\documentclass[10pt]{amsart}
\usepackage{amsmath, amsfonts, amscd, latexsym, amsthm, amssymb}
\usepackage{hyperref}

\def \c{\mathbb{C}}
\def \z{\mathbb{Z}}
\def \r{\mathbb{R}}
\def \n{\mathbb{N}}
\def \p{\mathbb{P}}

\def \ol{\overline}
\def \k{{\bf k}}

\def \R{\mathcal{R}}
\def \L{\mathcal{L}}
\def \K{{\bf K}_{rat}(X)}

\def \.{\cdot}
\def \Con{\textup{Con}}
\def \Reg{\textup{Reg}}

\def \ind{\textup{ind}}

\def \Vol{\textup{Vol}}

\def \supp{\textup{supp}}

\def \ker{\textup{ker}}

\def \span{\textup{span}}

\def \ord{\textup{ord}}

\def \ratmap{\dashrightarrow}

\usepackage[margin=1.4in]{geometry}

\theoremstyle{plain}
\newtheorem{Th}{Theorem}[section]
\newtheorem{Lem}[Th]{Lemma}
\newtheorem{Prop}[Th]{Proposition}
\newtheorem{Cor}[Th]{Corollary}

\newtheorem{THM}{Theorem}

\theoremstyle{definition}
\newtheorem{Ex}[Th]{Example}
\newtheorem{Def}[Th]{Definition}
\newtheorem{Rem}[Th]{Remark}

\begin{document}
\title[Convex bodies, semigroups of integral points and graded algebras]
{Newton-Okounkov bodies, semigroups of integral points, graded algebras and intersection theory}

\author{Kiumars Kaveh, A. G. Khovanskii}
%\\Department of Mathematics\\ University of Toronto}

\thanks{The second author is partially supported by Canadian Grant {\rm N~156833-02}.}

\maketitle

\begin{center}
{\it To the memory of Vladimir Igorevich Arnold}
\end{center}

\begin{abstract}
Generalizing the notion of Newton polytope,
we define the Newton-Okounkov body, respectively, for semigroups of integral
points, graded algebras, and linear series on varieties.
We prove that any semigroup in the lattice $\z^n$ is asymptotically approximated
by the semigroup of all the points in a sublattice and
lying in a convex cone. Applying this we obtain several results:
we show that for a large class of graded algebras, the Hilbert functions have polynomial growth
and their growth coefficients satisfy a Brunn-Minkowski type inequality. We prove analogues of Fujita approximation theorem for semigroups of integral points and graded algebras, which implies a generalization of this theorem for arbitrary linear series. Applications to intersection theory include a far-reaching generalization of the Kushnirenko theorem (from Newton polytope theory) and a new version of
the Hodge inequality. We also give elementary proofs of the Alexandrov-Fenchel inequality
in convex geometry and its analogue in algebraic geometry.\\
\end{abstract}

\noindent {\it Key words:} semigroup of integral points, convex body, mixed volume,
Alexandrov-Fenchel inequality, Hilbert function, graded algebra, Cartier divisor and linear series,
Hodge index theorem, Bernstein-Kushnirenko theorem\\

\noindent{\it AMS subject classification:} 14C20, 13D40, 52A39\\

\tableofcontents
\section*{Introduction}
This paper is dedicated to a generalization of the notion of Newton polytope
(of a Laurent polynomial). We introduce {the notion of {\it Newton-Okounkov body} and prove a series of results
 about it}. It is a completely expanded and revised version of the second part of the preprint \cite{Askold-Kiumars-arXiv-1}. A revised and extended version of the first part can be found in \cite{Askold-Kiumars-arXiv-2}.
Nevertheless, the present paper is totally independent and self-contained.
Here we develop a geometric approach to semigroups in $\z^n$ and apply the results to graded
algebras, intersection theory and convex geometry.

A generalization of the notion of Newton polytope was started by the pioneering works of A. Okounkov
\cite{Okounkov-Brunn-Minkowski, Okounkov-log-concave}. A systematic study of the Newton-Okounkov body
was introduced about the same time in the papers \cite{Lazarsfeld-Mustata} and \cite{Askold-Kiumars-arXiv-1}.
Recently the Newton-Okounkov body (which Lazarsfeld-Mustata call Okounkov body) has been explored and
used in the papers \cite{Yuan}, \cite{Nystrom}, \cite{Jow} and \cite{BC}. {There is also the nice recent paper \cite{Kuronya} which
describes these bodies in some interesting cases}.

First, we briefly discuss the results we need from \cite{Askold-Kiumars-arXiv-2} and then
we will explain the results of the present paper in more details. For the sake of simplicity
throughout the introduction we may use slightly simplified notation compared to the rest of the paper.

%About the paper \cite{Askold-Kiumars-arXiv-2}:
The remarkable Bernstein-Kushnirenko theorem
computes the number of solutions of a system of equations  $P_1=\dots=P_n=0$ in $(\c^*)^n$, where
each $P_i$ is a {general} Laurent polynomial taken from a non-zero finite dimensional subspace $L_i$ spanned by
Laurent monomials. The answer is given in terms of the mixed
volumes of the Newton polytopes of the polynomials $P_i$. (The Kushnirenko
theorem deals with the case where the Newton polytopes of all the equations
are the same; the Bernstein theorem concerns the general case.)

In \cite{Askold-Kiumars-arXiv-2} a much more general situation is addressed.
Instead of $(\c^*)^n$ one takes any irreducible
$n$-dimensional algebraic variety  $X$, and instead of the finite dimensional
subspaces $L_i$ spanned by monomials one takes arbitrary non-zero finite dimensional subspaces
of rational functions on $X$. We denote the collection of all the non-zero finite dimensional subspaces of
rational functions on $X$ by $\K$.
For an $n$-tuple $L_1,\dots,L_n\in \K$, we define the intersection index
$[L_1,\dots,L_n]$ as the number of solutions in $X$ of a system of equations
$f_1=\dots=f_n=0$, where each $f_i$ is a {general} element in $L_i$. In counting the number of solutions
one neglects the solutions at which all the functions from a subspace $L_i$, for
some $i$, are equal to $0$, and the solutions at which at least one
function in $L_i$, for some $i$, has a pole. One shows that this intersection
index is well-defined and has all the properties of the intersection index of divisors
on a complete variety. There is a natural multiplication in the set $\K$. For $L,M\in \K$
the product $LM$ is the span of all the functions $fg$, where $f\in L$,
$g\in M$. With this product, the set $\K$ is a commutative semigroup. Moreover,
the intersection index is multi-additive with respect to this
product and hence can be extended
to the Grothendieck group of $\K$, which we denote by ${\bf G}_{rat}(X)$ (see Section \ref{subsec-int-index}).
If $X$ is a normal projective variety,
the group of (Cartier) divisors on $X$ can be embedded as a subgroup in the group ${\bf G}_{rat}(X)$.
Under this embedding, the intersection index in the group of divisors coincides
with the intersection index in the group ${\bf G}_{rat}(X)$. Thus the intersection index in
${\bf G}_{rat}(X)$ can be considered as a generalization of the classical intersection index of divisors,
which is birationally invariant and can be applied to non-complete varieties also
(as discussed in \cite{Askold-Kiumars-arXiv-2} all the properties of this generalized intersection index can
be deduced from the classical intersection theory of divisors).

Now about the contents of the present paper:
we begin with proving general (and not very hard) results regarding
a large class of semigroups of integral points.
%\footnote{By a semigroup of integral points
%we mean a subset of $\z^n$ closed under addition and containing $0$.}
The origin of our approach goes back
to \cite{Askold-Hilbert-poly}. %Let's discuss these results.
Let us start with a class of semigroups with
a simple geometric construction: for an integer $0 \leq q <n$,
let $L$ be a $(q+1)$-dimensional rational subspace in $\r^n$, $C$
a $(q+1)$-dimensional {closed convex cone} in $L$ with apex at the origin, and $G$ a subgroup of full
rank $q+1$ in $L \cap \z^n$. \footnote{A linear subspace of $\r^n$ is called {\it rational} if it can be spanned
by rational vectors (equivalently integral vectors). An affine subspace is said to be {\it rational} if
it is a rational subspace after being shifted to pass through the origin.}
The set $\tilde{S} = G \cap C$ is a semigroup with respect to addition.
(After a linear change of coordinates, we can assume that the group $G$ coincides with $L \cap \z^n$
and hence $\tilde{S} = C \cap \z^n$.)
In addition, assume that the cone $C$ is strongly convex, that is, $C$ does not
contain any line. Let
$M_0 \subset L$ be a rational $q$-dimensional linear subspace which intersects $C$ only at the origin.
Consider the family of rational $q$-dimensional affine subspaces in $L$ parallel to $M_0$ such that
they intersect the cone $C$ as well as the lattice $G$.
Let $M_k$ denote the affine subspace in this family which has distance $k$ from the origin.
Let us normalize the distance $k$ so that as values it takes all the non-negative integers.
Then, this family of parallel affine subspaces can be enumerated as $M_0, M_1, M_2, \ldots$.
It is not hard to estimate the number $H_{\tilde{S}}(k)$ of points in the set
$\tilde{S}_k = M_k \cap \tilde{S}$. For sufficiently large $k$,
$H_{\tilde{S}}(k)$ is approximately equal to the
(normalized in the appropriate way) $q$-dimensional volume of the convex body $C \cap M_k$.
This idea, which goes back to Minkowski, shows that $H_{\tilde{S}}(k)$ grows like
$a_qk^q$ where the {\it $q$-th growth coefficient} $a_q$ is equal to the (normalized) $q$-dimensional volume of
the convex body $\Delta(\tilde{S}) = C \cap M_1$.\footnote{For a function $f$, we define
the {\it $q$-th growth coefficient} $a_q$ to be the limit $\lim_{k\to \infty} f(k)/k^q$
(whenever this limit exists).}

We should point out that the class of semigroups $\tilde{S}$ above has already
a rich and interesting geometry even when $C$ is just a simplicial cone. For example,
it is related to a higher dimensional generalization of continued fractions originating
in the work of V. I. Arnold \cite{Arnold}.

Now let us discuss the case of a general semigroup of integral points.
Let $S \subset \z^n$ be a semigroup. Let $G$ be the subgroup of $\z^n$ generated by $S$, $L$
the subspace of $\r^n$ spanned by $S$, and $C$ the closure of the convex hull of $S \cup \{0\}$, that is,
the smallest closed convex cone %\footnote{{\bf Throughout the paper all the convex cones are assumed to be closed.}}
(with apex at the origin) containing $S$. Clearly, $G$ and $C$ are
contained in the subspace $L$.
We define the {\it regularization} $\tilde{S}$ of $S$ to be the semigroup $C \cap G$.
\footnote{There is also the closely related notion of
{\it saturation} of a semigroup $S$. The saturation of $S$ is the semigroup of all $x \in \z^n$ for which $kx \in S$ for some
positive integer $k$. When $S$ is finitely generated, the saturation is the semigroup $C \cap \z^n$. Note that even when
$S$ is finitely generated, the saturation of $S$ can be different from the regularization of $S$, as the group $G$ can be
strictly smaller than $\z^n$.}.
From the definition $\tilde{S}$ contains $S$. We prove
that {\it the regularization $\tilde{S}$ asymptotically approximates the semigroup $S$.}
We call this the {\it approximation theorem}. More precisely:

\begin{THM} Let $C' \subset C$ be a {closed strongly convex cone} which intersects
the boundary (in the topology of the linear space $L$) of the cone $C$
only at the origin. Then there exists a constant $N>0$ (depending on $C'$)
such that any point in the group $G$ which lies in
$C'$ and whose distance from the origin is bigger than $N$ belongs to $S$.
\end{THM}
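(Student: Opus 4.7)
The plan is to reduce to a finitely generated subsemigroup $S_0 \subset S$ whose cone $C_0$ contains $C'$ in its interior and whose associated group coincides with $G$, and then to produce non-negative integer representations via a fiber-polytope / lattice-point argument.

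First I would pass to coordinates in which $L = \r^{q+1}$ and $G = \z^{q+1}$, so that $C$ is full-dimensional in $L$ and the hypothesis becomes $C' \setminus \{0\} \subset \textup{int}_L(C)$. Next I want to produce $s_1,\dots,s_m \in S$ such that $C_0 := \textup{cone}(s_1,\dots,s_m) \subset C$ satisfies $C' \setminus \{0\} \subset \textup{int}_L(C_0)$ and such that $s_1,\dots,s_m$ generate $G$ as a group. Since $C'$ is strongly convex, a linear functional $\ell$ strictly positive on $C' \setminus \{0\}$ cuts out a compact cross-section $K' := C' \cap \{\ell = 1\} \subset \textup{int}_L(C)$. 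For each $x \in K'$, picking $\dim L + 1$ affinely independent points of $C$ near $x$ and approximating each by a positive $S$-combination --- available because $C$ equals the closure of the convex positive cone of $S$ --- produces a finite $T_x \subset S$ with $x \in \textup{int}_L(\textup{cone}(T_x))$. Compactness of $K'$ selects finitely many such $T_x$'s, and adjoining finitely many elements of $S$ that generate $G$ (recall $G$ is a finitely generated abelian group) completes the list. Set $S_0 := \z_{\geq 0} s_1 + \cdots + \z_{\geq 0} s_m \subset S$.

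It then suffices to show that every $g \in G \cap C'$ with $|g|$ sufficiently large lies in $S_0$. I would frame this via the surjection $\pi : \z^m \twoheadrightarrow G$, $e_i \mapsto s_i$, with kernel $K \subset \z^m$, and its real extension $\bar\pi : \r^m \to L$ with kernel $V := K \otimes \r$: then $g \in S_0$ precisely when the fiber polytope $Q_g := \bar\pi^{-1}(g) \cap \r_{\geq 0}^m$ meets the lattice coset $\pi^{-1}(g)$. For $g \in \textup{int}(C_0)$, $Q_g$ has full dimension $\dim V$ in the affine fiber $\bar\pi^{-1}(g)$, and by continuity together with compactness of $K'$ there is a uniform lower bound $r_0 > 0$ on the radius of a $V$-ball contained in $Q_g$ for $g \in K'$. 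The scaling identity $Q_{tg} = t Q_g$ then shows that for $g = tx$ with $x \in K'$, $Q_g$ contains a $V$-ball of radius $tr_0$; once $tr_0$ exceeds the covering radius of the lattice $K$ in $V$, a standard Minkowski-type argument forces this ball to meet every coset of $K$, in particular $\pi^{-1}(g)$. This produces an expression $g = \sum n_i s_i$ with $n_i \in \z_{\geq 0}$, so $g \in S_0 \subset S$.

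The main obstacle will be the uniform inradius estimate --- verifying that the ``$V$-inradius'' of $Q_g$ is a continuous, strictly positive function on the compact set $K' \subset \textup{int}(C_0)$ --- together with the lattice-covering statement in the fiber. Both ingredients are elementary convex-geometric but warrant careful bookkeeping. By comparison, the reduction to finitely many generators rests on the observation that $\textup{int}(C)$ is contained in the convex positive cone of $S$, which combined with compactness of $K'$ is routine.
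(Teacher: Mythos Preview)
Your argument is correct. The reduction to a finitely generated subsemigroup $S_0 \subset S$ whose cone contains $C'$ in its interior and which generates $G$ is the same first move the paper makes; the paper does it by enumerating $S$, taking the subsemigroup $S_j$ generated by the first $j$ elements, and observing that the cross-sections $\Con(S_j)\cap\{\ell=1\}$ increase to $\Con(S)\cap\{\ell=1\}$, so eventually swallow $K'$. Your covering of $K'$ by interiors of finitely generated subcones amounts to the same thing.

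Where you genuinely diverge is in the finitely generated step. The paper isolates this as a separate lemma (the ``conductor'' statement, well known from toric geometry): for finitely generated $S_0$ with generators $a_i$, it produces a single element $g_0\in S_0$ with $\Reg(S_0)+g_0\subset S_0$, by writing any $g\in\Reg(S_0)$ as an integer part $\sum[\lambda_i]a_i$ plus a fractional remainder lying in the finite set $Q=\{\sum\mu_i a_i: 0\le\mu_i<1\}\cap G$, and choosing $g_0$ large enough to absorb the negative coefficients appearing in fixed integer representations of the elements of $Q$. One then notes that $C'$ far from the origin lies in $g_0+\Con(S_0)$. Your route instead lifts to $\r^m$ via $\pi:e_i\mapsto s_i$, studies the fiber polytope $Q_g=\bar\pi^{-1}(g)\cap\r_{\ge 0}^m$, and uses the homogeneity $Q_{tg}=tQ_g$ together with a uniform inradius bound on $K'$ and the covering radius of $\ker\pi$ to force an integer point into $Q_g$.

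Both arguments are short; the paper's buys a cleaner standalone statement (a single shift $g_0$ working for all of $\Reg(S_0)$, not just for $C'$), while yours is more overtly geometric and makes the role of the lattice in the kernel explicit. Two small points worth tightening in your write-up: the interior of $C$ already lies in the (unclosed) positive cone of $S$, so no approximation is needed to place the vertices $v_j$; and the inradius function $g\mapsto\textup{inrad}(Q_g)$ is concave along rays and lower semicontinuous on $\textup{int}(C_0)$, which is all you need for the uniform bound on the compact $K'$.
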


Now, in addition, assume that the cone $C$ constructed from $S$ is strongly convex.
Let $\dim L = q+1$. Fix a rational $q$-dimensional subspace $M_0 \subset L$ intersecting $C$ only at the origin
and as above let $M_k$, $k \in \z_{\geq 0}$, be the family of $q$-dimensional affine subspaces parallel to $M_0$.
That is, each $M_k$ intersects the cone $C$ as well as the group $G$. Let $H_S(k)$ and $H_{\tilde{S}}(k)$ be the number of points in the levels $S_k = S \cap M_k$ and
$\tilde{S}_k = \tilde{S} \cap M_k$ respectively.
The function $H_S$ is called the {\it Hilbert function} of the
semigroup $S$.

Let $\Delta(S) = C \cap M_1$. {One observes that it is a convex body}.
\footnote{{A {\it convex body} is a convex compact subset of $\r^n$.}}
We call it the {\it Newton-Okounkov body of the semigroup $S$}.
Note that $\dim \Delta(S) = q$.
By the above discussion (Minkowski's observation)
the Hilbert function $H_{\tilde{S}}(k)$ grows like $a_q k^q$ where
$a_q$ is the (normalized) $q$-dimensional volume of $\Delta(S)$.
But, by the approximation theorem, the Hilbert functions
$H_S(k)$ and $H_{\tilde{S}}(k)$ have the same asymptotic, as $k$ goes to infinity.
It thus follows that the volume of $\Delta(S)$ is responsible for the
asymptotic of the Hilbert function $H_S$ as well, i.e.

\begin{THM} \label{intro-THM-Hilbert-semigp}
The function $H_S(k)$ grows like
$a_qk^q$ where $q$ is the dimension of the convex body $\Delta(S)$, and the {\it $q$-th growth coefficient
$a_q$} is equal to the (normalized in the appropriate way) $q$-dimensional volume of $\Delta(S)$.
\end{THM}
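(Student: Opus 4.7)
The plan is to squeeze $H_S(k)$ between two lattice-point counts that can be handled by the classical Minkowski asymptotic: an upper bound coming from the regularization $\tilde S = C \cap G$, and a matching lower bound coming from arbitrarily large strongly convex subcones $C' \subset C$. The bridge from $\tilde S$ down to $S$ is the approximation theorem stated just above.

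For the upper bound, observe that since $C$ is a cone with apex at the origin, the affine slice $C \cap M_k$ is exactly the dilation $k \cdot \Delta(S)$, and this is a bounded convex body in $M_k$ by strong convexity of $C$. Normalize the $q$-dimensional volume on each $M_k$ so that the affine lattice $G \cap M_k$ has covolume one. The standard asymptotic for lattice points in dilations of a bounded convex body then gives
\[
H_{\tilde S}(k) = |G \cap C \cap M_k| = k^q \Vol_q(\Delta(S)) + O(k^{q-1}),
\]
so the inclusion $S \subset \tilde S$ yields $\limsup_{k\to\infty} H_S(k)/k^q \le \Vol_q(\Delta(S))$.

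For the lower bound, fix $\epsilon > 0$ and pick a compact convex set $K_\epsilon$ in the relative interior of $\Delta(S)$ with $\Vol_q(K_\epsilon) > \Vol_q(\Delta(S)) - \epsilon$; for instance, $K_\epsilon = (1-\delta)\Delta(S) + \delta x_0$ for a fixed interior point $x_0$ and small $\delta$. Let $C'_\epsilon = \r_{\ge 0}\cdot K_\epsilon$. Then $C'_\epsilon$ is a strongly convex subcone of $C$ meeting the boundary of $C$ in $L$ only at the origin, so the approximation theorem provides a constant $N_\epsilon$ such that every $G$-point of $C'_\epsilon$ at distance from the origin exceeding $N_\epsilon$ lies in $S$. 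For all $k$ large enough the slice $M_k$ lies outside the ball of radius $N_\epsilon$, hence $S_k \supset G \cap C'_\epsilon \cap M_k$. Applying the same Minkowski count to $C'_\epsilon$ in place of $C$ gives
\[
\liminf_{k\to\infty} \frac{H_S(k)}{k^q} \ge \Vol_q(K_\epsilon) > \Vol_q(\Delta(S)) - \epsilon,
\]
and letting $\epsilon \to 0$ finishes the proof.

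The main obstacle is the lattice-point asymptotic on the affine slices: since $G \cap M_k$ is only a coset of a fixed rank-$q$ sublattice of $G$ and the coset depends on $k$, one needs the version of the count for a dilated convex body that is uniform in the translate of the lattice. This is classical but must be stated carefully, and it is exactly what pins down the normalization of $\Vol_q$ that turns the geometric volume of $\Delta(S)$ into the growth coefficient $a_q$.
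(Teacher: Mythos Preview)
Your argument is correct and is essentially the same sandwich as the paper's: squeeze $S_k$ between the lattice points in a slightly shrunken slice (handled via the approximation theorem) and those in the full slice $C\cap M_k$ (the regularization), then apply the classical Minkowski count to both and let the inner body exhaust $\Delta(S)$. The paper carries this out as the special case $f\equiv 1$ of its Theorem~\ref{th-5.2}, so it is formulated for sums of polynomial values rather than just point counts, but the skeleton of the proof of that theorem is exactly your upper/lower bound with $\Delta\subsetneq\Delta(S,M)$.

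The one place where the paper is slightly cleaner is precisely the obstacle you flag at the end: rather than invoke a lattice-point asymptotic uniform over the varying cosets $G\cap M_k$, the paper first chooses a basis $e_1,\dots,e_{q+1}$ of $\r^{q+1}$ with $e_1,\dots,e_q$ generating $G\cap\partial M$ and $e_1,\dots,e_{q+1}$ generating $G$, so that after this linear change $G=\z^{q+1}$, $M_k=\{x_{q+1}=k\}$, and every slice is the standard $\z^q$; the index $\ind(S,\partial M)$ then reappears as the Jacobian relating the integral measure on $\partial M$ to Lebesgue measure in the new coordinates. This reduces the count to the most naive form of the Minkowski estimate (Riemann sums for the indicator of $\Delta$) and makes the normalization of $\Vol_q$ explicit, which is exactly what you were asking for in your last paragraph.
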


More generally, we extend the above theorem to the sum of values of a polynomial on the points in
the semigroup $S$ (Theorem \ref{th-5.2}).

Next, we describe another result about the asymptotic behavior of a semigroup $S$.
With each non-empty level $S_k = C \cap M_k$ we can associate a subsemigroup $\widehat{S}_k \subset S$
generated by this level. It is non-empty only at the levels $kt$, $t \in \n$.
Consider the Hilbert function $H_{\hat{S}_k}(kt)$ equal to the number of
points in the level $kt$, of the semigroup $\hat{S}_k$. Then if $k$ is sufficiently large,
$H_{\hat{S}_k}(kt)$, regarded as a function of $t \in \n$, grows like $a_{q,k}t^q$ where the $q$-th growth coefficient $a_{q,k}$ depends on $k$.
We show that:
\begin{THM} \label{intro-THM-Fujita-semigp}
The growth coefficient $a_{q,k}$ for the function $H_{\widehat{S}_k}$,
considered as a function of $k$, has the same asymptotic as the Hilbert function $H_S(k)$ of the original
semigroup $S$.
\end{THM}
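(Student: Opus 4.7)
The plan is to apply Theorem \ref{intro-THM-Hilbert-semigp} directly to the sub-semigroup $\widehat{S}_k$ itself, which reduces $a_{q,k}$ to a $q$-dimensional volume, and then to use the Approximation Theorem to show that this volume tends to $a_q$ as $k \to \infty$.

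First, I identify the Newton-Okounkov body of $\widehat{S}_k$. Its cone with apex at the origin is the cone over $\conv(S_k)$, and the group $G_k \subset G$ generated by $S_k$ has level image $k\z$; hence the first nonempty level of $G_k$ above $M_0$ is $M_k$ itself. Applying Theorem \ref{intro-THM-Hilbert-semigp} to $\widehat{S}_k$ with this reindexing of levels, the Newton-Okounkov body is
$$\Delta(\widehat{S}_k) = \conv(S_k) = k\Delta_k, \qquad \Delta_k := \tfrac{1}{k}\conv(S_k) \subset \Delta(S),$$
so that $a_{q,k} = \Vol(\conv(S_k)) = k^q \Vol(\Delta_k)$, provided the volume normalizations used for $S$ and for $\widehat{S}_k$ agree. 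This last point amounts to showing that $G_k \cap M_0 = G \cap M_0$ for large $k$: picking a basis $e_1, \ldots, e_q$ of $G \cap M_0$ and a point $v$ in the relative interior of $\Delta(S)$, the Approximation Theorem applied to $S$ places the $G$-lattice points closest to $kv$ and to $kv+e_i$ into $S_k$ for $k$ large, so each $e_i$ arises as their difference.

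It remains to show $\Vol(\Delta_k) \to \Vol(\Delta(S)) = a_q$. The upper bound follows from $\Delta_k \subset \Delta(S)$. For the matching lower bound, fix $\epsilon > 0$ and choose a compact $K$ in the relative interior of $\Delta(S)$ with $\Vol(K) \geq a_q - \epsilon$. The cone $\r_{\geq 0} \cdot K$ is strongly convex and meets the boundary of $C$ only at the origin, so by the Approximation Theorem the entire set $G \cap kK$ lies in $S_k$ once $k$ is sufficiently large. Consequently
$$\Vol(\Delta_k) \ \geq \ \Vol\bigl(\tfrac{1}{k}\conv(G \cap kK)\bigr) \ \longrightarrow \ \Vol(K) \ \geq \ a_q - \epsilon,$$
the convergence coming from $(1/k)\conv(G \cap kK) \to K$ in Hausdorff distance, which follows from the asymptotic density of $G$ in $kK$. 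Letting $\epsilon \to 0$ gives $\Vol(\Delta_k) \to a_q$, and combining with the previous paragraph yields $a_{q,k} = k^q \Vol(\Delta_k) \sim a_q k^q \sim H_S(k)$, as required.

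The principal difficulty is the lower bound in the last step: one must control enough elements of $S_k$ near the entire boundary of $k\Delta(S)$, not merely in some fixed interior sub-region. The Approximation Theorem provides exactly such control, but only after choosing an auxiliary strongly convex sub-cone $C' \subset C$ whose unit slice has volume within $\epsilon$ of $a_q$; this in turn requires that the cone $C$ be exhausted from the inside by strongly convex sub-cones meeting $\partial C$ only at the origin, a property that holds for any (strongly) convex cone with a rational transverse hyperplane $M_1$.
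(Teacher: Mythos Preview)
Your proposal is correct and follows essentially the same route as the paper's proof (Theorem \ref{th-1.35}): apply the Hilbert-function formula to $\widehat S_k$ to express $a_{q,k}$ as a volume, verify that the lattice $G_k\cap M_0$ stabilizes to $G\cap M_0$ for large $k$, and then sandwich $\Vol(\Delta_k)$ between $\Vol(\Delta(S))$ and the volume of a slightly smaller inner slice via the Approximation Theorem. The paper packages the index equality and the lower-bound inclusion into a single refinement (Theorem \ref{th-refinement-3.2}, proved via the ball Lemma \ref{lem-ball}), which gives $\conv(S_p)\supset kK$ directly and so bypasses your Hausdorff-convergence step, but the substance is the same.
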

%{\bf Give more details:}
%This result can be considered as an analogue of the well-known Fujita approximation
%theorem in theory of divisors.

%We will use the above to generalize the interesting Fujita approximation result of
%Lazarsfeld and Mustata \cite[Theorem 3.3]{Lazarsfeld-Mustata}.

Now we explain the results in the paper on graded algebras.
Let $F$ be a finitely generated field of transcendence degree $n$ over $\k = \c$.
\footnote{For simplicity, here in the introduction we
take the ground field $\k$ to be $\c$, although throughout the paper, most of the results
are stated for a general algebraically closed field $\k$.}
Let $F[t]$ be the algebra of polynomials over $F$. We will be concerned with the graded $\k$-subalgebras of $F[t]$
and their Hilbert functions. In order to apply the results about the semigroups to graded
subalgebras of $F[t]$ one needs a valuation $v_t$ on the algebra $F[t]$.
Let $I$ be an ordered abelian group. An $I$-valued valuation on an algebra $A$ is a
map from $A \setminus \{0\}$ to $I$ which respects the algebra operations
(see Section \ref{subsec-valuation} for the precise definition).
We construct a $\z^{n+1}$-valued valuation $v_t$ on $F[t]$
by extending a valuation $v$ on $F$.
We also require $v$ to be {\it faithful}, i.e. it takes all the values in $\z^n$.
It is well-known how to construct many such valuations $v$. We present main examples in Section \ref{subsec-valuation}.

The valuation $v_t$ maps the set of non-zero elements
of a graded subalgebra $A \subset F[t]$ to a semigroup of
integral points in $\z^n \times \z_{\geq 0}$. This gives a connection between the
graded subalgebras of $F[t]$ and semigroups in $\z^n \times \z_{\geq 0}$.

The following types of graded subalgebras in $F[t]$ will play the main roles for us:
\begin{itemize}
\item[-] The algebra $A_L = \bigoplus_{k \geq 0}L^kt^k$, where $L$ is a non-zero finite dimensional
subspace of $F$ over $\k$. Here $L^0=\k$ and for $k>0$ the space $L^k$ is the span of all the products $f_1\cdots f_k$ with $f_1, \ldots ,f_k \in L$. It is a graded algebra generated by $\k$ and finitely many degree
$1$ elements.

\item[-] An {\it algebra of integral type} is a graded subalgebra $A$ which is a finite module over some algebra $A_L$, equivalently, a graded subalgebra which is finitely generated and {a finite module over the subalgebra generate by $A_1$}.

\item[-] An {\it algebra of almost integral type} is a graded subalgebra $A$ which is contained in an algebra of integral type, equivalently, a graded subalgebra which is contained in some algebra $A_L$.
\end{itemize}

{Let $X$ be an $n$-dimensional irreducible variety over $\k$ with $F = \k(X)$ its field of rational functions.
To a subspace $L\in \K$
one associates {the Kodaira rational map} $\Phi_L: X \ratmap \p(L^*)$, where
$\p(L^*)$ is the projectivization of the dual space to $L$. Take a point $x \in X$  such that all the
$f \in L$ are defined at $x$ and not all are zero at $x$. To $x$ there
corresponds a functional $\xi_x$ on $L$ given by
$\xi_x(f) = f(x)$. The Kodaira map $\Phi_L$ sends $x$ to the image of this functional
in the projective space $\p(L^*)$. Let $Y_L \subset \p(L^*)$ be the closure of the image of
$X$ under the map $\Phi_L$. The algebra $A_L$, in fact, can be identified with
the homogeneous coordinate ring of $Y_L \subset \p(L^*)$.
Algebras of integral type are related to the rings of sections of ample line bundles, and
algebras of almost integral type to the rings of sections of arbitrary line bundles
(see Theorems \ref{th-ring-sec-almost-finite-type} and \ref{th-very-ample-int-closure}).}

By the Hilbert-Serre theorem on finitely generated modules over a polynomial ring, it follows that the
Hilbert function $H_A(k)$ of an algebra $A$ of almost integral type does not grow faster than $k^n$.
{From this one can then show that the cone $C$ associated to the semigroup $S(A) = v_t(A \setminus \{0\})$ is
strongly convex.} Let $\Delta(A)$ denote the Newton-Okounkov body of the semigroup $S(A)$.
We call $\Delta(A)$ the {\it Newton-Okounkov body of the algebra $A$}.
Applying Theorem \ref{intro-THM-Hilbert-semigp} above we prove:
%We apply the above results to this semigroup $S$ to prove:
\begin{THM} \label{intro-THM-Hilbert-alg}
1) After appropriate rescaling of the argument $k$, the Hilbert function
$H_A(k)$ grows like $a_qk^q$, where $q$ is an integer between $0$ and $n$.
2) Moreover, the degree $q$ is equal to the dimension of $\Delta(A)$, and
$a_q$ is the (normalized in the appropriate way) $q$-dimensional volume of
$\Delta(A)$.
\end{THM}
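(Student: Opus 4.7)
The plan is to reduce the statement to Theorem \ref{intro-THM-Hilbert-semigp} (the Hilbert function theorem for semigroups) applied to the image semigroup $S(A) = v_t(A \setminus \{0\}) \subset \z^n \times \z_{\geq 0}$. The key translation is the standard fact that a valuation separates a finite-dimensional subspace: for any finite-dimensional $\k$-subspace $V$ of $F[t]$, the image of $V \setminus \{0\}$ under $v_t$ has cardinality exactly $\dim_{\k} V$. Applied to the $k$-th homogeneous component $A_k t^k$, which is finite-dimensional because $A$ is of almost integral type, this gives
$$H_A(k) = \dim_{\k} A_k = |S(A) \cap (\z^n \times \{k\})|.$$
Thus, choosing the parallel family $M_k = \z^n \times \{k\}$ (with $M_0 = \z^n \times \{0\}$), the Hilbert function $H_A$ coincides with the Hilbert function $H_{S(A)}$ of the semigroup $S(A)$ in the sense of Theorem \ref{intro-THM-Hilbert-semigp}.

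Next I would check the hypotheses of Theorem \ref{intro-THM-Hilbert-semigp} on $S(A)$. The subspace $M_0 = \z^n \times \{0\}$ is rational and intersects the cone $C$ generated by $S(A)$ only at the origin, because every nonzero element of $S(A)$ has strictly positive last coordinate. The crucial hypothesis is strong convexity of $C$, which (as already noted in the excerpt) is forced by the Hilbert-Serre bound $H_A(k) = O(k^n)$ valid for any algebra of almost integral type: if $C$ contained a line, then an elementary argument would produce lattice points on $M_k$ in numbers exceeding any polynomial in $k$, contradicting this bound via the identification $H_A(k) = |S(A) \cap M_k|$ from the previous paragraph.

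Once the hypotheses are in place, Theorem \ref{intro-THM-Hilbert-semigp} directly yields $H_{S(A)}(k) \sim a_q k^q$ where $q = \dim \Delta(S(A)) = \dim \Delta(A)$ and $a_q$ is the normalized $q$-dimensional volume of $\Delta(A)$. The upper bound $q \le n$ is forced by the Hilbert-Serre bound, while $q \ge 0$ is trivial, giving both claims. The \emph{rescaling of the argument $k$} simply accounts for the fact that only $t$-degrees lying in a sublattice $d\z \subset \z$ (generated by the degrees appearing in $A$) carry nonzero components; restricting to the subfamily $M_{jd}$, as required by Theorem \ref{intro-THM-Hilbert-semigp}, amounts to a substitution $k \mapsto kd$. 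The main obstacle is not any single deep step: it is the bookkeeping needed to match the algebraic grading on $A$ with the affine level structure on $S(A)$, together with the deduction of strong convexity of $C$ from Hilbert-Serre. The genuine analytic content is entirely contained in the semigroup approximation theorem.
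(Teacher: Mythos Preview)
Your overall strategy matches the paper exactly: identify $H_A$ with the Hilbert function of the semigroup $S(A)=v_t(A\setminus\{0\})$ via the one-dimensional-leaves property of $v_t$ (Proposition~\ref{prop-dim-Z}), then invoke Theorem~\ref{intro-THM-Hilbert-semigp}. The genuine gap is in your verification that the cone $\Con(S(A))$ meets the level-zero hyperplane only at the origin (equivalently, that the pair is strongly admissible).

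Your first claim, that $\Con(S(A))\cap M_0=\{0\}$ because every nonzero element of $S(A)$ has positive last coordinate, does not follow: the cone is the \emph{closure} of the convex hull and can acquire level-zero rays even when $S(A)$ itself lies strictly above level zero. Example~\ref{ex-concave-graph} (with $F(x)=|x|^{\alpha}$, $0<\alpha<1$) is precisely this phenomenon. Your second claim, that a line in $C$ would force $\#S_k$ to exceed any polynomial, is likewise false. Take $S=\{(a,k)\in\z^2: k\ge 1,\ |a|\le k^2\}$: its cone is the full upper half-plane (hence contains the $x$-axis), yet $\#S_k=2k^2+1$. The correct statement (Theorem~\ref{th-5.4}) is that failure of strong admissibility forces $\#S_k/k^q\to\infty$ where $q=\dim\partial M(S(A))=\dim L(S(A))-1$. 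The Hilbert--Serre bound $H_A(k)=O(k^n)$ contradicts this only when $q=n$, i.e.\ when the linear span $L(S(A))$ is all of $\r^{n+1}$; for a general algebra of almost integral type this need not hold.

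The paper closes this gap with an embedding trick (Lemmas~\ref{lem-S(A)-strongly-non-negative} and~\ref{lem-G(A)-full}, Theorem~\ref{th-S(A)-strongly-non-neg}): enlarge $A$ to an algebra $B=A\cdot A_L$ of integral type with $G(B)=\z^{n+1}$, taking $L$ spanned by $1$ together with elements $f_1,\dots,f_n\in F$ satisfying $v(f_i)=e_i$ (possible since $v$ is faithful). For $B$ the span is full, so $q=n$ and Hilbert--Serre combined with Theorem~\ref{th-5.4} does yield strong admissibility of $S(B)$; then $\Con(S(A))\subset\Con(S(B))$ inherits it. Once this is in place, your remaining steps go through exactly as you wrote.
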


When $A$ is of integral type, again by the Hilbert-Serre theorem, the
Hilbert function becomes a polynomial of degree $q$ for large values of $k$ and
the number $q!a_q$ is an integer. When $A$ is of almost integral type, the Hilbert function $H_A$ is
not in general a polynomial for large $k$ and $a_q$  can be transcendental. It seems to the authors that
the result above on the polynomial growth of the Hilbert function of algebras of almost integral type is new.

The Fujita approximation theorem in the theory of divisors states that the so-called
{\it volume of a big divisor} can be approximated by the
self-intersection numbers of ample divisors (see \cite{Fujita}, \cite[Section 11.4]{Lazarsfeld}).
In this paper, we prove an abstract analogue of the Fujita approximation theorem for algebras
of almost integral type. This is done by reducing  it, via the valuation $v_t$,
to the corresponding result for the semigroups (Theorem \ref{intro-THM-Fujita-semigp} above).
With each non-empty homogeneous
component $A_k$ of the algebra $A$ one associates the graded subalgebra $\widehat
A_k$ generated by this component. For fixed large enough $k$,
the Hilbert function $H_{\widehat A_{k}}(kt)$ of the algebra  $\widehat
A_k$ grows like  $a_{q,k} t^q$.

\begin{THM} \label{intro-THM-Fujita-alg}
The $q$-th growth coefficient $a_{q,k}$
of the Hilbert function  $H_{\widehat A_k}$, regarded as a function of $k$,
has the same asymptotic as the Hilbert function $H_A(k)$ of the algebra $A$.
\end{THM}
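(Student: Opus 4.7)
The plan is to reduce the statement to its semigroup analogue, Theorem \ref{intro-THM-Fujita-semigp}, using the valuation $v_t$ that was set up precisely to connect graded subalgebras of $F[t]$ with semigroups in $\z^n \times \z_{\geq 0}$.

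First I would set $S = S(A) = v_t(A \setminus \{0\})$ and let $S_k = S \cap M_k$ be the slice at height $k$ (with respect to the $t$-grading direction). Since $A$ is of almost integral type, the same observation used just before Theorem \ref{intro-THM-Hilbert-alg}, based on the Hilbert--Serre bound on $H_A$, shows that the cone $C$ generated by $S$ is strongly convex, so Theorem \ref{intro-THM-Fujita-semigp} is applicable to $S$. The key property of the valuation I would invoke is that, for any finite dimensional $\k$-subspace $V \subset F[t]$, the leading terms (i.e.\ values of $v_t$) on a basis are pairwise distinct, so $\dim_\k V = \#\, v_t(V \setminus \{0\})$. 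Applied to $V = A_k$, this gives $H_A(k) = \#S_k = H_S(k)$.

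Next I would identify $v_t(\widehat{A}_k \setminus \{0\})$ with the subsemigroup $\widehat{S}_k \subset S$ generated by $S_k$. The inclusion $\widehat{S}_k \subset v_t(\widehat{A}_k \setminus \{0\})$ follows from multiplicativity of $v_t$: values of $m$-fold products of elements of $A_k$ exhaust $mS_k$. Conversely, the $mk$-th homogeneous component of $\widehat{A}_k$ is exactly $A_k^m$, the $\k$-span of $m$-fold products from $A_k$, so any leading term arising from a nonzero element of $A_k^m$ already lies in $mS_k = \widehat{S}_k \cap M_{mk}$. Applying the dimension equality to $V = A_k^m$ yields
\[
H_{\widehat{A}_k}(mk) \;=\; \#\bigl(\widehat{S}_k \cap M_{mk}\bigr) \;=\; H_{\widehat{S}_k}(mk).
\]

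With these identifications in hand, the $q$-th growth coefficient of $H_{\widehat{A}_k}(kt)$ in $t$ equals the semigroup growth coefficient $a_{q,k}$ produced by Theorem \ref{intro-THM-Fujita-semigp} applied to $S$, and the asymptotic of $H_A(k)$ coincides with that of $H_S(k)$. The semigroup theorem then gives the claimed asymptotic equivalence of $a_{q,k}$ and $H_A(k)$. I expect the main technical point to be the careful verification of $v_t(\widehat{A}_k \setminus \{0\}) = \widehat{S}_k$ together with the dimension identity $\dim A_k^m = \#(mS_k)$ — in particular, ruling out cancellations among leading terms of products, which rests on the elementary facts that elements of distinct $v_t$-value are $\k$-linearly independent and that $v_t(fg) = v_t(f) + v_t(g)$.
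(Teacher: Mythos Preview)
Your overall strategy of reducing to the semigroup statement via $v_t$ is exactly the paper's, and your identification $H_A(k)=H_{S}(k)$ is correct. However, the step where you claim $v_t(\widehat{A}_k\setminus\{0\})=\widehat{S}_k$ is \emph{false} in general, and this is precisely where your argument breaks.

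The forward inclusion $\widehat{S}_k\subset v_t(\widehat{A}_k\setminus\{0\})$ is fine. The converse fails because a $\k$-linear combination of products $f_1\cdots f_m$ with $f_j\in A_k$ can have cancellation among the leading terms, producing a valuation \emph{not} in $mS_k$. The inequality $v_t(f+g)\geq\min(v_t(f),v_t(g))$ can be strict when $v_t(f)=v_t(g)$; the one-dimensional-leaves property and multiplicativity do not prevent this. The paper itself gives the relevant counterexample right after Proposition~\ref{prop-superadd-subspace}: with $v=\ord_0$ on $\c(t)$, $L_1=\span\{1,t\}$, $L_2=\span\{t,1+t^2\}$, one has $v(L_1\setminus\{0\})+v(L_2\setminus\{0\})=\{0,1,2\}$ but $v(L_1L_2\setminus\{0\})=\{0,1,2,3\}$, since $(t+t^3)-t=t^3$ lies in $L_1L_2$. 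So only $\widehat{S}_k\subset S(\widehat{A}_k)$ holds, and your equality $\dim A_k^m=\#(mS_k)$ is an inequality $\geq$.

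The paper repairs this by a sandwiching argument (Theorem~\ref{th-5.14}): one has
\[
\widehat{S}_p(A)\ \subset\ S(\widehat{A}_p)\ \subset\ S(A),
\]
the second inclusion coming simply from $\widehat{A}_p\subset A$. Hence $H_{\widehat{S}_p}(tp)\leq H_{\widehat{A}_p}(tp)\leq H_A(tp)$ and $\Delta(\widehat{S}_p)\subset\Delta(\widehat{A}_p)\subset\Con(S)\cap\pi^{-1}(p)$. The semigroup Fujita theorem (Theorem~\ref{th-1.35}) shows that the $q$-th growth coefficient of the \emph{lower} semigroup $\widehat{S}_p$ already has the correct asymptotic $\Vol_q(\Delta(S))/\ind(S)$; the upper bound from $S(A)$ gives the same limit, so the squeezed quantity $\varphi(p)=\lim_t H_{\widehat{A}_p}(tp)/t^q$ is forced to have that asymptotic as well. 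You should replace the attempted equality with this two-sided inclusion; nothing else in your outline needs to change.
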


Hilbert's theorem on the dimension and degree of a projective variety yields an
algebro-geometric interpretation of the above results.
{Consider the algebra $A_L$ associated to a subspace $L \in \K$, and let $Y_L$ denote the
closure of the image of the Kodaira map $\Phi_L$.
Then by Hilbert's theorem we see that: {\it the dimension of the variety $Y_L$ is equal
to the dimension $q$ of the body $\Delta(A_L)$,
and the degree of $Y_L$ (in the projective space $\p(L^*)$) is equal to $q!$ times the $q$-dimensional
(normalized in the appropriate way) volume of $\Delta(A_L)$}.}

One naturally defines a componentwise product of graded subalgebras
(see Definition \ref{def-componentwise}).
Consider the class of graded algebras of almost integral type such that, for large enough $k$,
all their $k$-th homogeneous
components are non-zero. Let $A_1$, $A_2$ be algebras of such kind and put
$A_3 = A_1A_2$. It is easy to verify the inclusion
$$\Delta_0(A_1)+\Delta_0(A_2)\subset \Delta_0(A_3),$$
where $\Delta_0(A_i)$ is the Newton-Okounkov body for the algebra $A_i$
projected to $\r^n$ (via the projection on the first factor $\r^n \times \r \to \r^n$).
Using the previous result on the $n$-th growth coefficient $a_n(A_i)$ of the
Hilbert function of the algebra $A_i$ and the classical Brunn-Minkowski inequality
we then obtain the following inequality:
\begin{THM}
$$ a_n^{1/n}(A_1)+ a_n^{1/n}(A_2)  \leq a_n^{1/n}(A_3).$$
\end{THM}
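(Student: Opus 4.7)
The plan is a direct three-step reduction: pass from growth coefficients to volumes of Newton-Okounkov bodies via the Hilbert function theorem, apply the classical Brunn-Minkowski inequality in $\r^n$, and exploit the stated inclusion of bodies to promote the inequality from $\Delta_0(A_1)+\Delta_0(A_2)$ to $\Delta_0(A_3)$.

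More precisely, since each $A_i$ is of almost integral type with eventually nonzero homogeneous components, Theorem \ref{intro-THM-Hilbert-alg} applies with $q=n$, giving
\[
a_n(A_i) = c \cdot \Vol_n(\Delta(A_i))
\]
for some fixed normalization constant $c>0$ that depends only on the valuation $v_t$ and the lattice involved (and in particular is the same for $A_1$, $A_2$, and $A_3=A_1A_2$, since all three algebras live in the same $F[t]$ and are valuated by the same $v_t$). The projection $\pi\colon \r^n\times\r\to\r^n$ restricted to the affine hyperplane $M_1=\{t=1\}$ is an isometry onto its image up to the same normalization factor, so $\Vol_n(\Delta(A_i))$ equals $\Vol_n(\Delta_0(A_i))$ with respect to the standard Lebesgue measure on $\r^n$. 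Thus the inequality to prove is equivalent to
\[
\Vol_n(\Delta_0(A_1))^{1/n}+\Vol_n(\Delta_0(A_2))^{1/n}\le \Vol_n(\Delta_0(A_3))^{1/n}.
\]

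The classical Brunn-Minkowski inequality in $\r^n$ states that for any two convex bodies $K_1,K_2\subset \r^n$,
\[
\Vol_n(K_1)^{1/n}+\Vol_n(K_2)^{1/n}\le \Vol_n(K_1+K_2)^{1/n}.
\]
Applying this to the convex bodies $\Delta_0(A_1)$ and $\Delta_0(A_2)$, and then using the hypothesized inclusion
\[
\Delta_0(A_1)+\Delta_0(A_2)\subset \Delta_0(A_3)
\]
together with the monotonicity of Lebesgue measure, yields the desired estimate and hence the theorem.

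The only step that requires a little thought, rather than citation, is making sure that the normalization constant $c$ identifying growth coefficients with volumes is truly the same for $A_1$, $A_2$, and $A_3$. This is where the assumption that the $k$-th components are nonzero for large $k$ (so that the body is full-dimensional and the enumerating sequence of affine slices $M_k$ is the same standard one $t=0,1,2,\ldots$ for all three algebras) becomes important: it guarantees we are comparing volumes with respect to the same induced lattice measure on $\{t=1\}$. Once this is checked, the substantive content of the theorem is simply Brunn-Minkowski applied to Newton-Okounkov bodies, and the proof is essentially a one-line reduction.
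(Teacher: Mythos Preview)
Your overall strategy is the same as the paper's: pass from growth coefficients to volumes of Newton--Okounkov bodies, invoke the inclusion $\Delta_0(A_1)+\Delta_0(A_2)\subset\Delta_0(A_3)$, and apply the classical Brunn--Minkowski inequality. That part is fine.

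The gap is in your claim that $a_n(A_i)=c\cdot\Vol_n(\Delta(A_i))$ with a \emph{single} constant $c$ for $i=1,2,3$. The precise statement (Theorem \ref{th-growth-alg-Newton-Okounkov-body}) is
\[
a_n(A_i)=\frac{\Vol_n(\Delta(A_i))}{\ind(A_i)},
\]
where $\ind(A_i)$ is the index of the subgroup $G_0(A_i)=G(S(A_i))\cap(\r^n\times\{0\})$ in $\z^n$. This index genuinely depends on the algebra and is not determined by the valuation or by the condition $m(A_i)=1$. For instance, with $n=1$, $F=\c(x)$, and the order-of-vanishing valuation, take $A_1=A_{\span\{1,x^2\}}$ and $A_2=A_{\span\{1,x\}}$: then $\ind(A_1)=2$ while $\ind(A_2)=\ind(A_1A_2)=1$. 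Your last paragraph conflates the index $m(A)$ (controlling which levels are nonempty) with the index $\ind(A)$ (controlling the sublattice in $\partial M$); the hypothesis that all components are eventually nonzero only forces $m(A_i)=1$.

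The fix, which is exactly what the paper does, is to observe that $S(A_1)\oplus_t S(A_2)\subset S(A_3)$ implies $G_0(A_1)+G_0(A_2)\subset G_0(A_3)$, hence $\ind(A_3)\leq\ind(A_1)$ and $\ind(A_3)\leq\ind(A_2)$. Then
\[
a_n^{1/n}(A_1)+a_n^{1/n}(A_2)
\leq \frac{\Vol_n^{1/n}(\Delta(A_1))+\Vol_n^{1/n}(\Delta(A_2))}{\ind(A_3)^{1/n}}
\leq \frac{\Vol_n^{1/n}(\Delta(A_3))}{\ind(A_3)^{1/n}}
= a_n^{1/n}(A_3),
\]
where the second inequality is Brunn--Minkowski plus the inclusion of bodies. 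So one extra line repairs the argument, but as written your proof is incomplete.
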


The results about graded subalgebras of polynomials in particular
apply to the ring of sections of a divisor. In Section \ref{subsec-linear-series} we
see that the ring of sections of a divisor
is an algebra of almost integral type. Applying the above results to this algebra
we recover several well-known results regarding the asymptotic theory of divisors and linear
series. Moreover, we obtain some new results about the case when the divisor is not a big divisor.
As a corollary of our Theorem \ref{intro-THM-Fujita-alg} we generalize the interesting
Fujita approximation result in \cite[Theorem 3.3]{Lazarsfeld-Mustata}.
The result in \cite{Lazarsfeld-Mustata} applies to the so-called {\it big divisors} (or more generally
big graded linear series) on a projective variety.
Our generalization holds for any divisor (more generally any graded linear series)
on any complete variety (Corollary \ref{cor-linear-series}).
The point is that beside following the ideas in \cite{Lazarsfeld-Mustata},
we use results which apply to {\it arbitrary} semigroups of integral points. Another difference
between the approach in the present paper and that of \cite{Lazarsfeld-Mustata}
is that we use abstract valuations on algebras, as opposed to
a valuation on the ring of sections of a line bundle and coming from a flag of subvarieties.
On the other hand, the use of special valuations with algebro-geometric nature is helpful to get
more concrete information about the Newton-Okounkov bodies in special cases.

Let us now return to the subspaces of rational functions on a variety $X$.
Let $L \in \K$. If the Kodaira map $\Phi_L:X \ratmap \p(L^*)$ is a birational isomorphism between
$X$ and its image $Y_L$ then the degree of $Y_L$
is equal to the self-intersection index $[L,\dots,L]$
of the subspace $L$. We can then apply the results above to
the intersection theory on $\K$. Let us call a subspace $L$
a {\it big subspace} if for large $k$, $\Phi_{L^k}$ is a birational
isomorphism between $X$ and $Y_{L^k}$.

With a space $L\in \K$, we associate two graded algebras:
the algebra $A_L$ and its integral closure
$\ol{A_L}$ {in the field of fractions of the polynomial algebra $F[t]$.}
The algebra $A_L$ is easier to define and fits our purposes
best when the subspace $L$ is big. On the other hand, the second algebra
$\ol{A_L}$ is a little bit more complicated to define (it involves the integral closure)
but leads to more convenient results for any $L \in \K$ (Theorem \ref{intro-THM-Kus} below).
The algebraic construction of going from $A_L$ to its integral closure $\ol{A_L}$ can
be considered as the analogue of the geometric operation of taking the convex hull of a set of
points.

One can then associate to $L$ two
convex bodies $\Delta(A_L)$ and $\Delta(\ol{A_L})$. In general
$\Delta(A_L) \subseteq \Delta(\overline{A_L})$, while for a big subspace $L$
we have $\Delta(A_L)=\Delta(\ol{A_L})$.

The following generalization of the
Kushnirenko theorem gives a geometric interpretation of the self-intersection
index of a subspace $L$:
\begin{THM} \label{intro-THM-Kus}
For any $n$-dimensional irreducible algebraic variety  $X$
and for any $L\in \K$ we have:
\begin{equation} \label{equ-*}
[L, \dots, L] = n!\Vol(\Delta(\ol{A_L})).
\end{equation}
\end{THM}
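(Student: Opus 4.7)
The plan is to split into two cases depending on whether $L$ is big; for big $L$ we reduce to the case of a birational Kodaira map and apply the Hilbert-theoretic interpretation of $\Delta(A_L)$ mentioned just above, while for non-big $L$ both sides of (\ref{equ-*}) vanish for dimension reasons.

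First assume $L$ is big, so that for all sufficiently large $k$ the Kodaira map $\Phi_{L^k}: X \ratmap Y_{L^k} \subset \p((L^k)^*)$ is a birational isomorphism onto its image. For such $k$, generic elements $f_1, \dots, f_n \in L^k$ correspond to generic hyperplanes in $\p((L^k)^*)$, and birationality of $\Phi_{L^k}$ identifies their common zeros on $X$ (outside the base locus of $L^k$, which is discarded in the definition of the intersection index) with the intersection points of $n$ generic hyperplanes with $Y_{L^k}$. This gives $[L^k, \dots, L^k] = \deg Y_{L^k}$. By the algebro-geometric interpretation via Hilbert's theorem recalled in the text preceding the theorem, $\deg Y_{L^k} = n!\, \Vol(\Delta(A_{L^k}))$.

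Now multi-linearity of the intersection index on ${\bf G}_{rat}(X)$ gives $[L^k, \dots, L^k] = k^n [L, \dots, L]$, and a direct inspection at the level of semigroups shows the scaling relation $\Delta(A_{L^k}) = k \cdot \Delta(A_L)$ (the $m$-th homogeneous piece of $A_{L^k}$ matches the $km$-th piece of $A_L$, so slicing the cone generated by the semigroup of $A_L$ at level $k$ yields $k$ times its slice at level $1$), whence $\Vol(\Delta(A_{L^k})) = k^n \Vol(\Delta(A_L))$. Dividing by $k^n$ one obtains
\[ [L, \dots, L] \;=\; n!\, \Vol(\Delta(A_L)) \;=\; n!\, \Vol(\Delta(\ol{A_L})), \]
the last equality being the coincidence $\Delta(A_L) = \Delta(\ol{A_L})$ for big $L$ stated above the theorem. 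For $L$ not big, one has $\dim Y_L < n$, so generic $n$-tuples from $L$ correspond to hyperplanes whose intersection misses $Y_L$ and hence have no common zero outside the base locus, giving $[L, \dots, L] = 0$; on the other side, $\ol{A_L}$ being integral over $A_L$ shares its Krull dimension, and Theorem \ref{intro-THM-Hilbert-alg} then forces $\dim \Delta(\ol{A_L}) = \dim \Delta(A_L) = \dim Y_L < n$, so the $n$-dimensional volume $\Vol(\Delta(\ol{A_L}))$ vanishes.

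The main obstacle is making the identification $[L^k, \dots, L^k] = \deg Y_{L^k}$ rigorous when $X$ is not assumed complete: this rests on the birational invariance of the intersection index on $\K$ developed in earlier work of the authors, together with the convention of ignoring solutions on the base locus. A secondary subtlety is the bookkeeping of the volume normalization: the integral closure construction, combined with the faithfulness of the valuation $v$, guarantees that the sublattice of $\z^n$ generated by the image of the semigroup of $\ol{A_L}$ is the full lattice $\z^n$, so that the "normalized" volume of $\Delta(\ol{A_L})$ indeed equals its Euclidean volume and matches the intersection-theoretic side — which is precisely the reason the theorem is stated with $\ol{A_L}$ rather than $A_L$.
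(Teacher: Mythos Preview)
There is a genuine gap in your case split. You divide into ``$L$ big'' versus ``$L$ not big'', and in the non-big case you assert $\dim Y_L < n$. This is false: bigness (as defined here, via birationality of $\Phi_{L^k}$ for large $k$) is equivalent to $P(L) = \c(X)$, since $P(L^k) = P(L)$ for all $k$. But one can have $\dim Y_L = n$ with $\Phi_L$ of mapping degree $d > 1$, i.e.\ $P(L)$ a proper subfield of $\c(X)$ of finite index. In that situation $L$ is \emph{not} big, yet $[L,\dots,L] = d\cdot \deg Y_L \neq 0$. A concrete example: $X = \mathbb{A}^1$, $L = \span\{1,x^2\}$; then $\Phi_L$ is the degree-$2$ map $x\mapsto x^2$, $[L] = 2$, and your argument would wrongly conclude $[L]=0$.

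The paper handles precisely this missing case by working with $\ol{L^p}$ rather than $L^p$: Lemma~\ref{lem-Lp-bar} shows that whenever $\dim Y_L = n$, the Kodaira map $\Phi_{\ol{L^p}}$ becomes birational for large $p$, even when $\Phi_{L^p}$ never does. One then combines $[\ol{L^p},\dots,\ol{L^p}] = [L^p,\dots,L^p] = p^n[L,\dots,L]$ (invariance under completion plus multi-linearity) with the Hilbert-theoretic computation of $\deg Y_{\ol{L^p}}$ via Theorem~\ref{th-5.16} applied to the algebra $\ol{A_L}$. This same lemma is also what justifies $\ind(\ol{A_L}) = 1$, which you assert in your final paragraph but do not argue; without it the volume normalization does not match. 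So the passage to integral closure is not a cosmetic device for making the formula cleaner --- it is exactly what bridges the gap between ``$\dim Y_L = n$'' and ``$\Phi_{L^k}$ birational'' that your argument needs but does not supply.
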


The Kushnirenko theorem is a special case of the formula (\ref{equ-*}).
%(in \cite{Askold-Hilbert-poly}, the Kushnirenko theorem was proved in a similar way).
The Newton polytope of the product of two Laurent polynomials is equal to
the sum of the corresponding Newton polytopes. This additivity property  of
the Newton polytope and multi-additivity of the intersection index in
$\K$ give the Bernstein theorem as a corollary of the Kushnirenko theorem.

{Both} of the bodies $\Delta(A_L)$ and
$\Delta(\overline{A_L})$ satisfy superadditivity property, that is,
{\it the convex body associated to the product of two subspaces, contains the sum of
the convex bodies corresponding to the subspaces.}

The formula (\ref{equ-*}) and the superadditivity
of the Newton-Okounkov body $\Delta(\ol{A_L})$ together with the
classical Brunn-Minkowski inequality for convex bodies, then imply an analogous
inequality for the self-intersection index:
\begin{THM}
Let $L_1, L_2 \in \K$ and put $L_3=L_1L_2$. We have:
$$[L_1,\dots,L_1]^{1/n} + [L_2,\dots,L_2]^{1/n} \leq [L_3,\dots, L_3]^{1/n}.$$
\end{THM}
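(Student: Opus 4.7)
The plan is to chain together three ingredients: Theorem \ref{intro-THM-Kus} to convert each self-intersection index to a volume, the superadditivity of $L \mapsto \Delta(\ol{A_L})$ announced in the preceding discussion, and the classical Brunn-Minkowski inequality for convex bodies in $\r^n$. By Theorem \ref{intro-THM-Kus} applied to each $L_i$, we have $[L_i,\dots,L_i] = n!\Vol(\Delta(\ol{A_{L_i}}))$, so the stated inequality reduces to
$$\Vol(\Delta(\ol{A_{L_1}}))^{1/n} + \Vol(\Delta(\ol{A_{L_2}}))^{1/n} \leq \Vol(\Delta(\ol{A_{L_3}}))^{1/n},$$
where $\Vol$ is the appropriately normalized $n$-dimensional volume on the slice at level $t=1$ in the ambient $\r^n \times \r$.

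Next, I would verify the superadditivity $\Delta(\ol{A_{L_1}}) + \Delta(\ol{A_{L_2}}) \subset \Delta(\ol{A_{L_3}})$ starting at the level of semigroups. Since $L_3^k = (L_1L_2)^k = L_1^k L_2^k$ and the valuation $v_t$ turns multiplication into addition, adding any level-$k$ point of $S(A_{L_1})$ to any level-$k$ point of $S(A_{L_2})$ produces a level-$k$ point of $S(A_{L_3})$. Rescaling to the slice at level $1$ and passing to the closure gives $\Delta(A_{L_1}) + \Delta(A_{L_2}) \subset \Delta(A_{L_3})$. The analogous inclusion for $\ol{A_{L_i}}$ then follows from the commutative-algebra fact that a product of two elements integral over the respective subalgebras is integral over the product subalgebra inside the common ring $\c(X)[t]$.

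Finally, the classical Brunn-Minkowski inequality $\Vol(K_1+K_2)^{1/n} \geq \Vol(K_1)^{1/n} + \Vol(K_2)^{1/n}$ applied to $K_i = \Delta(\ol{A_{L_i}})$ for $i=1,2$, combined with monotonicity of volume under the inclusion established in the previous paragraph, yields
$$\Vol(\Delta(\ol{A_{L_1}}))^{1/n} + \Vol(\Delta(\ol{A_{L_2}}))^{1/n} \leq \Vol(\Delta(\ol{A_{L_1}}) + \Delta(\ol{A_{L_2}}))^{1/n} \leq \Vol(\Delta(\ol{A_{L_3}}))^{1/n},$$
which, together with the Ku\v{s}nirenko reduction, finishes the proof.

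The main obstacle will be the superadditivity step for the integral closure: the inclusion $A_{L_1} \cdot A_{L_2} \subset A_{L_3}$ is formal, but transferring it to $\ol{A_{L_1}} \cdot \ol{A_{L_2}} \subset \ol{A_{L_3}}$ requires a careful application of the product-of-integrals-is-integral principle in $\c(X)[t]$. A degenerate case also deserves attention: if some $\Delta(\ol{A_{L_i}})$ has dimension less than $n$, then $[L_i,\dots,L_i] = 0$ by Theorem \ref{intro-THM-Kus}, and the inequality reduces to the monotonicity $\Vol(\Delta(\ol{A_{L_j}})) \leq \Vol(\Delta(\ol{A_{L_3}}))$ for the other index $j$, which still follows from the same inclusion of bodies.
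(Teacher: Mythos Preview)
Your proposal is correct and follows essentially the same route as the paper: reduce to volumes via the generalized Ku\v{s}nirenko formula, use superadditivity of $L\mapsto\Delta(\ol{A_L})$ (established from $\ol{L_1^k}\,\ol{L_2^k}\subset\ol{L_3^k}$ via the product-of-integrals principle), and apply the classical Brunn--Minkowski inequality. One notational caution: the bodies $\Delta(\ol{A_{L_i}})$ live at level $1$ in $\r^n\times\r$, so their ordinary Minkowski sum lands at level $2$; the paper handles this by projecting to level $0$ (writing $\Delta_0$) or equivalently using the levelwise sum $\oplus_t$, and you should make the same adjustment when you write $\Delta(\ol{A_{L_1}})+\Delta(\ol{A_{L_2}})\subset\Delta(\ol{A_{L_3}})$.
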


For an algebraic surface $X$, i.e. for $n=2$, this inequality is equivalent
to the following analogue of the Hodge inequality (from the Hodge index
theorem):
\begin{equation} \label{equ-**}
[L_1,L_1] [L_2,L_2] \leq [L_1,L_2]^2.
\end{equation}

The Hodge index theorem holds for smooth irreducible projective
(or compact Kaehler) surfaces. Our inequality (\ref{equ-**}) holds for any
irreducible surface, not necessarily smooth or complete, and hence is easier to apply.
In contrast to the usual proofs of the Hodge inequality,
our proof of the inequality (\ref{equ-**}) is completely elementary.

Using properties of the intersection index in $\K$
and using the inequality (\ref{equ-**})
{one can easily prove the algebraic analogue of
Alexandrov-Fenchel inequality (see Theorem \ref{th-Alex-Fenchel-alg}).
The classical Alexandrov-Fenchel inequality (and its many corollaries) in convex geometry follow
easily from its algebraic analogue via the Bernstein-Kushnirenko theorem}.
These inequalities from intersection theory
and their application to deduce the corresponding inequalities
in convex geometry, have been known (see \cite{Askold-BZ}, \cite{Teissier}).
A contribution of the present paper is an elementary proof
of the key inequality (\ref{equ-**}) which makes all the chain of arguments involved elementary
and more natural.

This paper stems from an attempt to understand the right definition of the Newton polytope for
actions of reductive groups on varieties. Unexpectedly, we found that one can define many convex bodies (i.e.
Newton-Okounkov bodies) analogous to the Newton polytope and their definition, in general,
is not related with the group action.
%We explore these convex bodies in this paper.
It is unlikely that one can completely understand the shape of a Newton-Okounkov body in
the general situation (see \cite{Kuronya} for some results in this direction).
{In \cite{Askold-Kiumars-reductive}, we return to reductive group actions and
consider the Newton-Okounkov bodies associated to invariant subspaces of rational functions on spherical varieties
and constructed via special valuations (see also \cite{Askold-Kiumars-Kaz} and \cite{Askold-Kiumars-horo}).}
The Newton-Okounkov bodies in such cases can be described (in particular they are convex polytopes)
and the results of the present paper become more concrete.\\

\noindent{\bf Acknowledgement:} We would like to thank the referee for careful reading of the
manuscript and giving numerous helpful suggestions.

\section{Part I: Semigroups of integral points}
In this part we develop a geometric approach to the semigroups of integral
points in $\r^n$. The origin of this approach goes back to the paper \cite{Askold-Hilbert-poly}.
We show that a semigroup of integral points is sufficiently close to the semigroup of all points
in a sublattice and lying in a convex cone in $\r^n$. We then introduce the notion of Newton-Okounkov body
for a semigroup, which is responsible for the asymptotic
of the number of points of the semigroup in a given (co)direction. Finally, we prove a theorem which compares the
asymptotic of a semigroup and that of its subsemigroups. We regard this as an abstract version of
the Fujita approximation theorem in the theory of divisors. Later in the paper, the results of this part
will be applied to graded algebras and to intersection theory.

\subsection{Semigroups of integral points and their regularizations} \label{subsec-semigp-regularization}
Let $S$ be an additive semigroup in the lattice $\z^n \subset \r^n$.
In this section we will define the {\it regularization of $S$}, a simpler
semigroup with more points constructed out of the semigroup $S$.
The main result is the approximation theorem (Theorem \ref{th-semi-gp-approximation})
which states that the regularization of $S$ asymptotically approximates $S$.
Exact definitions and statement will be given below.

To a semigroup $S$ we associate the following basic objects:
\begin{Def}
\begin{enumerate}
\item The {\it subspace generated by the semigroup $S$} is the real span
$L(S) \subset \r^n$ of the semigroup $S$.
By definition the linear space $L(S)$ is spanned by integral vectors and thus
{\it the rank of the lattice $L(S) \cap \z^n$ is equal to $\dim L(S)$}.

\item The {\it cone generated by the semigroup $S$} is the closed convex cone $\Con(S) \subset L(S)$
which is the closure of the set of all linear combinations $\sum_i \lambda_ia_i$ for
$a_i \in S$ and $\lambda_i \geq 0$.

\item The {\it group generated by the semigroup $S$} is the group $G(S) \subset L(S)$ generated by all the
elements in the semigroup $S$. The group $G(S)$ consists of all the linear combinations
$\sum_i k_ia_i$ where $a_i \in S$ and $k_i \in \z$.
\end{enumerate}
\end{Def}

\begin{Def}
The {\it regularization of a semigroup $S$} is the semigroup
$$\Reg(S)= G (S)\cap \Con(S).$$ Clearly the semigroup $S$ is contained in its
regularization.
%2) Let $L$ be a linear subspace of $\r^n$ spanned by integral vectors.
%Let $C$ be a closed convex cone in $L$ with maximum dimension, i.e.
%$\dim(C) = \dim(L)$.
%Let $T$ be a subgroup of finite index in the lattice $L \cap \z^n$.
%The semigroup $C \cap T$ will be called {\it the semigroup of type $(C, T)$}.
\end{Def}

%The following {\bf statements are} obvious.
%{\bf Combined next theorem into the proposition and deleted the proof}
%\begin{Prop}
%1) The regularization of a semigroup $S \subset \z^n$ of type
%$(C, T)$ coincides with $S$ itself. 2) The regularization of any semigroup
%$S \subset \z^n$ is a semigroup of type $(C, T)$ where $C = \Con(S)$ and
%$T = G(S)$.
%3) Any semigroup is contained in its regularization.
%\end{Prop}

The {\it ridge} of a closed convex cone with apex at the origin is the biggest linear
subspace contained in the cone. A cone is called {\it strictly convex} if its ridge
contains only the origin. The {\it ridge} $L_0(S)$ of a semigroup $S$ is the ridge of the
cone $\Con(S)$.

First we consider the case of finitely generated semigroups.
The following statement is obvious:
\begin{Prop}
Let $A \subset \z^n$ be a finite set generating a semigroup $S$, and let
$\Delta(A)$ be the convex hull of $A$. Then:
1) The space $L(S)$ is the smallest subspace containing the polytope $\Delta(A)$.
2) The cone $\Con(S)$ is the cone with the apex at the origin over the polytope $\Delta(A)$.
3) If the origin $O$ belongs to $\Delta(A)$ then the ridge
$L_0(S)$ is the space generated by the smallest
face of the polytope $\Delta(A)$ containing $O$, otherwise $L_0(S) = \{O\}$.
\end{Prop}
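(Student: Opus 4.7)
The plan is to work directly from the definitions, using the finiteness of $A$ and the basic polytope fact that each point of $\Delta(A)$ lies in the relative interior of a unique face. Throughout, I write $K = \{tx : t \geq 0,\ x \in \Delta(A)\}$ for the cone over $\Delta(A)$; via the rescaling $\sum_i \lambda_i a_i = \bigl(\sum_j \lambda_j\bigr)\sum_i \bigl(\lambda_i/\sum_j \lambda_j\bigr) a_i$, this cone coincides with the conic hull $\{\sum_i \lambda_i a_i : \lambda_i \geq 0,\ a_i \in A\}$, and in particular is closed because $A$ is finite.

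For part (1), I note that $A \subseteq S$ (taking one-term sums), so $\span_\r(A) \subseteq L(S)$, while each $s \in S$ is a $\z_{\geq 0}$-combination of elements of $A$ and thus lies in $\span_\r(A)$; hence $L(S) = \span_\r(A)$. Since convex combinations are linear combinations, the smallest linear subspace containing the polytope $\Delta(A)$ is also $\span_\r(A)$. For part (2), the inclusion $K \subseteq \Con(S)$ follows from $A \subseteq S \subseteq \Con(S)$ together with the fact that $\Con(S)$ is a closed convex cone, and the reverse inclusion follows because every element of $S$ is a nonnegative integer combination of elements of $A$, hence already lies in $K$.

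For part (3), I split into two cases. If $O \notin \Delta(A)$, strictly separate $O$ from the compact convex set $\Delta(A)$ by a linear functional $\ell$ on $\r^n$ with $\ell \geq c > 0$ on $\Delta(A)$; by part (2) this yields $\ell > 0$ on $\Con(S) \setminus \{O\}$, so $\Con(S)$ contains no nonzero line and $L_0(S) = \{O\}$. If $O \in \Delta(A)$, let $F$ be the smallest face of $\Delta(A)$ containing $O$, so that $O$ lies in the relative interior of $F$ and $\span(F)$ coincides with the affine hull of $F$. For $v \in \span(F)$, a small $\pm \epsilon v$ lies in $F \subseteq \Delta(A) \subseteq \Con(S)$, proving $\span(F) \subseteq L_0(S)$. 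Conversely, for $v \in L_0(S)$, write $v = tx$ and $-v = sy$ with $x, y \in \Delta(A)$ and $t, s \geq 0$; if $v \neq O$ then $O$ is a positive-weight convex combination of $x$ and $y$, so by extremality of $F$ both $x$ and $y$ lie in $F$, whence $v = tx \in \span(F)$.

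The only subtlety is correctly invoking the two standard polytope facts: the smallest face containing $O$ contains it in its relative interior, and a face is extreme in the sense that a positive-weight convex combination lying in a face forces the summands to lie in the face. Granted these, the remainder is bookkeeping, which is presumably why the authors call the proposition obvious.
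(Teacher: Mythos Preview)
Your proof is correct. The paper simply declares the proposition ``obvious'' and gives no argument, so there is nothing to compare against; your write-up is a clean and complete unpacking of the three parts, with the only nontrivial point---the identification of the ridge with $\span(F)$ via the extremality of faces---handled properly.
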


{The following statement is well-known in toric geometry (conductor ideal). For the sake of completeness
we give a proof here.}
\begin{Th} \label{th-main-finite-gen}
Let $S \subset \z^n$ be a finitely generated semigroup. Then there is an element $g_0 \in S$
such that $\Reg(S) + g_0 \subset S$, i.e. for any element $g \in \Reg(S)$ we have
$g + g_0 \in S$.
\end{Th}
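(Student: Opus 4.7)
The plan is to reduce to a finite problem via a fundamental-parallelepiped argument (in the style of Gordan's lemma). Let $a_1,\ldots,a_m \in \z^n$ be semigroup generators of $S$, so that $\Con(S) = \r_{\geq 0}\langle a_1,\ldots,a_m\rangle$ (a finitely generated convex cone is automatically closed) and $G(S) = \z\langle a_1,\ldots,a_m\rangle$. Consider the half-open fundamental parallelepiped
\begin{equation*}
P = \left\{\sum_{i=1}^m s_i a_i \,:\, 0 \leq s_i < 1\right\} \subset \r^n.
\end{equation*}
Since $P$ is bounded, $P \cap G(S) \subset P \cap \z^n$ is a finite set; denote its elements by $f_1,\ldots,f_r$.

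The central step is the claim that every $g \in \Reg(S) = G(S) \cap \Con(S)$ can be written as $g = s + f_j$ for some $s$ equal to a non-negative integer combination of the $a_i$ (in particular $s \in S \cup \{0\}$) and some index $j$. To see this, use $g \in \Con(S)$ to write $g = \sum t_i a_i$ with real $t_i \geq 0$, split $t_i = n_i + u_i$ with $n_i = \lfloor t_i \rfloor$ and $u_i \in [0,1)$, and set $s = \sum n_i a_i$ and $f = \sum u_i a_i = g - s$. Then $f \in P$ and $f \in \z^n$ (since $g,s \in \z^n$); crucially, $f = g - s$ lies in $G(S)$ because $g \in G(S)$ by hypothesis and $s \in G(S)$ by construction, so $f \in P \cap G(S) = \{f_1,\ldots,f_r\}$. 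It is at this point that the hypothesis $g \in G(S)$ (rather than merely $g \in \z^n$) gets used --- this is exactly why the conclusion holds for the regularization $\Reg(S)$ but would fail in general for the larger saturation $\z^n \cap \Con(S)$.

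To produce the uniform translate $g_0$, for each $j$ fix an integer expansion $f_j = \sum_i k_{ji} a_i$ (possible since $f_j \in G(S)$) and split it into positive and negative parts $f_j = p_j - q_j$, so that $p_j$ and $q_j$ are both non-negative integer combinations of the $a_i$, in particular lying in $S \cup \{0\}$. Define
\begin{equation*}
g_0 = a_1 + \sum_{j=1}^r q_j,
\end{equation*}
which is visibly a non-negative integer combination of $a_1,\ldots,a_m$ with positive coefficient on $a_1$, hence an element of $S$. Given any $g \in \Reg(S)$, the decomposition $g = s + f_j$ above yields
\begin{equation*}
g + g_0 = s + (f_j + q_j) + \sum_{j' \neq j} q_{j'} + a_1 = s + p_j + \sum_{j' \neq j} q_{j'} + a_1 \in S,
\end{equation*}
again a non-negative combination of the $a_i$ with positive coefficient on $a_1$. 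I do not foresee any serious obstacle beyond bookkeeping; the one delicate point is the one flagged above regarding $G(S)$ versus $\z^n$.
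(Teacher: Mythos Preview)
Your proof is correct and follows essentially the same fundamental-parallelepiped argument as the paper: decompose $g \in \Reg(S)$ into an integer-part $s$ plus a remainder in the finite set $P \cap G(S)$, then choose $g_0$ large enough to absorb the negative coefficients in fixed integer expansions of those finitely many remainders. The only cosmetic difference is the specific choice of $g_0$ (the paper sets $g_0 = \sum_i (1 - \min_{q} k_i(q))\,a_i$, taking coordinatewise minima rather than your sum $a_1 + \sum_j q_j$), which yields a slightly smaller translate but plays the identical role.
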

\begin{proof}
Let $A$ be a finite set generating $S$ and
let $P\subset \r^n$ be the set of vectors $x$ which can be represented in the form
$x=\sum \lambda_i a_i$, where $0\leq \lambda_i<1$ and $a_i\in A$.
The set $P$ is bounded and hence $Q=P \cap G(S)$ is finite.
For each $q\in Q$ fix a representation of $q$ in the form $q=\sum
k_i(q)a_i$, where $k_i(q)\in \z$ and $a_i\in A$. Let $g_0
=\sum_{a_i\in A} m_i a_i$, with $m_i= 1 -\min_{q\in Q} \{k_i(q)\}$.
Each vector $g\in \Reg(S)\subset \Con(S)$ can be represented in the form
$g=\sum \lambda_ia_i$, where$\lambda_i\geq 0$ and $a_i\in A$.  Let
$g=x+y$, with $x= \sum [\lambda_i] a_i$ and $y=\sum
(\lambda_i-[\lambda_i])a_i$. Clearly $x\in S \cup \{0\}$ and $y\in
P$. Let's verify that $g+g_0\in S$. In fact $g+g_0=x +
(y+g_0)$. Because $g\in \Reg(S)$, we have $y\in Q$. Now $y+g_0=\sum
k_i(y)a_i+\sum m_ia_i= \sum (k_i(y)+m_i)a_i$. By definition
$k_i(y)+m_i\geq 1$ and so $(y+g_0)\in S$. Thus $g+g_0=x +(y+g_0)\in S$.
This finishes the proof.
\end{proof}

%A statement very close to Theorem \ref{th-main-finite-gen} can be found in
%\cite{Askold-Hilbert-poly}.

Fix any Euclidean metric in $L(S)$.
\begin{Cor}
Under the assumptions of Theorem \ref{th-main-finite-gen}, there is a constant $N>0$
such that any point in $G(S) \cap \Con(S)$ whose distance to the boundary of $\Con(S)$ (as a subset of
the topological space $L(S)$) is bigger than or equal to
$N$, is in $S$.
\end{Cor}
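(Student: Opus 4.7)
The plan is to reduce the corollary to Theorem \ref{th-main-finite-gen} by a simple translation argument. By that theorem, there exists an element $g_0 \in S$ such that $\Reg(S) + g_0 \subset S$. I will choose the constant $N$ to be the Euclidean norm $|g_0|$ (measured in the fixed metric on $L(S)$), or any larger number, and then show directly that every point of $G(S)$ lying at distance at least $N$ from $\partial \Con(S)$ must already belong to $S$.

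The key observation is the following. Take any $p \in G(S)$ with $d(p, \partial \Con(S)) \geq N$, where the distance is taken inside the ambient space $L(S)$. Since $g_0 \in S \subset G(S)$, the point $p - g_0$ is automatically in the group $G(S)$. Moreover, since $p - g_0$ differs from $p$ by a vector of length $|g_0| \leq N$, and the open ball of radius $N$ around $p$ inside $L(S)$ is contained in $\Con(S)$ (by the distance assumption and the fact that $\Con(S)$ is closed), the translate $p - g_0$ still lies in $\Con(S)$. Therefore
\[
p - g_0 \in G(S) \cap \Con(S) = \Reg(S).
\]
Applying Theorem \ref{th-main-finite-gen} now gives $p = (p-g_0) + g_0 \in \Reg(S) + g_0 \subset S$, as desired.

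There is essentially no obstacle here; the whole content is already in Theorem \ref{th-main-finite-gen}. The only minor point to check is that the distance is measured with respect to the Euclidean structure fixed on $L(S)$ rather than on all of $\r^n$, so the translate $p-g_0$ indeed stays in $L(S)$ (which is automatic, since both $p$ and $g_0$ lie in $L(S)$). Consequently the proof will amount to introducing $g_0$, setting $N = |g_0|$, and assembling the two lines above.
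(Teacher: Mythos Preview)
Your proof is correct and is exactly the argument the paper has in mind: set $N=|g_0|$ and observe that $p-g_0\in G(S)\cap\Con(S)=\Reg(S)$, whence $p\in S$. The paper's own proof is the one-line remark ``it is enough to take $N$ to be the length of the vector $g_0$,'' which you have simply unpacked.
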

\begin{proof}
It is enough to take $N$ to be the length of the vector $g_0$ from Theorem \ref{th-main-finite-gen}.
\end{proof}

Now we consider the case where the semigroup $S$ is not necessarily finitely
generated. Let $S \subset \z^n$ be a semigroup and let $\Con$ be {a closed strongly convex cone}
inside $\Con(S)$ which intersects the boundary of $\Con(S)$ (as a subset of $L(S)$)
only at the origin. We then have:

\begin{Th}[Approximation of a semigroup by its regularization] \label{th-semi-gp-approximation}
There is a constant $N>0$ (depending on the choice of $\Con \subset \Con(S)$)
such that each point in the group $G(S)$
which lies in $\Con$ and whose distance from the origin is bigger than
$N$ belongs to $S$.
\end{Th}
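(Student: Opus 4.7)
\medskip

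\noindent\textbf{Proof proposal.}
The plan is to reduce to the finitely generated case (Theorem~\ref{th-main-finite-gen}). Since $\Con$ is strictly convex and meets $\partial \Con(S)$ only at the origin, $\Con \setminus \{0\}$ lies in the relative interior of $\Con(S)$ inside $L(S)$. First I would enlarge $\Con$ slightly: choose a closed strictly convex cone $\Con'$ with $\Con \subset \Con' \subset \Con(S)$, still meeting $\partial \Con(S)$ only at the origin, and with $\Con$ contained in the relative interior of $\Con'$ except at $0$. By compactness of a unit slice of $\Con$, there is a constant $c > 0$ such that for every $g \in \Con$ the ball of radius $c|g|$ about $g$ in $L(S)$ is contained in $\Con'$. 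This angular separation is what will allow bounded perturbations of large elements of $\Con$ to stay inside $\Con'$.

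Next I would construct a finitely generated subsemigroup $S' \subset S$ with $\Con' \subset \Con(S')$ and $G(S') = G(S)$. For the cone inclusion, take a compact affine slice $K$ of $\Con'$ that avoids the origin. Each point of $K$ lies in $\mathrm{relint}(\Con(S))$, which coincides with the relative interior of the (non-closed) convex cone $C_0$ generated by $S$; hence each $x \in K$ admits a representation $x = \sum_i \lambda_i s_i$ with $\lambda_i > 0$, $s_i \in S$, where the $s_i$ positively span $L(S)$, and a small open neighborhood of $x$ in $L(S)$ lies in the non-negative cone generated by those $s_i$. A finite subcover of $K$ then yields finitely many $a_1,\dots,a_m \in S$ whose non-negative combinations contain $K$, and therefore contain $\Con'$. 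For the group equality, $G(S)$ is a subgroup of $\z^n$, hence a finitely generated abelian group; adjoining to $\{a_1,\dots,a_m\}$ finitely many further elements of $S$ whose integer combinations realize generators of $G(S)$ produces a finite set $A \subset S$ whose generated subsemigroup $S'$ has both desired properties.

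Applying Theorem~\ref{th-main-finite-gen} to $S'$ gives an element $g_0 \in S'$ with $\Reg(S') + g_0 \subset S'$. Set $N := |g_0|/c$. For any $g \in G(S) \cap \Con$ with $|g| > N$, we have $g - g_0 \in G(S') = G(S)$; moreover the choice of $c$ forces $g - g_0 \in \Con' \subset \Con(S')$, so $g - g_0 \in \Reg(S')$. Hence $g = (g-g_0) + g_0 \in \Reg(S') + g_0 \subset S' \subset S$, which is exactly what we want. I expect the main technical obstacle to lie in the second paragraph, specifically in verifying that $\Con'$ can be engulfed by a finitely generated rational subcone of $\Con(S)$ whose generators lie in $S$; the rest is either elementary geometry or a direct appeal to the finitely generated case already proven.
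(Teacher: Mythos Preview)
Your argument is correct and follows essentially the same strategy as the paper: construct a finitely generated subsemigroup $S' \subset S$ with $G(S') = G(S)$ and $\Con(S')$ strictly containing $\Con$, apply Theorem~\ref{th-main-finite-gen} to obtain $g_0$, and then use the angular separation between $\Con$ and $\partial\Con(S')$ to conclude that $g - g_0 \in \Reg(S')$ for all sufficiently large $g \in G(S) \cap \Con$. The only cosmetic difference is that the paper builds $S'$ by enumerating $S$ and taking a long enough initial segment (so that the increasing slices $\Delta_\ell(\Con(S_i))$ eventually swallow the compact slice of $\Con$), whereas you build it directly via a compactness argument on a slice of an explicitly chosen intermediate cone $\Con'$; both routes yield the same finitely generated object and the remaining geometry is identical.
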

{We will need a simple lemma:}
\begin{Lem} \label{lem-in-approx-th}
Let $C' \subset C \subset \r^n$ be closed convex cones with apex at the origin. Moreover,
assume that the boundaries of $C$ and $C'$ (in the
topologies of their linear spans) intersect only at the origin. Take $x_0 \in \r^n$. Then the shifted cone $x_0 +C'$
contains all the points in $C$ which are far enough from the origin.
\end{Lem}
\begin{proof}
{
Consider $B' = \{x \in C' \mid |x| = 1\}$. Then, as the boundaries of $C$ and $C'$ intersect only at the origin,
$B'$ is a compact subset of $C$ which lies in the
interior of $C$. Thus there exist $R > 0$ such that for any $r > R$ we have $(x_0/r) + B' \subset C$. But since $C$ is a cone
we conclude that $x_0 + rB' \subset C$ which proves the claim.
}
\end{proof}

\begin{proof}[Proof of Theorem \ref{th-semi-gp-approximation}]
Fix a Euclidean metric in $L(S)$ and equip $L(S)$ with the corresponding topology.
We will only deal with $L(S)$ and the ambient space $\r^n$ will not be used in the proof below.
Let us enumerate the points in the semigroup $S$ and let $A_i$ be the collection of the first $i$ elements of
$S$. Denote by $S_i$ the semigroup generated by $A_i$.
There is $i_0>0$ such that for $i > i_0$ the set $A_i$ contains a set of generators for the group $G(S)$.
If $i>i_0$ then the group $G(S_i)$ generated by the semigroup $S_i$ coincides with $G(S)$, and the
space $L(S_i)$ coincides with $L(S)$.

Fix any linear function $\ell: L(S) \to \r$ which is strictly positive
on $\Con \setminus \{0\}$. Let $\Delta_\ell(\Con(S))$ and $\Delta_\ell(\Con)$
be the closed convex sets obtained by intersecting $\Con(S)$ and
$\Con$ by the hyperplane $\ell=1$ respectively. By definition $\Delta_\ell(\Con)$ is bounded
and is strictly inside $\Delta_\ell(\Con(S))$.

The convex sets $\Delta_\ell(\Con(S_i))$, obtained by intersecting
$\Con(S_i)$ with the hyperplane $\ell=1$, form an increasing sequence of
closed convex sets in this hyperplane. The closure of the union of the
sets $\Delta_\ell(\Con(S_i))$ is, by construction, the convex set $\Delta_\ell(\Con(S))$.
So there is an integer $i_1$ such that for $i>i_1$ the set
$\Delta_\ell(\Con)$ is strictly inside $\Delta_\ell(\Con(S_i))$.
Take any integer $j$ bigger than $i_0$ and $i_1$. By Theorem \ref{th-main-finite-gen}, for
the finitely generated semigroup $S_j$ there is a vector $g_0$
such that any point in $G(S)\cap (g_0+ \Con(S_j))$ belongs to $S$.
The convex cone $\Con$ is contained in $\Con(S_j)$ and their boundaries intersect only
at the origin. {Now by Lemma \ref{lem-in-approx-th}
the shifted cone $g_0 + \Con(S_j)$ contains all the points of
$\Con$ which are far enough from the origin.}
%In fact, if $\Delta_{\ell, k}$,
%$\Delta'_{\ell, k}$ and $\Delta''_{\ell, k}$ denote the intersections of
%$\Con(S_j)$, $g_0 + \Con(S_j)$ and $\Con$ with the hyperplane $\ell = k$ respectively, then as
%$k \to \infty$, the distance between the boundaries of $\Delta_{\ell, k}$ and $\Delta''_{\ell, k}$
%grows linearly with $k$ while the distance between the boundaries of $\Delta_{\ell, k}$ and
%$\Delta'_{\ell,k}$ remains constant. This shows that for large $k$,
%$\Delta'_{\ell, k}$ contains $\Delta''_{\ell, k}$.
This finishes the proof of the theorem.
\end{proof}

\begin{Ex} \label{ex-concave-graph}
In $\r^2$ with coordinates $x$ and $y$, consider the domain $U$ defined by the inequality
$y \geq F(x)$ where $F$ is an even function, i.e. $F(x)=F(-x)$, such that $F(0)=0$
and $F$ is concave and increasing on the ray $x \geq 0$. The set $S = U \cap \z^2$ is a
semigroup. The group $G(S)$ associated to this semigroup is $\z^2$. The cone $\Con(S)$
is given by the inequality $y \geq c|x|$ where $c = \lim_{x \to \infty} F(x)/x$ and the
regularization $\Reg(S)$ is $\Con(S) \cap \z^2$. In particular, if $F(x)= |x|^{\alpha}$ where
$0 < \alpha < 1$, then $\Con(S)$ is the half-plane $y \geq 0$ and $\Reg(S)$ is the set of integral points
in this half-plane. Here the distance from the point $(x,0) \in \Con(S)$ to the semigroup
$S$ goes to infinity as $x$ goes to infinity.
\end{Ex}

%\subsection{Admissible pairs, their Hilbert function and Newton sets}
%In this section we deal with a semigroup of integral points in $\z^n$
%and a half-space in $\r^n$ containing it. Among such pairs of semigroups and
%half-spaces we define an {\it admissible pairs}. For admissible pairs we define
%its {\it Newton convex body} and {\it Hilbert function}. We will prove the main properties
%of the Hilbert function.

\subsection{Rational half-spaces and admissible pairs}
In this section we discuss admissible pairs consisting of a semigroup and a half-space.
We define the Newton-Okounkov body and the Hilbert function for an admissible pair.

Let $L$ be a linear subspace in $\r^n$ and $M$ a half-space in $L$ with boundary $\partial M$.
A half-space $M \subset L$ is {\it rational} if the subspaces $L$ and $\partial M$ {can be spanned by integral
vectors, i.e. are rational subspaces}.

With a rational half-space $M \subset L$ one can associate
$\partial M_\z= \partial M \cap \z^n$ and $L_\z =L \cap \z^n$.
Take the linear map $\pi_M: L \to \r$ such that $\ker(\pi_M) = \partial M$, $\pi_M(L_\z) = \z$ and
$\pi_M(M \cap \z^n) = \z_{\geq 0}$, the set of all non-negative integers.
The linear map $\pi_M$ induces an isomorphism from $L_\z/\partial M_\z$ to $\z$.

Now we define an admissible pair of a semigroup and
a half-space.
\begin{Def}
A pair $(S, M)$ where $S$ is a semigroup in $\z^n$ and $M$ a rational
half-space in $L(S)$ is called {\it admissible} if $S \subset M$. We call
an admissible pair $(S, M)$ {\it strongly admissible} if the cone
$\Con(S)$ is strictly convex and intersects the space $\partial M$ only at the
origin.
\end{Def}

With an admissible pair $(S, M)$ we associate the following objects:
\begin{itemize}
\item[-] $\ind(S,\partial M)$, the index of the subgroup $G(S)\cap \partial M$
in the group $\partial M_{\z}$.

\item[-] $\ind(S,M)$, the index of the subgroup $\pi_M (G(S))$ in the group $\z$.
(We will usually denote $\ind(S,M)$ by the letter $m$.)

\item[-] $S_k$, the subset $S\cap \pi^{-1}_M(k)$ of the points of $S$ at level $k$.
\end{itemize}

\begin{Def}
The {\it Newton-Okounkov convex set} $\Delta(S, M)$ of an admissible pair
$(S,M)$, is the convex set $\Delta(S,M)= \Con(S)\cap \pi^{-1}_M(m)$,
where $m=\ind(S,M)$. It follows from the definition that the convex set $\Delta(S, M)$ is compact
({i.e. is a convex body})
if and only if the pair $(S, M)$ is strongly admissible.
In this case we call $\Delta(S, M)$ the {\it Newton-Okounkov body} of $(S, M)$.
\end{Def}

We now define the Hilbert function of an admissible pair $(S, M)$. It is convenient to define it
in the following general situation. Let $T$ be a
commutative semigroup and $\pi: T \to \z_{\geq 0}$ a homomorphism of semigroups.

\begin{Def} 1) The {\it Hilbert function} $H$ of $(T, \pi)$ is
the function $H: \z_{\geq 0} \to \z_{\geq 0} \cup \{\infty\}$,
defined by $H(k) = \# \pi^{-1}(k)$. The {\it support} $\supp(H)$ of the Hilbert function
is the set of $k \in \z_{\geq 0}$ at which $H(k)\neq 0$.
2) The Hilbert function of an admissible pair $(S, M)$ is the Hilbert function of
the semigroup $S$ and the homomorphism $\pi_M: S \to \z_{\geq 0}$. That is,
$H(k) = \#S_k$, for any $k \in \z_{\geq 0}$.
\end{Def}

The following is easy to verify.
\begin{Prop} \label{prop-4.1} %cor-4.2
Let $T$ and $\pi$ be as above.
1) The support $\supp(H)$ of the Hilbert function $H$
is a semigroup in $\z_{\geq 0}$.
2) If the semigroup $T$ has the cancellation property then the set
$H^{-1}(\infty)$ is an ideal in the semigroup $\supp(H)$,
i.e. if $x\in H^{-1}(\infty)$ and $y\in \supp(H)$ then $x+y \in H^{-1}(\infty)$.
3) Let $m$ be the index of the subgroup generated by $\supp(H) \subset \z_{\geq 0}$ in $\z$.
Then $\supp(H)$ is contained in $m\z$ and there is a constant $N_1$ such that for $mk
> N_1$ we have $mk \in \supp(H)$.
4) If the semigroup $T$ has the cancellation property and $H^{-1}(\infty) \neq \emptyset$, then there is $N_2$ such that
for $mk>N_2$ we have $H(mk)=\infty$.
\end{Prop}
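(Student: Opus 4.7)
The plan is to take the four claims in order, since each either is immediate or builds on the previous ones.

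For (1), note that $\supp(H)$ is precisely the image $\pi(T) \subset \z_{\geq 0}$. Since $\pi$ is a homomorphism of semigroups, this image is closed under addition, giving a sub-semigroup of $\z_{\geq 0}$. For (2), suppose $x \in H^{-1}(\infty)$, so there are distinct elements $t_1, t_2, \ldots \in T$ all with $\pi(t_i)=x$. Pick any $s \in T$ with $\pi(s)=y$. The elements $t_i + s$ all lie in $\pi^{-1}(x+y)$, and the cancellation property in $T$ ensures they are pairwise distinct. Hence $H(x+y) = \infty$.

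For (3), let $\Sigma = \supp(H) \subset \z_{\geq 0}$ and let $m$ be the index in $\z$ of the subgroup it generates, so this subgroup is $m\z$. Then $\Sigma \subset m\z$. To get the eventual equality, pick finitely many $s_1, \ldots, s_r \in \Sigma$ whose integer combinations already generate $m\z$; then $\gcd(s_1/m, \ldots, s_r/m) = 1$, so by the standard Frobenius/coin-problem argument every sufficiently large non-negative integer is a non-negative integer combination of the $s_i/m$. Multiplying through by $m$, every multiple $mk$ with $mk > N_1$ is a non-negative integer combination of the $s_i$, hence lies in the sub-semigroup $\Sigma$.

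For (4), assume $T$ has cancellation and fix some $x_0 \in H^{-1}(\infty)$. By part (1) combined with part (3), $x_0 \in \supp(H) \subset m\z$, so $x_0 = m k_0$ for some $k_0 \geq 0$. By part (3) there is $N_1$ such that $mj \in \supp(H)$ for all $mj > N_1$. Then for any $k$ with $m(k-k_0) > N_1$, we have $m(k-k_0) \in \supp(H)$, and by part (2) applied to $x = x_0$ and $y = m(k-k_0)$, we get $mk = x_0 + m(k-k_0) \in H^{-1}(\infty)$. Setting $N_2 = N_1 + x_0$ gives the desired conclusion.

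The only real content is the Frobenius-style step in (3); the other three parts are immediate manipulations with homomorphisms, cancellation, and the ideal structure. I expect (3) to be the main (and still mild) obstacle since it requires invoking the classical fact that a sub-semigroup of $\z_{\geq 0}$ with group-span $m\z$ eventually contains every multiple of $m$.
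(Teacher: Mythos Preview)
Your proof is correct and matches the paper's approach almost exactly: the paper also declares (1) and (2) obvious and derives (4) from (2) and (3). The only difference is in (3): you give a direct numerical-semigroup (Frobenius/coin-problem) argument, whereas the paper simply invokes its general approximation theorem (Theorem~\ref{th-semi-gp-approximation}) applied to the one-dimensional semigroup $\supp(H)\subset\z$. In dimension one that theorem \emph{is} the Frobenius statement, so the two arguments coincide; yours is just spelled out rather than cited.
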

\begin{proof}
1) and 2) are obvious. 3) Follows from Theorem \ref{th-semi-gp-approximation} applied to the semigroup
$\supp(H) \subset \z$. Finally 4) Follows from 2) and 3). %and Proposition \ref{prop-4.1}(2).
\end{proof}

In particular, if the Hilbert function of an admissible pair $(S, M)$ is equal to infinity for at least one $k$,
then for sufficiently large values of $k$, Proposition \ref{prop-4.1}(4) describes this function
completely. Thus in what follows we will assume that the Hilbert function always takes finite values.

\subsection{Hilbert function and volume of the Newton-Okounkov convex set}
In this section we establish a connection between the asymptotic of
the Hilbert function of an admissible pair and its Newton-Okounkov body.

First let us define the notion of integral volume in a rational affine subspace.
%We call a linear subspace of $\r^n$ {\it rational} if it is spanned by rational vectors
%(equivalently by integral vectors). An affine subspace of $\r^n$ is {\it rational}
%if it is parallel to a rational linear subspace.

\begin{Def}[Integral volume] \label{def-int-volume} Let $L \subset \r^n$ be a rational
linear subspace of dimension $q$. The {\it integral measure} in $L$ is the translation
invariant Euclidean measure in $L$ normalized such that the smallest measure of a $q$-dimensional
parallelepiped with vertices in $L \cap \z^n$ is equal to $1$.
Let $E$ be a rational affine subspace of dimension $q$ and parallel to $L$.
The {\it integral measure} on $E$ is the integral measure on $L$ shifted to $E$.
The measure of a subset $\Delta \subset E$ will be called its {\it integral volume} and denoted by $\Vol_q(\Delta)$.
\end{Def}

For the rest of the paper, unless otherwise stated, $\Vol_q$ refers to the integral volume.

Now let $(S, M)$ be an admissible pair with $m = \ind(S, M)$. Put $q = \dim \partial M$.
We denote the integral measure in the affine space $\pi_M^{-1}(m)$ by $d\mu$.
Take a polynomial $f:\r^n \to \r $ of degree $d$ and let $f=f^{(0)}+f^{(1)}+\dots+f^{(d)}$ be its decomposition into homogeneous components.

\begin{Th} \label{th-5.2} Let $(S, M)$ be a strongly admissible pair. Then
$$\lim_{k \to \infty}\frac {\sum _{x\in S_{mk}}f(x) }{k^{q+d}} =\frac{\int_{\Delta(S,M)}f^{(d)}(x)d\mu}{\ind  (S,\partial M)}.$$ \end{Th}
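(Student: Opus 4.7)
The plan is to pass from $S$ to its regularization $\tilde{S}=\Reg(S)=G(S)\cap \Con(S)$ using the approximation theorem (Theorem~\ref{th-semi-gp-approximation}), then evaluate the resulting sum over $\tilde{S}_{mk}=G(S)\cap k\Delta(S,M)$ by a Riemann-sum argument, and finally extract the top-degree contribution using the homogeneity decomposition $f=f^{(0)}+\cdots+f^{(d)}$. Write $\Delta=\Delta(S,M)$. Because $\Con(S)$ is a cone and $\Delta=\Con(S)\cap \pi_M^{-1}(m)$, one has $\Con(S)\cap \pi_M^{-1}(mk)=k\Delta$, so $\tilde{S}_{mk}=G(S)\cap k\Delta$.

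For the reduction to $\tilde{S}$, fix $\epsilon>0$ and choose a closed strictly convex cone $\Con'\subset \Con(S)$ meeting the boundary of $\Con(S)$ (inside $L(S)$) only at the origin, with $\Delta'=\Con'\cap \pi_M^{-1}(m)$ satisfying $\Vol_q(\Delta\setminus \Delta')<\epsilon$. Theorem~\ref{th-semi-gp-approximation} yields $N>0$ so that every point of $G(S)\cap \Con'$ of norm $>N$ belongs to $S$. Consequently
\[
\tilde{S}_{mk}\setminus S_{mk}\;\subset\; \bigl(G(S)\cap (k\Delta\setminus k\Delta')\bigr)\cup \bigl(G(S)\cap B(0,N)\bigr).
\]
On $k\Delta$ one has $|f(x)|\leq C_1k^d$, while the number of lattice points of $G(S)\cap \pi_M^{-1}(mk)$ in $k\Delta\setminus k\Delta'$ is at most $C_2\epsilon k^q+O(k^{q-1})$ by a standard volume estimate (the lattice has density $1/\ind(S,\partial M)$ per unit integral volume). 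Hence
\[
\Bigl|\sum_{x\in \tilde{S}_{mk}}f(x)-\sum_{x\in S_{mk}}f(x)\Bigr|\leq C_3\,\epsilon\, k^{q+d}+o(k^{q+d}),
\]
and since $\epsilon>0$ is arbitrary, it suffices to prove the limit formula for the semigroup $\tilde{S}$.

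For $\tilde{S}_{mk}$, the fibre $G(S)\cap \pi_M^{-1}(mk)$ is an affine translate of the sublattice $G(S)\cap \partial M$, which has index $\ind(S,\partial M)$ in the full integral lattice $\partial M\cap \z^n$. Standard equidistribution of lattice points in a fixed bounded convex body dilated by $k$ gives, for any continuous $g$ on $\Delta$,
\[
\frac{1}{k^q}\sum_{x\in \tilde{S}_{mk}} g(x/k)\;\longrightarrow\;\frac{1}{\ind(S,\partial M)}\int_{\Delta}g(y)\,d\mu(y).
\]
Now $f(ky)/k^d=f^{(d)}(y)+\sum_{j<d}k^{j-d}f^{(j)}(y)$ converges uniformly on $\Delta$ to $f^{(d)}$. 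Applying the previous convergence with the uniformly convergent family $g_k(y)=f(ky)/k^d$ gives
\[
\frac{1}{k^{q+d}}\sum_{x\in \tilde{S}_{mk}}f(x)=\frac{1}{k^q}\sum_{x\in \tilde{S}_{mk}}g_k(x/k)\;\longrightarrow\;\frac{\int_{\Delta}f^{(d)}(y)\,d\mu(y)}{\ind(S,\partial M)},
\]
which combined with the reduction establishes the theorem. The main obstacle is the boundary-layer estimate: one must simultaneously shrink $\Vol_q(\Delta\setminus \Delta')$ and accommodate the corresponding growth of $N$ from Theorem~\ref{th-semi-gp-approximation}. This is exactly what strong admissibility of $(S,M)$ permits, since it guarantees that $\Delta$ is a compact convex body that can be exhausted from inside by $\Delta'=\Con'\cap \pi_M^{-1}(m)$ with $\Con'$ strictly convex.
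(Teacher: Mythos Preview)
Your proof is correct and follows essentially the same strategy as the paper: reduce via the approximation theorem (Theorem~\ref{th-semi-gp-approximation}) to lattice points in dilates of $\Delta(S,M)$, then evaluate by a Riemann-sum argument together with the homogeneity decomposition $f=\sum_j f^{(j)}$. The only organizational difference is that the paper first changes coordinates so that $G(S)=\z^{q+1}$ and $\ind(S,\partial M)=1$, and then sandwiches $S_k$ between two full lattice slices $S_k'\subset S_k\subset S_k''$ (coming from an inner body $\Delta\subset\Delta(S,M)$ and from $\Delta(S,M)$ itself), whereas you work one-sidedly with $S_{mk}\subset \tilde S_{mk}$ and bound the difference directly; both routes lead to the same boundary-layer estimate and the same invocation of the saturated case (the paper's Theorem~\ref{th-sum-integral}).
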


Let $M$ be the positive half-space $x_{q+1} \geq 0$ in $\r^{q+1}$. Take a $(q+1)$-dimensional
{closed strongly convex cone} $C \subset M$ which intersects $\partial M$ only at the origin. Let $S = C \cap \z^{q+1}$
be the semigroup of all the integral points in $C$. Then $(S, M)$ is a strongly admissible pair.
For such kind of a saturated semigroup $S$, Theorem \ref{th-5.2} is relatively easy to show. We restate the above theorem in this case as it will be needed in the proof of the general case.
Results of such kind have origins in the classical work of Minkowski.

\begin{Th} \label{th-sum-integral} Let $S = C \cap \z^{q+1}$ and
$\Delta = C \cap \{x_{q+1} = 1 \}$. Then:
$$\lim_{k \to \infty} \frac{\sum _{x \in S_k}f(x)}{k^{q+d}} =\int_{\Delta}f^{(d)}(x)d\mu.$$
Here $S_k$ is the set of all the integral points in $C \cap \{x_{q+1} = k\}$,
and $d\mu$ is the Euclidean measure at the hyperplane $x_{q+1}=1$.
\end{Th}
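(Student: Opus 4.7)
The plan is to reduce the statement to a standard Riemann-sum calculation by splitting $f$ into homogeneous components and showing that only the top-degree piece contributes in the limit. First, by linearity of the sum and integral, and since $f = f^{(0)} + \dots + f^{(d)}$, it suffices to analyze each $\sum_{x \in S_k} f^{(j)}(x)$ separately. The components of degree $j < d$ will be shown to contribute $o(k^{q+d})$ and thus wash out after dividing by $k^{q+d}$; the component of degree $d$ will give the claimed integral.

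Next, I would control the lower-order components by a crude estimate: since $C$ is contained in a cone of the form $\{x_{q+1} \ge 0\}$ and is strongly convex with respect to the level hyperplane, the slice $S_k$ lies in $k\Delta$, which is contained in a ball of radius $O(k)$ in $\r^q$. Hence $|S_k| = O(k^q)$ and $|f^{(j)}(x)| \le C_j k^j$ for $x \in S_k$, giving $\sum_{x \in S_k} f^{(j)}(x) = O(k^{q+j})$. Dividing by $k^{q+d}$ with $j < d$ yields $0$ in the limit.

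For the leading piece, I would exploit homogeneity: $f^{(d)}(x) = k^d f^{(d)}(x/k)$. Writing $y = x/k$, the set $\{x/k : x \in S_k\}$ is precisely $\Delta \cap \frac{1}{k}\z^q$ (identifying the hyperplane $\{x_{q+1}=k\}$ with $\{x_{q+1}=1\}$ via the obvious scaling), and the expression becomes
\[
\frac{1}{k^{q+d}} \sum_{x \in S_k} f^{(d)}(x) \;=\; \frac{1}{k^q} \sum_{y \in \Delta \cap \frac{1}{k}\z^q} f^{(d)}(y).
\]
This is a Riemann sum for $f^{(d)}$ over $\Delta$ with mesh $1/k$, and since $\Delta$ is a compact convex body (hence Jordan measurable, with boundary of Lebesgue measure zero) and $f^{(d)}$ is continuous, the sum converges to $\int_\Delta f^{(d)}(y)\,d\mu$.

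The main obstacle — though a mild one — is verifying the Riemann-sum convergence cleanly over the region $\Delta$, whose boundary need not be smooth; the standard resolution is to sandwich $\Delta$ between two Jordan-measurable polytopal approximations from inside and outside, note that the boundary shell has integral volume $\to 0$, and use uniform continuity of $f^{(d)}$ on a neighborhood of $\Delta$. Once this is in place, the three steps combine directly to give the claimed limit.
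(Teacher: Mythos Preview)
Your proposal is correct and follows precisely the approach the paper indicates: the paper does not actually prove this theorem but simply remarks that it ``can be easily proved by considering the Riemann sums for the integral of the homogeneous component of $f$ over $\Delta$,'' which is exactly what you do. Your handling of the lower-order components via the crude bound $\sum_{x\in S_k}|f^{(j)}(x)|=O(k^{q+j})$ and of the top component via the homogeneity substitution $y=x/k$ is the natural way to make that remark precise.
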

{Theorem \ref{th-sum-integral} can be easily proved by considering the Riemann sums for the integrals of the homogeneous components
of $f$ over $\Delta$}.

\begin{proof}[Proof of Theorem \ref{th-5.2}] The theorem follows from Theorem \ref{th-semi-gp-approximation}
(approximation theorem) and Theorem \ref{th-sum-integral}. Firstly, one reduces to the case where $L(S)= \r^{n}$, $q+1=n$, $M$ is given by the inequality $x_{q+1}\geq 0$, $G(S)= \z^{q+1}$, $\ind(S,\partial M)=\ind (S,M)=1$ and $\Delta(S,M)$ is a $q$-dimensional convex body in the hyperplane $x_{q+1}=1$, as follows: choose a basis $e_1,\dots,e_q, e_{q+1},\dots e_n$ in $\r^n$ such that $e_1,\dots,e_q$ generate the group $G(S)\cap \partial M$ and the vectors $e_1,\dots,e_{q+1}$ generate the group $G(S)$ (no condition on the rest of vectors in the basis). This choice of basis identifies the spaces $L(S)$ and $\partial M$ with $\r^{q+1}$ and $\r^q$ respectively. We will not deal with the vectors outside $\r^{q+1}$, and hence we can assume $q+1=n$. Under such choice of a basis the lattice $L(S)_\z$ identifies with a lattice $\Lambda \subset \r^{q+1}$ which may contain non-integral points. Also the lattice $\partial M_{\z}$ identifies with a lattice $\Lambda \cap \r^q$. The index of the subgroup $\z^q$ in the group $\Lambda \cap \r^q$ is equal to $\ind(S,\partial M)$. The coordinate $x_{q+1}$ of the points in the lattice $\Lambda\subset \z^{q+1}$ is proportional to the number $1/m$ where $m= \ind (S,M)$. The map $\pi_M: L(S)_\z/ M_{\z}\to \z$ then coincides with the restriction of the map $mx_{q+1}$ to the lattice $\Lambda$. The semigroup $S$ becomes a subsemigroup in the lattice $\z^q$ and the level set $S_k$ is equal to $S\cap \{x_{q+1}=k\}$. Also the measure $d\mu$ is given by $d\mu=\rho d{\bf x}=\rho dx_1\wedge\dots\wedge x_q$, where $\rho=\ind(S,\partial M)$. Thus with the above choice of basis the theorem is reduced to this particular case.

To prove that the limit exists and is equal to $\int_{\Delta(S,M)}f^{(d)}(x)d{\bf x}$, it is enough to show that any limit point of the sequence $\{g_k\}$, $g_k = \sum _{x\in S_{k}}f(x)/k^{q+d}$, lies in arbitrarily small neighborhoods of $\int_{\Delta(S,M)}f^{(d)}(x)d{\bf x}.$ Take a convex body $\Delta$ in the hyperplane $x_{q+1}=1$ which lies strictly inside the Newton-Okounkov body $\Delta(S,M)$. Consider the convex bodies $k \Delta (S,M)$ and $k \Delta $ in the hyperplane $x_{q+1}=k$. Let $S_k'$ and $S_k''$ be the sets $k\Delta\cap \z^{q+1}$ and $k\Delta(S,M)\cap \z^{q+1}$ respectively. By Theorem \ref{th-semi-gp-approximation}, for large values of $k$, we have $S_k'\subset S_k\subset S_k''$. Also by Theorem \ref{th-sum-integral}:
$$ \lim_{k\to \infty}\frac {\sum _{x\in S_k' }f(x) }{k^{q+d}} =\int_{\Delta}f^{(d)}(x)d{\bf x},$$
$$ \lim_{k\to \infty}\frac {\sum _{x\in S_k'' }f(x) }{k^{q+d}} =\int_{\Delta(S,M)}f^{(d)}(x)d{\bf x},$$
$$ \lim_{k\to \infty}\frac {\# (S_k''\setminus S_k')}{k^q} = \Vol_q(\Delta(S,M)\setminus \Delta).$$
Since $(S, M)$ is strongly admissible,
one can find a constant $N > 0$ such that for any point $x \in \Con(S)$ with $x_{q+1} \geq 1$ we have
$|f(x)|/x_{q+1}^d < N$ and $|f^{(d)}(x)|/x_{q+1}^d < N$. This implies that for large values of $k$ we have:
$$ \frac {\sum _{x\in (S_k''\setminus S_k') }|f(x)| }{k^{q+d}} \leq \tilde N \Vol_q(\Delta(S,M)\setminus \Delta),$$ $$\int_{\Delta(S,M)\setminus \Delta}|f^{(d)}(x)|d{\bf x}< \tilde N \Vol_q(\Delta(S,M)\setminus \Delta),$$
where $\tilde N$ is any constant bigger than $N$. Thus
$$ |\frac {\sum _{x\in S_k}f(x) }{k^{q+d}}
-\int_{\Delta(S,M)}f^{(d)}(x)d{\bf x}|< 2\tilde N \Vol_q(\Delta(S,M)\setminus \Delta).$$
For any given $\varepsilon > 0$ we may choose the convex body $\Delta$ such that $\Vol_q(\Delta(S,M)\setminus \Delta) < \varepsilon/2\tilde N$.
This shows that for any $\varepsilon>0$, all the limit points of the sequence $\{g_k\}$ belong to the $\varepsilon$-neighborhood of the number $\int_{\Delta(S,M)}f^{(d)}(x)d{\bf x}$, which finishes the proof.
\end{proof}

\begin{Cor} \label{cor-1.23}
With the assumptions as in Theorem \ref{th-5.2}, the following holds:
$$ \lim_{k\to \infty}\frac {\#
S_{mk}}{k^q} =\frac{\Vol_q (\Delta(S,M))}{\ind  (S,\partial M)}.$$
\end{Cor}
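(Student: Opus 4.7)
The plan is to obtain Corollary \ref{cor-1.23} as an immediate specialization of Theorem \ref{th-5.2} to the constant polynomial. Since the hypotheses (strong admissibility of $(S,M)$, with $q = \dim \partial M$ and $m = \ind(S,M)$) are identical to those of Theorem \ref{th-5.2}, I may apply that theorem directly to any polynomial $f$ of my choosing on $\r^n$.

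I would choose $f \equiv 1$, viewed as a polynomial of degree $d=0$. Then its homogeneous decomposition is trivial, namely $f^{(0)} = 1$, so the integrand on the right-hand side of Theorem \ref{th-5.2} reduces to the constant $1$. On the left-hand side, $\sum_{x \in S_{mk}} f(x) = \#S_{mk}$, and the normalizing exponent becomes $k^{q+d} = k^q$. Substituting, Theorem \ref{th-5.2} yields
\[
\lim_{k \to \infty} \frac{\#S_{mk}}{k^q} = \frac{\int_{\Delta(S,M)} 1 \, d\mu}{\ind(S,\partial M)}.
\]

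It remains to identify the numerator with $\Vol_q(\Delta(S,M))$. This is purely a matter of unwinding definitions: $d\mu$ is, by the convention set just before Theorem \ref{th-5.2}, the integral measure on the rational affine hyperplane $\pi_M^{-1}(m)$ of dimension $q$, normalized as in Definition \ref{def-int-volume}. By definition, the integral of $1$ against this measure over a subset of $\pi_M^{-1}(m)$ is precisely its integral volume, so $\int_{\Delta(S,M)} d\mu = \Vol_q(\Delta(S,M))$, yielding the claimed formula.

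There is no genuine obstacle here; the only thing to be careful about is the bookkeeping between the two lattices that enter the problem, namely the lattice $G(S) \cap \partial M$ inside $\partial M_{\z}$ (whose index contributes the factor $\ind(S,\partial M)$ in the denominator) and the image lattice $\pi_M(G(S))$ inside $\z$ (whose index is the scaling factor $m$ appearing in $S_{mk}$). Both are already handled inside the statement and proof of Theorem \ref{th-5.2}, so no further argument is needed.
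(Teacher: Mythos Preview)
Your proof is correct and matches the paper's own argument exactly: the paper simply says ``Apply Theorem \ref{th-5.2} to the polynomial $f=1$.'' Your additional unwinding of the definitions (that $\int_{\Delta(S,M)} d\mu = \Vol_q(\Delta(S,M))$ by the normalization of $d\mu$) is just making explicit what the paper leaves implicit.
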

\begin{proof}
Apply Theorem \ref{th-5.2} to the polynomial $f=1$.
\end{proof}

\begin{Def}
Let $(S, M)$ be an admissible pair with $m = \ind(S, M)$ and $q = \dim \partial M$.
We say that $S$ has {\it bounded growth} with respect to
the half-space $M$ if there exists a sequence $k_i\to \infty$ of positive integers such that
the sets $S_{m k_i}$ are finite and the sequence of numbers $\#S_{mk_i}/ k_i^q$ is bounded.
\end{Def}

\begin{Th} \label{th-5.4}
Let $(S, M)$ be an admissible pair.
The semigroup $S$ has bounded growth with respect to $M$ if and only if
the pair $(S, M)$ is strongly admissible. In fact, if $(S, M)$ is strongly
admissible then $S$ has polynomial growth.
\end{Th}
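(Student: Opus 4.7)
The forward direction is immediate from Corollary \ref{cor-1.23}: if $(S,M)$ is strongly admissible, then $\#S_{mk}/k^q$ tends to the finite limit $\Vol_q(\Delta(S,M))/\ind(S,\partial M)$, so $S$ has polynomial (and hence bounded) growth with respect to $M$.

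For the converse I will argue the contrapositive. The pair $(S,M)$ fails to be strongly admissible precisely when $\Delta(S,M)=\Con(S)\cap\pi_M^{-1}(m)$ is unbounded, because any nonzero vector in $\Con(S)\cap\partial M$ --- and any direction of a line in $\Con(S)$, which must automatically lie in $\partial M$ since $\Con(S)\subset M$ --- is a nonzero recession direction of $\Delta(S,M)$. Once $\Delta(S,M)$ is unbounded, being a $q$-dimensional closed convex set in the $q$-dimensional hyperplane $\pi_M^{-1}(m)$, it contains a tube $B+\r_{\geq 0}u$ with $B$ a $q$-dimensional ball and $u$ a nonzero recession vector, so its integral $q$-volume is infinite. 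Given an arbitrary $R>0$, I would choose a compact set $K$ in the relative interior of $\Delta(S,M)$ with $\Vol_q(K)>R$, and plan to apply the approximation theorem (Theorem \ref{th-semi-gp-approximation}) to the cone $\Con_0=\r_{\geq 0}\cdot K$. Two properties need verification: (i) $\Con_0$ is strictly convex, because any line through the origin inside $\Con_0\subset M$ would have to lie in $\partial M$, whereas every point of $K$ has $\pi_M$-value $m>0$; and (ii) $\Con_0$ meets $\partial\Con(S)$ only at the origin, which follows from a rescaling argument: for $x$ in the relative interior of $\Delta(S,M)$ and $t>0$, a small neighborhood of $tx$ in $L(S)$ maps back into $\Delta(S,M)$ upon rescaling to level $m$, putting $tx$ in the topological interior of $\Con(S)$.

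Granting (i) and (ii), Theorem \ref{th-semi-gp-approximation} supplies $N>0$ with $G(S)\cap\Con_0\cap\{|x|\geq N\}\subset S$. For all large $k$, the set $kK\subset\Con_0\cap\pi_M^{-1}(mk)$ is beyond distance $N$ from the origin, so $G(S)\cap kK\subset S_{mk}$. The set $G(S)\cap\pi_M^{-1}(mk)$ is an affine lattice in $\pi_M^{-1}(mk)$ parallel to the sublattice $G(S)\cap\partial M$, which has index $\ind(S,\partial M)$ in $\partial M_\z$; a standard lattice point count therefore gives $\#(G(S)\cap kK)=k^q\Vol_q(K)/\ind(S,\partial M)+O(k^{q-1})$. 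Hence $\liminf_{k\to\infty}\#S_{mk}/k^q\geq R/\ind(S,\partial M)$, and letting $R\to\infty$ forces $\#S_{mk}/k^q\to\infty$, contradicting bounded growth. The main obstacle I foresee is the verification of (ii), namely ensuring $K$ is chosen deep enough in $\Delta(S,M)$ so that the cone it generates avoids the entire boundary of $\Con(S)$ inside $L(S)$, and not merely the relative boundary of $\Delta(S,M)$ inside the slice $\pi_M^{-1}(m)$.
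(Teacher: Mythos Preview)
Your proof is correct and follows essentially the same route as the paper's: both use Corollary \ref{cor-1.23} for the forward direction, and for the converse both place a convex body $\Delta$ (your $K$) of arbitrarily large volume strictly inside the unbounded $\Delta(S,M)$, cone over it, apply the approximation theorem (Theorem \ref{th-semi-gp-approximation}), and count lattice points to contradict bounded growth --- the paper invokes Corollary \ref{cor-1.23} for this count, while you do it directly. Your worry about (ii) is unnecessary: the rescaling argument you gave already shows that any point of $\textup{relint}\,\Delta(S,M)$ lies in the topological interior of $\Con(S)$ in $L(S)$ (equivalently, since $\Con(S)$ is full-dimensional in $L(S)$ and the hyperplane $\pi_M^{-1}(m)$ meets its interior, one has $\textup{relint}(\Con(S)\cap\pi_M^{-1}(m))=\textup{int}(\Con(S))\cap\pi_M^{-1}(m)$), so $\Con_0$ meets $\partial\Con(S)$ only at the origin; just take $K$ to be a convex body rather than merely compact so that the lattice-point asymptotic holds without further justification.
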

\begin{proof}
Let us show that if $S$ has bounded growth then $(S, M)$ is strongly admissible.
Suppose the statement is false. Then the Newton-Okounkov convex set $\Delta(S, M)$ is
an unbounded convex $q$-dimensional set and hence has infinite $q$-dimensional volume.
Assume that $P$ is a constant such that for any $i$, $\#S_{mk_i}/ k_i^q < P$.
Choose a convex body $\Delta$ strictly inside $\Delta(S, M)$ in such a way that the $q$-dimensional
volume of $\Delta$ is bigger than $mP$. Let $\Con$ be the cone over the convex body
$\Delta$ with the apex at the origin. By Theorem \ref{th-semi-gp-approximation}, for large values of
$k_i$, the set $S_{mk_i}$ contains the set
$S_{m k_i}'=\Con \cap G(S)\cap \pi^{-1}_M (mk_i)$.
Then by Corollary \ref{cor-1.23},
$$\lim_{k_i\to \infty}\frac { \# S_{m k_i}'}{ k_i^q}
=\frac {\Vol_q(\Delta)}{\ind (S,\partial M)}>P.$$
The contradiction proves the claim. The other direction, namely if $(S, M)$ is
strongly admissible then it has polynomial growth (and hence bounded growth),
follows immediately from Corollary \ref{cor-1.23}.
\end{proof}

%\begin{Cor}
%Assume that a semigroup $S_1$ is contained in a semigroup $S$ of bounded growth with respect to a
%half-space $M$. Then the pair $(S_1, M_1)$ is strongly admissible where $M_1$ is the half-space $M_1 = M \cap L(S_1)$.
%\end{Cor}
%\begin{proof}
%According to Theorem \ref{th-5.4} the cone $\Con(S)$ intersects the subspace
%$\partial M$ only at the origin. Also we have
%$\Con(S_1)\subset \Con(S)$ and $\partial M_1\subset \partial M$. So
%$\Con(S_1)\cap \partial M_1=\{ 0\}$.
%\end{proof}

\begin{Th} \label{th-5.1}
Let $(S, M)$ be an admissible pair and assume that the sets $S_k$, $k\in \z_{\geq0}$, are finite. Let $H$ be the
Hilbert function of $(S, M)$ and put $\dim
\partial M=q$. Then
\begin{enumerate}
\item The limit
$$\lim_{k\to \infty} \frac{H(mk)}{k^q},$$ exists (possibly infinite),
where $m=\ind (S,M)$.
\item This limit is equal to the volume (possibly infinite) of the Newton-Okounkov convex set
$\Delta(S, M)$ divided by the integer $\ind(S,\partial M)$.
\end{enumerate}
\end{Th}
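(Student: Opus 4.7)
The plan is to split on whether the pair $(S,M)$ is strongly admissible. Theorem \ref{th-5.4} already identifies bounded growth of $\#S_{mk}/k^q$ with strong admissibility, so the strategy is: when $(S,M)$ is strongly admissible, the assertion is exactly Corollary \ref{cor-1.23}; when it is not, I will show that both sides of the claimed equality are $+\infty$, so the equality holds in the extended real sense.

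In the non-strongly admissible case the first step is to argue that $\Vol_q(\Delta(S,M))=\infty$. The failure of strong admissibility means either $\Con(S)$ contains a line or $\Con(S)\cap\partial M\neq\{0\}$; since any line in $\Con(S)\subset M$ must lie in $\partial M$, in either situation $\Con(S)\cap\partial M$ contains a nonzero vector $v$. Because $\Con(S)$ spans the $(q{+}1)$-dimensional space $L(S)$, its intersection $\Delta(S,M)=\Con(S)\cap\pi_M^{-1}(m)$ is a $q$-dimensional convex subset of the $q$-dimensional affine space $\pi_M^{-1}(m)$, and translation by $v$ carries $\Delta(S,M)$ into itself. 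Hence $\Delta(S,M)$ contains an entire ray and so has infinite integral $q$-volume.

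It then remains to show that $H(mk)/k^q\to\infty$, and here I would mimic the argument in the proof of Theorem \ref{th-5.4}. For any $P>0$, I would pick a compact convex body $\Delta'$ in the relative interior of $\Delta(S,M)$ with $\Vol_q(\Delta')>P\cdot\ind(S,\partial M)$; this is possible since $\Delta(S,M)$ is $q$-dimensional of infinite volume. Let $\Con'$ be the cone with apex at the origin over $\Delta'$. Because $\Delta'$ lies in the relative interior of $\Delta(S,M)$, the cone structure of $\Con(S)$ shows $\Con'\setminus\{0\}\subset\textup{int}\,\Con(S)$ in $L(S)$; in particular $\Con'$ is strictly convex and meets the boundary of $\Con(S)$ only at the origin. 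The approximation theorem (Theorem \ref{th-semi-gp-approximation}) then produces $N>0$ such that every point of $G(S)\cap\Con'$ of norm $\geq N$ lies in $S$, giving for $k$ large
\[
S_{mk}\supset G(S)\cap\Con'\cap\pi_M^{-1}(mk)=G(S)\cap(k\Delta').
\]
The exact sequence $0\to G(S)\cap\partial M\to G(S)\to m\z\to 0$ shows $G(S)\cap\partial M$ has full rank $q$ in $\partial M_\z$, and by definition its integral covolume in $\partial M$ equals $\ind(S,\partial M)$; moreover $G(S)\cap\pi_M^{-1}(mk)$ is an affine translate of this lattice. Theorem \ref{th-sum-integral} (applied to the lattice $G(S)\cap\partial M$ and the convex body $\Delta'$) then yields
\[
\lim_{k\to\infty}\frac{\#\bigl(G(S)\cap(k\Delta')\bigr)}{k^q}=\frac{\Vol_q(\Delta')}{\ind(S,\partial M)}>P.
\]
Thus $\liminf_k H(mk)/k^q>P$, and since $P$ was arbitrary the limit exists and equals $+\infty$, agreeing with $\Vol_q(\Delta(S,M))/\ind(S,\partial M)$.

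The main technical point to verify carefully is the inclusion $\Con'\setminus\{0\}\subset\textup{int}\,\Con(S)$, since this is what legitimizes the use of Theorem \ref{th-semi-gp-approximation}. This is a short consequence of the cone structure: a relative interior point $p$ of $\Delta(S,M)$ inside $\pi_M^{-1}(m)$ has a $q$-dimensional neighborhood in that hyperplane contained in $\Con(S)$, and positive rescalings of such a neighborhood sweep out a full $(q{+}1)$-dimensional open neighborhood (in $L(S)$) of every point on the open ray from the origin through $p$.
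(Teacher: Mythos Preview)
Your proof is correct and follows essentially the same approach as the paper's own proof: split into the strongly admissible case (where Corollary~\ref{cor-1.23} gives the finite limit) and its complement (where both sides are $+\infty$). The paper organizes the dichotomy slightly differently---it splits on whether $H(mk)/k^q$ has a bounded subsequence and then invokes Theorem~\ref{th-5.4} to pass to strong admissibility---but the content is the same. Your treatment of the non--strongly-admissible case is more detailed than the paper's (which simply asserts that $\Delta(S,M)$ is unbounded and hence has infinite volume), and your explicit argument that $\Con(S)\cap\partial M\neq\{0\}$ forces $\Delta(S,M)$ to contain a ray is a nice clarification. The one place you could economize: having already noted that Theorem~\ref{th-5.4} identifies bounded growth with strong admissibility, you may simply cite it to conclude $H(mk)/k^q\to\infty$ in the non--strongly-admissible case, rather than reproving that direction; likewise, the lattice-point count you extract from Theorem~\ref{th-sum-integral} is exactly Corollary~\ref{cor-1.23} applied to the cone over $\Delta'$.
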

\begin{proof}
First assume that $H(mk)/k^q$ does not approach infinity (as $k$ goes to infinity).
Then there is a sequence $k_i\to \infty$ with $k_i\in \z_{\geq 0}$ such that the sets $S_{m k_i}$
are finite and the sequence $\#S_{mk_i}/ k_i^q$ is bounded. But this means that the semigroup
$S$ has bounded growth with respect to the half-space $M$.
Thus by Theorem \ref{th-5.4} the cone $\Con(S)$ is strictly convex and
intersects $\partial M$ only at the origin. In this case the theorem
follows from Corollary \ref{cor-1.23}. Now if $\lim_{k\to
\infty} H(mk)/k^q=\infty$, then the conditions in Theorem \ref{th-5.2} cannot be
satisfied. Hence the convex set $\Delta(S,M)$ is unbounded and thus has infinite volume. This
shows that Theorem \ref{th-5.1} is true in this case as well.
\end{proof}

\begin{Ex}
Let $S$ be the semigroup in Example \ref{ex-concave-graph} where $F(x) = |x|^{1/n}$ for some
natural number $n>1$. Also let $M$ be the half-space $y \geq 0$. Then the pair $(S, M)$ is
admissible. Its Newton-Okounkov set $\Delta(S, M)$ is the line $y=1$ and its Hilbert function
is given by $H(k) = 2k^n+1$. Thus in spite of the fact that the dimension of the Newton-Okounkov convex set
$\Delta(S, M)$ is $1$, the Hilbert function grows like $k^n$. This effect is related to the fact that the
pair $(S, M)$ is not strongly admissible.
\end{Ex}

\subsection{Non-negative semigroups and approximation theorem} \label{sec-non-negative}
%In this section we consider semi groups belonging to a fixed half
%space. For such semigroups we make Approximation Theorem more
%precise.
In $\r^{n+1} = \r^n \times \r$ there is a natural half-space $\r^n \times \r_{\geq 0}$, consisting of
the points whose last coordinate is non-negative.
In this section we will deal with semigroups that are contained in this fixed half-space of
full dimension. For such semigroups we refine the statements of theorems proved in the previous sections.
%and we introduce the operation of levelwise addition of them.

We start with definitions.
A {\it non-negative} semigroup of integral points in
$\r^{n+1}$ is a semigroup $S \subset \r^n \times \r_{\geq 0}$ which is not contained in the
hyperplane $x_{n+1}=0$. With a non-negative semigroup $S$ we can associate an admissible pair $(S, M(S))$ where
$M(S) = L(S) \cap (\r^n \times \r_{\geq 0})$. We call a non-negative semigroup,
{\it strongly non-negative} if the corresponding admissible pair is strongly admissible.
Let $\pi: \r^{n+1} \to \r$ be the projection on the $(n+1)$-th coordinate.
We can associate all the objects defined for an admissible pair to a non-negative semigroup:
\begin{itemize}
\item[-] $\Con(S)$, the cone of the pair $(S,M(S))$.

\item[-] $G(S)$, the group generated by the semigroup $S$.

\item[-] $H_S$, the Hilbert function of the pair $(S,M(S))$.

\item[-] $\Delta (S)$, the Newton-Okounkov convex set of the pair $\Delta (S,M(S))$.

\item[-] $G_0(S) \subset G(S)$, the subgroup $\pi^{-1}(0)\cap G(S)$.

\item[-] $S_k$, the subset $S \cap \pi^{-1}(k)$ of points in $S$ at level $k$.

\item[-] $\ind (S)$, the index of the subgroup $G_0(S)$ in $\z^n \times \{0\}$,
i.e. $\ind(S, \partial M(S))$.

\item[-] $m(S)$, the index $\ind(S, M(S))$.
\end{itemize}

We now give a more refined version of the approximation theorem for the non-negative semigroups.
We will need the following elementary lemma.
\begin{Lem} \label{lem-ball}
Let $B$ be a ball of radius $\sqrt{n}$ centered at a point $a$ in the Euclidean space
$\r^n$ and let $A = B \cap \z^n$. Then: 1) the point $a$ belongs to the convex hull of $A$.
2) The group generated by $x-y$ where $x, y \in A$ is $\z^n$.
\end{Lem}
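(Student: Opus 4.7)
The plan is to exhibit an explicit finite subset of $A$ that already witnesses both conclusions, namely the $2^n$ vertices of the integer unit cube containing $a$. Write $a = (a_1,\dots,a_n)$ and let $C$ be the closed cube $[\lfloor a_1\rfloor, \lfloor a_1\rfloor + 1]\times\cdots\times[\lfloor a_n\rfloor, \lfloor a_n\rfloor + 1]$. By construction $a\in C$, and the vertex set $V$ of $C$ consists of $2^n$ integer points. Since the diameter of $C$ is $\sqrt{n}$ and $a\in C$, every vertex $v\in V$ satisfies $\|v-a\|\le \sqrt{n}$, so $V\subset B\cap\z^n = A$.

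For part (1), the containing cube $C$ is the convex hull of its vertex set $V$. Thus $a\in C = \conv(V)\subset \conv(A)$, which is exactly what is asserted.

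For part (2), let $\Lambda$ denote the subgroup of $\z^n$ generated by all differences $x-y$ with $x,y\in A$. For each coordinate direction $i$, the vertex $v\in V$ with $i$-th coordinate $\lfloor a_i\rfloor$ (and all other coordinates chosen equal to those of a fixed reference vertex $v_0\in V$) and the corresponding vertex $v'\in V$ with $i$-th coordinate $\lfloor a_i\rfloor + 1$ differ precisely by the standard basis vector $e_i$. Since both $v,v'\in V\subset A$, we have $e_i = v'-v\in\Lambda$. As this works for each $i=1,\dots,n$, the standard basis lies in $\Lambda$, so $\Lambda=\z^n$.

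There is no real obstacle here; the entire content of the lemma is the elementary geometric observation that the diagonal of the integer unit cube containing $a$ has length exactly $\sqrt{n}$, so the radius $\sqrt{n}$ is just large enough to capture all $2^n$ vertices of that cube at once, which simultaneously gives convex containment of $a$ and generation of the full lattice by nearest-neighbor differences.
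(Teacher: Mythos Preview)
Your proof is correct and follows essentially the same idea as the paper's: exhibit a small set of integer points near $a$ (the vertices of an axis-parallel box containing $a$) that lie in $B$, so that $a$ is in their convex hull and their pairwise differences include the standard basis vectors. The paper phrases it via the side-$2$ cube centered at $a$ and picks integers $n_i^-, n_i^+$ in $[a_i-1,a_i)$ and $(a_i,a_i+1]$, while you use the unit integer cube $[\lfloor a_i\rfloor,\lfloor a_i\rfloor+1]^n$; both choices yield $2^n$ lattice points within distance $\sqrt{n}$ of $a$ and the arguments are otherwise identical.
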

\begin{proof}
For $n=1$ the statement is obvious. For $n>1$, the lemma
follows from the one-dimensional case and the fact that the ball $B$
contains the product of closed intervals of radius $1$ centered at the projections of
the point $a$ on the coordinate lines.
%Let $K_a \subset B$ be the cube centered at $a$ and with the sides of length $2$
%parallel to the coordinate axes. If $a = (a_1, \ldots, a_n)$ then the cube $K_a$ is defined by the
%inequalities $(a_i-1) \leq x_i\leq (a_i+1)$, $i=1,\dots,n$.
%On the intervals defined by $(a_i-1)\leq x_i<a_i$ and $a_i \leq x_i < (a_i+1)$
%there are integers $n_i^-$ and $n_i^+$ respectively.
%The set $A$ contains the subset $A'=K_a\cap \z^n$ which contains the $2^n$ integral points
%${\bf n} =(n_1^{\pm},\dots, n_i^{\pm})$. The point $a$ belongs to the convex hull of $A'$.
%The differences of the points in $A'$ generates the group $\z^n$.
\end{proof}

\begin{Rem}
K. A. Matveev (an undergraduate student at the University of Toronto) has shown that
the smallest radius for which the above proposition holds is $\sqrt{n+3}/2$.
%His note about this is under preparation.
\end{Rem}

Let us now proceed with the refinement of the approximation theorem for non-negative
semigroups. Let $\dim L(S) = q+1$ and let
$\Con \subset \Con(S)$
be a {closed strongly convex} $(q+1)$-dimensional cone which intersects the boundary
(in the topology of $L(S)$) of $\Con(S)$ only at the origin $0$.

\begin{Th} \label{th-refinement-3.2}
There is a constant $N>0$ (depending on the choice of $\Con$)
such that for any integer $p>N$ which is divisible by $m(S)$ we have: 1) The convex hull of the
set $S_p$ contains the set $\Delta(p)= \Con \cap \pi^{-1}(p)$.
2) The group generated by the differences $x-y$, $x,y \in S_p$ is independent of
$p$ and coincides with the group $G_0(S)$.
\end{Th}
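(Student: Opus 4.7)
The plan is to combine the approximation theorem (Theorem \ref{th-semi-gp-approximation}), applied to a slightly fattened version of $\Con$, with the combinatorial input of Lemma \ref{lem-ball}. Concretely, around every $a \in \Delta(p)$ one produces a small finite subset of $G(S) \cap \pi^{-1}(p)$ whose convex hull contains $a$ and whose pairwise differences generate $G_0(S)$; for sufficiently large $p$ the approximation theorem forces all such points into $S$.

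Enlarge $\Con$ to a strongly convex $(q+1)$-dimensional cone $\Con^+ \subset \Con(S)$ whose boundary still meets $\partial \Con(S)$ only at the origin and in whose relative interior $\Con \setminus \{0\}$ sits. Applying Theorem \ref{th-semi-gp-approximation} to $\Con^+$ yields a constant $N_0$ such that every point of $G(S) \cap \Con^+$ at distance exceeding $N_0$ from the origin lies in $S$. Fix a $\z$-basis of $G_0(S)$, and let $R>0$ be a constant (depending only on this basis) such that every ambient-metric ball of radius $R$ in $L(S)$ contains the $G_0(S)$-Euclidean ball of radius $\sqrt{q}$ about the same center. For any $p$ with $m(S) \mid p$ and any point $a$ in the affine space $\pi^{-1}(p) \cap L(S)$, Lemma \ref{lem-ball} then supplies a finite set $A(a) \subset G(S) \cap \pi^{-1}(p)$ contained in the ambient $R$-ball about $a$, whose convex hull contains $a$ and whose pairwise differences generate $G_0(S)$.

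Choose $N > 0$ large enough that, for every $p > N$ divisible by $m(S)$: (i) the ambient $R$-neighborhood of $\Delta(p)$ inside $\pi^{-1}(p) \cap L(S)$ is contained in $\Con^+ \cap \pi^{-1}(p)$, and (ii) every point of that neighborhood has distance greater than $N_0$ from the origin. Both conditions are satisfied for large $p$ because $\Delta(p) = p \cdot \Delta(1)$ scales linearly while $R$ and $N_0$ are fixed. For such $p$ and any $a \in \Delta(p)$, the set $A(a)$ lies in $G(S) \cap \Con^+$ beyond the threshold $N_0$, hence in $S_p$. Part (1) is immediate from $a \in \conv(A(a)) \subset \conv(S_p)$; for part (2), fix a single $a_0 \in \Delta(p)$: the inclusion $A(a_0) - A(a_0) \subset S_p - S_p \subset G_0(S)$ is automatic, while Lemma \ref{lem-ball} already guarantees that $A(a_0) - A(a_0)$ generates $G_0(S)$.

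The main technical step is verifying (i): one must choose $\Con^+$ so that $\Delta(1) = \Con \cap \pi^{-1}(1)$ sits in the interior of $\Con^+ \cap \pi^{-1}(1)$ with a positive transversal margin $\delta > 0$. Rescaling by $p$ turns this margin into $p\delta$, which for $p > R/\delta$ absorbs the fixed radius $R$; condition (ii) is then routine since $\Delta(p)$ lies on the hyperplane at $\pi$-level $p$. Once this geometric setup is secured, both assertions of the theorem follow from the argument above.
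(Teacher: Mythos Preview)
Your proposal is correct and follows essentially the same approach as the paper: introduce an intermediate cone $\Con^+$ (the paper calls it $\Con_1$), apply the approximation theorem to it, and use Lemma~\ref{lem-ball} to produce lattice points near each $a\in\Delta(p)$ whose convex hull contains $a$ and whose differences generate $G_0(S)$. The only cosmetic difference is that the paper first performs a linear change of coordinates identifying $G(S)$ with $\z^{q+1}$ (so that Lemma~\ref{lem-ball} applies verbatim with radius $\sqrt q$), whereas you carry the comparison constant $R$ between the ambient metric and the $G_0(S)$-adapted metric through the argument; the scaling argument $\Delta(p)=p\cdot\Delta(1)$ and the margin $\delta$ are identical in spirit to the paper's choice of $N_1$.
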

\begin{proof}
By a linear change of variables we can assume that
$L(S)$ is $\r^{q+1}$ (whose coordinates we denote by $x_1, \ldots, x_{q+1}$),
$M(S)$ is the positive half-space
$x_{q+1} \geq 0$, $G(S)$ is $\z^{q+1}$ and the index $m(S)$ is $1$.
%In the rest of the proof we work in $\r^{q+1}$ and will use
%its intrinsic topology.
To make the notation simpler denote $\Con(S)$ by $\Con_2$.
Take any $(q+1)$-dimensional {closed convex cone} $\Con_1$ such that
1) $\Con \subset \Con_1 \subset \Con_2$ and 2) $\Con_1$ intersects the boundaries of the cones $\Con$ and
$\Con_2$ only at the origin. Consider the sections $\Delta(p)\subset
\Delta_1(p)\subset \Delta_2(p)$ of the cones $\Con \subset \Con_1\subset \Con_2$ (respectively)
by the hyperplane $\pi^{-1}(p)$, for some positive integer $p$. Take $N_1>0$ large enough
so that for any integer $p > N_1$ a ball of radius $\sqrt{q}$ centered at any point of the convex body
$\Delta(p)$ is contained in $\Delta_1(p)$. Then by Lemma \ref{lem-ball} the convex body $\Delta_1(p)$
is contained in the convex hull of the set of integral points in $\Delta_1(p)$.
Also by Theorem \ref{th-semi-gp-approximation} (approximation theorem) there is
$N_2>0$ such that for $p > N_2$
the semigroup $S$ contains all the integral points in $\Delta_1(p)$.
Thus if $p > N = \max\{N_1, N_2\}$, the convex hull of
the set $S_p$ contains the convex body $\Delta(p)$. This proves Part 1).
Moreover, since for $p>N$ $\Delta_1(p)$ contains a ball of radius $\sqrt{q}$ and $S_p$
contains all the integral points in this ball, by
Lemma \ref{lem-ball} the differences of the integral points in
$S_p$ generates the group $\z^q=\z^{q+1}\cap \pi^{-1}(0)$. This proves Part 2).
\end{proof}

\subsection{Hilbert function of a semigroup $S$ and its subsemigroups $\widehat{S}_p$}
%{\bf Explain the relation with Fujita, maybe after Thm \ref{th-1.35}.}
Let $S$ be a strongly non-negative semigroup with the Hilbert function $H_S$.
For an integer $p$ in the support of $H_S$ let
$\widehat{S}_p$ denote the subsemigroup generated by $S_p = S \cap \pi^{-1}(p)$.
In this section we compare the asymptotic of $H_S$ with the asymptotic, as $p \to \infty$,
of the Hilbert functions of the semigroups $\widehat{S}_p$.
%To any natural number $p$ with $S_p \neq \emptyset$ we associate
%the semigroup $\widehat{S}_p \subset S$ generated by $S_p$.

Later in Sections \ref{subsec-application-valuation} and \ref{subsec-linear-series}
we will apply the
results here to prove a generalization of the Fujita approximation theorem (from the theory of divisors).
Thus we consider the main result
of this section (Theorem \ref{th-1.35}) as an analogue of the Fujita approximation theorem for semigroups.

We will follow the notation introduced in Section \ref{sec-non-negative}. In particular,
$\Delta(S)$ is the Newton-Okounkov body of the semigroup $S$,
$q = \dim \Delta(S)$ its dimension, and $m(S)$ and
$\ind(S)$, the indices associated to $S$.
Also $\Con(\widehat S_{p})$,
$G(\widehat S_{p})$, $H_{\widehat S_{p}}$, $\Delta (\widehat
S_{p})$, $G_0(\widehat S_p)$, $\ind (\widehat S_{p})$, $m(\widehat
S_{p})$, denote the corresponding objects for the semigroup $\widehat{S}_p$.
If $S_p = \emptyset$ put $\widehat S_{p}=\Delta(\widehat
S_{p})=\widehat G_0(S_{p})=\emptyset$ and
$H_{\widehat S_{p}}\equiv 0$.

The next proposition is straightforward to verify:
\begin{Prop}
If the set $S_p$ is not empty then $m(\widehat S_p)=p$,
$\Delta (\widehat S_{p})$ is the convex hull of $S_p$,
the cone $\Con(\widehat S_{p})$ is the cone over $\Delta (\widehat
S_{p})$, $G(\widehat S_{p})$
is the group generated by the set $S_p$ and
$G_0(\widehat S_{p})=G(\widehat S_{p}) \cap \pi^{-1}(0)$ is the group generated by the differences
$a-b$, $a,b\in S_p$. Also $\Con(\widehat S_{p})\subset \Con(S)$.
If $p$ is not divisible by $m(S)$ then $S_p=\emptyset$.
\end{Prop}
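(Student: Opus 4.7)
The plan is to go through the list of assertions, each of which follows by essentially unpacking the definitions of $\widehat{S}_p$, $G$, $\Con$, $\Delta$, $m$ and of the projection $\pi$. The strategy is to observe first that $\widehat{S}_p$ consists precisely of all finite sums $a_1+\cdots+a_r$ with $a_i\in S_p$, so that many structural objects attached to $\widehat{S}_p$ are obtained by ``extending by $\n$-linearity'' the structural objects attached to the finite set $S_p$.

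First I would handle the group-theoretic statements. The group $G(\widehat{S}_p)$ is by definition generated by $\widehat{S}_p$, but since every element of $\widehat{S}_p$ is a $\z_{\geq 0}$-linear combination of elements of $S_p$, this group coincides with the group $\langle S_p\rangle_\z$ generated by $S_p$. Under $\pi$, elements of $S_p$ map to $p$, so $\pi(G(\widehat{S}_p))=p\z$, giving $m(\widehat{S}_p)=p$. Intersecting with $\ker\pi$, the elements of $G(\widehat{S}_p)$ in $\pi^{-1}(0)$ are exactly the combinations $\sum k_i a_i$ (with $a_i\in S_p$, $k_i\in\z$) for which $\sum k_i=0$; such a combination can always be rewritten as a sum of differences $a_i-a_j$. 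This gives the description of $G_0(\widehat{S}_p)$.

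Next I would handle the convex-geometric statements. The cone $\Con(\widehat{S}_p)$ is the closure of the set of nonnegative linear combinations of elements of $\widehat{S}_p$; since $\widehat{S}_p$ is itself the set of $\z_{\geq 0}$-combinations of elements of $S_p$, this is the same as the closure of the set of nonnegative real combinations of elements of $S_p$, i.e. the cone over $\conv(S_p)$. Slicing this cone by $\pi^{-1}(p)$ recovers $\conv(S_p)$ itself (because $\pi$ is linear and constant equal to $p$ on $S_p$), which gives $\Delta(\widehat{S}_p)=\conv(S_p)$. The containment $\Con(\widehat{S}_p)\subset \Con(S)$ is immediate from $\widehat{S}_p\subset S$ and the monotonicity of the cone construction.

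Finally, for the last assertion, $m(S)$ is by definition the index of $\pi(G(S))$ in $\z$, so $\pi(G(S))=m(S)\z$ and in particular $\pi(S)\subset m(S)\z$. Therefore if $p\notin m(S)\z$, then $S_p=S\cap\pi^{-1}(p)$ is empty. There is no genuine obstacle in any of these verifications; the only point requiring slight care is to remember that $\Con(\widehat{S}_p)$ is defined as a \emph{closure} of a cone, but since $\conv(S_p)$ is already compact (as the convex hull of a finite set---or more precisely, one reduces to this after noting $S_p$ generates $\widehat{S}_p$), taking the cone over it is automatically closed, so no passage to the closure is actually needed.
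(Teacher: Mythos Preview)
Your argument is correct and complete. The paper itself does not give a proof of this proposition; it simply states that it ``is straightforward to verify,'' so your write-up is exactly the kind of routine unpacking of definitions the authors had in mind, and there is nothing further to compare.
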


Below we deal with functions defined on a non-negative semigroup
$T\subset \z_{\geq 0}$. A semigroup $T \subset \z_{\geq 0}$ contains any large enough integer divisible by $m=m(T)$.
Let $O_m:\z\to \z$ be the scaling map given by $O_m(k)=mk$.
For any function $f: T\to \r$ and for sufficiently large $p$, the pull-back $O_m^*(f)$
is defined by $O_m^*(f)(k) = f(mk)$.

\begin{Def}
Let $\varphi$ be a function defined on a set of sufficiently large natural
numbers. The $q$-th {\it growth coefficient} $a_q(\varphi)$ is the value of the
limit $\lim_{k\to\infty}\varphi(k)/k^q$ (whenever this limit exists).
\end{Def}

The following is a reformulation of Corollary \ref{cor-1.23}.
\begin{Th} \label{th-1.34}
The $q$-th growth coefficient of the function $O_m^*(H_S)$, i.e.
$$a_q(O^*_m(H_S)) = \lim_{k \to \infty} \frac{H_S(mk)}{k^q},$$ exists and is equal to
$\Vol_q(\Delta (S))/\ind(S).$
\end{Th}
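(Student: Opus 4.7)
The plan is to observe that Theorem \ref{th-1.34} is essentially a direct translation of Corollary \ref{cor-1.23} into the notation of non-negative semigroups, so the proof is just an unpacking of definitions followed by invoking that corollary.

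First I would spell out the correspondence between the objects attached to a strongly non-negative semigroup $S$ and the objects attached to the strongly admissible pair $(S,M(S))$. By the definitions in Section \ref{sec-non-negative}, we have $\Delta(S) = \Delta(S, M(S))$, $\ind(S) = \ind(S, \partial M(S))$, $m(S) = \ind(S,M(S))$, and the Hilbert function $H_S$ is the Hilbert function of the pair, so $H_S(k) = \# S_k = \# (S \cap \pi_{M(S)}^{-1}(k))$. Moreover, the hypothesis that $S$ is strongly non-negative is by definition the statement that $(S,M(S))$ is a strongly admissible pair, which is exactly the setup of Corollary \ref{cor-1.23}. The dimension of $\partial M(S)$ is $q$, the dimension of the Newton-Okounkov body $\Delta(S)$.

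Next I would recall that $O_m^*(H_S)(k) = H_S(mk)$ by definition of the pull-back under the scaling map $O_m$, where $m = m(S)$. Thus the $q$-th growth coefficient of $O_m^*(H_S)$ is exactly
$$a_q(O_m^*(H_S)) = \lim_{k\to\infty} \frac{H_S(mk)}{k^q} = \lim_{k\to\infty} \frac{\# S_{mk}}{k^q}.$$
Applying Corollary \ref{cor-1.23} to the strongly admissible pair $(S, M(S))$, this limit exists and equals $\Vol_q(\Delta(S, M(S)))/\ind(S, \partial M(S)) = \Vol_q(\Delta(S))/\ind(S)$, which is the desired formula.

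There is no substantive obstacle: all the real work has already been done in Theorem \ref{th-5.2} and Corollary \ref{cor-1.23}, which in turn rest on the approximation Theorem \ref{th-semi-gp-approximation} together with the Minkowski-type counting lemma Theorem \ref{th-sum-integral}. The only thing to be careful about is ensuring that the finiteness hypothesis on the level sets $S_k$ implicit in the definition of $H_S$ is automatic here, which it is: strong admissibility forces polynomial (in particular, finite) growth of $\# S_{mk}$ by Theorem \ref{th-5.4}.
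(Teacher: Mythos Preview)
Your proposal is correct and matches the paper's own treatment: the paper simply remarks that Theorem \ref{th-1.34} is a reformulation of Corollary \ref{cor-1.23}, and your argument spells out exactly that dictionary between the strongly non-negative semigroup $S$ and the strongly admissible pair $(S,M(S))$. There is nothing to add.
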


For large enough $p$ divisible by $m(S)$,
$S_p \neq \emptyset$ and the subsemigroups $\widehat S_p$ are defined.
The following theorem holds.
\begin{Th} \label{th-1.35}
For $p$ sufficiently large and divisible by $m=m(S)$ we have:
\begin{enumerate}
\item $\dim \Delta(\widehat S_p) =\dim \Delta (S)=q$.
\item $\ind (\widehat S_p)=\ind (S)$.
\item Let the function $\varphi$ be defined by
$$\varphi(p) = \lim_{t \to \infty} \frac{H_{\widehat S_p}(tp)}{t^q}.$$
That is, $\varphi$ is the $q$-th growth coefficient of $O^*_p(H_{\widehat{S}_p})$.
Then the $q$-th growth coefficient of the function $O_m^*(\varphi)$, i.e. $$
a_q(O^*_m(\varphi)) = \lim_{k \to \infty} \frac{\varphi(mk)}{k^q},$$ exists and is equal to
$a_q(O^*_m(H_S)) = \Vol_q(\Delta(S))/\ind(S).$
\end{enumerate}
\end{Th}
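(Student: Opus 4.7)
The plan is to leverage the refined approximation theorem (Theorem \ref{th-refinement-3.2}) to describe the semigroups $\widehat{S}_p$ precisely for large $p$, and then to apply Theorem \ref{th-1.34} to each $\widehat{S}_p$ to convert $\varphi(p)$ into a volume.

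First I would handle (1) and (2). For any strongly convex cone $\Con \subset \Con(S)$ meeting the boundary of $\Con(S)$ only at the origin, Theorem \ref{th-refinement-3.2} produces an $N$ such that, for all $p > N$ divisible by $m$, the convex hull $\conv(S_p) = \Delta(\widehat{S}_p)$ contains the section $\Con \cap \pi^{-1}(p)$ and the group generated by the differences $x-y$ with $x,y \in S_p$ equals $G_0(S)$. The latter statement is exactly $G_0(\widehat{S}_p) = G_0(S)$, so $\ind(\widehat{S}_p)=\ind(S)$, giving (2). The former statement shows $\dim\Delta(\widehat S_p) \geq q$, while the opposite inequality is automatic from $\Delta(\widehat S_p) \subseteq \Con(S)\cap \pi^{-1}(p)$, a $q$-dimensional set; this gives (1).

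Next I would rewrite $\varphi(p)$ as a volume. Since $m(\widehat{S}_p) = p$ whenever $S_p \neq \emptyset$, Theorem \ref{th-1.34} (applied to $\widehat{S}_p$) states that
\[
\varphi(p) \;=\; \lim_{t\to\infty}\frac{H_{\widehat{S}_p}(tp)}{t^q} \;=\; \frac{\Vol_q(\Delta(\widehat S_p))}{\ind(\widehat S_p)} \;=\; \frac{\Vol_q(\conv(S_p))}{\ind(S)}
\]
for every sufficiently large $p$ divisible by $m$, using (2) in the last equality. Therefore the content of (3) reduces to the claim
\[
\lim_{k\to\infty} \frac{\Vol_q(\conv(S_{mk}))}{k^q} \;=\; \Vol_q(\Delta(S)),
\]
after which dividing by $\ind(S)$ matches the value of $a_q(O_m^*(H_S))$ from Theorem \ref{th-1.34}.

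The remaining limit I would bound from both sides. For the upper bound, $\conv(S_{mk}) \subseteq \Con(S)\cap\pi^{-1}(mk) = k\cdot\Delta(S)$, so $\Vol_q(\conv(S_{mk})) \leq k^q\Vol_q(\Delta(S))$. For the lower bound, fix $\varepsilon>0$ and choose $\Con \subset \Con(S)$ strongly convex, meeting $\partial\Con(S)$ only at the origin, with $\Vol_q(\Con\cap\pi^{-1}(m)) > \Vol_q(\Delta(S))-\varepsilon$; such $\Con$ exist by inner approximation of the cone $\Con(S)$. Then Theorem \ref{th-refinement-3.2} gives, for all sufficiently large $k$,
\[
\conv(S_{mk}) \;\supseteq\; \Con\cap\pi^{-1}(mk) \;=\; k\cdot(\Con\cap\pi^{-1}(m)),
\]
hence $\Vol_q(\conv(S_{mk}))/k^q \geq \Vol_q(\Delta(S))-\varepsilon$. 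Letting $\varepsilon\to 0$ yields the matching lower bound and finishes the proof.

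The main subtlety, rather than a true obstacle, lies in carefully matching the normalizations: one needs to check that the integral volume on $\pi^{-1}(p)$ scales as $k^q$ under multiplication by $k$ and that the lattice used to normalize $\Vol_q$ on $\conv(S_p)$ is consistent with the one used in the statement of Theorem \ref{th-1.34} for $\widehat{S}_p$. Both are immediate once (2) is established, since for large $p$ the semigroups $S$ and $\widehat{S}_p$ share the same horizontal lattice $G_0(S)$.
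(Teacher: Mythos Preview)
Your proof is correct and follows essentially the same route as the paper: invoke Theorem \ref{th-refinement-3.2} for parts (1) and (2), apply Theorem \ref{th-1.34} to $\widehat S_p$ to express $\varphi(p)$ as $\Vol_q(\Delta(\widehat S_p))/\ind(S)$, and then squeeze $\Vol_q(\Delta(\widehat S_p))$ between $\Vol_q(\Con_0\cap\pi^{-1}(p))$ and $\Vol_q(\Con(S)\cap\pi^{-1}(p))$ for an inner cone $\Con_0$ chosen arbitrarily close to $\Con(S)$. Your remark about the volume normalizations is a helpful addition but, as you note, is immediate once $G_0(\widehat S_p)=G_0(S)$.
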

\begin{proof}
1) Follows from Theorem \ref{th-refinement-3.2}(1).
2) Follows from Theorem \ref{th-refinement-3.2}(2).
3) By Theorem \ref{th-1.34}, applied to the semigroup
$\widehat S_p$ we have:
$$\varphi(p)= \frac{\Vol_q(\Delta (\widehat S_p))}{\ind(S)}.$$
Now we use Theorem \ref{th-refinement-3.2} to estimate the quantity
$\Vol_q(\Delta(\widehat S_p))$. Let
$\Con_0$ be a $(q+1)$-dimensional {closed cone} contained in $\Con(S)$ which
intersects its boundary (in the topology of the space $L(S)$) only at the origin.
Then, for sufficiently large $p$ and divisible by $m$, the volume $\Vol_q(\Delta (\widehat S_p))$
satisfies the inequalities
$$ \Vol_q(\Con _0\cap \pi ^{-1}(p)) < \Vol_q(\Delta (\widehat S_p))< \Vol_q(\Con(S) \cap \pi ^{-1}(p)).$$
Let $p=km$. Dividing the inequalities above by $k^q \ind(S)$ we obtain
$$\frac { \Vol_q(\Con _0\cap \pi ^{-1}(m))}{\ind(S)} < \frac {\varphi(mk)}{ k^q}<\frac {\Vol_q(\Con(S) \cap \pi ^{-1}(m))}{\ind (S)}=a_q(O^*_m(H_S)).$$
Since we can choose $\Con_0$ as close as we want to $\Con(S)$ this proves Part 3).
\end{proof}

%{\bf Theorem \ref{th-1.35} is a generalization of Theorem ... in \cite{Lazarsfeld-Mustata}.
%As it is explained in \cite{Lazarsfeld-Mustata} this is a generalization of the Fujita approximation
%theorem on the approximation of the volume of a big divisor with very ample divisors.}

\subsection{Levelwise addition of semigroups}
In this section we define the levelwise addition of non-negative semigroups,
and we consider a subclass of semigroups for which the $n$-th growth
coefficient of the Hilbert function depends on the
semigroup in a polynomial way.

Let $\pi_1:\r^n\times \r \to \r^n$ and $\pi:\r^n\times \r \to \r$ be the projections on
the first and second factors respectively.
Define the operation of levelwise addition $\oplus_t$ on
the pairs of points with the same last coordinate by: $$({\bf x}_1, h)\oplus_t ({\bf x}_2,h) =
({\bf x}_1+{\bf x}_2,h),$$ where $ {\bf x}_1,{\bf x}_2\in \r^n $,
$h\in \r$. In other words, if ${\bf e}$
is the $(n+1)$-th standard basis vector in $\r^n \times \r$ and
${\bf y}_1 = ({\bf x}_1, h)$, ${\bf y}_2 = ({\bf x}_2, h) \in \r^n \times \r$, then
we have ${\bf y}_1\oplus_t {\bf y}_2={\bf y}_1+{\bf y}_2-h{\bf e}$.

Next we define the operation of levelwise addition between any two subsets. Let $X$, $Y \subset \r^n\times \r$.
Then $X\oplus_t Y =Z$ where $Z$
is the set such that for any $h \in \r$
we have: $$\pi_1 (Z \cap \pi^{-1} (h))= \pi_1
(X \cap \pi^{-1} (h))+ \pi_1 (Y \cap \pi^{-1}(h)).$$
(By convention the sum of the empty set with any other set is the empty set.)

The following proposition can be easily verified.
\begin{Prop}
For any two non-negative semigroups
$S_1,S_2$, the set $S=S_1\oplus _t S_2$ is a non-negative semigroup and the following holds:
\begin{enumerate}
\item $L(S)=L(S_1)\oplus_t L(S_2)$.

\item $M(S)=M(S_1)\oplus_t M(S_2)$.

\item $\partial M(S)=\partial M(S_1)\oplus _t \partial M(S_2)$.

\item $G(S)= G(S_1)\oplus _t G(S_2)$.

\item $G_0(S) = G_0(S_1) + G_0(S_2)$.
\end{enumerate}
\end{Prop}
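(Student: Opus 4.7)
The plan is to verify that $S$ is a non-negative semigroup, prove (1) directly via a ``fold'' trick, deduce (2) and (3) from (1), prove (5) using the refined approximation result (Theorem \ref{th-refinement-3.2}), and finally obtain (4) by combining (5) with an index comparison. Closure of $S$ under ordinary vector addition is immediate: if $(a_1 + a_2, h)$ and $(b_1 + b_2, k)$ belong to $S$ with $(a_i, h), (b_i, k) \in S_i$, then $(a_i + b_i, h+k) \in S_i$ and so $(a_1 + a_2 + b_1 + b_2, h+k) \in S$; non-negativity is inherited from the $S_i$.

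For (1), the inclusion $L(S) \subset L(S_1) \oplus_t L(S_2)$ is immediate once one observes that the right-hand side is a linear subspace of $\mathbb{R}^{n+1}$, which follows from the level-wise description together with the fact that each $L(S_i)$ meets every hyperplane $\pi = h$ (since each $S_i$ has positive-level elements). For the reverse, take arbitrary $(x_i, h) \in L(S_i)$ and fix $(e_1, q) \in S_1$, $(e_2, p) \in S_2$ with $p, q > 0$. If $(x_1, h) = \sum \lambda_j (a_j, h_j)$ with $(a_j, h_j) \in S_1$, then the integer multiples $p(a_j, h_j) \in S_1$ and $h_j(e_2, p) \in S_2$ both lie at level $p h_j$, so their Minkowski sum $(p a_j + h_j e_2, p h_j)$ belongs to $S$. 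Forming $\sum \lambda_j (p a_j + h_j e_2, p h_j) = (p x_1 + h e_2, p h)$ and dividing by $p$ (valid since $L(S)$ is an $\mathbb{R}$-subspace) gives $(x_1 + (h/p) e_2, h) \in L(S)$. A symmetric construction yields $(x_2 + (h/q) e_1, h) \in L(S)$, and scaling $(p e_1 + q e_2, p q) \in S$ by $h/(pq)$ gives $((h/q) e_1 + (h/p) e_2, h) \in L(S)$. The combination (first) $+$ (second) $-$ (third) produces $(x_1 + x_2, h) \in L(S)$, completing (1). Statements (2) and (3) follow by intersecting (1) with $\{\pi \geq 0\}$ and $\{\pi = 0\}$ respectively, since these operations commute with $\oplus_t$.

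For (5), only the inclusion $G_0(S_1) + G_0(S_2) \subset G_0(S)$ requires argument; the reverse is immediate from the Minkowski sum at level $0$. The main obstacle here is that a generator $a - a'$ of $G_0(S_1)$ may come from a level where $S_2$ has no element, so it cannot be written directly as a same-level difference in $S$. This is bypassed by applying Theorem \ref{th-refinement-3.2}(2) to $S_1$ and $S_2$ separately: for any sufficiently large $p$ divisible by $\mathrm{lcm}(m(S_1), m(S_2))$, both $S_{i,p}$ are nonempty and the same-level differences in $S_{i,p}$ already generate all of $G_0(S_i)$. For such a generator $a - a'$ with $(a, p), (a', p) \in S_1$, picking any $(e_2, p) \in S_2$ yields $(a + e_2, p), (a' + e_2, p) \in S$ and hence $a - a' \in G_0(S)$. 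The symmetric argument handles $G_0(S_2)$.

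Finally for (4): the set $G(S_1) \oplus_t G(S_2)$ is itself a group under ordinary vector addition (closure and inverses inherited levelwise from each $G(S_i)$) and contains $S$, so $G(S) \subset G(S_1) \oplus_t G(S_2)$. To upgrade this to equality, note that both groups have the same $\pi$-kernel, namely $G_0(S_1) + G_0(S_2)$ by (5). It remains to check that their $\pi$-images also coincide; the image $\pi(G(S_1) \oplus_t G(S_2))$ equals $m(S_1)\mathbb{Z} \cap m(S_2)\mathbb{Z} = \mathrm{lcm}(m(S_1), m(S_2))\mathbb{Z}$, and this equals $m(S)\mathbb{Z} = \pi(G(S))$ because the support of $S$ is the intersection of the supports of $S_1, S_2$, which by Proposition \ref{prop-4.1}(3) is eventually an arithmetic progression with common difference $\mathrm{lcm}(m(S_1), m(S_2))$. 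Two subgroups with matching image and matching kernel coincide, yielding (4).
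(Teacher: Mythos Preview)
The paper does not actually prove this proposition; it simply states that it ``can be easily verified.'' Your write-up is therefore considerably more detailed than anything in the paper, and apart from one small slip it is correct.

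The slip is in your proof of the inclusion $L(S_1)\oplus_t L(S_2)\subset L(S)$. When you expand $(x_1,h)=\sum_j\lambda_j(a_j,h_j)$ with $(a_j,h_j)\in S_1$ and then claim that $h_j(e_2,p)\in S_2$, this fails whenever $h_j=0$: the element $(0,0)$ need not lie in the semigroup $S_2$, and consequently $(pa_j,0)$ need not lie in $S$. The fix is the same ``fold'' idea you already use: replace each $(a_j,h_j)$ by $(a_j,h_j)+(e_1,q)\in S_1$, which lies at the strictly positive level $h_j+q$, carry out your construction, and subtract off the extra $(e_1,q)$-contributions at the end (they cancel as in your third displayed term). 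With this adjustment your argument for (1) goes through, and (2), (3) follow exactly as you say.

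Your route through (5) and then (4) is correct. Theorem~\ref{th-refinement-3.2} does apply to an arbitrary non-negative semigroup (a strongly convex full-dimensional subcone of $\textup{Con}(S_i)$ meeting the boundary only at $0$ always exists), so invoking part~(2) of that theorem to realize every element of $G_0(S_i)$ as a level-$p$ difference for a single large $p$ divisible by $\mathrm{lcm}(m(S_1),m(S_2))$ is legitimate and not circular. One can also argue (5) more elementarily, without Theorem~\ref{th-refinement-3.2}: any $g\in G_0(S_1)$ can be written as $s-s'$ with $s,s'\in S_1$ at the \emph{same} level $h$ (separate positive and negative coefficients in $g=\sum k_i s_i$), and then adding a suitable element of $S_1$ shifts both to a common large level in $\mathrm{lcm}(m(S_1),m(S_2))\cdot\z$, where $S_2$ is nonempty. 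This is essentially the content of Theorem~\ref{th-refinement-3.2}(2) unpacked by hand, so your choice to cite it is reasonable. Your deduction of (4) from (5) by matching $\pi$-kernels and $\pi$-images is clean and correct.
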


Let us say that a non-negative semigroup has {\it almost all levels} if
$m(S)=1$. Also for a non-negative semigroup $S$, let $\Delta_0(S)$ denote its Newton-Okounkov convex set
shifted to level $0$, i.e. $\Delta_0(S) = \pi_1(\Delta(S))$.
\begin{Prop} \label{prop-levelwise}
For non-negative semigroups
$S_1,S_2$ and $S=S_1\oplus_t S_2$, the following relations hold:
1) The cone $\Con(S)$ is the closure of the levelwise addition
$\Con(S_1)\oplus_t \Con(S_2)$ of the cones $\Con(S_1)$ and $\Con(S_2)$.
2) If the semigroups $S_1, S_2$ have almost all levels then
the Newton-Okounkov set $\Delta(S)$ is the closure of the levelwise addition
$\Delta(S_1) \oplus_t \Delta (S_2)$ of the Newton-Okounkov sets $\Delta(S_1)$ and $\Delta (S_2)$.
(In fact, since in this case the Newton-Okounkov convex sets live in the level $1$, we have that
$\Delta_0(S)$ is the closure of the Minkowski sum $\Delta_0(S_1) + \Delta_0(S_2)$.)
\end{Prop}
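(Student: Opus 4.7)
The plan is to prove Part 1 by a double inclusion and to deduce Part 2 by slicing the resulting identity at level $1$.

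For the inclusion $\Con(S) \subseteq \overline{\Con(S_1) \oplus_t \Con(S_2)}$, the first step is to check that $\Con(S_1) \oplus_t \Con(S_2)$ is a convex cone: levelwise addition distributes over positive scaling, and a convex combination of two same-level sums in different levels descends to a sum at the weighted-average level. Since every $s \in S$ is of the form $s_1 \oplus_t s_2$ with $s_i \in S_i \subseteq \Con(S_i)$, the cone $\Con(S_1) \oplus_t \Con(S_2)$ contains $S$, so its closure is a closed convex cone containing $S$ and therefore contains $\Con(S)$.

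For the reverse inclusion, since $\Con(S)$ is closed it is enough to show $\Con(S_1) \oplus_t \Con(S_2) \subseteq \Con(S)$. Take $(a_i,h) \in \Con(S_i)$ and set $a = a_1 + a_2$; the goal is $(a,h) \in \Con(S)$. Handle first the case $h > 0$ with each $(a_i,h)$ in the relative interior of $\Con(S_i) \cap \pi^{-1}(h)$ inside $L(S_i) \cap \pi^{-1}(h)$. Because $\Con(S_i)$ is full-dimensional in $L(S_i)$ and its ridge is contained in $\pi^{-1}(0)$, one can pick a full-dimensional strongly convex subcone $\Con_i \subseteq \Con(S_i)$ meeting $\partial \Con(S_i)$ only at the origin and with $(a_i,h)$ in its relative interior. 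Theorem \ref{th-refinement-3.2} applied to $S_i$ then gives, for every $p$ sufficiently large and divisible by both $m(S_1)$ and $m(S_2)$, that $\conv(S_{i,p}) \supseteq \Con_i \cap \pi^{-1}(p)$, so $(a_i p/h,\, p) = \sum_k \alpha_k^{(i)}(x_k^{(i)},\, p)$ for convex coefficients $\alpha_k^{(i)}$ with $(x_k^{(i)},p) \in S_{i,p}$. Multiplying the two partitions of unity yields
\begin{equation*}
(ap/h,\, p) \;=\; \sum_{k,l} \alpha_k^{(1)} \alpha_l^{(2)} \bigl(x_k^{(1)} + x_l^{(2)},\, p\bigr),
\end{equation*}
which displays $(ap/h,p)$ as a convex combination of levelwise sums $(x_k^{(1)} + x_l^{(2)},p) \in S_p$; hence $(ap/h,p) \in \conv(S_p) \subseteq \Con(S)$, and rescaling by $h/p$ gives $(a,h) \in \Con(S)$. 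Points on the relative boundary at level $h > 0$ reduce to this case by approximation from the relative interior, and the case $h = 0$ follows by approximating $(a_i,0)$ with $(1-1/n)(a_i,0) + (1/n)(b_i,1) \in \Con(S_i)$ for some $(b_i,1)$ in the relative interior of $\Con(S_i) \cap \pi^{-1}(1)$ and passing to the limit.

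Part 2 will follow by intersecting the Part 1 identity with $\pi^{-1}(1)$. The hypothesis $m(S_1) = m(S_2) = 1$, together with $G(S) = G(S_1) \oplus_t G(S_2)$ from the preceding proposition, forces $m(S) = 1$, so $\Delta(S_i)$ and $\Delta(S)$ all live at level $1$. For any convex cone $A$ in $\r^n \times \r_{\geq 0}$ meeting $\pi^{-1}(h)$ for some $h > 0$, radial rescaling of approximating sequences gives $\overline{A} \cap \pi^{-1}(1) = \overline{A \cap \pi^{-1}(1)}$; applying this with $A = \Con(S_1) \oplus_t \Con(S_2)$ and using that the level-$1$ slice of this cone is $(\Con(S_1) \cap \pi^{-1}(1)) + (\Con(S_2) \cap \pi^{-1}(1)) = \Delta(S_1) + \Delta(S_2)$ produces $\Delta(S) = \overline{\Delta(S_1) \oplus_t \Delta(S_2)}$. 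Projecting by $\pi_1$ then yields the Minkowski-sum form $\Delta_0(S) = \overline{\Delta_0(S_1) + \Delta_0(S_2)}$.

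The hard step is the reverse inclusion in Part 1. Naively approximating each $(a_i,h)$ by a nonnegative combination of $S_i$-elements fails, because the sum of a combination from $S_1$ with one from $S_2$ is a nonnegative combination from $S_1 \cup S_2$, whose closed cone can be strictly larger than $\Con(S)$. The tensor-product expansion above is what converts two separate convex combinations into a single convex combination of genuine $\oplus_t$-sums; this conversion only works at integer levels $p$ after radial rescaling, and it requires Theorem \ref{th-refinement-3.2} to guarantee that $\conv(S_{i,p})$ is large enough to contain the rescaled points.
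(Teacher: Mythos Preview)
Your proof is correct, but it takes a considerably heavier route than the paper's for the inclusion $\Con(S_1)\oplus_t\Con(S_2)\subset\Con(S)$. The paper disposes of both inclusions in one line (``easy to see that $S_1\oplus_t S_2\subset \Con(S_1)\oplus_t \Con(S_2)\subset \Con(S)$ and the set $\Con(S_1)\oplus_t \Con(S_2)$ is dense in $\Con(S)$'') and deduces Part~2 immediately from Part~1.

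The substantive difference is your invocation of Theorem~\ref{th-refinement-3.2}. That theorem rests on the approximation theorem and on Lemma~\ref{lem-ball}, and is not needed here. The same tensor-product trick you use works directly from the semigroup axiom: given $(a_i,h)$ a finite nonnegative combination $\sum_j\lambda_j^{(i)}(x_j^{(i)},k_j^{(i)})$ of elements of $S_i$, let $N$ be a common multiple of all the positive $k_j^{(i)}$; then each $(x_j^{(i)},k_j^{(i)})$ with $k_j^{(i)}>0$ can be replaced by its $(N/k_j^{(i)})$-th multiple, which lies in $S_i$ at level $N$. After this rewriting both sums are combinations of level-$N$ elements with the \emph{same} total weight $h/N$, and your product-of-coefficients expansion then expresses $(a_1+a_2,h)$ as a nonnegative combination of elements $(x^{(1)}+x^{(2)},N)\in S$. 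Closure handles the limit points. This is presumably what the paper means by ``easy to see''; it uses only that $S_i$ is closed under addition, not the approximation theorem. Your remark that ``this conversion only works at integer levels $p$ \ldots\ and it requires Theorem~\ref{th-refinement-3.2}'' is therefore too pessimistic: integer levels are already available from the semigroup structure.

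Your treatment of Part~2, including the slice lemma $\overline{A}\cap\pi^{-1}(1)=\overline{A\cap\pi^{-1}(1)}$ for convex cones meeting a positive level, is correct and more explicit than the paper's, which just says ``follows from Part~1)''.
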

\begin{proof}
1) It is easy to see that
$S_1\oplus_t S_2\subset \Con(S_1)\oplus_t \Con(S_2)\subset \Con(S)$
and the set $\Con(S_1)\oplus_t \Con(S_2)$ is dense in $\Con(S)$.
Note that the set $\Con(S_1)\oplus_t \Con(S_2)$ may not be closed (see Example
\ref{ex-2} below). 2) Follows from Part 1). Note that the Minkowski sum of
closed convex subsets may not be closed (see Example \ref{ex-1}).
\end{proof}

\begin{Ex} \label{ex-1}
Let $\Delta_1$, $\Delta_2$
be closed convex sets in $\r^2$ with coordinates $(x,y)$
defined by $\{(x,y) \mid xy\geq 1, y>0\}$ and
$\{(x,y) \mid -xy\geq 1, y>0\}$ respectively.
Then the Minkowski sum $\Delta_1 +\Delta_2$
is the open upper half-plane $\{(x,y) \mid y>0\}$.
%{(x,y)|xy\geq 1, y>0} and {(x,y)| xy\leq -1, y>0}
\end{Ex}

\begin{Ex} \label{ex-2}
Let $\Delta_1$, $\Delta_2$ be the sets from Example \ref{ex-1}
and let $\Delta_1 \times \{1\}$,
$\Delta_2 \times \{1\}$ in $\r^2 \times \r$ (with coordinates $(x,y, z)$)
be the shifted copies of these sets to the plane $z=1$. Let $\Con_1$ and
$\Con_2$ be the closures of the cones over these sets.
Then $\Con_1\oplus _t \Con_2$ is a non-closed cone which is the union of the set
$\{(x,y,z) \mid 0\leq z, 0< y\}$ and the line $\{(x,y,z) \mid y=z=0\}$.
\end{Ex}

\begin{Prop} \label{prop-strong-levelwise}
Let $S_1$ be a strongly non-negative semigroup and $S_2$ a
non-negative semigroup. Let $S=S_1\oplus S_2$. Then
$\Con(S)= \Con(S_1)\oplus_t \Con(S_2)$ and
$\Reg (S)= \Reg(S_1) \oplus _t \Reg(S_2)$.
If in addition, $S_1, S_2$ have almost all levels, then
$\Delta (S) = \Delta(S_1)\oplus_t \Delta (S_2)$. (In other words,
$\Delta_0(S) = \Delta_0(S_1) + \Delta_0(S_2)$.)
\end{Prop}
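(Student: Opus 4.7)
The plan is to reduce all three assertions of the proposition to a single closedness statement. By Proposition~\ref{prop-levelwise} we already know that $\Con(S)$ is the closure of the levelwise sum $\Con(S_1) \oplus_t \Con(S_2)$ and, in the almost-all-levels case, that $\Delta(S)$ is the closure of $\Delta(S_1) \oplus_t \Delta(S_2)$. The strong non-negativity of $S_1$ is used precisely to remove the closure operation, after which the remaining two assertions should follow by combining the cone equality with part~(4) of the preceding proposition.

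The first step is to show that $\Con(S_1) \oplus_t \Con(S_2)$ is already closed. Strong non-negativity of $S_1$ forces each slice $\Con(S_1) \cap \pi^{-1}(h)$ to be compact: if it were unbounded, normalizing a sequence whose norms go to infinity would produce a non-zero limit direction in $\Con(S_1) \cap \partial M(S_1)$, contradicting strong non-negativity. Now given a convergent sequence $(x_k, h_k) \to (x, h)$ in $\Con(S_1) \oplus_t \Con(S_2)$, write $x_k = \xi_{1,k} + \xi_{2,k}$ with $(\xi_{i,k}, h_k) \in \Con(S_i)$. Since $h_k$ stays in a bounded interval, the compactness above keeps $\xi_{1,k}$ bounded in $L(S_1)$, so after passing to a subsequence $\xi_{1,k} \to \xi_1 \in \Con(S_1) \cap \pi^{-1}(h)$, and $\xi_{2,k} \to x - \xi_1 \in \Con(S_2) \cap \pi^{-1}(h)$. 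This exhibits $(x, h)$ in the sum and yields the first equality.

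For the regularization, the inclusion $\Reg(S_1) \oplus_t \Reg(S_2) \subseteq \Reg(S)$ is immediate from the first equality together with part~(4) of the preceding proposition, since a levelwise sum of elements of $\Con(S_i) \cap G(S_i)$ lies in $\Con(S) \cap G(S) = \Reg(S)$. For the reverse inclusion, given $(x, h) \in \Reg(S)$, combine a cone decomposition $x = \xi_1 + \xi_2$ from the first equality with a group decomposition $x = \eta_1 + \eta_2$ from part~(4); the correction $w = \eta_1 - \xi_1 = \xi_2 - \eta_2$ lies in $L_0(S_1) \cap L_0(S_2)$, and I would select $\alpha \in G_0(S_1) \cap G_0(S_2)$ so that $a := \eta_1 + \alpha \in \Con(S_1) \cap \pi^{-1}(h)$ and $b := \eta_2 - \alpha \in \Con(S_2) \cap \pi^{-1}(h)$ simultaneously, exploiting the freedom to move $\xi_1$ within the compact convex set $\Con(S_1) \cap (x - \Con(S_2)) \cap \pi^{-1}(h)$ of admissible cone decompositions. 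The $\Delta$ equality is then the easy consequence: the almost-all-levels hypothesis forces $m(S_i) = m(S) = 1$, so $\Delta(S_i)$ and $\Delta(S)$ are level-$1$ slices of the respective cones, and the claim is the restriction of the first equality to the hyperplane $\pi^{-1}(1)$; projecting along $\pi_1$ converts the levelwise sum into the Minkowski sum $\Delta_0(S) = \Delta_0(S_1) + \Delta_0(S_2)$.

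The routine parts are the first and third steps (closedness by a compactness argument, and the $\Delta$ statement by slicing). The main obstacle will be the reverse inclusion in the $\Reg$ equality: the correction vector $w$ is a priori only in $L_0(S_1) \cap L_0(S_2)$, not in the lattice $G_0(S_1) \cap G_0(S_2)$, so one must carefully exploit the compact convex family of cone decompositions (again supplied by strong non-negativity of $S_1$), together with the available lattice of common translations, to reconcile the cone and group decompositions into a single one.
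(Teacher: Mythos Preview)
Your argument for the closedness of $\Con(S_1)\oplus_t\Con(S_2)$ is correct and is essentially the paper's proof, which packages the same compactness idea as the properness of the addition map $F:D\to\r^{n+1}$, $F(y_1,y_2)=y_1\oplus_t y_2$, on $D=\{(y_1,y_2)\in\Con(S_1)\times\Con(S_2):\pi(y_1)=\pi(y_2)\}$. In both versions the crucial input is that $\Con(S_1)\cap\pi^{-1}([a,b])$ is compact, which is exactly strong non-negativity of $S_1$. Your deduction of the $\Delta$-equality by slicing the $\Con$-equality at level $1$ is likewise what the paper does.

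You are right to flag the reverse inclusion $\Reg(S)\subseteq\Reg(S_1)\oplus_t\Reg(S_2)$ as the real obstacle; in fact it cannot be overcome, because the asserted equality is false as stated. Take $n=1$, let $S_1$ be the semigroup generated by $(0,1),(3,1)$ and $S_2$ the semigroup generated by $(1,1),(3,1)$. Then $S_1$ is strongly non-negative, $G_0(S_1)=3\z$, $G_0(S_2)=2\z$, and $G(S)=G(S_1)\oplus_t G(S_2)=\z^2$ since $3\z+2\z=\z$. Also $\Con(S)=\{(x,z):z\le x\le 6z\}$, so $(2,1)\in\Reg(S)$. But at level $1$ one has $\Reg(S_1)_1=\{0,3\}$ and $\Reg(S_2)_1=\{1,3\}$, whose sum $\{1,3,4,6\}$ misses $2$. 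Your sketch breaks exactly where you predicted: the correction $w=\eta_1-\xi_1$ lies in $L_0(S_1)\cap L_0(S_2)=\r\times\{0\}$, but the lattice of available common shifts is $G_0(S_1)\cap G_0(S_2)=3\z\cap 2\z=6\z$, which is far too coarse to land $\eta_1+\alpha$ in the length-$3$ interval $\Con(S_1)\cap\pi^{-1}(1)$. The paper's ``the other statements follow from this and Proposition \ref{prop-levelwise}'' glosses over this; only the easy inclusion $\Reg(S_1)\oplus_t\Reg(S_2)\subseteq\Reg(S)$ is valid in general. Since this $\Reg$-equality is not used anywhere else in the paper, you may simply drop it and keep the $\Con$- and $\Delta$-statements.
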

\begin{proof}
Let $D$ be the set of pairs $({\bf y}_1,{\bf y}_2) \in \Con(S_1)\times \Con(S_2)$
defined by the condition
$\pi({\bf y}_1)= \pi({\bf y}_2)$. Let us show that the map
$F:D\to \r^n\times \r$ given by
$F({\bf y}_1,{\bf y}_2)= {\bf y}_1 \oplus _t {\bf y}_2$ is proper.
Consider a compact set $K \subset \r^n\times \r$.
The function $x_{n+1}$ is bounded on the compact set $K$, i.e.
there are constants $N_1, N_2$ such that $N_1 \leq x_{n+1}\leq N_2$.
The subset $K_1$ in the cone $\Con(S_1)$ defined by the inequalities
$N_1 \leq x_{n+1}\leq N_2$ is compact.
Consider the set $K_2$ consisting of the points ${\bf y}_2 \in \Con(S_2)$ for which there
is ${\bf y}_1\in K_1$ such that ${\bf y}_1\oplus _t {\bf y}_2 \in K$.
The compactness of $K$ and $K_1$ implies that $K_2$ is also
compact and hence the map $F$ is proper.
The properness of $F$ implies that the sum $\Con(S_1)\oplus_t \Con(S_2)$ is closed which proves
$\Con(S) = \Con(S_1) \oplus_t \Con(S_2)$. The other statements follow from this
and Proposition \ref{prop-levelwise}.
\end{proof}

Finally, let us define $\mathcal{S}(n)$ to be the collection of all
strongly non-negative semigroups $S \subset \z^n\times \z_{\geq 0}$ with almost all levels, i.e. $m(S)=1$, and
$\ind(S)=1$. The set ${\mathcal S}(n)$ is a (commutative) semigroup with respect to the levelwise addition.

%Finally, let us define a subclass of non-negative semigroups (which itself is a semigroup with
%respect to the levelwise addition):
%
%\begin{enumerate}
%\item ${\mathcal S}(n)$ is the collection of all strongly non-negative semigroups in
%$\z^n\times \z_{\geq 0}$ with almost all levels, i.e the semigroups
%$S$ with $m(S)=1$. The set ${\mathcal S}(n)$
%is a (commutative) semigroup with respect to the levelwise addition.
%
%\item $\mathcal{S}_1(n)$ is the subsemigroup of ${\mathcal S}(n)$,
%consisting of the semigroups $S$ with $\ind(S) = 1$.
%\end{enumerate}

Let $f: \mathcal{S} \to \r$ be a function defined on a (commutative) semigroup $\mathcal{S}$. We say that
$f$ is a {\it homogeneous polynomial of degree $d$} if for any choice of the elements
$a_1, \ldots, a_r \in \mathcal{S}$, the function $F(k_1, \ldots, k_r) = f(k_1a_1+ \cdots + k_ra_r)$,
where $k_1, \ldots, k_r \in \z_{\geq 0}$, is a homogeneous polynomial of degree $d$ in the $k_i$.

\begin{Th}
The function on $\mathcal{S}(n)$ which associates to a semigroup
$S \in \mathcal{S}(n)$, the $n$-th growth coefficient of its Hilbert
function, is a homogeneous polynomial of degree $n$. The value of the polarization of this
polynomial on an $n$-tuple $(S_1, \ldots, S_n)$ is equal to the mixed volume of
the Newton-Okounkov bodies $\Delta(S_1), \ldots, \Delta(S_n)$.
\end{Th}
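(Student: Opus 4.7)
The plan is to reduce the statement to Minkowski's classical theorem on mixed volumes via the Newton--Okounkov body construction.

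First, I would observe that for any $S \in \mathcal{S}(n)$, the conditions $m(S) = 1$ and $\ind(S) = 1$ force $G_0(S) = \z^n$, so $\dim \partial M(S) = n$ and consequently $q := \dim \Delta(S) = n$. Applying Theorem \ref{th-1.34} (with $m=1$, so that $O_m^*$ is trivial), we get
$$a_n(H_S) \;=\; \frac{\Vol_n(\Delta(S))}{\ind(S)} \;=\; \Vol_n(\Delta(S)) \;=\; \Vol_n(\Delta_0(S)),$$
where $\Delta_0(S) = \pi_1(\Delta(S)) \subset \r^n$ is the $n$-dimensional convex body obtained by projecting the Newton--Okounkov body (sitting at level $1$) down to $\r^n$, and $\Vol_n$ is computed with respect to the standard integer lattice.

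Next, I would verify that $\mathcal{S}(n)$ is closed under the levelwise addition $\oplus_t$, and that $\Delta_0$ is additive: for $S_1,S_2 \in \mathcal{S}(n)$,
$$\Delta_0(S_1 \oplus_t S_2) \;=\; \Delta_0(S_1) + \Delta_0(S_2)$$
(Minkowski sum of convex bodies). Closure under $\oplus_t$ follows by tracking $L$, $M$, $\partial M$, $G$, $G_0$ through Proposition \ref{prop-levelwise}: strong non-negativity is preserved because both cones lie in $\r^n \times \r_{\geq 0}$ and meet the bounding hyperplane only at $0$, while $m=1$ and $\ind=1$ are preserved because $\pi(G(S_1)\oplus_t G(S_2))=\z$ and $G_0(S_1)+G_0(S_2)=\z^n$. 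The additivity of $\Delta_0$ is exactly the content of Proposition \ref{prop-strong-levelwise} (the strong non-negativity of $S_1$ guarantees closedness of the levelwise sum, so no closure needs to be taken). Iterating, for any $S_1,\dots,S_r \in \mathcal{S}(n)$ and any $k_1,\dots,k_r \in \z_{\geq 0}$,
$$\Delta_0(k_1 S_1 \oplus_t \cdots \oplus_t k_r S_r) \;=\; k_1 \Delta_0(S_1) + \cdots + k_r \Delta_0(S_r).$$

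Combining the two reductions, setting $f(S) := a_n(H_S)$, we obtain
$$F(k_1,\dots,k_r) \;:=\; f(k_1 S_1 \oplus_t \cdots \oplus_t k_r S_r) \;=\; \Vol_n\bigl(k_1 \Delta_0(S_1) + \cdots + k_r \Delta_0(S_r)\bigr).$$
By Minkowski's classical theorem, the right hand side is a homogeneous polynomial of degree $n$ in $k_1,\dots,k_r$ whose polarization (the unique symmetric $n$-linear form agreeing with $F$ on the diagonal) evaluated at standard basis vectors gives the mixed volume $V(\Delta_0(S_{i_1}),\dots,\Delta_0(S_{i_n}))$. This simultaneously yields both conclusions: $f$ is a homogeneous polynomial of degree $n$ in the sense of the preceding definition, and its polarization at $(S_1,\dots,S_n)$ is the mixed volume of $\Delta_0(S_1),\dots,\Delta_0(S_n)$ (equivalently, of $\Delta(S_1),\dots,\Delta(S_n)$, since these are just level-$1$ translates).

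I do not expect a serious obstacle: all three ingredients (the growth coefficient identity from Theorem \ref{th-1.34}, additivity of $\Delta_0$ from Proposition \ref{prop-strong-levelwise}, and Minkowski's polynomial theorem from classical convex geometry) are already in place. The only delicate points are the routine bookkeeping that $\mathcal{S}(n)$ is really closed under $\oplus_t$, and matching the normalization conventions so that the polarization of $F$ recovers the mixed volume without extra combinatorial factors.
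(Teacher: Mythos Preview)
Your proposal is correct and follows essentially the same route as the paper: identify the $n$-th growth coefficient with $\Vol_n(\Delta(S))$ via Theorem~\ref{th-1.34}, use Proposition~\ref{prop-strong-levelwise} to show that $\Delta_0$ turns levelwise addition into Minkowski sum, and then invoke Minkowski's classical polynomiality of volume. The paper's proof is a condensed three-sentence version of exactly this argument; your additional bookkeeping (closure of $\mathcal{S}(n)$ under $\oplus_t$, the reason $q=n$ via $\ind(S)=m(S)=1$) simply fills in details the paper leaves implicit.
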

\begin{proof}
According to Theorem \ref{th-1.34}, the $n$-th growth coefficient of a semigroup
$S \in \mathcal{S}(n)$ exists and is equal to the $n$-dimensional volume of the
convex body $\Delta(S)$. By Proposition \ref{prop-strong-levelwise},
the Newton-Okounkov bodies are added under the levelwise addition of semigroups.
Thus the $n$-th growth coefficient is a homogeneous polynomial of degree $n$
and the value of its polarization is the mixed volume
(see Section \ref{subsec-mixed-vol} for a review of the mixed volume).
\end{proof}

\section{Part II: Valuations and graded algebras}
In this part we consider the graded subalgebras of a polynomial ring in one variable with coefficients
in a field $F$ of transcendence degree $n$ over a ground field $\k$.
For a large class of graded subalgebras (which are not necessarily finitely generated),
we prove the polynomial growth of the Hilbert function,
a Brunn-Minkowski inequality for their growth coefficients and an
{\it abstract} version of the Fujita approximation theorem. We obtain all these from the analogous
results for the semigroups of integral points. The conversion of problems about algebras
into problems about semigroups is made possible via a faithful $\z^n$-valued valuation
on the field $F$. Two sections of this part are devoted to valuations.

\subsection{Prevaluation on a vector space}
In this section we define a prevaluation and discuss its basic properties.
A prevaluation is a weaker version of a valuation which is defined for a vector space
(while a valuation is defined for an algebra).

Let $V$ be a vector space over a field ${\bf k}$
and $I$ a totally ordered set with respect to some ordering $<$.
\begin{Def}
A {\it prevaluation on $V$ with values in $I$} is a function $v: V\setminus \{0\} \to I$ satisfying the following:
\begin{enumerate}
\item For all $f,g \in V$ with $f,g, f+g \neq 0$, we have
$v(f+g) \geq \min(v(f), v(g))$.
\item For all $0 \neq f \in V$ and $0 \neq \lambda \in \k$, $v(\lambda f) = v(f)$.
%3) If for $f, g \in V$ we have $v(f) = v(g)$ then there is $\lambda \neq 0$ such that
%$v(g - \lambda f) > v(g)$.
\end{enumerate}
\end{Def}

\begin{Ex} \label{ex-prevaluation}
Let $V$ be a finite dimensional vector space with a basis $\{e_1, \ldots, e_n\}$ and
$I = \{1, \ldots, n\}$, ordered with
the usual ordering of numbers. For $f = \sum_i \lambda_i e_i$ define
$$v(f) = \min\{ i \mid \lambda_i \neq 0\}.$$ Then $v$ is a prevaluation on $V$ with values in
$I$.
\end{Ex}

Let $v: V \setminus \{0\} \to I$ be a prevaluation.
For $\alpha \in I$, let $V_\alpha = \{ f \in V \mid v(f) \geq \alpha \textup{ or } f = 0\}$.
It follows immediately from the definition of a prevaluation that
$V_\alpha$ is a subspace of $V$.
The {\it leaf} $~\widehat{V}_{\alpha}$ above the point $\alpha \in I$ is the
quotient vector space $V_{\alpha}/\bigcup_{\alpha <\beta}V_{\beta}$.
%The prevaluation $v$ is said to have {\it one-dimensional leaves} if for each
%$\alpha \in I$ we have $\dim(\widehat{V}_\alpha) = 0$ or $1$. Equivalently, $v$ has
%one-dimensional leaves if the following holds:
%for $f, g \in V \setminus \{0\}$ with $v(f) = v(g)$ there is
%$\lambda \neq 0$ such that $v(g - \lambda f) > v(g)$.

\begin{Prop} \label{prop-lin-ind}
Let $P \subset V$ be a set of vectors. If
the prevaluation $v$ sends different vectors in $P$ to
different points in $I$ then the vectors in $P$ are linearly independent.
\end{Prop}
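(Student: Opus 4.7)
The plan is to argue by contradiction. Suppose some nontrivial finite linear combination $\sum_{i=1}^r \lambda_i f_i = 0$ holds with $f_1, \ldots, f_r \in P$ distinct and all $\lambda_i \neq 0$. To avoid worrying about intermediate sums vanishing, I would take such a relation with $r$ minimal. By hypothesis the valuations $v(f_1), \ldots, v(f_r) \in I$ are all distinct, and since $I$ is totally ordered I may reindex so that $v(f_1) < v(f_2) < \cdots < v(f_r)$.

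Next I would isolate the term of smallest valuation: rewrite the relation as $-\lambda_1 f_1 = \sum_{i=2}^r \lambda_i f_i$. The left-hand side is nonzero because $\lambda_1 \neq 0$ and $f_1 \neq 0$ (the latter holds since $v$ is defined on $f_1$), so the right-hand side is nonzero as well. By the minimality of $r$, every proper subsum of $\sum_{i=2}^r \lambda_i f_i$ is nonzero, so axiom (1) of a prevaluation can be applied iteratively together with axiom (2) to give
$$v\Bigl(\sum_{i=2}^r \lambda_i f_i\Bigr) \geq \min_{2 \leq i \leq r} v(\lambda_i f_i) = \min_{2 \leq i \leq r} v(f_i) = v(f_2).$$
On the other hand, $v(-\lambda_1 f_1) = v(f_1) < v(f_2)$, contradicting the equality of the two sides.

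The only subtle point is justifying the $r$-term version of axiom (1), since the axiom is stated for two summands and presupposes that the sum is nonzero. Both of these issues are handled by the minimality assumption on $r$, which ensures that none of the partial sums encountered during the induction vanishes. Alternatively, one could pass to the leaf $\widehat{V}_{v(f_1)}$: the images of $f_2, \ldots, f_r$ vanish in this quotient while the image of $f_1$ does not, so the relation forces $\lambda_1 = 0$, and an induction on $r$ finishes the argument. I expect no real obstacle beyond being careful with this bookkeeping.
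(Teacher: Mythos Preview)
Your proof is correct and follows essentially the same route as the paper: isolate the term of smallest valuation and observe that the remaining sum has strictly larger valuation. The only cosmetic difference is that, where you invoke minimality of $r$ to ensure intermediate sums are nonzero, the paper simply uses the fact that $V_{\alpha_2} = \{f : v(f) \geq \alpha_2\} \cup \{0\}$ is a subspace, so $\sum_{i \geq 2} \lambda_i f_i \in V_{\alpha_2}$ automatically while $\lambda_1 f_1 \notin V_{\alpha_2}$; this sidesteps the bookkeeping about partial sums entirely.
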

\begin{proof}
Let $\sum_{i=1}^s \lambda_i w_i=0$, $\lambda_i\neq 0$,
be a non-trivial linear relation between the vectors in $P$.
Let $\alpha_i = v(w_i)$, $i=1, \ldots, s$, and without loss of generality
assume $\alpha_1<\dots <\alpha _s$.
We can rewrite the linear relation in the form $\lambda_1w_1 =-\sum_{i=2}^s \lambda_i w_i$.
But this cannot hold since $\lambda_1 w_1 \not\in V_{\alpha_{2}}$ while
$\sum_{i>1}\lambda_i w_i\in V_{\alpha_{2}}$.
\end{proof}

\begin{Prop} \label{prop-dim-leaves}
Let $V$ be finite dimensional. Then for all but a finite set of $\alpha\in I$,
the leaf $\widehat V_{\alpha}$ is zero, and we have:
$$\sum_{\alpha \in I} \dim \widehat V_{\alpha} = \dim V.$$
\end{Prop}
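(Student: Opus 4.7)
\medskip

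\noindent\textbf{Proof plan.} The plan is to show first that the prevaluation takes only finitely many values on $V\setminus\{0\}$, then to identify the nonzero leaves with the successive quotients of a finite strictly decreasing flag of subspaces, and finally to read off the dimension identity as a telescoping sum.

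First I would use Proposition \ref{prop-lin-ind} to argue that the image $J = v(V\setminus\{0\}) \subset I$ is a finite set. Indeed, for every $\alpha \in J$ choose a representative $w_\alpha \in V$ with $v(w_\alpha) = \alpha$. By Proposition \ref{prop-lin-ind} the vectors $\{w_\alpha\}_{\alpha \in J}$ are linearly independent in $V$, so $|J| \leq \dim V < \infty$. Enumerate the elements of $J$ in increasing order as $\alpha_1 < \alpha_2 < \cdots < \alpha_k$.

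Next I would describe the filtration $\{V_\alpha\}_{\alpha \in I}$ explicitly. By definition of $\alpha_1$ as the minimum of $J$, every nonzero $f\in V$ satisfies $v(f) \geq \alpha_1$, hence $V_{\alpha_1} = V$. For any $\alpha \in I$, $V_\alpha$ depends only on which interval $[\alpha_i,\alpha_{i+1})$ contains $\alpha$: if $\alpha_i \leq \alpha \leq \alpha_i$ then $V_\alpha = V_{\alpha_i}$, while if $\alpha > \alpha_k$ then $V_\alpha = \{0\}$, and for $\alpha_i < \alpha < \alpha_{i+1}$ we have $V_\alpha = V_{\alpha_{i+1}}$ since no value of $v$ lies strictly between $\alpha_i$ and $\alpha_{i+1}$. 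The same analysis applied to $\bigcup_{\alpha < \beta} V_\beta$ gives $\bigcup_{\alpha < \beta} V_\beta = V_{\alpha_{i+1}}$ whenever $\alpha_i \leq \alpha < \alpha_{i+1}$, with the convention $V_{\alpha_{k+1}} = \{0\}$. In particular, for $\alpha \notin J$ the leaf $\widehat{V}_\alpha$ is the quotient of a subspace by itself, hence zero, while for $\alpha = \alpha_i$ we get
\[
\widehat{V}_{\alpha_i} \;=\; V_{\alpha_i}/V_{\alpha_{i+1}}.
\]
Each of these is nonzero because the representative $w_{\alpha_i}$ lies in $V_{\alpha_i}$ but not in $V_{\alpha_{i+1}}$.

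Finally I would conclude by a telescoping dimension count:
\[
\sum_{\alpha \in I} \dim \widehat{V}_\alpha \;=\; \sum_{i=1}^{k} \bigl(\dim V_{\alpha_i} - \dim V_{\alpha_{i+1}}\bigr) \;=\; \dim V_{\alpha_1} - \dim V_{\alpha_{k+1}} \;=\; \dim V.
\]
The only nontrivial step is the finiteness of $J$, and that reduces cleanly to Proposition \ref{prop-lin-ind}; the rest is bookkeeping about the filtration $\{V_\alpha\}$.
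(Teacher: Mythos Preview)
Your proof is correct and follows essentially the same approach as the paper: use Proposition~\ref{prop-lin-ind} to bound $|J|$ by $\dim V$, identify the nonzero leaves with the successive quotients $V_{\alpha_i}/V_{\alpha_{i+1}}$ of the resulting finite filtration, and telescope. Your write-up is in fact more careful than the paper's (you explicitly handle $\alpha\notin J$ and the final quotient $V_{\alpha_k}/\{0\}$); the only blemish is the evident typo ``$\alpha_i \leq \alpha \leq \alpha_i$'' in the filtration description.
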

\begin{proof}
From Proposition \ref{prop-lin-ind} it follows that $v(V \setminus \{0\})$ contains no more than
$\dim V$ points. Let $v(V\setminus \{ 0\})=\{\alpha_1,\dots,\alpha_{s}\}$ where
$\alpha_1 < \cdots < \alpha_s$.
We have a filtration $V=V_{\alpha_1}
\supset V_{\alpha_2} \supset \cdots \supset V_{\alpha_s}$ and $\dim V$ is
equal to $\sum_{k=1}^{s-1} \dim(V_{\alpha_k}/V_{\alpha_{k+1}}) =\sum_{k=1}^{s-1} \dim \widehat V_{\alpha_k}$.
\end{proof}

Let $W\subset V$ be a non-zero subspace.
Let $J\subset I$ be the image of $W\setminus
\{0\}$ under the prevaluation $v$. The set $J$ inherits a total ordering from $I$.
The following is clear:

\begin{Prop} \label{prop-preval-restriction}
%The map $\pi_1:I_1\to \L(Z)$
%is a representation of the ordered $I_1$ with Gr\"{o}bner property.
The restriction $v_{|W}: W \setminus \{0\} \to J$ is a prevaluation on $W$.
For each $\alpha \in J$ we have $\dim \widehat{V}_\alpha \geq
\dim \widehat{W}_\alpha$.
\end{Prop}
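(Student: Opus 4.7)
The plan is to verify the two assertions separately; both should follow quickly from the definitions, so the main task is to pick the right natural map and check injectivity.

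For the first assertion, I would simply check that the two axioms of a prevaluation pass to the subspace. Given $f, g \in W$ with $f, g, f+g$ all nonzero, the inequality $v(f+g) \geq \min(v(f), v(g))$ holds in $V$ and hence in $W$; and for $0 \neq \lambda \in \k$ and $0 \neq f \in W$ the vector $\lambda f$ lies in $W$ and has $v(\lambda f) = v(f)$. So all that needs noting is that $W$ is closed under addition and scalar multiplication, and that by definition $J = v(W \setminus \{0\})$ is the image, so $v_{|W}$ takes values in $J$; the total order on $J$ is inherited from $I$.

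For the dimension inequality, fix $\alpha \in J$ and set $W_\beta = W \cap V_\beta$ for $\beta \in J$; then $W_\beta$ is the subspace $\{g \in W : v(g) \geq \beta\} \cup \{0\}$ attached to the restricted prevaluation. The inclusion $W \hookrightarrow V$ sends $W_\alpha$ into $V_\alpha$, and it sends $\bigcup_{\beta > \alpha, \beta \in J} W_\beta$ into $\bigcup_{\gamma > \alpha} V_\gamma$, since an element with $v$-value $> \alpha$ in $W$ certainly has $v$-value $> \alpha$ in $V$. Thus we get a well-defined induced $\k$-linear map
\[
\iota_\alpha: \widehat{W}_\alpha \longrightarrow \widehat{V}_\alpha.
\]

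The key step is to show $\iota_\alpha$ is injective, which then yields $\dim \widehat{W}_\alpha \leq \dim \widehat{V}_\alpha$. Take $f \in W_\alpha$ whose class maps to $0$ in $\widehat{V}_\alpha$; this means either $f = 0$ or $v(f) > \alpha$. In the second case, since $f \in W$, the value $v(f)$ is an element of $J$ strictly greater than $\alpha$, so $f \in W_{v(f)} \subset \bigcup_{\beta > \alpha, \beta \in J} W_\beta$, and therefore the class of $f$ in $\widehat{W}_\alpha$ is already zero. This is essentially the only step with content, and it is genuinely routine — no obstacle is expected, as the argument is a direct comparison of filtrations.
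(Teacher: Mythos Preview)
Your proof is correct. The paper simply declares the proposition ``clear'' and gives no argument; your verification via the induced map $\iota_\alpha:\widehat{W}_\alpha\to\widehat{V}_\alpha$ and its injectivity is exactly the natural way to fill in the omitted details.
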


A prevaluation $v$ is said to have {\it one-dimensional leaves} if
for every $\alpha \in I$ the dimension of the
leaf $\widehat{V}_\alpha$ is at most $1$.
%We will only deal with prevaluations
%with one dimensional-leaves. {\it For the rest of the paper, without explicitly mentioning, we will
%assume that all the prevaluations have one-dimensional leaves}.

\begin{Prop} \label{prop-dim-Z}
Let $V$ be equipped with an $I$-valued prevaluation $v$ with one-dimensional leaves.
Let $W \subset V$ be a non-zero subspace. Then the number of elements in $v(W \setminus \{0\})$ is equal to $\dim W$.
\end{Prop}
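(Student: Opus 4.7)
The plan is to prove the equality by a two-sided inequality, both sides resting on earlier results about prevaluations. I will first reduce to the finite-dimensional case (if $\dim W = \infty$, then applying the argument below to finite-dimensional subspaces $W' \subset W$ yields $\#v(W \setminus \{0\}) \geq \dim W'$ for every finite $\dim W'$, so both quantities are infinite), and henceforth assume $W$ is finite dimensional.

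For the inequality $\#v(W \setminus \{0\}) \leq \dim W$, I would appeal directly to Proposition \ref{prop-lin-ind}: choose, for each $\alpha$ in $J = v(W \setminus \{0\})$, a representative $w_\alpha \in W$ with $v(w_\alpha) = \alpha$. Since these representatives are sent by $v$ to distinct values, Proposition \ref{prop-lin-ind} (applied inside $W$, using the restriction prevaluation from Proposition \ref{prop-preval-restriction}) guarantees that they are linearly independent, giving $\#J \leq \dim W$.

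For the reverse inequality, I would use the filtration/leaf machinery of Propositions \ref{prop-preval-restriction} and \ref{prop-dim-leaves}. By Proposition \ref{prop-preval-restriction}, $v_{|W}$ is a prevaluation on $W$ with values in $J$. The key observation is that the inclusion $W \hookrightarrow V$ induces an injection $\widehat{W}_\alpha \hookrightarrow \widehat{V}_\alpha$ for each $\alpha$: if $w \in W_\alpha$ lies in $\bigcup_{\beta > \alpha} V_\beta$, then $w \in V_\beta$ for some $\beta > \alpha$, and since $w \in W$ we get $w \in W \cap V_\beta = W_\beta$, so its class in $\widehat{W}_\alpha$ is already zero. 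Combined with the one-dimensional-leaves hypothesis on $V$, this gives $\dim \widehat{W}_\alpha \leq 1$ for all $\alpha$. On the other hand, for each $\alpha \in J$ there exists $w \in W$ with $v(w) = \alpha$, hence $w \in W_\alpha \setminus \bigcup_{\beta > \alpha} W_\beta$, which shows $\dim \widehat{W}_\alpha \geq 1$. Thus $\dim \widehat{W}_\alpha = 1$ exactly when $\alpha \in J$ and $= 0$ otherwise, and Proposition \ref{prop-dim-leaves} applied to $v_{|W}$ yields
\[
\dim W = \sum_{\alpha \in J} \dim \widehat{W}_\alpha = \#J = \#v(W \setminus \{0\}).
\]

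The only subtlety I anticipate is the verification that the leaf of $W$ embeds into the corresponding leaf of $V$ — this is purely formal but must be stated carefully, because the definition of leaf involves an intersection with \emph{all} higher filtration steps and one has to check that passing from $W$ to $V$ does not shrink this intersection. Once that injection is in hand, combining the one-dimensionality hypothesis with Proposition \ref{prop-dim-leaves} is routine.
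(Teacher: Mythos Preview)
Your proof is correct and follows essentially the same route as the paper's: restrict the prevaluation to $W$, use the leaf comparison $\dim \widehat{W}_\alpha \leq \dim \widehat{V}_\alpha \leq 1$ (which is exactly the content of Proposition \ref{prop-preval-restriction}, so you are re-deriving what the paper simply cites), and then invoke Proposition \ref{prop-dim-leaves} on $W$. Note that your second half already yields the full equality $\dim W = \#J$, so the separate $\#J \leq \dim W$ argument via Proposition \ref{prop-lin-ind} is redundant; your explicit handling of the infinite-dimensional case is a nice addition that the paper leaves implicit.
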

\begin{proof}
Let $J = v(W \setminus \{0\})$. From Proposition \ref{prop-preval-restriction}, $v$ induces a
$J$-valued prevaluation with one-dimensional leaves on the space $W$. The proposition now follows
from Proposition \ref{prop-dim-leaves} applied to $W$.
\end{proof}

\begin{Ex} [Schubert cells in Grassmannian] \label{ex-Grassmannian}
{Let $\k$ be an arbitrary field}.
Let $\textup{Gr}(n, k)$ be the Grassmannian of $k$-dimensional
planes in $\k^n$.  Take the prevaluation $v$ in Example \ref{ex-prevaluation} for $V=\k^n$
and the standard basis. Under this prevaluation each
$k$-dimensional subspace $L \subset \k^n$ goes to a subset $J\subset
I$ with $k$ elements. The set of all $k$-dimensional subspaces
which are mapped onto $J$ forms the {\it Schubert cell $X_J$} in the
Grassmannian $\textup{Gr}(n, k)$.
\end{Ex}

In a similar fashion to Example \ref {ex-Grassmannian}, the Schubert
cells in the variety of complete flags can also be recovered from
the above prevaluation $v$ on $\k^n$.

\subsection{Valuations on algebras} \label{subsec-valuation}
In this section we define a valuation on an algebra and describe its
basic properties. It will allow us to reduce the properties of the Hilbert functions of
graded algebras to the corresponding properties of semigroups. We will present several examples of
valuations.

An {\it ordered abelian group} is an abelian group $\Gamma$ equipped with a
total order $<$ which respects the group operation, i.e. for $a,b,c \in \Gamma$, $a < b$
implies $a+c < b+c$.
\begin{Def}
Let $A$ be an algebra over a field $\k$ and $\Gamma$ an ordered abelian group.
A prevaluation $v: A \setminus \{0\} \to \Gamma$ is a {\it valuation} if, in addition, it satisfies the
following: for any $f, g \in A$ with $f, g \neq 0$, we have $v(fg) = v(f) + v(g).$
The valuation $v$ is called {\it faithful} if its image is the whole $\Gamma$.
\end{Def}

{For the rest of the paper by a valuation we will mean a valuation with one-dimensional leaves.}

\begin{Ex} \label{ex-valuation-curve}
{Let $\k$ be an algebraically closed field, and $X$ an irreducible curve over $\k$.}
As the algebra take the field of rational
functions $\k(X)$ and $\Gamma = \z$ (with the usual ordering of numbers).
Let $a \in X$ be a smooth point.
Then the map
$$v(f) = \ord_a(f)$$ defines a faithful $\z$-valued valuation (with one-dimensional leaves)
on $\k(X)$.
\end{Ex}
The following proposition is straightforward.
\begin{Prop} \label{prop-superadd-subspace}
Let $A$ be an algebra over $\k$  together with a $\Gamma$-valued valuation
$v:A \setminus \{0\} \to \Gamma$. 1) For each subalgebra $B \subset A$,
the set $v(B\setminus \{0\})$ is a subsemigroup of $\Gamma$.
2) For subspaces $L_1, L_2 \subset A$, put $D_1 = v(L_1 \setminus \{0\})$,
$D_2= v(L_2 \setminus \{0\})$ and
$D = v(L_1L_2 \setminus \{0\})$.
We then have $D_1+D_2\subset D$.
\end{Prop}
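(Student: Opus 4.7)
\medskip

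My plan is to verify each part directly from the defining multiplicative property of a valuation, since both statements amount to transporting the identity $v(fg) = v(f) + v(g)$ from elements to subsets. The only subtlety is ensuring the relevant products are nonzero so that $v$ is defined on them; this is implicit in the definition of valuation, since the equality $v(fg) = v(f) + v(g)$ with $f, g \neq 0$ presupposes $fg \neq 0$ (otherwise the left-hand side is undefined). In particular, we are tacitly working inside an integral domain.

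For Part 1, I would pick two arbitrary elements $\alpha, \beta \in v(B \setminus \{0\})$ and choose preimages $f, g \in B \setminus \{0\}$ with $v(f) = \alpha$ and $v(g) = \beta$. Because $B$ is a subalgebra, $fg \in B$, and since $f, g \neq 0$ the product $fg$ is nonzero and $v(fg) = v(f) + v(g) = \alpha + \beta$. Hence $\alpha + \beta \in v(B \setminus \{0\})$, which is what was needed.

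For Part 2, I would take $\alpha \in D_1$, $\beta \in D_2$ and choose $f \in L_1 \setminus \{0\}$ and $g \in L_2 \setminus \{0\}$ with $v(f) = \alpha$, $v(g) = \beta$. By the definition of $L_1 L_2$ the product $fg$ lies in $L_1 L_2$, and as before $fg \neq 0$ and $v(fg) = \alpha + \beta$, proving $\alpha + \beta \in D = v(L_1 L_2 \setminus \{0\})$.

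There is no real obstacle here; the proposition is essentially a reformulation of the multiplicativity axiom at the level of image sets. The only point worth flagging in the write-up is that the statement says only $D_1 + D_2 \subset D$ and not equality, because $L_1 L_2$ is the \emph{span} of all products $fg$, so it may contain elements whose valuation is not of the form $v(f) + v(g)$ for $f \in L_1$, $g \in L_2$; we are merely asserting that every such sum does appear in $D$, and this is exactly what the above argument produces.
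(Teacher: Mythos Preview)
Your proof is correct and is precisely the direct verification the paper has in mind; in fact the paper does not write out a proof at all, simply declaring the proposition ``straightforward.'' Your remark that the multiplicativity axiom tacitly forces $fg \neq 0$ (so that $A$ behaves like an integral domain on the relevant elements) is the only point requiring care, and you handle it correctly.
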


In general it is not true that $D = D_1+D_2$ as the following example shows.
\begin{Ex} {Let $\k$ be an arbitrary field}, $F=\k(t)$, the field of rational {functions} in one variable, $\Gamma= \z$ (with the usual ordering of numbers) and $v$ the valuation which associates to a polynomial
its order of vanishing at the origin.
Let $L_1 = \span\{ 1, t\}$ and $L_2 = \span\{t, 1+t^2\}$.
Then $D_1=D_2 =\{0, 1\}$. The space $L_1L_2$ is spanned by the polynomials $t$,
$1+t^2$, $t^2$, $t+t^3$, and {hence by $1$, $t$, $t^2$ and $t^3$}.
We have $D = \{0,1,2,3\}$, while $D_1+D_2$ is
$\{0,1,2\}$.
\end{Ex}

We will work with valuations with values in the group $\z^n$ (equipped with some
total ordering). One can define orderings on $\z^n$ as follows. Take $n$ independent linear functions $\ell_1,\dots,\ell_n$
on $\r^n$. For $p, q \in \z^n$ we say that $p > q$ if for some $1 \leq r < n$ we have
$\ell_i(p)=\ell_i(q)$, $i=1 \ldots, r$, and
$\ell_{r+1}(p)> \ell_{r+1}(q)$. This is a total ordering on $\z^n$ which respects the addition.

We are essentially interested in orderings on $\z^n$ whose
restriction to the semigroup $\z^n_{\geq 0}$ is a well-ordering. This holds
for the above ordering if the following properness condition is satisfied:
there is $1 \leq k\leq n$ such that $\ell_1, \ldots, \ell_k$ are non-negative
on $\z^n_{\geq 0}$ and the map $\ell = (\ell_1, \ldots, \ell_k)$ is a proper map from $\z^n_{\geq 0}$
to $\r^k$.

Let us now define the {\it Gr\"{o}bner valuation} on the algebra
$A = \k[[x_1, \ldots, x_n]]$ of formal power series in the variables $x_1, \ldots, x_n$ and with
coefficients in a field $\k$. Fix a total ordering on $\z^n$ (respecting the addition)
which restricts to a well-ordering on $\z^n_{\geq 0}$. For $f \in A$ let
$cx_1^{a_1} \cdots x_n^{a_n}$ be the term in $f$ with the smallest exponent
$(a_1, \ldots, a_n)$ with respect to this ordering. It exists since $\z^n_{\geq 0}$ is
well-ordered. Define $v(f) = (a_1, \ldots, a_n)$. We extend $v$ to the
field of fractions $K$ of $A$ by defining $v(f/g) = v(f) - v(g)$, for any $f, g \in A$, $g \neq 0$.
One verifies that $v$ is a faithful $\z^n$-valued valuation (with one-dimensional leaves) on the
field $K$.

{The following fact is important for us:  {\it any field of transcendence degree $n$ over a ground filed $\k$ 
has faithful $\z^n$-valued valuations} (cf. 
\cite[Chapter 9] {Jacobson}). For all our purposes such valuations
can be realized} as restrictions of the above Gr\"{o}bner valuation to subfields of $K$.

%Let $K'$ be the subfield of $K$ consisting of elements which are algebraic over $\k[x_1, \ldots, x_n]$.
%The transcendence degree of $K'$ over $\k$ is $n$ but
%$K$  have finite transcendence degree over $\k$.

\begin{Ex} \label{ex-valuation-rational-function-alg-var}
Let X be an irreducible $n$-dimensional variety over an
arbitrary field $\k$. First assume that there is a smooth point $p$ in $X$ over $\k$. Let $u_1, \ldots, u_n$  be
regular functions at $p$ which form a system of local coordinates at $p$. Then the field
of rational function on $X$ is naturally embedded in the field of fractions $K$ of the algebra of formal
power series $A = \k[[u_1,\ldots, u_n]]$. The restriction of the above Gr\"{o}bner valuation $v$ to $\k(X)$ gives a
$\z^n$-valued valuation. Since $\k(X)$
contains $u_1,\ldots,u_n$ this valuation is faithful.  In general $X$ may have no
smooth points over $\k$, but almost every point in $X$ over the algebraic
closure $\tilde{\k}$ is a smooth point. Take a smooth point $p$ in $X$ over $\tilde{\k}$. Without loss of
generality we can assume that $X$ is an
affine variety contained in an affine space $\mathbb{A}^N$, moreover we can assume
that the projection of $X$ to the coordinate plane with coordinates $x_1,\ldots, x_n$ is non-degenerate
at the point $p$. Then the functions $u_i = x_i - \alpha_i$ where $\alpha_i = x_i(p)$, $i = 1,\ldots, n$, form
a local system of coordinates at $p$. Consider a $\z^n$-valued faithful valuation $v$ on $\tilde{\k}(X)$ constructed as
above from a Gr\"{o}bner valuation on the algebra $\tilde{\k}[[u_1,\ldots, u_n]]$. For each $\alpha_i = x_i(p)$
choose a polynomial $g_i$ in $x_i$ with coefficients in $\k$ which has $\alpha_i$ as a root. The vectors $v(u_i)$ 
generate the lattice $\z^n$ and we have $v(g_i)= \ord_{\alpha_i}(g_i) v(u_i)$. The image of $\k(X) \setminus \{0\}$
under the valuation $v$ contains the vectors $v(g_i)$ and hence is a
sublattice $\Lambda$ of rank $n$ in $\z^n$. The restriction of the valuation $v$ to $\k(X)$ is a faithful 
$\Lambda$-valued valuation, and $\Lambda$ is isomorphic to $\z^n$ as a group.
\end{Ex}

Let $Y$ be an irreducible variety
birationally isomorphic to $X$.
Then a valuation $v$ on the field of rational functions on
$Y$ (e.g. the faithful $\z^n$-valued valuation in Example \ref{ex-valuation-rational-function-alg-var})
automatically gives a valuation on the field of rational functions on $X$.
The following is an example of this kind of valuation defined in terms of
the variety $X$, although it indeed corresponds to a system of parameters at a smooth point
on some birational model $Y$ of $X$ (at least when the ground field $\k$ {is algebraically closed and} has characteristic $0$).

\begin{Ex}[Valuation constructed from a Parshin point on $X$] \label{ex-valuation-Parshin}
Let $X$ be an irreducible  $n$-dimensional variety {over an algebraically closed field $\k$}. Consider a sequence of
maps $$X_\bullet: \quad \{a\}=X_0 \stackrel{\pi_0}{\to}  X_1 \stackrel{\pi_1}{\to} \cdots
\stackrel{\pi_{n-1}}{\to} X_n \stackrel{\pi_n}{\to} X,$$ where
each $X_i$ is a normal irreducible variety of dimension $i$,
the map $X_i \stackrel{\pi_i}{\to} X_{i+1}$ for $i=0,\ldots, n-1$, is the normalization map for the image
$\pi_i(X_i)\subset X_{i+1}$ { and $\pi_n$ is the normalization map for $X$}. We call such a sequence $X_\bullet$ a {\it
Parshin point} on the variety $X$. We say that a collection of rational functions
$f_1,\dots,f_n$ is {\it a system of parameters} about
$X_\bullet$, if for each $i$, the function $\pi^*_i\circ \dots\circ \pi^*_{n}(f_i)$
vanishes at order $1$ along the
hypersurface  $\pi_{i-1}(X_{i-1})$ in the normal variety $X_i$.
Given a Parshin point $X_\bullet$ together with a system of parameters,
one can associate an {\it iterated Laurent series} to each rational function
$g$ on $X$ (see \cite{Parshin, Okounkov-log-concave}).
An iterated Laurent series is defined inductively (on the number of parameters). It is a usual
Laurent series  $\sum_{k} c_kf_n^k$ with a finite number of terms with negative degrees in the variable
$f_n$ and every coefficient $c_k$
is an iterated Laurent series in the variables
$f_1,\dots,f_{n-1}$. An iterated Laurent series has a monomial
$cf_1^{k_1}\dots f_n^{k_n}$ of smallest degree with respect to
the lexicographic order in the degrees $(k_1,\dots,k_n)$ (where we order the parameters
by $f_n > f_{n-1} > \cdots > f_1$).
The map which assigns to a Laurent series its smallest monomial
defines a faithful valuation (with one-dimensional leaves) on the field of rational functions on
$X$.
\end{Ex}

{Finally let us give an example of a faithful $\z^n$-valued valuation on a field of transcendence degree $n$
over a ground field $\k$ which is not finitely generated over $\k$.}

\begin{Ex} \label{ex-2.26}
{Let $\k$ be an arbitrary field. As above let $K$ denote the field of fractions of the
algebra of formal power series $\k[[x_1, \ldots, x_n]]$} and let $K'$ be the subfield
consisting of the elements which are algebraic over the field of rational functions
$\k(x_1, \ldots, x_n)$. This subfield has transcendence degree $n$ over $\k$ but is not finitely generated
over $\k$. The restriction of the above Gr\"{o}bner valuation on $K$ to $K'$ gives a faithful valuation
on $K'$
\end{Ex}

\subsection{Graded subalgebras of the polynomial ring $F[t]$} \label{subsec-graded-alg}
In this section we introduce certain large classes of graded algebras and
discuss their basic properties.

{Let $F$ be a field containing a field $\k$ which we take as the ground field.}
%The main example of $\k$ will be $\c$, the field of complex numbers. Nevertheless, here $\k$ can be
%taken to be any field, not necessarily algebraically closed and it can have positive characteristic.}
A homogenous element of degree $m\geq 0$ in $F[t]$ is an element
$a_mt^m$ where $a_m\in F$. (For any $m$ the element $0\in F$
is a homogeneous element of degree $m$.)
Let $M$ be a linear subspace of $F[t]$.
For any $k\geq 0$  the collection $M_k$
of homogeneous elements of degree $k$ in $M$ is a linear subspace over $\k$
called the {\it $k$-th homogenous component of $M$}. Similarly
the linear subspace $L_k \subset F$ consisting of those $a$ such that $at^k\in M_k$
is called the {\it $k$-th subspace of $M$}. A linear
subspace $M \subset F[t]$
over $\k$ is called a {\it graded space} if it is the direct sum of its homogeneous
components. A subalgebra  $A\subset F[t]$
is called {\it graded} if it is graded as a linear subspace of $F[t]$.

We now define three classes of graded subalgebras which will play
main roles later:
\begin{enumerate}
\item To each non-zero finite dimensional linear subspace
$L \subset F$ over $\k$ we associate the graded algebra $A_L$
defined as follows: its zero-th homogeneous component is $\k$ and for each $k > 0$ its
$k$-th subspace is $L^k$, the subspace
spanned by all the products $f_1\cdots f_k$ with $f_i \in L$.
That is, $$A_L = \bigoplus_{k \geq 0} L^kt^k.$$
The algebra $A_L$ is a graded algebra generated by $\k$ and finitely many
elements of degree $1$.

\item We call a graded subalgebra $A \subset F[t]$,
an {\it algebra of integral type}, if there is an algebra $A_L$,
for some non-zero finite dimensional subspace $L$ over $\k$,
such that $A$ is a finitely generated $A_L$-module. ({Equivalently, if
$A$ is finitely generated over $\k$ and is a finite module over the subalgebra generated by $A_1$}.)

\item  We call a graded subalgebra $A \subset F[t]$,
an {\it algebra of almost integral type}, if there is an algebra $A' \subset F[t]$ of integral type
such that $A\subset A'$. (Equivalently if $A \subset A_L$
for some finite dimensional subspace $L \subset F$.)
\end{enumerate}

{As mentioned in the introduction, algebras $A_L$ {are related to the} homogeneous coordinate rings
of projective varieties, algebras of integral type are related to the rings of sections of ample line bundles and
algebras of almost integral type to the rings of sections of arbitrary line bundles
(see Theorems \ref{th-ring-sec-almost-finite-type} and \ref{th-very-ample-int-closure}).}

As the following shows, the class of algebras of almost integral type already
contains the class of finitely generated graded subalgebras. Although, in general,
an algebra of almost integral type may not be finitely generated.
\begin{Prop}
Let $A$ be a finitely generated graded subalgebra of $F[t]$ (over $\k$).
Then $A$ is an algebra of almost integral type.
\end{Prop}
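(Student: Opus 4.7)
The plan is to produce a single non-zero finite-dimensional $\k$-subspace $L \subset F$ whose powers $L^k$ simultaneously contain every $k$-th homogeneous component of $A$; this yields $A \subseteq A_L$ directly, so $A$ is of almost integral type. The construction uses a finite generating set of $A$ in an essentially combinatorial way, with the crucial trick being to force $1 \in L$ so that one has the chain of inclusions $L \subseteq L^2 \subseteq L^3 \subseteq \cdots$ inside $F$.

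First I would fix a finite set of homogeneous generators $b_1 t^{d_1}, \ldots, b_r t^{d_r}$ of $A$, with $b_i \in F$ and $d_i \geq 0$. Every algebra of the form $A_L$ has zero-th homogeneous component equal to $\k$, so the target inclusion $A \subseteq A_L$ tacitly forces $A_0 \subseteq \k$; this is the standing convention for the graded subalgebras considered in this section, under which any generator of degree $0$ already lies in $\k$ and contributes only scalars. I may therefore discard such generators and assume $d_i \geq 1$ for each $i$. Now set
\[
 L = \k \cdot 1 + \k b_1 + \cdots + \k b_r \subset F,
\]
a non-zero finite-dimensional $\k$-subspace containing $1$.

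To verify that $A \subseteq A_L$, take a homogeneous element of $A$ of degree $k \geq 1$. It is a $\k$-linear combination of monomials
\[
 b_{i_1} b_{i_2} \cdots b_{i_s}\, t^k, \qquad d_{i_1} + d_{i_2} + \cdots + d_{i_s} = k.
\]
Since every $d_{i_j} \geq 1$, the number of factors satisfies $s \leq k$. Because $1 \in L$, the product $b_{i_1} \cdots b_{i_s}$ may be padded with $k - s$ copies of $1$ and so regarded as a product of exactly $k$ elements of $L$; hence it lies in $L^k$. Summing over $k$ gives the degree-by-degree inclusion $A \subseteq A_L$.

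I do not expect a genuine obstacle; the argument is a brief bookkeeping once $L$ is chosen correctly. The only subtle point is the degree-zero convention noted above, and the single ingredient that makes the proof work is the trivial but essential observation that $1 \in L$ forces $L^s \subseteq L^k$ whenever $s \leq k$, which allows monomials of varying length but fixed total $t$-degree to be covered uniformly.
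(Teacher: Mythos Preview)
Your argument is correct and is essentially identical to the paper's: both take a finite set of homogeneous generators, let $L$ be the $\k$-span of $1$ together with their $F$-coefficients, and observe that $A \subset A_L$ because $1 \in L$ forces $L^s \subset L^k$ for $s \le k$. The paper's proof is a terse three lines that omits the verification you spelled out (and also glosses over the degree-zero issue you flagged), but the idea is the same.
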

\begin{proof}
Let $f_1t^{d_1}, \ldots, f_rt^{d_r}$ be a set of homogeneous generators for $A$.
Let $L$ be the subspace spanned by $1$ and all the $f_i$. Then
$A$ is contained in the algebra $A_L$ and hence is of almost integral type.
\end{proof}

The following proposition is easy to show.
\begin{Prop}\label{Prop-finite-module}
Let $M \subset F[t]$ be a graded subspace and write $M = \bigoplus_{k\geq 0} L_kt^k$, where $L_k$ is the
$k$-th subspace of $M$.
Then $M$ is a finitely generated module over an algebra $A_L$ if
and only if there exists $N>0$ such that for any $m \geq N$ and $\ell > 0$ we have $L_{m+\ell} = L_m L^{\ell}$.
\end{Prop}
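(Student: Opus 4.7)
The plan is to prove the two directions of the equivalence separately; both reduce to careful bookkeeping with the $A_L$-module structure.

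For the forward direction, suppose $M$ is a finitely generated $A_L$-module. Choose a finite set of homogeneous generators $g_1 t^{d_1}, \ldots, g_r t^{d_r}$, with $g_i \in L_{d_i}$, and set $N = \max_i d_i$. The inclusion $L_m L^\ell \subseteq L_{m+\ell}$ holds in general simply because $M$ is closed under multiplication by $A_L$: indeed $L_m t^m \cdot L^\ell t^\ell \subseteq A_L \cdot M \subseteq M$, so the product lies in $M_{m+\ell}$. For the opposite inclusion when $m \geq N$, I would take $f \in L_{m+\ell}$ and use the generators to write $f = \sum_i q_i g_i$ with $q_i \in L^{m+\ell - d_i}$. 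Since $m \geq N \geq d_i$, the factorization $L^{m+\ell - d_i} = L^{m - d_i} \cdot L^\ell$ lets me rewrite $q_i = \sum_j a_{ij} b_{ij}$ with $a_{ij} \in L^{m - d_i}$ and $b_{ij} \in L^\ell$. A second use of the module structure, applied to $a_{ij} t^{m-d_i} \cdot g_i t^{d_i} \in M$, shows $a_{ij} g_i \in L_m$; hence $(a_{ij} g_i) b_{ij} \in L_m L^\ell$ and therefore $f \in L_m L^\ell$.

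For the reverse direction, assume that $L_{m+\ell} = L_m L^\ell$ holds for every $m \geq N$ and $\ell > 0$. Specializing to $m = N$ gives $L_k = L_N L^{k - N}$ for all $k > N$, which translates into $M_k = L_N t^N \cdot (A_L)_{k - N}$. Thus every homogeneous component of $M$ of degree exceeding $N$ lies in the $A_L$-submodule generated by $L_N t^N$. Picking a $\k$-basis of each of the low-degree pieces $L_0, L_1, \ldots, L_N$ — which are finite-dimensional in the settings the paper considers, since $M$ is taken to sit inside some $A_{L'}$ with finite-dimensional homogeneous components — yields a finite set of homogeneous generators of $M$ as an $A_L$-module.

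The only slightly delicate point is in the forward direction, where one must remember to invoke the $A_L$-module structure twice: once to express $L_{m+\ell}$ through the generators, and once more to verify that the auxiliary products $L^{m - d_i} g_i$ land back in $L_m$. Apart from this, both directions are transparent manipulations, and no substantial obstacle arises.
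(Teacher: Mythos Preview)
Your argument is correct, and since the paper merely declares the proposition ``easy to show'' without supplying a proof, there is nothing to compare against beyond confirming that your write-up is the natural direct argument the authors have in mind.

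Two small remarks. First, your $N = \max_i d_i$ could in principle be $0$ (if all generators sit in degree $0$); since the statement asks for $N>0$, you should take $N = \max(\max_i d_i,\,1)$, which changes nothing in the argument. Second, you are right to flag the finite-dimensionality issue in the reverse direction: as literally stated, the equivalence requires each $L_k$ for $k \le N$ to be finite-dimensional over $\k$, and this is indeed implicit in the paper's setting (all the modules to which the proposition is later applied are of integral or almost integral type, hence have finite-dimensional graded pieces). Your parenthetical remark handles this adequately; if you wanted to be fully self-contained you could simply add the hypothesis that each $L_k$ is finite-dimensional.
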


Let $A$ be a graded subalgebra of $F[t]$. Let us denote the
integral closure of $A$ in the field of fractions $F(t)$ by $\ol{A}$.
\footnote{Let $A \subset B$ be commutative rings. An element $f \in B$ is called {\it integral} over $A$ if
$f$ satisfies an equation $f^m + a_1f^{m-1} + \cdots + a_m = 0,$ for $m>0$ and $a_i \in A$, $i=1, \ldots, m$.
%It is an standard fact that the integrality of
%$f$ is equivalent to the following: $A[f]$, the subring generated by $A$ and $f$,
%is a finite $A$-module (see for example \cite[Proposition 5.1]{Atiyah-McDonald}).
The integral closure $\ol{A}$ of $A$ in $B$ is the collection of all the elements of $B$ which are
integral over $A$. It is a ring containing $A$.}
It is a standard result that $\ol{A}$ is contained in $F[t]$ and is
graded (see \cite[Ex. 4.21]{Eisenbud}).

The following is a corollary of the classical theorem of Noether on finiteness of integral closure.
\begin{Th} \label{th-A-bar-finite-type}
Let $F$ be a finitely generated field over a field ${\bf k}$ and $A$ a graded subalgebra of $F[t]$.
1) If $A$ is of integral type then $\ol{A}$ is also of integral type.
2) If $A$ is of almost integral type then $\ol{A}$ is also of almost integral type.
\end{Th}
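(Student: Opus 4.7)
The plan is to deduce both parts from the classical Noether finiteness theorem: if $R$ is a finitely generated integral $\k$-algebra with fraction field $K$, then the integral closure of $R$ in any finite extension of $K$ is a finitely generated $R$-module.

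For Part 1, suppose $A$ is of integral type, so $A$ is a finitely generated module over some $A_L$. Since $A_L=\k[f_1t,\ldots,f_rt]$ (for any $\k$-spanning set $f_1,\ldots,f_r$ of $L$) is itself a finitely generated $\k$-algebra, the strategy is to apply Noether's theorem with $R=A_L$. By transitivity of integrality, $\ol{A}$ equals the integral closure of $A_L$ in $F(t)$, and every element of $\ol{A}$ is algebraic over $K:=\textup{Frac}(A_L)$, so $\ol{A}$ is contained in the algebraic closure $E$ of $K$ inside $F(t)$.

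The heart of the argument is the assertion that $E/K$ is finite. Since $F/\k$, and therefore $F(t)/K$, is finitely generated as a field extension, I would fix a transcendence basis $t_1,\ldots,t_d$ of $F(t)/K$, so that $F(t)$ is finite over $K(t_1,\ldots,t_d)$. The subfield $E$ is algebraic over $K$ while $K(t_1,\ldots,t_d)/K$ is purely transcendental, so $E$ and $K(t_1,\ldots,t_d)$ are linearly disjoint over $K$; consequently
$$[E:K]=[E(t_1,\ldots,t_d):K(t_1,\ldots,t_d)]\leq[F(t):K(t_1,\ldots,t_d)]<\infty.$$
Noether's theorem then gives that $\ol{A}$ is a finitely generated $A_L$-module, and combined with the gradedness of $\ol{A}$ (noted in the excerpt just before the statement), this proves Part 1. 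For Part 2, if $A\subset A'$ with $A'$ of integral type, any element of $F(t)$ integral over $A$ is integral over $A'$, so $\ol{A}\subset\ol{A'}$; by Part 1, $\ol{A'}$ is of integral type, and hence $\ol{A}$ is of almost integral type.

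The main obstacle is the linear-disjointness step showing $[E:K]<\infty$ — this is the only place where the finite generation of $F$ over $\k$ is used in an essential way; everything else is transitivity of integrality and bookkeeping. A subtle point to watch in positive characteristic with non-perfect $\k$ is that the version of Noether's theorem invoked must apply to possibly inseparable finite extensions; this is guaranteed by the fact that finitely generated $\k$-algebras are Japanese (Nagata), so the argument goes through uniformly in all characteristics.
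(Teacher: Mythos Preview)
Your proof is correct and follows exactly the route the paper indicates: the paper gives no detailed argument but simply states that the result is a corollary of Noether's finiteness theorem, and you have carefully supplied the missing details (transitivity of integrality to reduce to $A_L$, finiteness of the relative algebraic closure $E/K$ via linear disjointness with a transcendence basis, and the Nagata/Japanese property to handle all characteristics). The deduction of Part~2 from Part~1 via $\ol{A}\subset\ol{A'}$ is likewise the obvious and intended one.
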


Let $L \subset F$ be a linear subspace over $\k$. Let $P(L) \subset F$ denote the field
consisting of all the elements $f/g$ where $f, g \in L^k$ for some $k>0$ and $g \neq 0$.
We call $P(L)$, the {\it subfield associated to $L$}, and its transcendence degree over $\k$,
the {\it projective transcendence degree} of the subspace $L$.

\begin{Def}
The {\it Hilbert function} of a graded subspace $M \subset F[t]$ is the function $H_M$
defined by $H_M(k) = \dim M_k$ (over $\k$), where $M_k$ is the $k$-th homogeneous component of
$M$. We put $H_M(k)=\infty$ if $M_k$ is infinite dimensional.
\end{Def}

The theorem below is a corollary of
the so-called Hilbert-Serre theorem on Hilbert function of a
finitely generated module over a polynomial ring.
Algebraic and combinatorial proofs of this theorem can be found in
\cite[Chap. VII, \S 12]{Zariski}, \cite{Askold-finite-sets} and \cite{Askold-Chulkov}.

\begin{Th} \label{th-Hilbert-th-module}
Let $L \subset F$ be a finite dimensional subspace over $\k$ and let $q$ be its
projective transcendence degree. Let $M \subset F[t]$ be a finitely generated graded module over $A_L$.
Then for sufficiently large values of $k$, the Hilbert function $H_M(k)$ of $M$ coincides with
a polynomial $\tilde{H}_M(k)$ of degree $q$. The leading coefficient of this polynomial
multiplied by $q!$ is a positive integer.
\end{Th}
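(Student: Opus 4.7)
The plan is to deduce the theorem from the classical Hilbert--Serre theorem by exhibiting $A_L$ as a quotient of a polynomial ring. First, pick a $\k$-basis $f_1,\ldots,f_N$ of $L$. Since $A_L$ is generated as a $\k$-algebra by the degree-$1$ elements $f_1 t,\ldots,f_N t$, the assignment $x_i \mapsto f_i t$ defines a surjective graded $\k$-algebra homomorphism
\[
\phi:\k[x_1,\ldots,x_N]\twoheadrightarrow A_L,
\]
where each $x_i$ has degree $1$. Pulling back along $\phi$, the module $M$ becomes a graded module over the standard-graded polynomial ring $R=\k[x_1,\ldots,x_N]$; moreover, since $M$ is finitely generated over $A_L$ and $A_L$ is finitely generated as a $\k$-algebra in degree $1$, the module $M$ is also finitely generated over $R$. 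The homogeneous components are unchanged, so the Hilbert function is the same whether we view $M$ over $A_L$ or over $R$.

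Now I invoke the classical Hilbert--Serre theorem for finitely generated graded modules over $R$: the generating series $\sum_{k\ge 0} H_M(k) z^k$ is a rational function of the form $P(z)/(1-z)^{d}$ with $P(z)\in\z[z]$ and $P(1)>0$, so $H_M(k)$ agrees with a polynomial $\tilde H_M(k)\in\q[k]$ of degree $d-1$ for all sufficiently large $k$, and the leading coefficient multiplied by $(d-1)!$ equals $P(1)$, a positive integer. Thus the polynomial form of the Hilbert function and the integrality of $q!\,a_q$ are immediate; the only substantive step remaining is to show that $d-1=q$.

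To identify the degree, I use that $d-1$ equals the Krull dimension of the support of $M$ in $\Spec(R)$, minus $1$ in the projective sense. Concretely, since $M\subset F[t]$ and $F[t]$ is an integral domain, $M$ is torsion-free over $A_L$, so its annihilator is zero and the Krull dimension of its support equals $\dim A_L$. On the other hand, $A_L$ is a finitely generated $\k$-algebra that is an integral domain, so its Krull dimension equals the transcendence degree of $\mathrm{Frac}(A_L)$ over $\k$. I will show this transcendence degree is $q+1$: the degree-zero part of $\mathrm{Frac}(A_L)$ is exactly $P(L)$, which by definition has transcendence degree $q$ over $\k$; and for any nonzero $f\in L$, the element $ft\in A_L$ lies in $\mathrm{Frac}(A_L)$ and is transcendental over $P(L)\subset F$ (since $t$ is transcendental over $F$ inside $F(t)$). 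Hence $\dim A_L=q+1$ and the Hilbert polynomial has degree $q$, as claimed.

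The main obstacle is the last step, namely the translation from the \emph{projective} transcendence degree of the subspace $L$ (a condition about the field $P(L)$ of quotients of elements of the same degree) to the Krull dimension of the algebra $A_L$. Once that bridge is built, everything else is an immediate application of Hilbert--Serre. The identification $\mathrm{Frac}(A_L)_0=P(L)$ requires a small verification: any element of $\mathrm{Frac}(A_L)$ can be written as a ratio of homogeneous elements of $A_L$, and such a ratio has degree zero precisely when the numerator and denominator have the same degree $k$, in which case it is a ratio of two elements of $L^k$, i.e., lies in $P(L)$. This, combined with the transcendence of $ft$ over $F$, pins the dimension and finishes the proof.
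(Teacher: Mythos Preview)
Your argument is essentially correct and is exactly the standard deduction the paper has in mind: the paper does not give a proof of this statement at all, but simply records it as ``a corollary of the so-called Hilbert--Serre theorem on the Hilbert function of a finitely generated module over a polynomial ring'' and cites \cite{Zariski}, \cite{Askold-finite-sets}, \cite{Askold-Chulkov}. Your reduction via the surjection $R=\k[x_1,\ldots,x_N]\twoheadrightarrow A_L$ and the dimension count $\dim A_L=\tr\!\deg_\k \mathrm{Frac}(A_L)=q+1$ is precisely how one carries this out.

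One small imprecision worth tightening: it is not literally true that ``any element of $\mathrm{Frac}(A_L)$ can be written as a ratio of homogeneous elements of $A_L$,'' so speaking of the ``degree-zero part of $\mathrm{Frac}(A_L)$'' is slightly abusive. What you actually need (and essentially prove) is the identity $\mathrm{Frac}(A_L)=P(L)(ft)$ for any nonzero $f\in L$: since every homogeneous element $at^k\in L^kt^k$ equals $(a/f^k)(ft)^k$ with $a/f^k\in P(L)$, one has $A_L\subset P(L)[ft]$, while conversely $P(L)\subset\mathrm{Frac}(A_L)$ and $ft\in A_L$. Together with the transcendence of $ft$ over $F\supset P(L)$ this gives $\tr\!\deg_\k\mathrm{Frac}(A_L)=q+1$, and your conclusion follows.
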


\begin{Def}
The polynomial $\tilde{H}_M$ in Theorem \ref{th-Hilbert-th-module} is called the {\it Hilbert polynomial} of
the graded module $M$.
\end{Def}

Two numbers appear in Theorem \ref{th-Hilbert-th-module}: the degree $q$ of the Hilbert polynomial and its
leading coefficient multiplied by $q!$. When $M = A_L$ both of these numbers have geometric meanings
(see Section \ref{sec-dim-deg-proj-var}).

Assume that a graded algebra $A \subset F[t]$ has at least one non-zero homogeneous component of
positive degree. Then the set of $k$ for which the homogeneous component $A_k$ is not
$0$ forms a non-trivial semigroup $T \subset \z_{\geq 0}$. Let $m(A)$ be the index of the group
$G(T)$ in $\z$. When $k$ is sufficiently large, the homogeneous component $A_k$ is non-zero
(and hence $H_A(k)$ is non-zero) if and only if $k$ is divisible by $m(A)$.
{It follows from definition that when $A \subset F[t]$ is of integral type we have $m(A) = 1$.}

%\begin{Cor} \label{cor-m(A)=1}
%For an algebra $A \subset F[t]$ of integral type we have $m(A) = 1$.
%\end{Cor}
%\begin{proof}
%The Hilbert polynomial $\tilde{H}_A$ of an algebra $A$ of integral type is not identically zero.
%So any large enough integer belongs to the support of $\tilde{H}_A$ and hence
%$m(A)=1$.
%\end{proof}

Next we define the componentwise product of graded spaces. Recall that
for two subspaces $L_1$, $L_2 \subset F$, the product $L_1L_2$ denotes
the $\k$-linear subspace spanned by all the products $fg$, where $f \in L_1$, $g \in L_2$.

\begin{Def} \label{def-prod-subspace}
The collection of all the non-zero finite dimensional subspaces of $F$ is a (commutative) semigroup with respect
to this product. We will denote it by ${\bf K}(F)$.
\end{Def}

\begin{Def} \label{def-componentwise}
Let $ M',M''$ be graded spaces with $k$-th subspaces $L'_k$, $L''_k$ respectively.
The {\it componentwise product} of spaces $M'$ and $M''$ is the graded space $M = M'M''$
whose $k$-th subspace $L_k$ is $L'_k L''_k$.
\end{Def}

In particular, the componentwise product can be applied to graded subalgebras of $F[t]$.
The following can be easily verified:
\begin{Prop} \label{prop-5.2}
1) The componentwise product of graded algebras is a graded algebra.
2) Let $L', L'' \subset F$ be two non-zero finite dimensional subspaces over $\k$
and let $L = L'L''$. Then $A_L = A_{L'}A_{L''}$. 3) Let $M', M''$ be two
finitely generated modules over $A_{L'}$ and $A_{L''}$ respectively. Then $M=M'M''$ is
a finitely generated module over $A_L$ where $L = L'L''$.
4) If $A',A''$ are algebras of integral type (respectively of almost integral type) then
$A=A'A''$ is also of integral type (respectively of almost integral type).
\end{Prop}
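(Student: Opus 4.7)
The proposition has four parts, and my plan is to treat them in order so that each can feed the next. All four reduce to a single algebraic observation: inside a field $F$, the span of products of elements can be freely rearranged, so that $(L'L'')^k = (L')^k(L'')^k$ for any finite dimensional subspaces $L', L'' \subset F$.

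For part 1), I would verify closure under multiplication at the level of homogeneous components. If $f \in L_k = L'_k L''_k$ and $g \in L_\ell = L'_\ell L''_\ell$, then writing $f = \sum_i a'_i a''_i$ and $g = \sum_j b'_j b''_j$ with $a'_i, b'_j$ in the primed factors and $a''_i, b''_j$ in the double-primed factors, one expands $fg = \sum_{i,j} (a'_i b'_j)(a''_i b''_j)$. Since $A'$ and $A''$ are themselves graded subalgebras, $a'_i b'_j \in L'_{k+\ell}$ and $a''_i b''_j \in L''_{k+\ell}$, so $fg \in L'_{k+\ell} L''_{k+\ell} = L_{k+\ell}$. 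Gradedness is automatic from the construction.

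For part 2), I would prove the identity $(L'L'')^k = (L')^k (L'')^k$ by a symmetric inclusion argument: a spanning product $(f'_1 f''_1)\cdots(f'_k f''_k)$ of $(L'L'')^k$ rearranges as $(f'_1 \cdots f'_k)(f''_1 \cdots f''_k) \in (L')^k(L'')^k$, and conversely any generator of the right-hand side rearranges back. The identification $A_L = A_{L'} A_{L''}$ then follows componentwise. For part 3), I would apply Proposition \ref{Prop-finite-module}: choose $N$ larger than both thresholds for $M'$ over $A_{L'}$ and $M''$ over $A_{L''}$, so that $L'_{m+\ell} = L'_m (L')^\ell$ and $L''_{m+\ell} = L''_m (L'')^\ell$ for $m \geq N$ and $\ell > 0$. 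Multiplying and reassociating (using part 2) gives $L_{m+\ell} = L'_m L''_m (L')^\ell (L'')^\ell = L_m L^\ell$, which is precisely the criterion in Proposition \ref{Prop-finite-module}.

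For part 4), the integral-type case is immediate from part 3): if $A'$ is a finite $A_{L'}$-module and $A''$ is a finite $A_{L''}$-module, then $A = A'A''$ is a finite $A_L$-module for $L = L'L''$, hence of integral type. For almost integral type, if $A' \subset B'$ and $A'' \subset B''$ with $B', B''$ of integral type, then $A'A'' \subset B'B''$, and $B'B''$ is of integral type by the previous sentence. The only delicate point throughout is that componentwise products interact correctly with the algebraic operations, and this is exactly the content of the identity in part 2). I expect that step to be the main obstacle, since everything else is bookkeeping once the rearrangement $(L'L'')^k = (L')^k(L'')^k$ is in hand.
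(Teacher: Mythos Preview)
Your proof is correct. The paper itself does not give a proof of this proposition; it simply states that it ``can be easily verified'' and moves on. Your argument supplies exactly the routine verification the authors have in mind: the rearrangement identity $(L'L'')^k = (L')^k(L'')^k$ in a commutative field, combined with the criterion of Proposition~\ref{Prop-finite-module}, is the natural way to carry out parts 2)--4), and part 1) is the obvious componentwise check.
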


%\begin{proof} 1) Let $A', A'' \subset F[t]$ be graded subalgebras and $A = A'A''$.
%Let $L_k, L_k'$ and $L_k''$ denote the $k$-th subspaces of $A, A'$ and $A''$ respectively.
%To show $A$ is a graded subalgebra we need to prove that
%$L_kL_m\subset L_{k+m}$.
%But $L_kL_m=(L'_kL''_k)(L'_mL''_m)= (L'_kL'_m)(L''_kL''_m)\subset$ $L'_{k+m}L''_{k+m}$ $=L_{k+m}$.
%2) The $k$-th subspaces of
%$A_{L'}$ and $A_{L''}$ are $L'^k$ and $L''^k$ respectively. Now $L'^k L''^k
%=(L'L'')^k$ which proves that $A_L =A_{L'}A_{L''}$. 3) Let $C_k$, $C'_k$ and $C''_k$ be the
%$k$-th subspace of $M$, $M'$ and $M''$ respectively.
%Then by Proposition \ref{Prop-finite-module}, there is
%$N$ such that for $m>N$ and $\ell>0$ we have $C'_{m+\ell}=C'_{m}
%(L')^\ell$ and $C''_{m+\ell}=C''_{m}(L'')^\ell$. Then $ C_{m+\ell}= C_{m}L^\ell$ which
%proves that $M$ is a finitely generated module over $A_L$.
%In particular if $A'$, $A''$ are algebras of integral type then $A=A'A''$ is also an algebra of
%integral type. The analogous statement for algebras of almost integral type follows immediately.
%\end{proof}

\begin{Cor}
1) The map $L \mapsto A_L$ is an isomorphism between the semigroup ${\bf K}(F)$ of non-zero
finite dimensional subspaces in $F$, and the semigroup of subalgebras $A_L$ with respect to
the componentwise product. 2) The collection of algebras of almost
integral type in $F[t]$ is a semigroup with respect to the componentwise product of subalgebras.
\end{Cor}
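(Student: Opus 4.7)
The plan is to extract both parts directly from Proposition \ref{prop-5.2} together with a simple injectivity observation. Most of the work has already been done, so the proposal is essentially a bookkeeping argument.

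For Part 1), the strategy is to verify the three properties of being a semigroup isomorphism. First, the map $L \mapsto A_L$ is a homomorphism of semigroups: this is exactly the content of Proposition \ref{prop-5.2}(2), which says $A_{L'L''} = A_{L'}A_{L''}$. Next, I would argue injectivity by looking at the first homogeneous component. By definition of $A_L$, the degree $1$ subspace of $A_L$ is $Lt$, so $L$ is recovered from $A_L$ as the set $\{a \in F : at \in (A_L)_1\}$. Consequently, if $A_{L'} = A_{L''}$ then $L' = L''$. Finally, surjectivity onto the image $\{A_L : L \in {\bf K}(F)\}$ is tautological. These three facts together say that $L \mapsto A_L$ is a semigroup isomorphism between ${\bf K}(F)$ and the collection of subalgebras of the form $A_L$ with the componentwise product.

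For Part 2), the claim is just that the componentwise product of two algebras of almost integral type is itself of almost integral type, since closure under this operation is what it means to be a subsemigroup of the semigroup of graded subalgebras of $F[t]$. This is precisely the last assertion in Proposition \ref{prop-5.2}(4). So nothing new is required.

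There is no real obstacle: everything is a direct appeal to Proposition \ref{prop-5.2}, except for the injectivity of $L \mapsto A_L$, which only requires identifying $L$ inside $A_L$ via its degree $1$ piece. The only mild point worth stating explicitly is that the degree $1$ component of $A_L$ is exactly $Lt$ and not larger, which is immediate from the construction $A_L = \bigoplus_{k \geq 0} L^k t^k$ with $L^1 = L$.
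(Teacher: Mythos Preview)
Your proposal is correct and matches the paper's approach: the paper states this as an immediate corollary of Proposition \ref{prop-5.2} without further proof, and your argument simply unpacks that implication together with the evident injectivity observation from the degree $1$ component.
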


\subsection{Valuations on graded algebras and semigroups} \label{subsec-application-valuation}
In this section given a valuation on the field $F$ we construct a valuation on
the ring $F[t]$. Using this valuation we will deduce results about the graded
algebras of almost integral type from the analogous results for
the strongly non-negative semigroups.

%Let us discuss some assumptions we need on the field $F$. The Brunn-Minkowski type inequality for
%growth coefficients is valid for any field extension $F/{\bf k}$ of transcendence degree $n$.
It will be easier to prove the statements in this section
if in addition $F$ is assumed to be finitely generated over ${\bf k}$.
One knows that {\it a field extension $F/{\bf k}$ is finitely generated if and only if it is the field of rational functions of an irreducible algebraic variety over ${\bf k}$}. Moreover, {\it the transcendence degree of
$F/{\bf k}$ is the dimension of the variety $X$}. The following simple proposition justifies that the
general case can be reduced to the case where $F$ is finitely generated over ${\bf k}$.

\begin{Prop}
Let $A_1, \ldots, A_k \subset F[t]$ be algebras of almost integral type over ${\bf k}$. Then there exists
a field $F_0 \subset F$ which is finitely generated over ${\bf k}$ such that $A_1, \ldots, A_k
\subset F_0[t]$. If $F$ has finite transcendence degree over ${\bf k}$ then the field $F_0$ can be chosen to have the same transcendence degree.
\end{Prop}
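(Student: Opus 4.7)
The plan is to construct $F_0$ explicitly from finite sets of generators provided by the almost integral type hypothesis. Recall that each $A_i$, being of almost integral type, is contained in some algebra $A_{L_i}$, where $L_i \subset F$ is a finite dimensional $\k$-subspace. For each $i$, I would pick a finite $\k$-basis of $L_i$ and collect all of these basis elements into a single finite subset $E \subset F$.

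Next, I would take $F_0$ to be the subfield of $F$ generated over $\k$ by $E$; this is finitely generated over $\k$ by construction. Since each basis element of each $L_i$ lies in $F_0$, we have $L_i \subset F_0$, hence $L_i^k \subset F_0$ for every $k$, and therefore $A_{L_i} \subset F_0[t]$. Since $A_i \subset A_{L_i}$, the containment $A_1, \ldots, A_k \subset F_0[t]$ follows at once.

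For the second statement, suppose $F/\k$ has finite transcendence degree $n$. The field $F_0$ produced above has some transcendence degree $m \leq n$ over $\k$. To adjust, I would pick a transcendence basis $\{x_1, \ldots, x_m\}$ of $F_0/\k$ and then, using the exchange property of transcendence bases, extend it to a transcendence basis $\{x_1, \ldots, x_m, y_1, \ldots, y_{n-m}\}$ of $F/\k$, with $y_1, \ldots, y_{n-m} \in F$. Replacing $F_0$ by the subfield $F_0(y_1, \ldots, y_{n-m}) \subset F$ yields a subfield which is still finitely generated over $\k$ (since we have only adjoined finitely many new generators), still contains all the $A_i$, and now has transcendence degree $n$ over $\k$.

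There is essentially no hard step here: the construction is mechanical once one unpacks the definition of almost integral type. The only minor subtlety is the transcendence degree adjustment, which relies on the standard exchange property for transcendence bases to guarantee that the elements $y_j$ can be chosen inside $F$.
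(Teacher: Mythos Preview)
Your proof is correct. The paper states this proposition without proof, calling it a ``simple proposition'', and your argument is exactly the natural mechanical construction one would expect: use the equivalent characterization of almost integral type ($A_i \subset A_{L_i}$), take $F_0$ to be generated by bases of the $L_i$, and then adjust the transcendence degree by adjoining part of a transcendence basis of $F/\k$.
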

%\begin{proof}
%The algebras $A_1, \ldots, A_k$ are contained in some algebras of integral type $\tilde{A}_1, \ldots, \tilde{A}_k$
%which in turn are finitely generated modules over some algebras $A_{L_1}, \ldots, A_{L_k}$ respectively. It suffices to %take $F_1$ to be the extension of $\k$ by a collection of basis vectors for $L_1, \ldots L_k$
%and a collection of basis vectors for all the $j$-th subspaces
%of the algebras $\tilde{A}_1, \ldots, \tilde{A}_k$ where $0 \leq j \leq N$ for some
%sufficiently large $N$.
%Clearly $F_1$ is finitely generated, $F_1 \subset F$ and $A_1, \ldots, A_k \subset F_1[t]$.
%If the transcendence degree of $F_1/\k$ is less than the
%transcendence degree of $F/\k$ we can extend $F_1$ by finitely many elements to make its transcendence degree
%equal to that of $F$.
%\end{proof}

%\begin{Cor}
Thus to prove a statement about a finite collection of subalgebras $A_1, \ldots, A_k \subset F[t]$ of almost integral type
over $\k$, it is enough to prove it for the case where $F$ is finitely generated over $\k$.
%\end{Cor}

{To carry out our constructions we need a faithful valuation $v: F \setminus \{0\} \to \z^n$ with one-dimensional leaves (where it is understood that $\z^n$ is equipped with a total order $<$ respecting addition).
By the above proposition, we can assume that $F$ is finitely generated over $\k$, i.e. is the field of rational
functions on a variety. In this case, one can construct many such valuations
(see Examples \ref{ex-valuation-rational-function-alg-var} and \ref{ex-valuation-Parshin}).}
%To carry out our constructions we require a faithful $\z^n$-valued valuation on the field $F$
%(where $n$ is the transcendence degree of $F/\k$). When $F$ is the field of rational functions on
%an irreducible algebraic variety there are many examples of such a valuation (see Example \ref{ex-valuation-rational-%function-alg-var}).
%Also if $F$ is not finitely
%generated over $\k$ such a valuation still may exist (see Example \ref{ex-2.26}).
%Nevertheless, by the above proposition,
%it is enough to prove all the statements in this section
%for the case when $F$ is finitely generated over $\k$, and hence is the field of
%rational functions of some algebraic variety. So without loss of generality we assume that there is
%a faithful $\z^n$-valued valuation on $F$. Fix one such valuation $v:F \setminus \{0\} \to \z^n$
%(where it is understood that $\z^n$ is equipped with a total order $<$ respecting the addition).

{Using $v$ on $F$ we define a $\z^n \times \z$-valued valuation $v_t$ on the algebra $F[t]$.}
Consider the total ordering $\prec$ on the group $\z^n \times \z$ given by the following:
let $(\alpha,n), (\beta,m)\in \z^n \times \z$.
\begin{enumerate}
\item If $n>m$ then $(\alpha,n) \prec (\beta,m)$.
\item If $n=m$ and $\alpha <\beta$ then $(\alpha,n)\prec (\beta,m)$.
\end{enumerate}

\begin{Def}
Define $v_t: F[t] \setminus \{0\} \to \z^n \times \z$ as follows:
Let $P(t) = a_nt^n+\cdots+a_0$, $a_n\neq 0$, be a polynomial in $F[t]$.
Then $$v_t(P)=(v(a_n), n).$$
It is easy to verify that $v_t$ is a valuation (extending $v$ on
$F$) where $\z^n \times \z$ is equipped with the total ordering $\prec$. The extension of
$v_t$ to the field of fractions $F(t)$ is faithful and has one-dimensional leaves.
\end{Def}

Let $A \subset F[t]$ be a graded subalgebra.
Then $$S(A) = v_t(A \setminus \{0\}),$$
is a non-negative semigroup (see Proposition \ref{prop-superadd-subspace}).
We will use the following notations:
\begin{itemize}
\item[-] $\Con(A)$, the cone of the semigroup $S(A)$,

\item[-] $G(A)$, the group generated by the semigroup $S(A)$,

\item[-] $G_0(A)$, the subgroup $G_0(S(A))$.

\item[-] $H_A$, the Hilbert function of the graded algebra $A$,

\item[-] $\Delta(A)$, the Newton-Okounkov convex set of the semigroup $S(A)$,

%\item[-] $\dim \Delta(A)$, dimension of the convex set $\Delta(A)$,

\item[-] $m(A)$, $\ind(A)$, the indices $m(S(A))$, $\ind(S(A))$ for the semigroup $S(A)$ respectively.

%\item[-] $O_m:\z \to \z$, the scaling map $O_m(k)=mk$, where $m=m(A)$.
\end{itemize}

\begin{Prop}
The Hilbert function $H_{S(A)}$ of the non-negative semigroup $S(A)$
coincides with the Hilbert function $H_A$ of the algebra $A$.
\end{Prop}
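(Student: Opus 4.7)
The plan is to show, level by level, that the fiber $S(A)_k = S(A) \cap \pi^{-1}(k)$ of the semigroup above the integer $k$ is naturally in bijection with the image $v(L_k \setminus \{0\}) \subset \z^n$, where $L_k \subset F$ is the $k$-th subspace of $A$, and then to conclude via Proposition \ref{prop-dim-Z} that this bijection forces $\#S(A)_k = \dim L_k = \dim A_k = H_A(k)$.

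First I would unwind the definition of $v_t$. For any non-zero $g \in F[t]$ written as $g = f_N t^N + f_{N-1} t^{N-1} + \cdots + f_0$ with $f_N \neq 0$, one has $v_t(g) = (v(f_N), N)$; in particular, the last coordinate of $v_t(g)$ is always the $t$-degree of $g$, and for a homogeneous element $f t^k$ one simply obtains $v_t(f t^k) = (v(f), k)$. Next I would use that $A$ is a graded subalgebra: every non-zero element decomposes as $\sum_k f_k t^k$ with each homogeneous piece $f_k t^k$ already in $A_k$, so the leading coefficient $f_N$ of any degree-$N$ element of $A$ lies in $L_N$, and conversely every non-zero $f \in L_k$ appears as the leading coefficient of $f t^k \in A_k \subset A$. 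This identifies $S(A)_k$ with $v(L_k \setminus \{0\}) \times \{k\}$, so $\#S(A)_k = \#v(L_k \setminus \{0\})$.

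The last step applies Proposition \ref{prop-dim-Z} to the subspace $L_k \subset F$: the valuation $v$ on $F$ is (by our standing assumption) faithful with one-dimensional leaves, so in particular it is a prevaluation on $L_k$ with one-dimensional leaves, and Proposition \ref{prop-dim-Z} gives $\#v(L_k \setminus \{0\}) = \dim L_k$. Combining the two identifications yields
\[
H_{S(A)}(k) = \#S(A)_k = \#v(L_k \setminus \{0\}) = \dim L_k = \dim A_k = H_A(k).
\]

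There is essentially no genuine obstacle here: the ordering $\prec$ on $\z^n \times \z$, which makes the $t$-degree coordinate dominant, was tailored precisely so that $v_t$ respects the grading on its last coordinate, and the standing hypothesis of one-dimensional leaves was imposed precisely so that Proposition \ref{prop-dim-Z} applies. The only thing to verify carefully is that nothing outside a given homogeneous component contributes to $S(A)_k$, which is immediate from the leading-term formula for $v_t$ and the fact that $A$ is graded.
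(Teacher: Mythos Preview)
Your proof is correct and follows exactly the approach the paper intends: the paper's own proof is the single line ``Follows from Proposition \ref{prop-dim-Z},'' and what you have written is precisely the unwinding of that reference---identifying $S(A)_k$ with $v(L_k\setminus\{0\})\times\{k\}$ via the leading-term formula for $v_t$ and the gradedness of $A$, then invoking Proposition \ref{prop-dim-Z}. Nothing more is needed.
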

\begin{proof}
Follows from Proposition \ref{prop-dim-Z}.
\end{proof}

Now we show that when $A$ is an algebra of almost integral type then the semigroup $S(A)$ is
strongly non-negative.
\begin{Lem} \label{lem-S(A)-strongly-non-negative}
Let $A$ be an algebra of integral type. Assume that the rank of
$G(A)\subset \z^n\times \z$ is equal to $n+1$.
Then the semigroup $S(A)$ is strongly non-negative.
\end{Lem}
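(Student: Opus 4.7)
The plan is to reduce the question to Theorem \ref{th-5.4}, which says that a strongly admissible pair is equivalent to having bounded growth. The key inputs I would need are (i) the dimension of $\partial M(S(A))$, (ii) the index $m(A)$, and (iii) an $O(k^n)$ bound on the Hilbert function, all of which are available from the hypotheses and earlier results.

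First, since the rank of $G(A)\subset\z^n\times\z$ is $n+1$, the real span $L(S(A))$ is all of $\r^n\times\r$. Hence $\partial M(S(A)) = L(S(A)) \cap (\r^n\times\{0\}) = \r^n\times\{0\}$ has dimension $q=n$. Since $A$ is of integral type, Corollary \ref{cor-m(A)=1} gives $m(A)=1$, so the scaling factor $m$ appearing in Theorem \ref{th-5.4} is $1$ and the level sets of interest are $S(A)_k$ with $k\in\z_{\geq 0}$.

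Next, I would bound the Hilbert function. Since $A$ is of integral type, by definition $A$ is a finitely generated graded module over some $A_L$, where $L\subset F$ is a non-zero finite dimensional subspace over $\k$. The projective transcendence degree of $L$ is the transcendence degree of $P(L)\subset F$ over $\k$, which is at most $n$. Therefore by Theorem \ref{th-Hilbert-th-module} the Hilbert function $H_A(k)$ agrees, for all sufficiently large $k$, with a polynomial of degree at most $n$. In particular $H_A(k)/k^n$ is bounded as $k\to\infty$. Since (by Proposition following the definition of $S(A)$) $\#S(A)_k = H_A(k)$, the ratio $\#S(A)_{mk}/k^q = H_A(k)/k^n$ is bounded for a sequence $k_i\to\infty$, so $S(A)$ has bounded growth with respect to $M(S(A))$ in the sense of the definition preceding Theorem \ref{th-5.4}.

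Applying Theorem \ref{th-5.4} then gives that the admissible pair $(S(A), M(S(A)))$ is strongly admissible, which by definition is precisely the statement that $S(A)$ is a strongly non-negative semigroup. There is no real obstacle here; the only subtle point to check is that the degree of the Hilbert polynomial of an integral-type algebra is indeed at most $n$, which reduces to bounding the projective transcendence degree of a finite dimensional $\k$-subspace of $F$ by $\tr\deg(F/\k)=n$.
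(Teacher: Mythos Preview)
Your proof is correct and follows essentially the same approach as the paper: bound the Hilbert function by a polynomial of degree at most $n$ via Theorem \ref{th-Hilbert-th-module} (using that the projective transcendence degree of $L$ is at most $\tr\deg(F/\k)=n$), then invoke Theorem \ref{th-5.4}. You make explicit the role of the rank-$(n+1)$ hypothesis (namely to force $q=\dim\partial M(S(A))=n$, so that an $O(k^n)$ bound really is bounded growth in the sense required), which the paper's proof leaves implicit; your appeal to Corollary \ref{cor-m(A)=1} for $m(A)=1$ is correct but not strictly needed, since $H_A(mk)/k^n$ is bounded regardless of $m$.
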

\begin{proof}
It is obvious that the semigroup $S(A)$ is non-negative.
Let $A$ be a finitely generated module over some algebra $A_L$.
Since $P(L) \subset F$, the projective transcendence degree of $L$ cannot be bigger than
$n$. By Theorem \ref{th-Hilbert-th-module} (Hilbert-Serre theorem), for large values of $k$,
the Hilbert function of the algebra $A$ is a polynomial in $k$ of degree $\leq n$.
Thus by Theorem \ref{th-5.4} the semigroup $S(A)$ is strongly non-negative.
\end{proof}

\begin{Lem} \label{lem-G(A)-full}
Let $A$ be an algebra of integral type.
Then there exists an algebra of integral type $B$ containing $A$ such that
the group $G(B)$ is the whole $\z^n\times \z$.
\end{Lem}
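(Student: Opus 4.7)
The plan is to enlarge $A$ by adjoining a small, explicit graded subalgebra whose valuation image already generates $\z^n \times \z$, and then verify that integrality is preserved. The key input is that the valuation $v$ on $F$ is faithful, so I may choose elements $g_1, \ldots, g_n \in F \setminus \{0\}$ with $v(g_i) = e_i$, the $i$-th standard basis vector of $\z^n$.

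By hypothesis, $A$ is a finitely generated module over some $A_L$, with $L \subset F$ a non-zero finite dimensional subspace over $\k$. Set
$$ L^* = L + \span_{\k}(1, g_1, \ldots, g_n), $$
which is again a non-zero finite dimensional subspace of $F$ containing $L$, and define
$$ B = A \cdot A_{L^*} $$
as the componentwise product of graded subalgebras (Definition \ref{def-componentwise}). By Proposition \ref{prop-5.2}(1), $B$ is a graded subalgebra of $F[t]$; since $1 \in A_{L^*}$ and $1 \in A$, we have both $A \subset B$ and $A_{L^*} \subset B$.

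Next I would check that $B$ is of integral type, with $A_{L^*}$ playing the role of the base algebra. Let $a_1, \ldots, a_r$ be a set of generators for $A$ as a module over $A_L$. Any element of $B$ is a $\k$-linear combination of products $a b$ with $a \in A$ and $b \in A_{L^*}$; writing $a = \sum_i c_i a_i$ with $c_i \in A_L \subset A_{L^*}$, we get $ab = \sum_i (c_i b) a_i$ with $c_i b \in A_{L^*}$. Hence $a_1, \ldots, a_r$ generate $B$ as an $A_{L^*}$-module, so $B$ is of integral type.

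Finally I would verify $G(B) = \z^n \times \z$. Since $t = 1 \cdot t \in A_{L^*} \subset B$ and $g_i t \in A_{L^*} \subset B$ for each $i$, the non-negative semigroup $S(B)$ contains
$$ v_t(t) = (0,1), \qquad v_t(g_i t) = (v(g_i), 1) = (e_i, 1), \quad i=1,\dots,n. $$
The differences $(e_i,1) - (0,1) = (e_i, 0)$ lie in $G(B)$ for $i=1, \ldots, n$, and together with $(0,1)$ they generate all of $\z^n \times \z$. Therefore $G(B) = \z^n \times \z$, completing the proof. There is no real obstacle here; the only subtle point is to make sure that the enlargement $L \rightsquigarrow L^*$ does not destroy the finite-module property, which is handled by the simple generator argument above.
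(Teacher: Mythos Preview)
Your overall strategy matches the paper's: adjoin to $A$ the algebra coming from a subspace spanned by $1$ and elements $g_i$ with $v(g_i)=e_i$, form the componentwise product, and read off $G(B)=\z^n\times\z$. The paper does exactly this (it simply takes $L=\span_\k(1,f_1,\dots,f_n)$ and sets $B=A_LA$), and for the integrality of $B$ it silently invokes Proposition~\ref{prop-5.2}(4).

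However, your execution has two genuine gaps, both stemming from a misreading of the componentwise product.

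\textbf{The inclusion $A_{L^*}\subset B$ is unjustified.} Under Definition~\ref{def-componentwise} the $k$-th subspace of $B=A\,A_{L^*}$ is $L_k\,(L^*)^k$, where $L_k$ is the $k$-th subspace of $A$. For $(L^*)^k\subset L_k(L^*)^k$ you would need $1\in L_k$, which is not assumed; the statement ``$1\in A$'' only concerns $L_0$. So the claim $t,\,g_it\in B$ can fail. The fix is easy: pick any $k$ with $L_k\neq 0$ (these exist, and in fact all large $k$ work since $m(A)=1$ by Corollary~\ref{cor-m(A)=1}) and any $0\neq f\in L_k$. Then $f,\,fg_i\in L_k(L^*)^k$, so $(v(f),k)$ and $(v(f)+e_i,k)$ lie in $S(B)$; their differences give $(e_i,0)\in G(B)$, and $m(A)=1$ supplies the generator in the $\z$-direction.

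\textbf{The finite-module argument is incorrect.} You assert that every element of $B$ is a $\k$-linear combination of products $ab$ with $a\in A$, $b\in A_{L^*}$. But an element of $B_k$ is $ht^k$ with $h\in L_k(L^*)^k$, which is \emph{not} a product in $F[t]$ of an element of $A_k$ with one of $(A_{L^*})_k$ (that product would sit in degree $2k$). Concretely, already for $A=A_L$ one gets $B=A_{LL^*}$, whose $k$-th subspace $(LL^*)^k$ is in general strictly larger than $(L^*)^k$, so $B$ is not generated over $A_{L^*}$ by the single element $1$; your generators $a_i$ fail for the same reason. The correct route is Proposition~\ref{prop-5.2}(3) (or~(4)): since $A$ is a finite $A_L$-module and $A_{L^*}$ is a finite $A_{L^*}$-module, $B=A\,A_{L^*}$ is a finite $A_{LL^*}$-module, hence of integral type. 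Note the base algebra is $A_{LL^*}$, not $A_{L^*}$.
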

\begin{proof}
By assumption $v$ is faithful and hence we can find elements $f_1, \ldots, f_n \in F$
such that $v(f_1), \ldots, v(f_n)$ is the standard basis for $\z^n$. Consider the space
$L$ spanned by $1$ and $f_1, \ldots, f_n$ and take its associated graded algebra $A_L$.
The semigroup $S(A_L)$ contains the basis
$\{{\bf e}_{n+1}, {\bf e}_1 + {\bf e}_{n+1}, \ldots, {\bf e}_n + {\bf e}_{n+1}\}$,
where $\{{\bf e}_1, \ldots, {\bf e}_{n+1}\}$ is the standard basis in $\r^n \times \r = \r^{n+1}$.
Hence $G(A_L) = \z^n \times \z$. Let $B = A_LA$ be the componentwise product of $A$ and $A_L$.
One sees that $G(B) = \z^n \times \z$. Since $1 \in L$ we have $A \subset B$.
\end{proof}

\begin{Th} \label{th-S(A)-strongly-non-neg}
Let  $A\subset F[t]$ be an algebra of almost integral type.
Then $S(A)$ is a strongly non-negative semigroup, and hence its Newton-Okounkov
convex set $\Delta(A)$ is a convex body.
\end{Th}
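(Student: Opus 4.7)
The plan is to reduce the almost integral case to the integral case, then invoke Lemma~\ref{lem-S(A)-strongly-non-negative} together with Lemma~\ref{lem-G(A)-full} to handle the integral case, and finally to pass the strong non-negativity property down to subsemigroups by monotonicity.

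First I would unwind the definition: since $A$ is of almost integral type, there exists an algebra $A'$ of integral type with $A\subset A'$. By Lemma~\ref{lem-G(A)-full} applied to $A'$, one can further enlarge $A'$ to an algebra $B$ of integral type with $A\subset A'\subset B$ and with $G(B)=\z^n\times \z$, in particular of full rank $n+1$. Lemma~\ref{lem-S(A)-strongly-non-negative} then applies to $B$ and gives that $S(B)$ is a strongly non-negative semigroup; equivalently, the cone $\Con(B)$ is strictly convex and intersects the hyperplane $\r^n\times\{0\}$ only at the origin.

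Next I would use the inclusion $A\subset B$ to transport these properties to $S(A)$. Since valuations respect inclusions, $S(A)\subset S(B)$, and taking closures of convex hulls gives $\Con(A)\subset \Con(B)$. Any linear subspace contained in $\Con(A)$ lies in $\Con(B)$, hence in the ridge of $\Con(B)$, which is $\{0\}$; therefore $\Con(A)$ is strictly convex. Likewise, $\Con(A)\cap(\r^n\times\{0\})\subset \Con(B)\cap(\r^n\times\{0\})=\{0\}$, and since the boundary $\partial M(A)$ of the half-space $M(A)=L(A)\cap(\r^n\times\r_{\geq 0})$ inside $L(A)$ is precisely $L(A)\cap(\r^n\times\{0\})$, the cone $\Con(A)$ meets $\partial M(A)$ only at the origin. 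This is exactly the definition of $(S(A),M(A))$ being strongly admissible, i.e.\ $S(A)$ is strongly non-negative. The fact that $\Delta(A)$ is then a convex body is immediate from the definition of the Newton--Okounkov set for a strongly admissible pair (or from Theorem~\ref{th-5.4}).

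The only step that requires a moment of care is verifying that Lemma~\ref{lem-G(A)-full} can genuinely be arranged so that $A$ sits inside $B$ with $G(B)$ of full rank: this is why the lemma is formulated to include the original algebra $1\in L$ so that $A\subset A_L A$. With that observation in hand, everything else is formal: strong non-negativity is preserved under passing to subsemigroups of a non-negative semigroup whose associated cone is already strictly convex and transversal to $\partial M$, and the reduction from almost integral to integral type via Lemma~\ref{lem-G(A)-full} is the only non-trivial ingredient.
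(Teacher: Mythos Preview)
Your proof is correct and follows essentially the same route as the paper: reduce to an integral algebra $B$ with $G(B)$ of full rank via Lemma~\ref{lem-G(A)-full}, apply Lemma~\ref{lem-S(A)-strongly-non-negative} to $B$, and then observe that $S(A)\subset S(B)$ inherits strong non-negativity. You simply spell out in more detail the final step (that strict convexity of $\Con(B)$ and transversality to $\partial M$ pass to the subcone $\Con(A)$), which the paper leaves implicit.
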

\begin{proof}
By definition the algebra $A$
is contained in some algebra of integral type, and moreover
by Lemma \ref{lem-G(A)-full}, it is contained in an algebra $B$ of integral type such that
$G(B)=\z^n\times \z$. By Lemma \ref{lem-S(A)-strongly-non-negative},
$S(B)$ is strongly non-negative. Since $A\subset B$ we have
$S(A) \subset S(B)$ which shows that $S(A)$ is also strongly non-negative.
\end{proof}

Using Theorem \ref{th-S(A)-strongly-non-neg} we can translate the results in Part I
about the Hilbert function of strongly non-negative semigroups to results about the Hilbert function of
algebras of almost integral type.

Let $A$ be an algebra of almost integral type with the Newton-Okounkov body $\Delta(A)$.
Put $m=m(A)$ and $q=\dim \Delta(A)$. The Hilbert function $H_A$ vanishes at those $p$ which are not divisible by $m$.
Recall that $O_m$ denotes the scaling map $O_m(k)=mk$. For a function $f$, $O^*_m(f)$ is the
pull-back of $f$ defined by $O^*_m(f)(k) = f(mk)$, for all $k$. Also $\Vol_q$ denotes the integral volume
(Definition \ref{def-int-volume}).

\begin{Th} \label{th-growth-alg-Newton-Okounkov-body}
The $q$-th growth coefficient of the function $O_m^*(H_A)$, i.e.
$$a_q(O^*_m(H_A)) = \lim_{k \to \infty} \frac{H_A(mk)}{k^q},$$ exists and is equal to
$\Vol_q(\Delta(A))/\ind(A)$.
%Moreover, if $A$ is an algebra of integral type then $m(By)=1$ and the Hilbert function
%$H_A$ is a polynomial for large values of the argument.
\end{Th}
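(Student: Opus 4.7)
The plan is to reduce this theorem directly to its semigroup counterpart, namely Theorem \ref{th-1.34}, via the valuation $v_t$ introduced in this section. Essentially all the hard work has already been done, so this statement is a translation of results from Part I through the bridge between graded algebras of almost integral type and strongly non-negative semigroups of integral points.

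First I would recall that to the algebra $A$ we have associated the semigroup $S(A) = v_t(A \setminus \{0\}) \subset \z^n \times \z_{\geq 0}$. By Theorem \ref{th-S(A)-strongly-non-neg}, because $A$ is of almost integral type, the semigroup $S(A)$ is strongly non-negative, so the machinery of Section \ref{sec-non-negative} applies. The numerical invariants associated to the algebra $A$ were defined precisely as the corresponding invariants of $S(A)$: we have $m(A) = m(S(A))$, $\ind(A) = \ind(S(A))$, $\Delta(A) = \Delta(S(A))$, and hence $q = \dim \Delta(A) = \dim \Delta(S(A))$.

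Next, the key identification is that the Hilbert function $H_A$ of the graded algebra coincides with the Hilbert function $H_{S(A)}$ of the non-negative semigroup $S(A)$. This is exactly the content of the proposition stated just after the definition of $S(A)$, which itself follows from Proposition \ref{prop-dim-Z}: the fact that $v_t$ has one-dimensional leaves implies that the number of distinct values of $v_t$ on the $k$-th homogeneous component $A_k$ equals $\dim_\k A_k$, and these values constitute $S(A)$ at level $k$.

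Having made these identifications, the desired limit $\lim_{k\to\infty} H_A(mk)/k^q = \lim_{k\to\infty} H_{S(A)}(m k)/k^q$ exists and equals $\Vol_q(\Delta(S(A)))/\ind(S(A)) = \Vol_q(\Delta(A))/\ind(A)$ by Theorem \ref{th-1.34} applied to the strongly non-negative semigroup $S(A)$. There is no substantial obstacle here: the nontrivial content is packaged in the earlier theorem on semigroups (which in turn rested on the approximation theorem \ref{th-semi-gp-approximation}) and in the verification, via the Hilbert-Serre theorem together with Lemma \ref{lem-G(A)-full}, that $S(A)$ is indeed strongly non-negative. The only potential subtlety worth double-checking is the equality $\dim_\k A_k = \#S(A)_k$, which requires the one-dimensional leaves hypothesis on $v_t$ (guaranteed by our standing assumption on valuations).
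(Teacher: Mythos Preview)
Your proof is correct and follows exactly the paper's approach: the paper's proof is the single line ``This follows from Theorem \ref{th-S(A)-strongly-non-neg} and Theorem \ref{th-1.34},'' and you have simply unpacked this by making explicit the identification $H_A = H_{S(A)}$ (via Proposition \ref{prop-dim-Z}) and the matching of invariants $m(A), \ind(A), \Delta(A)$ with those of $S(A)$.
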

\begin{proof}
This follows from Theorem \ref{th-S(A)-strongly-non-neg} and Theorem \ref{th-1.34}.
\end{proof}

The semigroup associated to an algebra of almost integral type has
the following superadditivity property with respect to the componentwise product:

\begin{Prop} \label{prop-superadd-alg}
Let $A', A''$ be algebras of almost integral type and $A=A' A''$. Put
$S=v_t(A\setminus \{0\})$, $S'=v_t(A'\setminus \{0\}))$ and $S''=v_t(A''\setminus \{0\})$.
Then  $S' \oplus_t S'' \subset S$.
Moreover, if $m(A')=m(A'')=1$ then $$\Delta(A') \oplus_t \Delta(A'') \subset \Delta(A).$$
(In other words, $\Delta_0(A') + \Delta_0(A'') \subset \Delta_0(A)$, where
$\Delta_0$ is the Newton-Okounkov body projected to the level $0$ and $+$ is the Minkowski sum.)
\end{Prop}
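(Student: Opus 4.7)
The plan is to reduce both claims to the multiplicative property of the valuation $v_t$ and then invoke monotonicity of $\Con$ under inclusion together with Propositions \ref{prop-levelwise}/\ref{prop-strong-levelwise}.

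For the first inclusion $S'\oplus_t S''\subset S$, I would work level by level. The componentwise product $A=A'A''$ has $k$-th subspace $L_k=L'_kL''_k$, the $\k$-span of products $fg$ with $f\in L'_k$, $g\in L''_k$. Given any non-zero $f\in L'_k$ and non-zero $g\in L''_k$, the product $fg$ is a non-zero element of $L_k$ (because $F$ is a field), and since $v_t$ is a valuation on $F[t]$ one has $v_t(fgt^k)=v_t(ft^k)+v_t(gt^k)-(0,k)$ after projecting off the extra level coordinate; equivalently, $\pi_1\bigl(v_t(fgt^k)\bigr)=v(f)+v(g)$ at level $k$. This is exactly the definition of levelwise addition, so for every $k$ we get $\pi_1\bigl(S'\cap\pi^{-1}(k)\bigr)+\pi_1\bigl(S''\cap\pi^{-1}(k)\bigr)\subset \pi_1\bigl(S\cap\pi^{-1}(k)\bigr)$, proving $S'\oplus_t S''\subset S$.

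For the second inclusion, I would first observe that $m(A)=1$: since $L'_1,L''_1\neq 0$ (because $m(A')=m(A'')=1$) and $F$ is a field, $L_1=L'_1L''_1\neq 0$, hence $1$ lies in the support of $H_A$ and $m(A)=1$. Thus all three Newton-Okounkov bodies $\Delta(A'),\Delta(A''),\Delta(A)$ sit in the hyperplane $\pi^{-1}(1)$. Taking cones in the inclusion from the first step gives $\Con(S'\oplus_t S'')\subset\Con(S)=\Con(A)$, and by Proposition \ref{prop-levelwise}(1) applied to $S'\oplus_t S''$ (or more cleanly Proposition \ref{prop-strong-levelwise}, which applies because $S(A')$ is strongly non-negative by Theorem \ref{th-S(A)-strongly-non-neg}), one has $\Con(A')\oplus_t\Con(A'')\subset\Con(A)$. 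Intersecting both sides with $\pi^{-1}(1)$ and using that intersection with a level set commutes with $\oplus_t$, I get
\[
\Delta(A')\oplus_t\Delta(A'')=\bigl(\Con(A')\oplus_t\Con(A'')\bigr)\cap\pi^{-1}(1)\subset\Con(A)\cap\pi^{-1}(1)=\Delta(A),
\]
which in level-$0$ language is exactly $\Delta_0(A')+\Delta_0(A'')\subset\Delta_0(A)$.

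There is no genuine obstacle here; the only point that requires a moment of care is confirming $m(A)=1$ so that the target body $\Delta(A)$ indeed lives at level $1$, and verifying that intersecting with the affine slice $\pi^{-1}(1)$ interchanges with levelwise Minkowski sum, both of which are immediate from the definitions. Everything else is a formal consequence of $v_t(fg)=v_t(f)+v_t(g)$ together with the already-established Propositions \ref{prop-levelwise} and \ref{prop-strong-levelwise}.
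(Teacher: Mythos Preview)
Your approach is essentially the paper's: the first inclusion is exactly Proposition~\ref{prop-superadd-subspace} applied level by level (the paper cites that proposition rather than re-deriving it), and the passage to Newton--Okounkov bodies via cones and Proposition~\ref{prop-strong-levelwise} is what the paper compresses into ``the proposition follows from this inclusion.''

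One step, however, is not correct as written: $m(A')=1$ does \emph{not} imply $L'_1\neq 0$. The index $m(A')$ is the index in $\z$ of the group generated by $\supp(H_{A'})$, so e.g.\ $\supp(H_{A'})=\{2,3,4,\dots\}$ gives $m(A')=1$ while $L'_1=0$. The right way to get $m(A)=1$ is: since $m(A')=m(A'')=1$, Proposition~\ref{prop-4.1}(3) guarantees $L'_k\neq 0$ and $L''_k\neq 0$ for all sufficiently large $k$, hence $L_k=L'_kL''_k\neq 0$ for all large $k$, and therefore $m(A)=1$. With this correction your argument goes through, and your more detailed treatment of the second inclusion (taking cones, invoking Proposition~\ref{prop-strong-levelwise}, slicing at level $1$) spells out what the paper leaves implicit.
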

\begin{proof}
If $L_k$, $L'_k$ and $L''_k$ are the $k$-th subspaces corresponding to
$A, A'$ and $A''$ respectively, then by definition $L_k= L'_{k}L''_{k}$.
According to Proposition \ref{prop-superadd-subspace} we have
$v( L'_{k}\setminus \{0\}) + v( L''_{k}\setminus \{0\}) \subset
v( L_{k}\setminus \{0\})$. The proposition follows from this inclusion.
\end{proof}

Next we prove a Brunn-Minkowski type inequality for the $n$-th growth coefficients
of Hilbert functions of algebras of almost integral type, where as usual $n$ is the
transcendence degree of $F$ over $\k$. This is a generalization of
the corresponding inequality for the volume of big divisors (see Corollary \ref{cor-linear-series}(3)
and Remark \ref{rem-linear-series}).

\begin{Th} \label{th-Brunn-Mink-algebra}
Let $A_1$, $A_2$ be algebras of almost integral type and let $A_3 = A_1A_2$ be their
componentwise product. Moreover assume $m(A_1) = m(A_2) = 1$. Then the $n$-growth
coefficients $\rho_1, \rho_2$ and $\rho_3$ of the Hilbert functions of the algebras
$A_1, A_2, A_3$ respectively, satisfy the following Brunn-Minkowski type inequality:
\begin{equation} \label{eqn-Brunn-Mink-algebra}
\rho_1^{1/n}+\rho_2^{1/n} \leq  \rho_3^{1/n}.
\end{equation}
\end{Th}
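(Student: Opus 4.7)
The plan is to combine the classical Brunn-Minkowski inequality with the superadditivity of Newton-Okounkov bodies under componentwise product (Proposition \ref{prop-superadd-alg}), and then handle the normalization factors $\ind(A_i)$ separately.

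First, by Theorem \ref{th-growth-alg-Newton-Okounkov-body} applied with $q = n$, and using $m(A_i) = 1$, one has $\rho_i = \Vol_n(\Delta_0(A_i))/\ind(A_i)$ whenever $\dim \Delta(A_i) = n$, and $\rho_i = 0$ otherwise (since then $H_{A_i}(k)$ grows like $k^{q_i}$ for some $q_i < n$). In the generic case $\dim \Delta(A_1) = \dim \Delta(A_2) = n$, Proposition \ref{prop-superadd-alg} gives the Minkowski-sum containment $\Delta_0(A_1) + \Delta_0(A_2) \subseteq \Delta_0(A_3)$, and the classical Brunn-Minkowski inequality applied in $\r^n$ yields
$$\Vol_n(\Delta_0(A_3))^{1/n} \;\geq\; \Vol_n(\Delta_0(A_1))^{1/n} + \Vol_n(\Delta_0(A_2))^{1/n}.$$

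To convert this into the desired inequality for the $\rho_i$, I would establish the key index comparison $\ind(A_3) \leq \min(\ind(A_1),\ind(A_2))$, equivalently $G_0(A_i) \subseteq G_0(A_3)$ for $i=1,2$. Since $v_t$ is a valuation, any element of $G_0(A_1)$ can be expressed as $v_t(p)-v_t(q)$ for nonzero homogeneous $p,q \in A_1$ of a common degree $k$ (group the positive and negative contributions by degree and collapse each group into a single product). Because $m(A_2)=1$, for all sufficiently large $k$ the $k$-th subspace $L_k^{(2)} \subset F$ of $A_2$ is nonzero; picking $g \in L_k^{(2)} \setminus \{0\}$ and writing $p = f t^k$, $q = f' t^k$, the elements $fg\cdot t^k$ and $f'g\cdot t^k$ lie in the degree-$k$ part of $A_3 = A_1A_2$ (by the definition of componentwise product), and their $v_t$-values differ by exactly $v_t(p) - v_t(q)$. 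Hence $G_0(A_1) \subseteq G_0(A_3)$; symmetrically for $A_2$. Dividing the Brunn-Minkowski inequality by $\ind(A_3)^{1/n}$ and then replacing $\ind(A_3)$ by the larger $\ind(A_i)$ inside each of the two summands yields \eqref{eqn-Brunn-Mink-algebra}.

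The degenerate cases, where $\dim \Delta(A_i) < n$ for some $i$ and hence $\rho_i = 0$, are handled by pure monotonicity: $\Delta_0(A_3)$ contains a translate of the other $\Delta_0(A_j)$, and $\ind(A_3) \leq \ind(A_j)$, giving $\rho_3 \geq \rho_j$, which is the desired inequality. The main obstacle is the index comparison; the valuation property of $v_t$ together with the hypothesis $m(A_2)=1$ enables the trick of multiplying $p,q \in A_1$ by a common $g t^k \in A_2$ to realize each generator of $G_0(A_1)$ inside $G_0(A_3)$. Once this is in hand, everything else reduces to standard Brunn-Minkowski.
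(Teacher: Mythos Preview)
Your proposal is correct and follows essentially the same route as the paper: superadditivity of the Newton--Okounkov bodies (Proposition \ref{prop-superadd-alg}), the classical Brunn--Minkowski inequality, and the index comparison $\ind(A_3)\le\min(\ind(A_1),\ind(A_2))$. The paper obtains the index comparison in one line from the semigroup inclusion $S(A_1)\oplus_t S(A_2)\subset S(A_3)$ (which forces $G_0(A_1)+G_0(A_2)\subset G_0(A_3)$), whereas you unpack the same fact by hand at the algebra level; your case split on $\dim\Delta(A_i)$ is unnecessary, since Brunn--Minkowski already covers the degenerate case, but it is harmless. One small point: in your argument the common degree $k$ of $p,q\in A_1$ is determined by the element of $G_0(A_1)$ and need not be large, so to pick $g\in L_k^{(2)}\setminus\{0\}$ you should first multiply $p,q$ by a common nonzero homogeneous element of $A_1$ of sufficiently large degree (available since $m(A_1)=1$) to push $k$ past the threshold.
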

\begin{proof}
%By Theorem \ref{th-S(A)-strongly-non-neg}, the semigroups $S(A_1), S(A_2)$ and $S(A_3)$ are strongly non-negative
%and hence he Newton-Okounkov convex sets $\Delta(A_1), \Delta(A_2)$ and $\Delta(A_3)$
%are bounded (i.e. they are convex bodies)
By Proposition \ref{prop-superadd-alg} applied to the valuation $v_t$,
we have $S(A_1) \oplus_t S(A_2) \subset S(A_3)$, and
$\Delta(A_1) \oplus_t \Delta(A_2) \subset \Delta(A_3)$.
From the classical Brunn-Minkowski inequality (Theorem \ref{th-Brunn-Mink})
we then get
\begin{equation} \label{equ-Brunn-Mink-proof-alg}
\Vol_n^{1/n}(\Delta(A_1)) + \Vol_n^{1/n}(\Delta(A_2)) \leq \Vol_n^{1/n}(\Delta(A_3)).
\end{equation}
For $i=1,2,3$, we have $\rho_i = \Vol_n(\Delta(A_i)) / \ind(A_i)$ (Theorem \ref{th-growth-alg-Newton-Okounkov-body}).
Since $S(A_1) \oplus_t S(A_2) \subset S(A_3)$, the index $\ind(A_3)$ is less than or equal to both of the
indices $\ind(A_1)$ and $\ind(A_2)$. From this and (\ref{equ-Brunn-Mink-proof-alg}) the required inequality (\ref{eqn-Brunn-Mink-algebra})
follows.
\end{proof}

Let $A\subset F[t]$ be an algebra of almost integral type.
For an integer $p$ in the support of the Hilbert function $H_A$
let $\widehat{A}_p$ be the graded subalgebra generated by the $p$-th homogeneous
component $A_p$ of $A$. We wish to compare the asymptotic of $H_A$ with the
asymptotic, as $p$ tends to infinity, of the growth coefficients of the Hilbert
functions of the algebras $\widehat{A}_p$.
%The result we prove can be considered as a generalization of the Fujita approximation theorem
%regarding the volume of line bundles.

%From the definition it is clear that all the algebras $\widehat{A}_p$ are
%algebras of $A_L$ type.
To every $p$ in the support of $H_A$ we
associate two semigroups: 1) the semigroup $\widehat{S}_p(A)$
generated by the set $S_p(A)$ of points at level $p$ in $S(A)$, and
2) The semigroup $S(\widehat{A}_p)$ associated to the algebra $\widehat{A}_p$.

\begin{Th} \label{th-5.14}
Let $A$ be an algebra of almost integral type and $p$ any integer in the support of
$H_A$. Then the semigroup
$S(\widehat{A}_p)$ satisfies the inclusions:
$$ \widehat{S}_p (A)\subset S( \widehat{A}_p)\subset S(A).$$
\end{Th}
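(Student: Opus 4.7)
The statement is really a matter of unwinding the definitions, and the plan is to handle the two inclusions separately.

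For the right-hand inclusion $S(\widehat{A}_p) \subset S(A)$, I would simply observe that by construction $\widehat{A}_p$ is a graded subalgebra of $A$, so $\widehat{A}_p \setminus \{0\} \subset A \setminus \{0\}$, and applying $v_t$ to both sides gives $v_t(\widehat{A}_p \setminus \{0\}) \subset v_t(A \setminus \{0\})$, which is exactly the desired containment.

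For the left-hand inclusion $\widehat{S}_p(A) \subset S(\widehat{A}_p)$, I would first note that $A_p \subset \widehat{A}_p$ (since $\widehat{A}_p$ is defined as the subalgebra generated by $A_p$), so $S_p(A) = v_t(A_p \setminus \{0\}) \subset v_t(\widehat{A}_p \setminus \{0\}) = S(\widehat{A}_p)$. Then, by Proposition \ref{prop-superadd-subspace}(1), the image under $v_t$ of any subalgebra is a subsemigroup of $\z^n \times \z$, so $S(\widehat{A}_p)$ is a semigroup containing $S_p(A)$. Since $\widehat{S}_p(A)$ is by definition the smallest subsemigroup generated by $S_p(A)$, we conclude $\widehat{S}_p(A) \subset S(\widehat{A}_p)$.

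There is no real obstacle here; the only conceptual subtlety worth flagging in the write-up is that in general the second inclusion can be strict, because $\widehat{A}_p$ contains products $f_1 \cdots f_r$ of elements of $A_p$, and the valuation of such a product equals $v_t(f_1) + \cdots + v_t(f_r)$ only when none of these sums experience cancellation of the leading terms — something that is automatic at the level of $v_t$-images since $v_t$ is a valuation. So equality in the first inclusion holds ``up to what is generated,'' while the second inclusion need not be an equality because taking the valuation image may add new points coming from sums in $\widehat{A}_p$ that do not arise as products of elements of $A_p$.
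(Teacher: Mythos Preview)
Your proof of both inclusions is correct and matches the paper's approach; if anything, your phrasing of the left-hand inclusion via the universal property of the generated semigroup is slightly cleaner than the paper's explicit levelwise computation $k*S_p \subset v_t(A_p^k\setminus\{0\})$.

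One correction to your concluding commentary, however: since $v_t$ is a valuation, it is \emph{multiplicative}, so $v_t(f_1\cdots f_r)=v_t(f_1)+\cdots+v_t(f_r)$ always holds with no cancellation issue. The possible strictness of the inclusion $\widehat{S}_p(A)\subset S(\widehat{A}_p)$ comes instead from \emph{sums}: an element of $A_p^k$ is a linear combination of products, and $v_t$ of such a sum can exceed the minimum of the $v_t$-values of the summands, producing points in $S(\widehat{A}_p)$ not lying in $\widehat{S}_p(A)$. So if you include this remark in the write-up, attribute the phenomenon to addition rather than multiplication, and to the first inclusion rather than the second.
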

\begin{proof}
The inclusion $S(\widehat{A}_p)\subset S(A)$ follows from
$\widehat A_p\subset A$. By definition, the set of points at level $p$ in the semigroups
$\widehat S_p(A)$ and $S(\widehat{A}_p)$ coincide. Denote this set by $S_p$.
For any $k > 0$, the set of points in $ \widehat{S}_p(A)$ at the level $kp$ is equal to
$k*S_p= S_p + \cdots + S_p$ ($k$-times), and the set $S_{kp}(\widehat{A}_p)$ is equal to $v_t (A_p^k
\setminus \{0\})$. By Proposition \ref{prop-superadd-alg}
we get $k*S_p \subset v_t(A_p^k\setminus \{0\})$, i.e.
$k*S_p \subset S_{kp}(\widehat{A}_p)$, which implies the required inclusion.
\end{proof}

Let $A$ be an algebra of almost integral type with index $m = m(A)$. Any positive integer
$p$ which is sufficiently large and is divisible by $m$ lies in the support of
the Hilbert function $H_A$ and hence the subalgebra
$\widehat{A}_p$ is defined. To this subalgebra there corresponds its Hilbert function
$H_{\widehat{A}_p}$, the semigroup $S(\widehat{A}_p)$, the Newton-Okounkov body $\Delta(\widehat{A}_p)$,
and the indices $m(\widehat{A}_p)$, $\ind(\widehat{A}_p)$.

The following can be considered as a
generalization of the Fujita approximation theorem (regarding the volume of big divisors)
to algebras of almost integral type.

\begin{Th} \label{th-Fujita-algebra}
For $p$ sufficiently large and divisible by $m=m(A)$ we have:
\begin{enumerate}
\item $\dim \Delta (\widehat A_p) =\dim \Delta(A)=q$.
\item $\ind (\widehat A_p)=\ind (A)$.
\item Let the function $\varphi$ be defined by
$$\varphi(p) = \lim_{t \to \infty} \frac{H_{\widehat A_p}(tp)}{t^q}.$$
That is, $\varphi$ is the $q$-th growth coefficient of $O^*_p(H_{\widehat{A}_p})$.
Then the $q$-th growth coefficient of the function $O_m^*(\varphi)$, i.e. $$
a_q(O^*_m(\varphi)) = \lim_{k \to \infty} \frac{\varphi(mk)}{k^q},$$ exists and is equal to
$a_q(O^*_m(H_A)) = \Vol_q(\Delta(A))/\ind(A).$
\end{enumerate}
\end{Th}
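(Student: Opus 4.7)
The plan is to reduce the statement to its semigroup analogue, Theorem \ref{th-1.35}, via the chain of inclusions
$$\widehat{S}_p(A) \;\subset\; S(\widehat{A}_p) \;\subset\; S(A)$$
supplied by Theorem \ref{th-5.14}. Because $A$ is of almost integral type, Theorem \ref{th-S(A)-strongly-non-neg} guarantees that $S(A)$ is strongly non-negative, so Theorem \ref{th-1.35} applies to it directly. The three invariants in the present theorem---dimension of the Newton-Okounkov body, index of $G_0$, and growth coefficient of the Hilbert function---are all monotone under semigroup inclusion, and this monotonicity transports the conclusions from the outer semigroup $S(A)$ to the middle one $S(\widehat{A}_p)$.

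For Part (1), passing to cones gives $\Con(\widehat{S}_p(A)) \subset \Con(\widehat{A}_p) \subset \Con(A)$ and hence $\dim \Delta(\widehat{S}_p(A)) \leq \dim \Delta(\widehat{A}_p) \leq \dim \Delta(A) = q$. By Theorem \ref{th-1.35}(1) applied to $S(A)$, the leftmost quantity equals $q$ once $p$ is large and divisible by $m$, so the middle dimension is pinned to $q$ as well. For Part (2), the same chain yields $G_0(\widehat{S}_p(A)) \subset G_0(\widehat{A}_p) \subset G_0(A)$; by Theorem \ref{th-1.35}(2) the two outer subgroups have equal index in $\z^n \times \{0\}$, hence they coincide, and therefore so does $G_0(\widehat{A}_p)$, giving $\ind(\widehat{A}_p) = \ind(A)$.

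For Part (3), I first verify that $\varphi(p)$ is well-defined. The semigroup $S(\widehat{A}_p)$ is strongly non-negative as a subsemigroup of $S(A)$, has $m(S(\widehat{A}_p)) = p$, and by Parts (1)--(2) its Newton-Okounkov body has dimension $q$ and index $\ind(A)$. Applying Theorem \ref{th-1.34} to $S(\widehat{A}_p)$ yields
$$\varphi(p) \;=\; \lim_{t \to \infty} \frac{H_{\widehat{A}_p}(tp)}{t^q} \;=\; \frac{\Vol_q(\Delta(\widehat{A}_p))}{\ind(A)}.$$
Intersecting the semigroup inclusions with $\pi^{-1}(p)$ produces the nested convex sets
$$\Delta(\widehat{S}_p(A)) \;\subset\; \Delta(\widehat{A}_p) \;\subset\; \Con(A) \cap \pi^{-1}(p),$$
and dividing the corresponding $q$-dimensional volumes by $k^q \ind(A)$ with $p=mk$ sandwiches $\varphi(mk)/k^q$ between two quantities: the lower bound $\Vol_q(\Delta(\widehat{S}_p(A)))/(k^q \ind(A))$ tends to $\Vol_q(\Delta(A))/\ind(A)$ by Theorem \ref{th-1.35}(3) applied to $S(A)$; the upper bound is the constant $\Vol_q(\Con(A) \cap \pi^{-1}(m))/\ind(A) = \Vol_q(\Delta(A))/\ind(A)$ by homogeneity of the cone. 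Hence $\lim_{k \to \infty} \varphi(mk)/k^q$ exists and equals this common value.

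The main conceptual hurdle is recognizing that, although $\widehat{S}_p(A)$ may be strictly smaller than $S(\widehat{A}_p)$ (the valuation is only superadditive on sums), its Newton-Okounkov body already exhausts $\Con(A) \cap \pi^{-1}(p)$ in the asymptotic limit $p = mk \to \infty$. This is exactly the content of the semigroup Fujita theorem, which is where the real work has already been done; once it is in hand, the two-sided sandwich closes without further effort.
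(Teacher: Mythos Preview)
Your proof is correct and follows essentially the same route as the paper: the paper's one-line proof cites Theorem~\ref{th-5.14} (the sandwich $\widehat{S}_p(A) \subset S(\widehat{A}_p) \subset S(A)$) and the growth-coefficient result, implicitly relying on Theorem~\ref{th-1.35} applied to $S(A)$, which you have made explicit. Your detailed unpacking of the sandwich argument for each of the three parts is exactly what the paper's citation is meant to convey.
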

\begin{proof}
This follows from Theorem \ref{th-5.14} and Theorem \ref{th-growth-alg-Newton-Okounkov-body}.
\end{proof}

When $A$ is an algebra of integral type, Theorem \ref{th-Fujita-algebra} can be refined
using the Hilbert-Serre theorem (Theorem \ref{th-Hilbert-th-module}).
Note that when $A$ is of integral type have $m(A)=1$.
\begin{Th} \label{th-5.16}
Let $A$ be an algebra of integral type and, as in Theorem
\ref{th-Fujita-algebra}, let $\varphi(p)$ be the $q$-th growth coefficient of
$O^*_p(H_{\widehat{A}_p})$. Then for sufficiently large $p$, the number $\varphi(p)/p^q$
is independent of $p$ and we have:
$$\frac{\varphi(p)}{p^q}= \frac{\Vol_q(\Delta (A))}{\ind(A)}=a_q(H_A).$$
%\end{enumerate}
\end{Th}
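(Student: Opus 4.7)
The plan is to leverage the fact, special to algebras of integral type, that the homogeneous components of $A$ eventually become ``multiplicatively generated'' by a single sufficiently high degree. Then for large $p$ the algebra $\widehat{A}_p$ will agree with $A$ at all levels divisible by $p$, and the statement reduces to the Hilbert-Serre polynomiality of $H_A$.

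First, since $A$ is of integral type, Corollary \ref{cor-m(A)=1} gives $m(A)=1$, so $O_m^*$ is the identity and we may drop the rescaling. By hypothesis, $A$ is a finitely generated module over some $A_L$. Proposition \ref{Prop-finite-module} then yields an integer $N$ such that for $m\geq N$ and $\ell>0$ the $k$-th subspaces $L_k$ of $A$ satisfy $L_{m+\ell}=L_m L^{\ell}$. The key lemma I would extract from this is: for every $p\geq N$ and every $k\geq 1$,
$$L_p^{\,k}=L_{kp}.$$
The inclusion $L_p^k\subseteq L_{kp}$ is automatic (products of degree-$p$ elements are of degree $kp$ in $A$). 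For the reverse inclusion, note that $L^p\subseteq L_p$, so the proposition gives $L_{(k+1)p}=L_{kp}L^p\subseteq L_{kp}L_p$; induction on $k$, using $L_p^{k+1}=L_p\cdot L_p^k$, then finishes the argument.

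Next, the $(kp)$-th homogeneous component of the subalgebra $\widehat{A}_p$ (generated by $A_p$) is exactly $L_p^{\,k}\,t^{kp}$, so the identity above gives $H_{\widehat{A}_p}(kp)=\dim L_p^k=\dim L_{kp}=H_A(kp)$ for every $p\geq N$ and $k\geq 1$. Now by the Hilbert-Serre theorem (Theorem \ref{th-Hilbert-th-module}), $H_A(k)$ coincides for large $k$ with a polynomial $\tilde H_A$ of degree $q$, whose leading coefficient $\rho$ equals $a_q(H_A)=\mathrm{Vol}_q(\Delta(A))/\mathrm{ind}(A)$ by Theorem \ref{th-growth-alg-Newton-Okounkov-body}. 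Hence for $p\geq N$,
$$\varphi(p)=\lim_{k\to\infty}\frac{H_{\widehat{A}_p}(kp)}{k^q}=\lim_{k\to\infty}\frac{H_A(kp)}{k^q}=\rho\,p^q,$$
so $\varphi(p)/p^q=\rho$ is independent of $p$ and equals the asserted value.

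The only nontrivial step is the lemma $L_p^k=L_{kp}$ for $p\geq N$, but this is a direct and routine induction from the module-finite condition in Proposition \ref{Prop-finite-module}; everything else is a clean substitution into the definitions of $\varphi$ and $H_A$. In essence, the point is that passing from $A$ to $\widehat{A}_p$ loses no information on the arithmetic progression of levels $p,2p,3p,\ldots$ once $p$ is large, which is precisely what integral type guarantees and is what fails for algebras of merely almost integral type.
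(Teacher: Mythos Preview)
Your proof is correct and follows exactly the paper's approach: both use Proposition \ref{Prop-finite-module} to show that for $p$ large the $(kp)$-th components of $\widehat A_p$ and $A$ coincide, and then read off $\varphi(p)=p^q a_q$ from the Hilbert polynomial of $A$. You have simply made explicit the induction $L_p^{\,k}=L_{kp}$ that the paper leaves to the reader (your use of $L^p\subseteq L_p$ is justified since $A$ is a unital $A_L$-module, hence $A_L\subseteq A$).
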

\begin{proof}
This follows from Theorem \ref{th-Hilbert-th-module}.
Let $\tilde H_A(k)=a_qk^q +\dots+a_0 $ be the Hilbert polynomial of the algebra $A$.
From Proposition \ref{Prop-finite-module}
it follows that if $p$ is sufficiently large then,
for any $k>0$, the $(kp)$-th homogeneous component of the algebra $\widehat A_p$
coincides with the $(kp)$-th homogeneous component of the algebra $A$,
and hence the dimension of the $(kp)$-th homogeneous component of
$\widehat A_p$ is equal to $\tilde{H}_A(kp)$. Thus the $q$-th growth coefficient of the function
$O^*_p (H_{\widehat A_p})$ equals $p^q a_q$ which proves the theorem.
\end{proof}

\section{Part III: Projective varieties and algebras of almost integral type}
The famous Hilbert theorem computes the dimension and degree of
a projective subvariety of projective space by means of the asymptotic growth of
its Hilbert function. The constructions and results in the previous parts relate
the asymptotic of Hilbert function with
the Newton-Okounkov body. In this part we use Hilbert's theorem to
give geometric interpretations of these results.
{We will take the ground field $\k$ to be algebraically closed.}

\subsection{Dimension and degree of projective varieties} \label{sec-dim-deg-proj-var}
In this section we give a geometric interpretation of the dimension and degree
of (the closure of) the image of an irreducible variety under a rational
map to projective space.

%The famous Hilbert's theorem computes the dimension and degree of a projective
%algebraic variety in terms of the asymptotic of its Hilbert function (i.e.
%the Hilbert function of its homogeneous coordinate ring). Our constructions
%in the previous sections relate the growth rate of Hilbert function with dimension and volume of
%the corresponding Newton-Okounkov body. Here we will discuss Hilbert's theorem and will use it
%to interpret the previous results.

{Let $X$ be an irreducible algebraic variety over $\k$ of dimension $n$, and let $F = \k(X)$ denote the
field of rational functions on $X$.}
{Recall from the introduction that to each non-zero finite dimensional subspace $L \subset F$ we associate
the Kodaira rational map $\Phi_L: X \ratmap \p(L^*)$.}
%, as follows:
%\begin{Def}
%Let $x \in X$ be such that $f(x)$ is defined for all $f \in L$.
%To $x$ there corresponds a functional in $L^*$
%which evaluates any $f \in L$ at $x$. The map $\Phi_L$ sends $x$ to the
%image of this functional in $\p(L^*)$. We call $\Phi_L$ the {\it Kodaira map of $L$}.
%\end{Def}
Let $Y_L$ denote the closure of the image of $X$ under the Kodaira map in $\p(L^*)$ (more precisely,
the image of a Zariski open subset of $X$ where $\Phi_L$ is defined).
%The degree of the
%subvariety $Y_L \subset \p(L^*)$ and its dimension can be computed using Hilbert's theorem.

Consider the algebra $A_L$ associated to $L$.
For large values of $k$,
the Hilbert function $H_{A_L}(k)$ coincides with the Hilbert polynomial
$\tilde{H}_{A_L}(k) = a_qk^q + \cdots + a_0$.
The following is the celebrated Hilbert theorem on the dimension and
degree of a projective subvariety, customized for the purposes of this paper
{(see \cite[Section I.7]{Hartshorne}).}
\begin{Th}[Hilbert] \label{th-Hilbert}
The degree $q$ of the Hilbert polynomial $\tilde{H}_{A_L}$ is equal to the dimension of the
variety $Y_L$, and its leading coefficient $a_q$ multiplied by $q!$ is equal to the degree of the subvariety
$Y_L$ in the projective space $\p(L^*)$.
\end{Th}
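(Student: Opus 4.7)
The plan is to identify the graded algebra $A_L$ with the homogeneous coordinate ring of the projective variety $Y_L \subset \p(L^*)$, and then invoke the classical Hilbert theorem for projective subvarieties in its usual form. Once this identification is made, both statements (the degree of the Hilbert polynomial equalling $\dim Y_L$, and $q!\,a_q$ equalling $\deg Y_L$) are direct translations of the classical theorem.

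First, fix a basis $f_0, \ldots, f_N$ of $L$, where $N+1 = \dim L$. Then $\p(L^*)$ has homogeneous coordinate ring $\Sym(L) \cong \c[f_0,\ldots,f_N]$, graded by total degree. The Kodaira map is $x \mapsto [f_0(x):\cdots:f_N(x)]$ on the open set $U \subset X$ where all the $f_i$ are defined and not simultaneously zero, and $Y_L$ is the Zariski closure of $\Phi_L(U)$. The $k$-th homogeneous component of $A_L$ is $L^k t^k$, spanned over $\c$ by products $f_{i_1}\cdots f_{i_k}$. The multiplication map in $F$ therefore defines a surjective graded $\c$-algebra homomorphism
\[
\psi: \Sym(L) \twoheadrightarrow \bigoplus_{k \geq 0} L^k \;\cong\; A_L,
\]
where on the right we suppress the formal grading variable $t$.

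The key step is to show $\ker \psi = I(Y_L)$, the homogeneous ideal of $Y_L$ in $\Sym(L)$. A homogeneous element $P \in \Sym^k(L)$ lies in $\ker \psi$ iff $P(f_0,\ldots,f_N) = 0$ as a rational function on $X$, iff it vanishes on the dense open subset $U$, iff it vanishes identically on the image $\Phi_L(U)$, iff it vanishes on its closure $Y_L$, i.e.\ $P \in I(Y_L)$. (Homogeneity of $P$ is essential in passing from vanishing on affine representatives to vanishing on the projective image.) This gives the graded isomorphism
\[
A_L \;\cong\; \Sym(L)/I(Y_L),
\]
which is by definition the homogeneous coordinate ring of $Y_L \subset \p(L^*)$.

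The conclusion then follows from the classical Hilbert theorem applied to $Y_L$: the Hilbert polynomial of its homogeneous coordinate ring has degree equal to $\dim Y_L$, and its leading coefficient, multiplied by the factorial of that degree, equals $\deg Y_L$ with respect to the embedding in $\p(L^*)$. The main obstacle is purely bookkeeping: making the identification of $\ker \psi$ with $I(Y_L)$ fully rigorous, in particular ensuring that one works on a Zariski open subset of $X$ where $\Phi_L$ is regular and that the image of such an open set is indeed dense in $Y_L$ (which is its defining property as the closure of $\Phi_L(U)$). No deep input beyond the classical projective Hilbert theorem is needed.
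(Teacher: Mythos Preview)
Your argument is correct and is precisely the intended reduction: the paper does not supply its own proof of this theorem, but rather states it as ``a version of the celebrated Hilbert theorem on the dimension and degree of a projective subvariety customized for the purposes of this paper,'' leaving the identification of $A_L$ with the homogeneous coordinate ring of $Y_L$ implicit. Your computation $\ker\psi = I(Y_L)$ makes that identification explicit, after which the classical statement applies verbatim; this is exactly what the paper has in mind.
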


Fix a faithful $\z^n$-valued valuation $v$ on
the field of rational functions $F = \k(X)$.
The extension $v_t$ of $v$ to $F[t]$, associates to any algebra $A$ of
almost integral type the strongly non-negative semigroup
$S(A) \subset \z^n\times \z_{\geq 0}$.
Comparing Theorem \ref{th-5.1} and Theorem \ref{th-Hilbert} (Hilbert's theorem)
we obtain that for $A=A_L$, the Newton-Okounkov body $\Delta(A_L)$ is
responsible for the dimension and degree of
the variety $Y_L$.

\begin{Cor} \label{cor-deg-Y_L-vol-Delta}
The dimension $q$ of the Newton-Okounkov body $\Delta(A_L)$ is equal to
the dimension of the variety $Y_L$, and its $q$-dimensional
integral volume $\Vol_q(\Delta(A_L))$ multiplied by $q!/\ind(A_L)$ is equal to
the degree of $Y_L$.
%Moreover, if the Kodaira map $\Phi_L$ is a birational
%isomorphism between $X$ and $Y_L$ then $\ind(A_L) = 1$ and hence the degree of
%$Y_L$ is equal to $\Vol_q(\Delta(A_L))$.
\end{Cor}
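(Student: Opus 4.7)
The plan is to deduce the corollary by comparing two asymptotic descriptions of the same Hilbert function $H_{A_L}$: the algebraic one coming from Hilbert's theorem (Theorem \ref{th-Hilbert}) and the convex-geometric one coming from Theorem \ref{th-growth-alg-Newton-Okounkov-body}. Both describe the polynomial growth of $H_{A_L}(k)$, so matching the degree of growth and the leading coefficient should give the two assertions.

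First I would observe that $A_L$ is finitely generated by degree-$1$ elements (by definition), hence is an algebra of integral type, and in particular is an algebra of almost integral type. By Corollary \ref{cor-m(A)=1}, we have $m(A_L)=1$, so the scaling operator $O_m^*$ in Theorem \ref{th-growth-alg-Newton-Okounkov-body} is trivial and the theorem yields
\[
\lim_{k\to\infty} \frac{H_{A_L}(k)}{k^q} \;=\; \frac{\Vol_q(\Delta(A_L))}{\ind(A_L)},
\]
where $q=\dim \Delta(A_L)$. In particular, $H_{A_L}(k)$ has polynomial growth of degree exactly $q$ and positive leading coefficient.

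On the other hand, by Theorem \ref{th-Hilbert}, for large $k$ the function $H_{A_L}(k)$ coincides with the Hilbert polynomial $\tilde H_{A_L}(k)=a_q k^q+\dots+a_0$, whose degree equals $\dim Y_L$ and whose leading coefficient satisfies $q!\,a_q = \deg(Y_L)$. Comparing the two asymptotic descriptions: the growth exponent forces $\dim \Delta(A_L)=\dim Y_L$, and equality of leading coefficients gives
\[
\frac{\deg(Y_L)}{q!} \;=\; \frac{\Vol_q(\Delta(A_L))}{\ind(A_L)},
\]
which is exactly the claim that $\deg(Y_L) = q!\,\Vol_q(\Delta(A_L))/\ind(A_L)$.

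There is really no obstacle beyond bookkeeping: the non-trivial work has already been absorbed into Theorem \ref{th-growth-alg-Newton-Okounkov-body} (reduction to semigroups via $v_t$, strong admissibility of $S(A_L)$ proved in Theorem \ref{th-S(A)-strongly-non-neg}, and the Minkowski-type volume estimate in Corollary \ref{cor-1.23}) and into the classical Hilbert-Serre theorem underlying Theorem \ref{th-Hilbert}. The only subtle point to mention is that one must ensure $m(A_L)=1$ before invoking Theorem \ref{th-growth-alg-Newton-Okounkov-body} without a rescaling; otherwise the two polynomial asymptotics could not be compared termwise. This follows from Corollary \ref{cor-m(A)=1}, completing the argument.
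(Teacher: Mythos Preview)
Your proof is correct and matches the paper's approach: the paper simply states (in the sentence preceding the corollary) that one compares Theorem~\ref{th-5.1} and Theorem~\ref{th-Hilbert}, which is exactly the comparison of the convex-geometric and algebraic asymptotics of $H_{A_L}$ that you carry out. Your version is slightly more polished in that you invoke the algebra-level statement Theorem~\ref{th-growth-alg-Newton-Okounkov-body} rather than the underlying semigroup-level Theorem~\ref{th-5.1}, and you make explicit the point $m(A_L)=1$ via Corollary~\ref{cor-m(A)=1}; the paper leaves both of these implicit.
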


Let $A$ be an algebra of almost integral type in $F[t]$.
Let $L_k$ be the $k$-th subspace of the algebra $A$. To
each non-zero subspace $L_k$ we can associate the following objects:
the Kodaira map $\Phi_{L_k}:X\ratmap \p(L_k^*)$, the variety $Y_{L_k}\subset \p(L_k^*)$
(i.e. the closure of the image of $\Phi_{L_k}$) and its dimension and degree.
Recall that for a sufficiently large integer $p$ divisible by
$m=m(A)$, the space $L_p$ is non-zero. As before let $O_m$ be the scaling map
$O_m(k) = mk$ and $O_m^*$ the pull-back given by $O_m^*(f)(k) = f(mk)$.
We have the following:
\begin{Th} \label{th-6.3}
If $p$ is sufficiently large and divisible by $m=m(A)$, the dimension
of the variety $Y_{L_p}$ is independent of $p$ and is equal to the dimension $q$
of the Newton-Okounkov body $\Delta(A)$. Let $\deg$
be the function given by $\deg(p)= \deg Y_{L_p}$.
Then the $q$-th growth coefficient of the function $O^*_m(\deg)$, i.e.
$$a_q(O^*_m(\deg)) = \lim_{k \to \infty} \frac{\deg Y_{L_{mk}}}{k^q},$$ exists and is equal to $q!a_q(O^*_m (H_A))$,
which in turn is equal to $q!\Vol_q(\Delta(A))/\ind(A)$.
\end{Th}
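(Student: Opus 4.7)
The plan is to reduce Theorem \ref{th-6.3} to the Fujita-type theorem for algebras (Theorem \ref{th-Fujita-algebra}) by comparing, for a fixed $p$, the subalgebra $\widehat{A}_p \subset F[t]$ generated by $A_p$ with the algebra $A_{L_p}$ associated to the subspace $L_p \subset F$. The two are essentially the same graded algebra up to a rescaling of the grading: $\widehat{A}_p$ has $(kp)$-th homogeneous component equal to $L_p^k\,t^{kp}$, while $A_{L_p}$ has $k$-th homogeneous component equal to $L_p^k\,t^k$. In particular, $A_{L_p}$ is an algebra of integral type (being generated in degree one by finitely many elements), so Hilbert's Theorem \ref{th-Hilbert} and Corollary \ref{cor-deg-Y_L-vol-Delta} apply to it.

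First I would make the dictionary between $A_{L_p}$ and $\widehat{A}_p$ precise at the level of semigroups. The map $\phi:\z^n\times\z\to \z^n\times\z$ given by $(x,j)\mapsto (x,pj)$ sends $S(A_{L_p})$ bijectively onto $S(\widehat{A}_p)$, so it sends $\Con(S(A_{L_p}))$ onto $\Con(S(\widehat{A}_p))$. Slicing the first cone at level $1$ and the second at level $p$ shows that the projections to $\r^n$ satisfy $\Delta_0(A_{L_p})=\Delta_0(\widehat{A}_p)$; hence the two Newton--Okounkov bodies have the same dimension and the same integral volume. Moreover, $G_0(A_{L_p})$ and $G_0(\widehat{A}_p)$ coincide (both are the subgroup of $\z^n\times\{0\}$ generated by the differences $v(f)-v(g)$ with $f,g\in L_p^k\setminus\{0\}$, for some $k$), so $\ind(A_{L_p})=\ind(\widehat{A}_p)$.

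Next I would feed this into Hilbert's theorem to translate the geometry of $Y_{L_p}$ into data of $\widehat{A}_p$: Corollary \ref{cor-deg-Y_L-vol-Delta} gives
\[
\dim Y_{L_p}=\dim\Delta(A_{L_p})=\dim\Delta(\widehat{A}_p), \qquad
\deg Y_{L_p}=\frac{q!\,\Vol_q(\Delta(A_{L_p}))}{\ind(A_{L_p})}=\frac{q!\,\Vol_q(\Delta(\widehat{A}_p))}{\ind(\widehat{A}_p)}.
\]
Applying Theorem \ref{th-growth-alg-Newton-Okounkov-body} to the algebra $\widehat{A}_p$ (which has $m(\widehat{A}_p)=p$) identifies the right-hand side of the second equality with $q!\,\varphi(p)$, where $\varphi(p)=\lim_{t\to\infty}H_{\widehat{A}_p}(tp)/t^q$ is the function appearing in Theorem \ref{th-Fujita-algebra}. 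Thus $\deg Y_{L_p}=q!\,\varphi(p)$ for every sufficiently large $p$ divisible by $m$.

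From here the theorem drops out from Theorem \ref{th-Fujita-algebra}: part (1) of that theorem gives $\dim\Delta(\widehat{A}_p)=q$ (proving statement (1) of Theorem \ref{th-6.3}), while part (3) gives
\[
a_q(O_m^*(\deg))=q!\,a_q(O_m^*(\varphi))=q!\,a_q(O_m^*(H_A))=\frac{q!\,\Vol_q(\Delta(A))}{\ind(A)}.
\]
The only real point to check carefully is the identification $\Delta_0(A_{L_p})=\Delta_0(\widehat{A}_p)$ and the equality of indices, which is the main (and essentially formal) obstacle: once this is established, everything else is a direct concatenation of Hilbert's theorem with the semigroup-theoretic Fujita approximation proved earlier.
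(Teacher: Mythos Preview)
Your proof is correct and follows the paper's approach, which simply invokes Theorem \ref{th-Fujita-algebra} together with Hilbert's theorem (Theorem \ref{th-Hilbert}). Your semigroup dictionary between $A_{L_p}$ and $\widehat{A}_p$ is fine, though it can be bypassed by the direct identity $H_{A_{L_p}}(t)=\dim L_p^t=H_{\widehat{A}_p}(tp)$, which immediately gives $\deg Y_{L_p}=q!\,\varphi(p)$ from Hilbert's theorem without passing through Newton--Okounkov bodies and indices.
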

\begin{proof}
Follows from Theorem \ref{th-Fujita-algebra} and Hilbert's theorem.
\end{proof}

%Theorem \ref{th-Fujita-algebra} not only proves Theorem \ref{th-6.3} but also gives
%a visualization of it, i.e. the Newton-Okounkov body $\Delta(A)$ is
%responsible for the asymptotic behavior of the Hilbert function of the algebra $A$ and
%thus for the asymptotic behavior of dimensions and degrees of the varieties $Y_{L_p}$.
%As always $\Vol_q$ denotes the integral volume.
%\begin{Cor} \label{cor-6.4}
%If $p$ is sufficiently large and divisible by $m=m(A)$, the dimension of the
%variety $Y_{L_p}$ is equal to the dimension $q$ of the Newton-Okounkov body
%$\Delta(A)$. The $q$-th growth coefficient of the function
%$O^*_m(\deg Y_{L_p})$
%is equal to $q!\Vol_q(\Delta(A))/\ind(A)$.
%\end{Cor}

When $A$ is an algebra of integral type Theorem \ref{th-6.3} can be refined.
{Note that in this case $m(A)=1$.}
\begin{Th} \label{th-6.5}
Let $A$ be an algebra of integral type. Then for sufficiently large $p$, the
dimension $q$ of the variety $Y_{L_p}$, as well as the degree of the variety
$Y_{L_p}$ divided by $p^q$, are independent of $p$. Moreover,
the dimension of $Y_{L_p}$ is equal to the dimension of the Newton-Okounkov body $\Delta(A)$ and
its degree is given by:
$$\deg Y_{L_p} = q!p^q a_q(O^*_m (H_A)) = \frac{q!p^q\Vol_q(\Delta(A))}{\ind(A)}.$$
\end{Th}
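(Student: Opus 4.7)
The plan is to deduce Theorem \ref{th-6.5} by feeding the refined Fujita-type identity of Theorem \ref{th-5.16} into Hilbert's theorem \ref{th-Hilbert} applied to the algebra $A_{L_p}$.

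First I would make the bookkeeping identification between $\widehat{A}_p$ and $A_{L_p}$. By definition $\widehat{A}_p\subset F[t]$ is generated by $A_p=L_p t^p$, so its only nonzero homogeneous pieces sit in degrees of the form $kp$, with $(\widehat{A}_p)_{kp}=L_p^k\,t^{kp}$. On the other hand $(A_{L_p})_k=L_p^k\,t^k$. Therefore $H_{A_{L_p}}(k)=\dim_{\c} L_p^k = H_{\widehat{A}_p}(kp)$; equivalently, as ordinary functions of $k$, $H_{A_{L_p}}=O_p^{\ast}(H_{\widehat{A}_p})$. Moreover by definition the Kodaira image of $X$ under $\Phi_{L_p}$ is exactly $Y_{L_p}\subset\p(L_p^{\ast})$, which is precisely the variety whose dimension and degree Hilbert's theorem reads off from the Hilbert polynomial of $A_{L_p}$.

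Next I would apply Theorem \ref{th-5.16}, noting that $m(A)=1$ for $A$ of integral type by Corollary \ref{cor-m(A)=1}, so $O_m^{\ast}$ is the identity. For all sufficiently large $p$ this gives
$$\varphi(p)\ :=\ \lim_{t\to\infty}\frac{H_{\widehat{A}_p}(tp)}{t^q}\ =\ \lim_{t\to\infty}\frac{H_{A_{L_p}}(t)}{t^q}\ =\ p^{q}\,\frac{\Vol_q(\Delta(A))}{\ind(A)}.$$
In particular this limit is finite and strictly positive, since $\Delta(A)$ is $q$-dimensional and hence has positive integral volume. Because $A_{L_p}$ is a finitely generated graded algebra (hence of integral type), the Hilbert-Serre theorem \ref{th-Hilbert-th-module} supplies a Hilbert polynomial $\tilde{H}_{A_{L_p}}(k)=a_{q'}k^{q'}+\cdots$ agreeing with $H_{A_{L_p}}$ for large $k$; the positivity and finiteness of the limit above then force $q'=q$ and $a_{q'}=\varphi(p)$.

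Finally I would invoke Hilbert's theorem \ref{th-Hilbert}: the degree of $\tilde{H}_{A_{L_p}}$ equals $\dim Y_{L_p}$, and $q'!$ times its leading coefficient equals $\deg Y_{L_p}$. Combining this with the previous step yields $\dim Y_{L_p}=q=\dim\Delta(A)$ together with
$$\deg Y_{L_p}\ =\ q!\,\varphi(p)\ =\ q!\,p^{q}\,a_q(H_A)\ =\ \frac{q!\,p^{q}\,\Vol_q(\Delta(A))}{\ind(A)},$$
so that both $\dim Y_{L_p}$ and $\deg Y_{L_p}/p^{q}$ are manifestly independent of $p$ once $p$ is large enough for Theorem \ref{th-5.16} to apply. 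The only real obstacle in the argument is the degree-rescaling bookkeeping: one must verify that the ``$q$-th growth coefficient'' $\varphi(p)$ of the pulled-back function $O_p^{\ast}(H_{\widehat{A}_p})$ coincides, under the identification $H_{A_{L_p}}=O_p^{\ast}(H_{\widehat{A}_p})$, with the ordinary leading coefficient of the Hilbert polynomial of $A_{L_p}$, and that the factor $p^q$ supplied by Theorem \ref{th-5.16} is absorbed correctly. Beyond this, Theorem \ref{th-6.5} is a direct consequence of the earlier Fujita-type approximation for integral-type algebras and the classical Hilbert theorem.
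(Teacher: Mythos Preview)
Your proposal is correct and follows essentially the same route as the paper, which simply cites Theorem~\ref{th-5.16}, Hilbert's theorem (Theorem~\ref{th-Hilbert}), and Theorem~\ref{th-6.3}; you have merely unpacked the bookkeeping that the paper leaves implicit, namely the identification $H_{A_{L_p}}(k)=H_{\widehat{A}_p}(kp)$ and the fact that positivity and finiteness of $\varphi(p)=\lim_{t\to\infty}H_{A_{L_p}}(t)/t^q$ pin down the degree of the Hilbert polynomial of $A_{L_p}$ as exactly $q$.
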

\begin{proof}
Follows from Theorem \ref{th-5.16}, Hilbert's theorem and Theorem \ref{th-6.3}.
\end{proof}

\subsection{Algebras of almost integral type associated to linear series} \label{subsec-linear-series}
In this section we apply the results on graded algebras to the
rings of sections of divisors and more generally to linear series.
One of the main results is a generalization of the Fujita approximation theorem (for a big
divisor) to any divisor on a complete variety.

%{\bf We assume that the ground field $\k$ is algebraically closed}.
{Let $X$ be an irreducible variety of dimension $n$ over an algebraically closed field $\k$},
and let $D$ be a Cartier divisor on $X$. To $D$ one associates the subspace $\L(D)$ of
rational functions defined by
$$\L(D) = \{ f \in \k(X) \mid (f) + D \geq 0 \}.$$

Let $\mathcal{O}(D)$ denote the line bundle corresponding to $D$.
When $X$ is normal, the elements of
the subspace $\L(D)$ are in one-to-one correspondence with the sections
in $H^0(X, \mathcal{O}(D))$. One also knows that for a complete variety $X$ the dimension of $H^0(X, \mathcal{O}(D))$ is finite 
({see \cite[Chapter II, Theorem 5.19]{Hartshorne}}). Thus whenever $X$ is normal and complete, the vector space $\L(D)$ is
finite dimensional.

Let $D, E$ be divisors and let $f \in \L(D)$, $g \in \L(E)$. From the definition it is
clear that $fg \in \L(D+E)$. Thus multiplication of functions gives a map
\begin{equation} \label{equ-L(D+E)}
\L(D) \times \L(E) \to \L(D+E).
\end{equation}
In general {\it this map is not surjective}.

To a divisor $D$ we associate a graded subalgebra $\R(D)$ of the ring $F[t]$ of polynomials
in $t$ with coefficients in the field of rational functions $F=\c(X)$ as follows.
\begin{Def}
Define
$\R(D)$ to be the collection of all the polynomials $f(t) = \sum_k f_k t^k$ with $f_k \in \L(kD)$,
for all $k$. In other words,
$$\R(D) = \bigoplus_{k=0} \L(kD)t^k.$$
From (\ref{equ-L(D+E)}) it follows that $\R(D)$ is a graded subalgebra of $F[t]$.
\end{Def}

\begin{Rem}
One can find example of a divisor $D$ such that the algebra $\R(D)$ is not finitely generated.
See for example \cite[Section 2.3]{Lazarsfeld}.
\end{Rem}

%The basic result of this section is that the graded algebra $\R(D)$ is of almost integral type.
%It allows us to apply the results of Section \ref{subsec-application-valuation}
%to the algebra $\R(D)$ in order to recover several important results about
%the asymptotic behavior of divisors (and line bundles).

\begin{Th} \label{th-ring-sec-almost-finite-type}
{For any Cartier divisor $D$ on a complete variety $X$}, the algebra $\R(D)$ is of almost integral type.
\end{Th}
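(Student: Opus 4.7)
The plan is to exhibit a finite-dimensional subspace $L \subset F$ with $\R(D) \subset A_L$, via two successive reductions. The main subtlety will be tracking the spaces $\L(kD)$ under a birational pullback; once that is in place, the remainder is an assembly of standard projective geometry together with the paper's earlier Proposition that every finitely generated graded subalgebra of $F[t]$ is of almost integral type.

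First I would reduce to the case where $X$ is projective. By Chow's lemma applied to the complete irreducible variety $X$, there is a projective irreducible variety $\tilde X$ and a birational morphism $\pi : \tilde X \to X$; birationality identifies $\c(\tilde X)$ canonically with $F$. For each $k$, pullback of sections of the line bundle $\mathcal{O}_X(kD)$ along $\pi$ is injective (because $\pi$ is dominant), which translates in terms of rational functions into the inclusion $\L_X(kD) \subset \L_{\tilde X}(k\pi^* D)$ as subspaces of $F$. Summing over $k$ gives $\R(D) \subset \R(\pi^* D)$, and since any graded subalgebra of an algebra of almost integral type is itself of almost integral type (a subalgebra of $A_L$ is again contained in $A_L$), I may henceforth assume $X$ is projective.

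Assuming $X$ projective, fix a very ample divisor $H$ and choose $n > 0$ large enough so that $H_1 := D + nH$ is very ample; such $n$ exists because adding a sufficiently high multiple of an ample divisor to any Cartier divisor produces a very ample one. Writing $D = H_1 - H_2$ with $H_2 := nH$ very ample (in particular effective), for any $f \in \L(kD)$ we have
$$(f) + kH_1 \;=\; \bigl[(f) + kD\bigr] + kH_2 \;\geq\; 0,$$
whence $\L(kD) \subset \L(kH_1)$ for every $k$, and therefore $\R(D) \subset \R(H_1)$. By the inheritance property noted above, it remains only to prove that $\R(H_1)$ is of almost integral type.

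For this final step, $H_1$ being very ample realizes a closed embedding $X \hookrightarrow \p^N = \p(\L(H_1)^*)$, whose homogeneous coordinate ring $R$ is, as a quotient of a polynomial ring, a finitely generated graded $\c$-subalgebra of $\R(H_1)$. By Serre's vanishing theorem, the inclusion $R \subset \R(H_1)$ is an equality in all sufficiently large degrees, so $\R(H_1)$ differs from $R$ only in finitely many graded pieces and is itself finitely generated as a $\c$-algebra. Invoking the paper's earlier Proposition that every finitely generated graded subalgebra of $F[t]$ is of almost integral type then completes the argument.
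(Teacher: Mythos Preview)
Your proof is correct and follows the same overall architecture as the paper's: reduce to the projective case via Chow's lemma and a birational pullback, then write $D$ as a difference of very ample divisors with the subtracted one effective, so that $\R(D) \subset \R(H_1)$ for $H_1$ very ample.

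The difference lies in the endgame. The paper additionally normalizes in the first reduction, because it then invokes Theorem~\ref{th-very-ample-int-closure} (which requires $X$ normal) to conclude that $\R(H_1)$ is of integral type. You instead bypass normalization: identifying the homogeneous coordinate ring of the projective embedding with $A_L$ for $L = \L(H_1)$, you use Serre vanishing for $\mathcal{I}_X$ on $\p^N$ to see that $A_L$ and $\R(H_1)$ agree in all large degrees, whence $\R(H_1)$ is finitely generated and the paper's Proposition on finitely generated graded subalgebras applies. Your route is marginally more self-contained (it avoids the normalization step and the cited integral-closure theorem), while the paper's yields the slightly sharper conclusion that $\R(H_1)$ is of \emph{integral} type when $X$ is normal. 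Both are standard and equally valid.
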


To prove Theorem \ref{th-ring-sec-almost-finite-type} we need some preliminaries which we recall here. When $D$ is a very ample divisor, the following well-known result describes
$\R(D)$ (see \cite[Chapter II, Ex. 5.14]{Hartshorne}).
\begin{Th} \label{th-very-ample-int-closure}
Let $X$ be a normal projective variety and $D$ a very ample divisor. Let $L = \L(D)$ be the finite dimensional
subspace of rational functions associated to $D$, and let
$A_L = \bigoplus_{k \geq 0} L^kt^k$ be the algebra corresponding to $L$. Then
1) $\R(D)$ is the integral closure of $A_L$ in its field of fractions.
2) $\R(D)$ is a graded subalgebra of integral type.
\end{Th}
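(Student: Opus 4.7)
The plan is to prove (1) and (2) simultaneously by establishing both inclusions between $\R(D)$ and the integral closure $\overline{A_L}$ of $A_L$ in its field of fractions. First, $A_L \subset \R(D)$ is immediate from $L = \L(D)$ and $L^k \subset \L(kD)$ coming from (\ref{equ-L(D+E)}). Moreover the field of fractions of $A_L$ is $F(t)$: for a basis $f_0,\dots,f_N$ of $L$, the ratios $f_it/f_0t = f_i/f_0$ generate $F=\c(X)$ over $\c$ (because $D$ is very ample, $\Phi_L$ is a closed embedding and in particular a birational isomorphism onto $Y_L$), and $f_0t$ then supplies $t$, so $\mathrm{Frac}(A_L) \supseteq F(t)$, while the reverse inclusion is clear.

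For $\R(D)\subset \overline{A_L}$ and statement (2), the key input is that, because $\mathcal{O}(D)$ is very ample on the projective variety $X$, there exists $m_0$ such that $L^k = \L(kD)$ for every $k \geq m_0$. This is a consequence of Serre's vanishing theorem applied to the exact sequence
$$ 0 \to \mathcal{I}_X(k) \to \mathcal{O}_{\p(L^*)}(k) \to \mathcal{O}_X(kD) \to 0 $$
under the closed embedding $X \hookrightarrow \p(L^*)$ determined by $L$: $H^1(\p(L^*), \mathcal{I}_X(k))=0$ for $k$ large enough, forcing the restriction map $\Sym^k L \to \L(kD)$ to be surjective. Since $X$ is complete, each $\L(kD)$ with $k<m_0$ is finite-dimensional over $\c$, so choosing bases of these finitely many spaces together with $1$ gives a finite generating set for $\R(D)$ as an $A_L$-module. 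This finishes (2), and since $A_L$ is a finitely generated $\c$-algebra hence Noetherian, the standard characterization of integrality (every element of a finitely generated $A_L$-submodule of a ring that contains $A_L$ as a subring is integral over $A_L$) gives $\R(D)\subset \overline{A_L}$.

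For the reverse inclusion $\overline{A_L}\subset \R(D)$, note first that $F[t]$ is a UFD and hence integrally closed in $F(t)$, so $\overline{A_L}\subset F[t]$. The natural $\c^*$-action on $F(t)$ that scales $t$ preserves $A_L$ and therefore preserves $\overline{A_L}$; decomposing into weight spaces shows that $\overline{A_L}$ is an $\n$-graded subring of $F[t]$. It then suffices to take a homogeneous element $ft^k \in \overline{A_L}$ with $k \geq 0$ and verify $f\in \L(kD)$. From a monic integral equation with homogeneous coefficients one extracts, by matching degrees, an equation $f^n + c_1 f^{n-1} + \cdots + c_n = 0$ with $c_j \in L^{jk}$. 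For any prime divisor $E$ on $X$ set $d = v_E(D)$; the containment $L\subset \L(D)$ yields $v_E(c_j) \geq -jkd$, and if one had $v_E(f) < -kd$ then the ultrametric inequality would make $v_E(f^n)$ strictly smaller than $v_E(c_jf^{n-j})$ for every $j\geq 1$, contradicting that the sum vanishes. Hence $v_E(f) + kv_E(D) \geq 0$ for every prime divisor $E$, which gives $(f)+kD \geq 0$, i.e.\ $f \in \L(kD)$.

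The main obstacle will be the valuation argument in the last paragraph: one must carefully extract homogeneous components from the integral equation, track that the dominant-term estimate succeeds regardless of whether $v_E(D)$ is positive, zero, or negative, and verify the strict inequality $v_E(f^n) < v_E(c_jf^{n-j})$ for every $j \geq 1$. The remaining steps reduce either to Serre vanishing for the very ample bundle $\mathcal{O}(D)$ or to standard manipulations with graded integral closures.
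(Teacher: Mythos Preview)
Your proof is correct. The paper does not actually prove this theorem; it simply cites it as a well-known result (Hartshorne, Exercise II.5.14), so there is no ``paper's own proof'' to compare against. Your argument is the standard one for that exercise: Serre vanishing for the ideal sheaf of the projective embedding gives $L^k=\L(kD)$ for $k\gg 0$, which immediately yields (2) and the inclusion $\R(D)\subset\overline{A_L}$; the reverse inclusion follows from the valuation estimate you describe. One small remark worth making explicit: the normality hypothesis on $X$ enters precisely in your final step, where you pass from $v_E(f)+k\,v_E(D)\ge 0$ for every prime divisor $E$ to $(f)+kD\ge 0$ and hence $f\in\L(kD)$; on a non-normal variety this last implication can fail.
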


It is well-known that very ample divisors generate the group of all Cartier divisors
(see \cite[Example 1.2.10]{Lazarsfeld}). More precisely,
\begin{Th} \label{th-difference-very-ample}
Let $X$ be a projective variety. Let $D$ be a Cartier divisor and $E$ a very ample divisor. Then for large
enough $k$, the divisor $D+kE$ is very ample. In particular, $D$ can be written as the difference of two
very ample divisors $D+kE$ and $kE$.
\end{Th}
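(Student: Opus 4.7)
The plan is to combine Serre's theorem on coherent sheaves over a projective variety with the standard fact that tensoring a globally generated line bundle with a very ample one yields a very ample line bundle. First I would view $\mathcal{O}_X(D)$ as a coherent sheaf on the projective variety $X$. Since $E$ is very ample, it is in particular ample, so Serre's theorem produces an integer $k_1 > 0$ such that the twist $\mathcal{O}_X(D+k_1 E) = \mathcal{O}_X(D) \otimes \mathcal{O}_X(E)^{\otimes k_1}$ is generated by its global sections.

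Next I would establish the auxiliary claim: if $\mathcal{L}$ is a globally generated line bundle and $\mathcal{M}$ is a very ample line bundle on $X$, then $\mathcal{L} \otimes \mathcal{M}$ is very ample. Writing $\phi_{\mathcal{L}}: X \to \mathbb{P}^a$ and $\phi_{\mathcal{M}}: X \hookrightarrow \mathbb{P}^b$ for the associated morphisms, the product map $(\phi_{\mathcal{L}}, \phi_{\mathcal{M}}): X \to \mathbb{P}^a \times \mathbb{P}^b$ followed by the Segre embedding $\mathbb{P}^a \times \mathbb{P}^b \hookrightarrow \mathbb{P}^{(a+1)(b+1)-1}$ is a closed embedding, because projection to the second factor recovers the closed embedding $\phi_{\mathcal{M}}$ (so injectivity on points and tangent vectors is automatic). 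This composite pulls back $\mathcal{O}(1)$ to $\mathcal{L} \otimes \mathcal{M}$, i.e. a subspace of $H^0(\mathcal{L} \otimes \mathcal{M})$ already defines a closed embedding; since the complete linear system factors the partial one through a linear projection, it too defines a closed embedding, so $\mathcal{L} \otimes \mathcal{M}$ is very ample.

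Combining these two ingredients, for every $k \geq k_1 + 1$ I would decompose
\[
D + kE = (D + k_1 E) + (k - k_1) E,
\]
where $\mathcal{O}_X(D + k_1 E)$ is globally generated by Step 1 and $(k-k_1) E$ is very ample, since any positive multiple of a very ample divisor is very ample (composing the embedding furnished by $E$ with an appropriate Veronese embedding). The auxiliary claim then gives that $D + kE$ is very ample. For the final assertion, one writes $D = (D+kE) - kE$ for such a $k$, exhibiting $D$ as a difference of two very ample divisors.

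The only delicate point is the passage from global generation to very ampleness; the rest is a direct invocation of Serre's theorem, which supplies the essential analytic content. Once the Segre-embedding argument is in place, the conclusion is immediate.
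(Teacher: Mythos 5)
Your proof is correct and is exactly the standard argument: Serre's theorem to make $\mathcal{O}_X(D+k_1E)$ globally generated, then the Segre-embedding lemma that (globally generated) $\otimes$ (very ample) is very ample. The paper itself offers no proof of this statement, citing it as well known from \cite[Example 1.2.10]{Lazarsfeld}, and your argument is precisely the one given there, so there is nothing to add.
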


Finally we need the following statement which is an immediate corollary of
Chow's lemma and the normalization theorem.
\begin{Lem} \label{lem-Chow}
Let $X$ be any complete variety. Then
there exists a normal projective variety $X'$ and a morphism $\pi: X' \to X$ which is a
birational isomorphism.
\end{Lem}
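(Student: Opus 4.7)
The plan is to combine Chow's lemma with the finiteness of normalization, exactly as the statement of the lemma hints. First I would invoke Chow's lemma to obtain an irreducible projective variety $X''$ together with a surjective birational morphism $\pi_1: X'' \to X$. Since $X$ is complete (and in particular irreducible by our convention) Chow's lemma applies and produces such an $X''$; here the completeness of $X$ is essential so that $\pi_1$ is indeed a morphism onto $X$ and a birational isomorphism.

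Next I would take the normalization $\nu: X' \to X''$. Because $X''$ is a variety (hence reduced and irreducible), $X'$ is a normal irreducible variety and $\nu$ is a finite (in particular projective) birational morphism onto $X''$. Since $X''$ is projective and $\nu$ is finite, $X'$ is again projective: one can realize the structure sheaf of $X'$ as $\nu_\ast \mathcal{O}_{X'}$, a coherent sheaf of $\mathcal{O}_{X''}$-algebras, and apply the standard $\mathbf{Proj}$ of a graded algebra construction to see that $X' = \mathbf{Proj}(\bigoplus_k \nu_\ast \mathcal{O}_{X'} \otimes \mathcal{O}_{X''}(k))$ embeds into projective space after twisting by a sufficiently ample line bundle on $X''$.

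Finally I would set $\pi = \pi_1 \circ \nu: X' \to X$. The composition of two birational morphisms between irreducible varieties is again birational, so $\pi$ is a birational morphism from the normal projective variety $X'$ to the complete variety $X$, which is exactly the statement.

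The main obstacle, if any, is really just being careful that the two inputs (Chow's lemma and the finiteness of normalization for varieties over an algebraically closed field of characteristic $0$) are available and produce genuinely projective outputs; once those two black boxes are in hand, the argument is a one-line composition. Since the paper explicitly works over $\c$ in Part III and cites Chow's lemma by name in the statement, both ingredients can be invoked directly with no further technical work.
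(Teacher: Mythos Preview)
Your argument is correct and is exactly the approach the paper intends: the paper does not give a separate proof but simply remarks that the lemma is an immediate corollary of Chow's lemma and the normalization theorem, which is precisely the two-step composition you spell out. Your extra justification that the normalization of a projective variety remains projective is more detail than the paper supplies, but it is accurate and does no harm.
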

\begin{proof}[Proof of Theorem \ref{th-ring-sec-almost-finite-type}]
Let $\pi: X' \to X$ be as in Lemma \ref{lem-Chow}.
Let $D' = \pi^*(D)$ be the pull-back of $D$ to $X'$. Then
$\pi^*(\R(D)) \subset \R(D')$. Thus replacing $X$ with $X'$, it is enough to
prove the statement when $X$ is normal and projective.
Now by Theorem \ref{th-difference-very-ample} we can find very ample divisors $D_1$ and $D_2$ with
$D = D_1 - D_2$, moreover, we can take $D_2$ to be an effective divisor.
It follows that $\R(D) \subset \R(D_1)$.
By Theorem \ref{th-very-ample-int-closure},
$\R(D_1)$ is of integral type and hence $\R(D)$ is of almost integral type.
\end{proof}

We can now apply the results of Section \ref{subsec-application-valuation}
to the graded algebra $\R(D)$ and derive some results
on the asymptotic of the dimensions of the spaces $\L(kD)$.

Let us recall some terminology from the theory of divisors and linear series
(see \cite[Chapter 2]{Lazarsfeld}). These are special cases of the
corresponding general definitions for graded algebras in Part II of the paper.

%\begin{Def} \label{def-graded-lin-series}
A graded subalgebra $W$ of $\R(D)$ is usually called a {\it graded linear series for $D$}.
Since $\R(D)$ is of almost integral type, then any graded linear series $W$ for $D$ is also an
algebra of almost integral type. Let us write $W = \bigoplus_{k \geq 0} W_k = \bigoplus_{k \geq 0} L_k t^k$,
where $W_k$ (respectively $L_k$) is the $k$-th homogeneous component (respectively $k$-th subspace)
of the graded subalgebra $W$.
%\end{Def}

1) The $n$-th growth coefficient of the algebra $W$ multiplied with $n!$,
is called the {\it volume} of the
graded linear series $W$ and denoted by $\Vol(W)$. When $W = \R(D)$, the volume
of $W$ is denoted by $\Vol(D)$. {In the classical case, i.e. when $D$ is ample, $\Vol(D)$ is
equal to its self-intersection number. In the case $\k = \c$ and $D$ very ample,
$\Vol(D)$ is the (symplectic) volume of the image of $X$ under the embedding of $X$ into projective space
induced by $D$, and hence the term volume.}

2) The index $m = m(W)$ of the algebra $W$ is usually called the {\it exponent} of the
graded linear series $W$. Recall that for large enough $p$ and divisible by $m$, the homogeneous
component $W_p$ is non-zero.

3) The growth degree $q$ of the Hilbert function of the algebra $W$
is called the {\it Kodaira-Iitaka dimension of $W$}.

The general theorems proved in Section \ref{subsec-application-valuation}
about algebras of almost integral type, applied to a graded
linear series $W$ gives the following results:

\begin{Cor} \label{cor-linear-series}
Let $X$ be a complete irreducible $n$-dimensional  variety. Let $D$ be a
Cartier divisor on $X$ and $W \subset \R(D)$ a graded linear series. Then:
1) The $q$-th growth coefficient of the function $O_m^*(H_W)$, i.e.
$$a_q(O^*_m(H_W)) = \lim_{k \to \infty} \frac{\dim W_{mk}}{k^q},$$ exists.
Fix a faithful $\z^n$-valued valuation for the field $\k(X)$. Then the Kodaira-Iitaka dimension $q$ of $W$ is
equal to the dimension of the convex body $\Delta(W)$ and
the growth coefficient $a_q(O^*_m(H_W))$ is equal to $\Vol_q(\Delta(W))$.
Following the notation for the volume of a divisor we denote the
quantity $q! a_q(O^*_m(H_W))$ by $\Vol_q(W)$.

2) (A generalized version of Fujita approximation)
For $p$ sufficiently large and divisible by $m$, let $\varphi(p)$ be the $p$-th
growth coefficient of the graded algebra $A_{L_p} =
\bigoplus_k L_p^k t^k$ associated to the $q$-th subspace $L_p$ of $W$, i.e.
$\varphi(p) = \lim_{t \to \infty}\dim L_p^t/t^q.$
Then the $q$-th growth coefficient of the function $O_m^*(\varphi)$, i.e.
$$a_q(O_m^*(\varphi)) = \lim_{k \to \infty} \frac{\varphi(mk)}{k^q},$$ exists and is equal to
$\Vol_q(\Delta(W))/\ind(W) = \Vol_q(W)/q!\ind(W)$.

3) (Brunn-Minkowski for volume of graded linear series)
Suppose $W_1$ and $W_2$ are two graded linear series for divisors $D_1$ and $D_2$ respectively. Also
assume $m(W_1) = m(W_2) = 1$, then we have: $$\Vol^{1/n}(W_1) + \Vol^{1/n}(W_2) \leq \Vol^{1/n}(W_1W_2),$$
where $W_1W_2$ denotes the componentwise product of $W_1$ and $W_2$. In particular,
if $W_1 = \R(D_1)$ and $W_2 = \R(D_2)$ then $W_1W_2 \subset \R(D_1+D_2)$ and hence
$$\Vol^{1/n}(D_1) + \Vol^{1/n}(D_2) \leq \Vol^{1/n}(D_1+D_2).$$
\end{Cor}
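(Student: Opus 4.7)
The plan is to reduce all three parts to the corresponding general statements for graded algebras of almost integral type established in Section \ref{subsec-application-valuation}. The bridge is Theorem \ref{th-ring-sec-almost-finite-type}, which asserts that $\R(D)$ is of almost integral type; since a graded linear series $W \subset \R(D)$ is by definition a graded subalgebra, it is contained in the same algebra of integral type that contains $\R(D)$, so $W$ itself is of almost integral type. Consequently the Newton-Okounkov body $\Delta(W)$, the index $\ind(W)$, and the dimension $q = \dim \Delta(W)$ are all defined and the results of Part II apply.

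For part (1), I would simply apply Theorem \ref{th-growth-alg-Newton-Okounkov-body} to $W$. This gives existence of $a_q(O_m^*(H_W)) = \lim_{k\to\infty} H_W(mk)/k^q$ and identifies it with the integral volume of $\Delta(W)$ (normalized by $\ind(W)$, as throughout Part III), together with the equality of the Kodaira-Iitaka dimension $q$ and $\dim \Delta(W)$. The definition $\Vol_q(W) = q!\, a_q(O_m^*(H_W))$ then packages the result in the stated form.

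For part (2), the main step is to identify, for each large $p$ divisible by $m$, the subalgebra $\widehat W_p \subset W$ generated by the $p$-th homogeneous component $L_p t^p$ with the algebra $A_{L_p} = \bigoplus_k L_p^k t^{kp}$. This is immediate from the definitions: the degree-$pk$ component of $\widehat W_p$ is spanned by $k$-fold products of elements of $L_p t^p$, which is exactly $L_p^k t^{pk}$. With this identification, the function $\varphi$ in the statement is exactly the function $\varphi$ of Theorem \ref{th-Fujita-algebra}, so parts (1)--(3) of that theorem translate verbatim into the claim.

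For part (3), I apply Theorem \ref{th-Brunn-Mink-algebra} to the graded linear series $W_1$ and $W_2$ (each of almost integral type, with $m(W_i) = 1$) and their componentwise product $W_1W_2$, obtaining $\Vol^{1/n}(W_1) + \Vol^{1/n}(W_2) \leq \Vol^{1/n}(W_1 W_2)$. To specialize to $W_i = \R(D_i)$, I need the inclusion $\R(D_1)\R(D_2) \subset \R(D_1+D_2)$, which is elementary: if $f \in \L(kD_1)$ and $g \in \L(kD_2)$, then $(fg) + k(D_1 + D_2) = ((f) + kD_1) + ((g) + kD_2) \geq 0$, so $fg \in \L(k(D_1+D_2))$. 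The remaining ingredient is monotonicity of $\Vol$ under inclusion of graded linear series, which follows because an inclusion of graded subalgebras induces an inclusion of the associated semigroups $S(\cdot)$, hence of their Newton-Okounkov bodies, and by part (1) the volume equals (up to $q!$ and $\ind$) the integral volume of this body.

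The main obstacle I expect is the bookkeeping in part (2): keeping straight the two scaling maps $O_p^*$ and $O_m^*$, the fact that $p$ runs through multiples of $m$ (not through all integers), and the identification $\widehat W_p \cong A_{L_p}$ in a manner compatible with the valuation $v_t$ used to define the semigroup $S(W)$. Parts (1) and (3) are each one-line invocations of a Part II theorem once the almost integral type property of $W$ is recorded; the Fujita-type approximation in part (2) is where most of the care is required, but no new ideas beyond Theorem \ref{th-Fujita-algebra} are needed.
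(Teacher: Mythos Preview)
Your proposal is correct and follows exactly the approach the paper intends: the corollary is stated immediately after the phrase ``the general theorems proved in Section~\ref{subsec-application-valuation} about algebras of almost integral type, applied to a graded linear series $W$ gives the following results,'' and no further proof is given. Your reduction via Theorem~\ref{th-ring-sec-almost-finite-type} to Theorems~\ref{th-growth-alg-Newton-Okounkov-body}, \ref{th-Fujita-algebra}, and \ref{th-Brunn-Mink-algebra}, together with the identification $\widehat W_p = A_{L_p}$ and the elementary inclusion $\R(D_1)\R(D_2)\subset\R(D_1+D_2)$, is precisely what is needed.
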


\begin{Rem} \label{rem-linear-series}
The existence of the limit in 1) has been known for the graded algebra $\R(D)$, where $D$ is
a so-called {\it big divisor} (see \cite{Lazarsfeld}).
A divisor $D$ is big if its volume $\Vol(D)$ is strictly positive.
Equivalently, $D$ is big if for some $k>0$, the Kodaira map of the subspace $\L(kD)$ is a birational isomorphism
onto its image.

It seems that for a general graded linear series (and in particular the algebra $\R(D)$ of
a general divisor $D$) the existence of the limit in 1) has not previously been known (see \cite[Remark 2.1.39]{Lazarsfeld}).

Part 2) above is in fact a generalization of the Fujita approximation result of
\cite[Theorem 3.3]{Lazarsfeld-Mustata}. Using similar methods, for certain graded linear series of big divisors,
Lazarsfeld and Mustata prove a statement very close to the statement 2) above.

In \cite{Lazarsfeld-Mustata} and
\cite[Theorem 5.13]{Askold-Kiumars-arXiv-1} the Brunn-Minkowski inequality in 3)
is proved with similar methods.
\end{Rem}
\section{Part IV: Applications to intersection theory and mixed volume}
In this part we associate a convex body to any non-zero finite dimensional
subspace of rational functions on an $n$-dimensional irreducible variety
such that: 1) the volume of the body multiplied by $n!$ is
equal to the self-intersection index of the subspace, 2) the body which
corresponds to the product of subspaces contains the sum of the
bodies corresponding to the factors. This construction allows us
to prove that the intersection index enjoys all the main inequalities concerning the mixed
volume, and also to prove these inequalities for the mixed volume itself.

\subsection{Mixed volume} \label{subsec-mixed-vol}
In this section we recall the notion of mixed volume of convex bodies and
list its main properties (without proofs).

%There are two operations of addition
%and scalar multiplication for convex bodies: let $\Delta_1$, $\Delta_2$ be convex bodies, then
%their sum
%$$\Delta_1 + \Delta_2 = \{x+y \mid x \in \Delta_1, y \in \Delta_2 \},$$
%is also a convex body called the {\it Minkowski sum} of $\Delta_1$, $\Delta_2$. Also for a convex body $\Delta$ and
%a scalar $\lambda \geq 0$, $$\lambda \Delta = \{\lambda x \mid x \in \Delta \},$$ is a convex body.
{The collection of all convex bodies in $\r^n$ is a cone, that is, we can add convex bodies and multiply a convex body with a
positive number.} Let $\Vol$ denote the $n$-dimensional volume in $\r^n$ with respect to the
standard Euclidean metric. The function $\Vol$ is a homogeneous polynomial of degree $n$ on the cone of convex bodies,
i.e. its restriction to each finite dimensional section of the cone is a homogeneous polynomial of degree $n$.
%More precisely: for any $k>0$ let $\r_+^k$ be the positive octant in $\r^k$ consisting of all
%${\bf \lambda} = (\lambda_1,\dots,\lambda_k)$ with $\lambda_1\geq
%0,\dots,\lambda_k\geq 0$. Then polynomiality of $\Vol$ means that
%for any choice of convex bodies $\Delta_1,\dots,\Delta _k$, the function
%$P_{\Delta_1,\dots,\Delta_k}$ defined on $\r^k_{+}$ by
%$$P_{\Delta_1,\dots,\Delta_k}(\lambda_1,\dots,\lambda_k)=\Vol(\lambda_1\Delta_1+\dots+
%\lambda_k\Delta_k),$$ is a homogeneous polynomial of degree $n$.

By definition the {\it mixed volume}  $V(\Delta_1,\dots,\Delta_n)$ of
an $n$-tuple $(\Delta_1,\dots,\Delta_n)$ of convex bodies
is the coefficient of the monomial $\lambda_1\cdots\lambda_n$ in the polynomial
$$P_{\Delta_1,\dots,\Delta_n}(\lambda_1, \ldots, \lambda_n)
= \Vol(\lambda_1\Delta_1 + \cdots + \lambda_n\Delta_n),$$ divided by $n!$.
This definition implies that mixed volume is the {\it polarization} of the volume polynomial,
that is, {it is the unique function} on the $n$-tuples of convex bodies satisfying the
following:
\begin{itemize}
\item[(i)] (Symmetry) $V$ is symmetric with respect to permuting the bodies $\Delta_1, \ldots, \Delta_n$.
\item[(ii)] (Multi-linearity) It is {linear} in each argument with respect to the Minkowski sum. {The linearity} in first argument
means that for convex bodies $\Delta_1'$, $\Delta_1'',
\Delta_2,\dots,\Delta_n$, and real numbers $\lambda', \lambda'' \geq 0$ we have:
$$ V(\lambda'\Delta_1'+\lambda''\Delta_1'',\dots, \Delta_n)=\lambda'V(\Delta_1',\dots,
\Delta_n) + \lambda''V(\Delta_1'',\dots, \Delta_n).$$
\item[(iii)] (Relation with volume) On the diagonal it coincides with the volume, i.e. if
$\Delta_1 =\dots=\Delta_n=\Delta$, then $V(\Delta_1,\dots,
\Delta_n)=\Vol(\Delta)$.
\end{itemize}

It is easy to verify that: 1) Mixed volume is non-negative, and
%that is, for any $n$-tuple
%of convex bodies $\Delta_1, \dots, \Delta_n$ we have:
%$$V(\Delta_1,\dots, \Delta_n)\geq 0.$$
2) Mixed volume is monotone, that is,
for two $n$-tuples of convex bodies $\Delta'_1\subset \Delta_1,\dots, \Delta'_n\subset
\Delta_n$ we have: $V(\Delta'_1,\dots, \Delta'_n) \leq V(\Delta_1,\dots, \Delta_n).$

The following inequality attributed to Alexandrov and Fenchel is important and very
useful in convex geometry. All its previously known proofs are rather complicated (see
\cite{Burago-Zalgaller}).
\begin{Th}[Alexandrov-Fenchel] \label{thm-alexandrov-fenchel}
Let $\Delta_1, \ldots, \Delta_n$ be convex bodies
in $\r^n$. Then
$$ V(\Delta_1, \Delta_1, \Delta_3, \ldots, \Delta_n)
V(\Delta_2, \Delta_2, \Delta_3, \ldots, \Delta_n) \leq V^2(\Delta_1, \Delta_2, \ldots, \Delta_n).$$
\end{Th}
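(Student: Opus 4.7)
The plan is to deduce the classical Alexandrov--Fenchel inequality from its algebraic analogue in the intersection theory on ${\bf K}_{rat}(X)$, which in turn will be reduced to the Hodge-type inequality (\ref{equ-**}) on surfaces. The starting point is the Bernstein--Ku\v{s}nirenko theorem: for Laurent polynomials on $(\c^*)^n$, the mixed volume $n!\,V(\Delta_1,\dots,\Delta_n)$ of the Newton polytopes of $n$ finite dimensional subspaces $L_1,\dots,L_n$ of $\c[x_1^{\pm 1},\dots,x_n^{\pm 1}]$ equals the intersection index $[L_1,\dots,L_n]$ in ${\bf K}_{rat}((\c^*)^n)$. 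Hence proving the inequality for mixed volumes of integral polytopes is equivalent to proving
\begin{equation}\label{equ-AF-alg}
[L_1,L_1,L_3,\dots,L_n]\,[L_2,L_2,L_3,\dots,L_n]\leq [L_1,L_2,\dots,L_n]^2,
\end{equation}
for the corresponding monomial subspaces, and the general case of real convex bodies then follows by rescaling and continuity of the mixed volume (using that integral polytopes, after scaling, are dense in the cone of convex bodies).

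The main step is therefore \eqref{equ-AF-alg} for arbitrary $L_1,\dots,L_n\in \K$ on any irreducible $n$-dimensional variety $X$. The approach is a reduction to the case $n=2$, which is precisely the Hodge-type inequality \eqref{equ-**}. First I would replace $L_3,\dots,L_n$ by large enough powers and multiply by a fixed big subspace if needed so that the Kodaira maps are well-behaved; by multilinearity of the intersection index in each slot, this does not affect the validity of \eqref{equ-AF-alg}. Then I would use multi-linearity to write the intersection indices
$$[L_1^{a}L_2^{b},L_3,\dots,L_n]\quad (a+b=2)$$
as intersection numbers on the surface cut out by generic elements $f_3\in L_3,\dots,f_n\in L_n$. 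More precisely, choosing generic $f_i\in L_i$ for $i\geq 3$, the scheme $Y=\{f_3=\dots=f_n=0\}$ is a (possibly non-complete, non-normal) irreducible surface in $X$, and the restrictions $\bar L_1,\bar L_2$ of $L_1,L_2$ to $Y$ lie in ${\bf K}_{rat}(Y)$. Using the standard property that the intersection index descends under taking generic sections, one gets
$$[L_1^{a}L_2^{b},L_3,\dots,L_n]_X=[\bar L_1^{a}\bar L_2^{b}]_Y$$
for all $a+b=2$. Applying \eqref{equ-**} on $Y$ to $\bar L_1,\bar L_2$ then yields \eqref{equ-AF-alg}.

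The inequality \eqref{equ-**} on $Y$ is itself obtained from the Brunn--Minkowski inequality for the Newton--Okounkov bodies $\Delta(\overline{A_{\bar L_1}})$, $\Delta(\overline{A_{\bar L_2}})$ (in dimension $2$), combined with the Ku\v{s}nirenko-type formula (Theorem \ref{intro-THM-Kus}) $[L,L]=2\,\Vol(\Delta(\overline{A_L}))$ and the superadditivity $\Delta(\overline{A_{L_1}})+\Delta(\overline{A_{L_2}})\subset \Delta(\overline{A_{L_1L_2}})$; expanding $[(L_1L_2)^{\otimes 2}]=[L_1,L_1]+2[L_1,L_2]+[L_2,L_2]$, the inequality
$$([L_1,L_1]+2[L_1,L_2]+[L_2,L_2])^{1/2}\ge [L_1,L_1]^{1/2}+[L_2,L_2]^{1/2}$$
obtained from Brunn--Minkowski is algebraically equivalent to $[L_1,L_1][L_2,L_2]\le [L_1,L_2]^2$.

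The hard part of this plan is the reduction step from $n$ dimensions to a surface: one must check carefully that cutting by generic elements of $L_3,\dots,L_n$ produces an irreducible surface $Y$ on which the restrictions of $L_1,L_2$ still define elements of ${\bf K}_{rat}(Y)$ with the right intersection numbers, and that the generic-section identities for the intersection index in ${\bf K}_{rat}(X)$ are available in the present non-complete, possibly singular setting. Once this is granted, the chain of implications Brunn--Minkowski $\Rightarrow$ \eqref{equ-**} $\Rightarrow$ \eqref{equ-AF-alg} $\Rightarrow$ Alexandrov--Fenchel runs through formally. The virtue of the present approach, as stressed in the introduction, is that each step (including \eqref{equ-**}) is elementary, so the whole argument avoids the analytic machinery used in the classical proofs.
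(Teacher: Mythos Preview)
Your outline is correct and matches the paper's own argument essentially step for step: Bernstein--Ku\v{s}nirenko reduces classical Alexandrov--Fenchel for integral polytopes to the algebraic analogue \eqref{equ-AF-alg}, which is proved by cutting down to a surface via a Bertini--Lefschetz/reduction theorem and invoking the Hodge-type inequality (Corollary~\ref{cor-Hodge}), itself a formal consequence of Brunn--Minkowski through Theorem~\ref{th-main-Delta-bar-A_L}; one then passes to rational and finally arbitrary convex bodies by homogeneity and continuity.

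Two small points. First, your suggestion to ``multiply by a fixed big subspace if needed'' is not justified by multilinearity (replacing $L_i$ by $L_iM$ adds cross-terms and does not obviously preserve the inequality); the paper instead simply assumes $L_3,\dots,L_n$ are big in Theorem~\ref{th-Alex-Fenchel-alg}, which is harmless for the application since one may approximate by full-dimensional integral polytopes. Second, your notation $[L_1^aL_2^b,L_3,\dots,L_n]$ with $a+b=2$ has only $n-1$ arguments; what you want is $[a\!*\!L_1,\,b\!*\!L_2,\,L_3,\dots,L_n]$ in the paper's repetition notation, reducing to $[\bar L_1,\bar L_1]_Y$, $[\bar L_1,\bar L_2]_Y$, $[\bar L_2,\bar L_2]_Y$ on the surface $Y$.
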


In dimension $2$, this inequality is elementary. We will call it the {\it generalized isoperimetric inequality}, because when
$\Delta_2$ is the unit ball it coincides with the classical isoperimetric inequality.

%Below we mention a formal corollary of the Alexandrov-Fenchel inequality.
%First we need to introduce a notation for repetition
%of convex bodies in the mixed volume. Let $2\leq m\leq n$
%be an integer and $k_1+\dots+k_r=m$ a partition of $m$ with $k_i \in \n$.
%Denote by $V(k_1*\Delta_1,\dots, k_r*\Delta_r,\Delta_{m+1},\dots,\Delta_n)$
%the mixed volume of the $\Delta_i$, where
%$\Delta_1$ is repeated $k_1$ times, $\Delta_2$ is repeated $k_2$ times, etc.
%and $\Delta_{m+1},\dots,\Delta_n$ appear once.

%\begin{Cor}  \label{cor-Alex-Fenchel}
%With the notation as above, the following inequality holds:
%$$\prod_{1\leq j
%\leq r}  V^{k_j}(m*\Delta_j,\Delta_{m+1},\ldots,\Delta_n) \leq V^m(k_1*\Delta_1,\dots,
%k_r*\Delta_r,\Delta_{m+1},\dots,\Delta_n).$$
%\end{Cor}
%page 31. Right after the theorem 4.1 I would write :" In dimension 2
%this inequality is elementary  and is called "generalized isoperimetric inequality" because for $\Delta_2$ is the unite ball it coincides with %the
%> classical isoperimetric inequality". Then I would delete the last
%sentence
%> in the paragraph after theorem 4.2. I also would delete the last
%sentence
%> in the next paragraph (" The Brunn-Minkowski inequality for n=2 is ...
%").

The celebrated {\it Brunn-Minkowski inequality} concerns volume of
convex bodies in $\r^n$. {It is an easy corollary of the Alexandrov-Fenchel
inequality.}

\begin{Th}[Brunn-Minkowski] \label{th-Brunn-Mink}
Let $\Delta_1$, $\Delta_2$ be convex bodies in $\r^n$. Then
$$\Vol^{1/n}(\Delta_1) + \Vol^{1/n}(\Delta_2)\leq
\Vol^{1/n}(\Delta_1+\Delta_2).$$
\end{Th}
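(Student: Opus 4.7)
The plan is to derive Brunn-Minkowski as a formal consequence of the Alexandrov-Fenchel corollary already in hand (Corollary \ref{cor-Alex-Fenchel}); since the question is about the final Theorem \ref{th-Brunn-Mink} itself, the hard work (proving Alexandrov-Fenchel) is assumed to have been done earlier in the paper, so what remains is a short algebraic manipulation.

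First, I would expand the volume of the Minkowski sum by the polarization identity: combining properties (i), (ii), (iii) of the mixed volume, one sees that
$$\Vol(\Delta_1+\Delta_2) = V(n*(\Delta_1+\Delta_2)) = \sum_{k=0}^{n} \binom{n}{k}\, V(k*\Delta_1, (n-k)*\Delta_2).$$
Next, I would apply Corollary \ref{cor-Alex-Fenchel} with $r=2$, $m=n$, $k_1=k$, $k_2=n-k$ and no auxiliary bodies, which gives
$$V^{k}(n*\Delta_1)\, V^{n-k}(n*\Delta_2) \le V^{n}(k*\Delta_1, (n-k)*\Delta_2).$$
By property (iii) the left-hand side is $\Vol(\Delta_1)^{k}\Vol(\Delta_2)^{n-k}$, so
$$V(k*\Delta_1,(n-k)*\Delta_2) \ge \Vol(\Delta_1)^{k/n}\Vol(\Delta_2)^{(n-k)/n}.$$

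Substituting this lower bound termwise into the expansion above and invoking the binomial theorem yields
$$\Vol(\Delta_1+\Delta_2) \ge \sum_{k=0}^{n}\binom{n}{k}\Vol(\Delta_1)^{k/n}\Vol(\Delta_2)^{(n-k)/n} = \bigl(\Vol(\Delta_1)^{1/n}+\Vol(\Delta_2)^{1/n}\bigr)^{n},$$
and taking $n$-th roots of both sides gives the desired Brunn-Minkowski inequality.

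Given that Corollary \ref{cor-Alex-Fenchel} is already available, the present argument is entirely formal and has no real obstacle: the only step that requires care is verifying that the instance of Corollary \ref{cor-Alex-Fenchel} used here is actually the stated one (with an empty list of auxiliary convex bodies $\Delta_{m+1},\ldots,\Delta_n$), and that the resulting inequality $V_k \ge \Vol(\Delta_1)^{k/n}\Vol(\Delta_2)^{(n-k)/n}$ is tight at the endpoints $k=0$ and $k=n$, so that no information is lost in passing to the bound. The genuine difficulty of Brunn-Minkowski has been absorbed into the proof of Alexandrov-Fenchel earlier in the paper; this final statement is the clean payoff of that machinery.
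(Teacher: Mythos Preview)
Your derivation is mathematically correct: expanding $\Vol(\Delta_1+\Delta_2)$ by multilinearity, bounding each mixed-volume term below via Corollary~\ref{cor-Alex-Fenchel} with $m=n$, and reassembling with the binomial theorem is a standard and valid route from Alexandrov--Fenchel to Brunn--Minkowski.

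However, your framing misreads the paper's logical structure. Theorem~\ref{th-Brunn-Mink} is not proved in the paper at all: it sits in Section~\ref{subsec-mixed-vol}, which explicitly ``list[s] its main properties (without proofs)'' as classical background. Alexandrov--Fenchel (Theorem~\ref{thm-alexandrov-fenchel}) and its Corollary~\ref{cor-Alex-Fenchel} are likewise only stated there; the paper's proof of Alexandrov--Fenchel comes \emph{later}, in Section~\ref{sec-Alexandrov-Fenchel}, and that proof \emph{uses} Brunn--Minkowski (the $n=2$ case, via Corollary~\ref{cor-Brunn-Mink-int-index} and Corollary~\ref{cor-Hodge}). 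So ``the hard work \ldots\ done earlier in the paper'' has the order backwards.

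That said, the paper does remark that Alexandrov--Fenchel ``implies many immediate corollaries and in particular the Brunn--Minkowski and its generalization for any $n$ (Corollary~\ref{cor-gen-Brunn-Mink}),'' and the footnote at the end of Section~\ref{sec-Alexandrov-Fenchel} makes the intended logic explicit: only the elementary $n=2$ Brunn--Minkowski is taken as input, Alexandrov--Fenchel is then proved, and the general Brunn--Minkowski is recovered as a consequence. Your argument is a perfectly good way to flesh out that last recovery step; it is just not a proof the paper itself supplies, and you should be aware of the circularity hazard if you invoke Corollary~\ref{cor-Alex-Fenchel} without first noting that the paper's proof of Alexandrov--Fenchel depends only on the planar case.
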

{We used this inequality in the proof of Theorem \ref{th-Brunn-Mink-algebra}.
In dimension $2$, the Brunn-Minkwoskii inequality is equivalent to the generalized isoperimetric inequaity
(compare with Corollary \ref{cor-Hodge}).
%The following generalization of the Brunn-Minkowski inequality
%is also a corollary of the Alexandrov-Fenchel inequality.
%\begin{Cor}[Generalized Brunn-Minkowski inequality] \label{cor-gen-Brunn-Mink}
%For any $0<m \leq n$ and for any fixed convex bodies
%$\Delta_{m+1},\dots, \Delta_n$, the function $F$ which assigns to a
%body $\Delta$, the number $F(\Delta) =
%V^{1/m}(m*\Delta,\Delta_{m+1},\dots,\Delta_n),$ is concave, i.e. for
%any two convex bodies $\Delta_1, \Delta_2$ we have:
%$$F(\Delta_1)+F(\Delta_2)\leq F(\Delta_1+\Delta_2).$$
%\end{Cor}

On the other hand, all the classical proofs of the Alexandrov-Fenchel inequality
deduce it from the Brunn-Minkowski inequality.
But these deductions are the main and
most complicated parts of the proofs (\cite{Burago-Zalgaller}).
Interestingly, the main construction in the present paper (using algebraic geometry)
allows us to obtain the Alexandrov-Fenchel inequality as an immediate corollary of its
simplest case, namely the generalized isoperimetric inequality (that is, when $n=2$).

\subsection{Semigroup of subspaces and intersection index} \label{subsec-int-index}
In this section we briefly review some concepts and results
from \cite{Askold-Kiumars-arXiv-2}. That is, we discuss the semigroup of subspaces of rational functions,
its Grothendieck group and the intersection index on the Grothendieck group. We also
recall the key notion of the completion of a subspace.

{For the rest of the paper we will take the ground field $\k$ to be the field of complex numbers $\c$.
Let $F$ be a field finitely generated over $\c$.
Later we will deal with the
case where $F=\c(X)$ is the field of rational functions on a variety $X$ over $\c$.}
Recall (Definition \ref{def-prod-subspace}) that ${\bf K}(F)$ denotes
the collection of all non-zero finite dimensional subspaces of $F$ over $\c$.
Moreover, for $L_1, L_2 \in {\bf K}(F)$, the product
$L_1L_2$ is the $\k$-linear subspace spanned by all the products $fg$ where $f \in L_1$ and $g \in L_2$.
With respect to this product ${\bf K}(F)$ is a (commutative) semigroup.

In general the semigroup ${\bf K}(F)$ does not have the cancellation property.
that is, the equality $L_1M = L_2M$, $L_1, L_2, M \in {\bf K}(F)$, does not imply $L_1 = L_2$.
Let us say that $L_1$ and $L_2$ are {\it equivalent}
and write $L_1 \sim L_2$, if there is $M \in {\bf K}(F)$ with $L_1M = L_2M$.
Naturally the quotient ${\bf K}(F) / \sim$ is a semigroup with the cancellation property and
hence can be extended to a group. The {\it Grothendieck group}
${\bf G}(F)$ of ${\bf K}(F)$ is the collection of formal quotients $L_1/L_2$, $L_1, L_2 \in {\bf K}(F)$, where
$L_1/L_2 = L_1'/L_2'$ if $L_1L_2' \sim L_1'L_2$. There is a natural homomorphism
$\phi: {\bf K}(F) \to {\bf G}(F)$.
The Grothendieck group has the following universal property: for any group $G'$ and a
homomorphism $\phi': {\bf K}(F) \to G'$, there exists a unique homomorphism
$\psi: {\bf G}(F) \to G'$  such that $\phi' = \psi \circ \phi$.

Similar to the notion of integrality of an element over a ring, one defines the integrality of
an element over a linear subspace.

\begin{Def} \label{def-int-over-L}
Let $L$ be a ${\bf k}$-linear subspace in $F$.
An element $f \in F$ is {\it integral over} $L$ if
it satisfies an equation
\begin{equation} \label{equ-int-element-L}
f^m + a_1f^{m-1} + \cdots + a_m = 0,
\end{equation}
where $m>0$ and $a_i \in L^i$, $i=1, \ldots, m$.
The {\it completion} or {\it integral closure} $\ol{L}$ of $L$ in
$F$ is the collection of all $f \in F$ which are integral over $L$.
\end{Def}

The facts below about the completion of a subspace can be found, for example, in \cite[Appendix 4]{Zariski}.
One shows that $f \in F$ is integral over a subspace $L$ if and only if
$L \sim L + \langle f \rangle$. Moreover, the completion $\ol{L}$ is
a subspace containing $L$, and if $L$ is finite dimensional then
$\ol{L}$ is also finite dimensional.

The completion $\ol{L}$ of a subspace $L \in {\bf K}(F)$ can be characterized
in terms of the notion of equivalence of subspaces: take $L \in {\bf K}(F)$. Then $\ol{L}$ is
the largest subspace which is equivalent to $L$, that is,
1) $L \sim \ol{L}$ and, 2) If $L \sim M$ then $M \subset \ol{L}$.

The following standard result shows the connection between the completion of subspaces
and integral closure of algebras.
\begin{Th} \label{th-int-closure-A_L}
Let $L$ be a finite dimensional ${\bf k}$-subspace and let $A_L = \bigoplus_k L^k t^k$ be the
corresponding graded subalgebra of $F[t]$. Then the $k$-th subspace of the integral closure $\ol{A_L}$ is
$\ol{L^k}$, the completion of the $k$-th subspace of $A_L$. That is, $$\ol{A_L} = \bigoplus_k \ol{L^k}t^k.$$
\end{Th}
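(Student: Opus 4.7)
The plan is to prove the two inclusions $\bigoplus_k \ol{L^k}t^k \subseteq \ol{A_L}$ and $\ol{A_L} \subseteq \bigoplus_k \ol{L^k}t^k$ separately, using the fact (recalled just before the theorem statement) that $\ol{A_L}$ is itself a graded subalgebra of $F[t]$. Thanks to this, one can write $\ol{A_L} = \bigoplus_k M_k t^k$ for some sequence of $\k$-subspaces $M_k \subset F$, and the task reduces to identifying $M_k = \ol{L^k}$ for each $k$.

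For the easy inclusion $\supseteq$, suppose $f \in \ol{L^k}$. Then by Definition \ref{def-int-over-L}, there exist $m>0$ and $a_i \in L^{ki}$ with
\[
f^m + a_1 f^{m-1} + \cdots + a_m = 0.
\]
Multiplying the $i$-th term by $t^{ki}$ yields
\[
(ft^k)^m + (a_1 t^k)(ft^k)^{m-1} + (a_2 t^{2k})(ft^k)^{m-2} + \cdots + (a_m t^{km}) = 0,
\]
and since $a_i t^{ki} \in L^{ki} t^{ki} \subset A_L$ for every $i$, this exhibits $ft^k$ as integral over $A_L$ in the sense of the usual ring-theoretic definition recalled in the footnote before Theorem \ref{th-A-bar-finite-type}. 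Hence $ft^k \in \ol{A_L}$.

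For the reverse inclusion, suppose $ft^k \in \ol{A_L}$, so there is a relation
\[
(ft^k)^m + b_1 (ft^k)^{m-1} + \cdots + b_m = 0, \qquad b_i \in A_L.
\]
The key observation is that the left-hand side is a sum of terms that, upon expansion, have various homogeneous degrees in $t$; only the component of total degree $mk$ can contribute to the relation (since the whole expression equals zero, each homogeneous component must separately vanish). The $i$-th summand $b_i (ft^k)^{m-i}$ has homogeneous component of degree $mk$ equal to $c_i t^{ik} \cdot f^{m-i} t^{(m-i)k}$, where $c_i t^{ik}$ is the degree-$ik$ homogeneous component of $b_i$; in particular $c_i \in L^{ik}$. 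Extracting the degree-$mk$ part of the relation and dividing by $t^{mk}$ yields
\[
f^m + c_1 f^{m-1} + \cdots + c_m = 0, \qquad c_i \in L^{ik},
\]
which is exactly the integrality equation for $f$ over $L^k$, so $f \in \ol{L^k}$.

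I do not anticipate a genuine obstacle; the only subtle point is the homogeneity argument in the second direction, where one must be careful that $A_L$ itself is graded so that each $b_i$ decomposes into homogeneous pieces $c_{i,j} t^j$ with $c_{i,j} \in L^j$, and that $b_i$ may be replaced by its degree-$ik$ component without loss of generality because the original relation is a sum of terms homogeneous (after expansion) in degrees that must independently cancel. Once this is observed the argument is completely formal, and no further appeal to deeper structure of $\ol{A_L}$ (beyond its grading) is required.
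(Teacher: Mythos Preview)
Your proof is correct. The paper does not actually give a proof of this theorem; it is introduced as ``the following standard result'' and stated without argument. Your two-inclusion proof, using the gradedness of $\ol{A_L}$ to extract the degree-$mk$ homogeneous component of an integrality relation, is the standard way to establish this fact and is carried out cleanly.
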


%{For the rest of the paper, we take the ground field $\k$ to be $\c$.}
Consider an $n$-dimensional irreducible algebraic variety $X$
and let $F = \c(X)$ be its field of rational functions.
We denote the semigroup ${\bf K}(F)$ of finite dimensional subspaces in $F$ by ${\bf K}_{rat}(X)$.

Next we recall the notion of intersection index of an $n$-tuple of subspaces.
Let $(L_1, \ldots, L_n)$ be an $n$-tuple of finite dimensional spaces of rational functions on $X$.
Put ${\bf L} = L_1 \times \cdots \times L_n$.
Let $U_{\bf L} \subset X$ be the set of all non-singular points in $X$ at which all the
functions from all the $L_i$ are regular, and let $Z_{\bf L} \subset U_{\bf L}$ be the collection of
all the points $x$ in $U_{\bf L}$ such that all the functions from some $L_i$ ($i$ depending on $x$) vanish at $x$.

\begin{Def}
Let us say that for an $n$-tuple of subspaces $(L_1, \ldots, L_n)$, the intersection index
is defined and equal to $[L_1, \ldots, L_n]$ if there is a proper algebraic subvariety ${\bf R}\subset {\bf L}=L_1 \times \cdots \times L_n$ such that for each $n$-tuple $(f_1,\dots,f_n)\in {\bf L} \setminus {\bf R}$
the following holds:

1) The number of solutions of the system $f_1=\dots=f_n=0$ in the set
$U_{\bf L} \setminus Z_{\bf L}$ is independent of the choice of $(f_1, \ldots, f_n)$ and is equal to
$[L_1,\dots,L_n]$.

2) Each solution $a\in U_{\bf L} \setminus Z_{\bf L}$ of the system $f_1=\dots=f_n=0$
is non-degenerate, i.e. the form $df_1\wedge\dots\wedge
df_n$ does not vanish at $a$.
\end{Def}

The following is proved in \cite[Proposition 5.7]{Askold-Kiumars-arXiv-2}.
\begin{Th}
For any $n$-tuple $(L_1, \ldots, L_n)$ of subspaces $L_i \in {\bf K}_{rat}(X)$ the
intersection index $[L_1 \ldots, L_n]$ is defined.
\end{Th}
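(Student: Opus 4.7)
The plan is to prove existence of the intersection index by a generic-position/Bertini argument carried out on a smooth complete birational model of $X$. First, using Lemma \ref{lem-Chow} together with resolution of singularities, I would replace $X$ by a smooth projective birational model $\rho\colon X'\to X$. Since $\c(X)=\c(X')$, the subspaces $L_i$ survive unchanged as subspaces of rational functions on $X'$, and the sets $U_{\bf L}$ and $Z_{\bf L}$ computed on $X$ and on $X'$ agree outside a proper subvariety of each. Consequently one may count solutions on $X'$ instead of on $X$, while discarding any intersection points that lie on the exceptional locus of $\rho$.

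On $X'$, for each $i$ choose a Cartier divisor $D_i$ with $L_i\subset \L(D_i)$, so that $L_i$ cuts out a finite-dimensional linear system on $X'$ with some base locus $B_i$. Bertini's theorem applied to each $L_i$ shows that for generic $f_i\in L_i$ the divisor $(f_i)+D_i$ is smooth away from $B_i$; iterating this (or applying its multi-parameter refinement) gives that for a generic $(f_1,\dots,f_n)\in {\bf L}=L_1\times\cdots\times L_n$ the divisors $\{f_i=0\}$ meet properly, transversally, and in finitely many points, all lying inside $U_{\bf L}\setminus Z_{\bf L}$ and away from the $B_i$ and from the exceptional locus of $\rho$.

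To promote the word ``generic'' to an explicit Zariski-open subset of ${\bf L}$, I would introduce the incidence variety
$$\mathcal I \;=\;\bigl\{\,((f_1,\dots,f_n),x)\in {\bf L}\times(U_{\bf L}\setminus Z_{\bf L}) \,\bigm|\, f_1(x)=\cdots =f_n(x)=0\,\bigr\},$$
and study the first projection $p\colon\mathcal I\to {\bf L}$. A dimension count shows $\dim \mathcal I=\dim {\bf L}$, so $p$ is generically finite. Upper semicontinuity of fibre cardinality for a quasi-finite map, combined with the openness of the non-degeneracy condition $df_1\wedge\cdots\wedge df_n\neq 0$ and with the Bertini information above, then produces a nonempty Zariski-open $V\subset{\bf L}$ over which every fibre of $p$ consists of the same finite number of reduced, non-degenerate points. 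Setting ${\bf R}={\bf L}\setminus V$ yields the desired proper algebraic subvariety and defines the integer $[L_1,\dots,L_n]$ unambiguously.

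The main obstacle is the bookkeeping between $X$ and the smooth model $X'$: one must verify that a transverse intersection point of the $\{f_i=0\}$ on $X'$, chosen generically and lying off the $B_i$ and off the exceptional locus of $\rho$, corresponds bijectively to a genuine smooth solution in $U_{\bf L}\setminus Z_{\bf L}\subset X$, and that no solutions on $X$ are lost in this comparison. Once this identification is established, the rest is a standard application of Bertini and constructibility of morphisms of varieties to $X'$, so the conclusion on $X$ follows.
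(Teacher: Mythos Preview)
The paper does not prove this theorem in the text; it simply cites \cite[Proposition~5.7]{Askold-Kiumars-arXiv-2}. So there is no in-paper argument to compare your proposal against.

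Your outline is reasonable and is likely close in spirit to what the cited reference does: pass to a smooth projective birational model, realise each $L_i$ as a subsystem of a complete linear series $|D_i|$, and invoke Bertini for generic transversality. One step, however, does not stand as written. You appeal to ``upper semicontinuity of fibre cardinality for a quasi-finite map'' to get constancy of the number of solutions over an open subset of ${\bf L}$. For a quasi-finite but non-proper morphism this is false in general: points of a fibre can run off the non-complete locus $U_{\bf L}\setminus Z_{\bf L}$ as the parameter moves, so the fibre count of your incidence projection $p:\mathcal I\to {\bf L}$ is neither upper- nor lower-semicontinuous without further input. (Also, $\mathcal I$ need not be irreducible, so the inference ``$\dim\mathcal I=\dim{\bf L}$, hence $p$ is generically finite of a fixed degree'' requires some care.)

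The robust fix is to use precisely the completeness you already arranged: on the smooth projective model $X'$ the total intersection number of the divisors $(f_i)+D_i$ is the numerical invariant $D_1\cdots D_n$, independent of the choice of the $f_i$. What remains is a dimension/Bertini argument showing that for a generic $(f_1,\dots,f_n)$ the contribution to this fixed total coming from the bad set (the base loci, the exceptional locus of $\rho$, the complement of $U_{\bf L}\setminus Z_{\bf L}$) is itself constant (or zero). That replaces the semicontinuity step and makes constancy of the good count automatic. Your identification of the $X$-versus-$X'$ bookkeeping as the main remaining issue is then accurate.
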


The following are immediate corollaries of the definition of the intersection index:
1) $[L_1,\dots,L_n]$ is a symmetric function of
$L_1,\dots,L_n \in {\bf K}_{rat}(X)$, 2) The intersection index
is monotone, (i.e. if $L'_1\subseteq L_1,\dots, L'_n\subseteq L_n$,
then $[L'_1,\dots,L'_n] \leq [L_1,\dots,L_n]$, and 3) The intersection index is non-negative.

The next theorem contains the main properties of the intersection index
(see \cite[Section 5]{Askold-Kiumars-arXiv-2}).
\begin{Th} \label{th-multi-lin-int-index}
1) (Multi-additivity) Let $L_1', L_1'', L_2, \ldots, L_n \in {\bf K}_{rat}(X)$
and put $L_1= L_1'L_1''$. Then
$$[L_1,\dots,L_n]=[L'_1,L_2,\dots,L_n]+[L''_1,L_2,\dots,L_n].$$
2) (Invariance under the completion)
Let $L_1 \in {\bf K}_{rat}(X)$ and let $\overline{L_1}$ be its completion.
Then for any $(n-1)$-tuple $L_2,\dots,L_n\in {\bf K}_{rat}(X)$ we have:
$$[L_1,L_2,\dots,L_n]=[\overline{L_1},L_2,\dots,L_n].$$
\end{Th}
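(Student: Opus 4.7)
Part 2 is a formal algebraic consequence of Part 1. By the characterization of the completion recalled in Section \ref{subsec-int-index}, $L_1 \sim \ol{L_1}$, so there exists $M \in {\bf K}_{rat}(X)$ with $L_1 M = \ol{L_1} M$. Applying the multi-linearity of Part 1 to both products yields
\begin{align*}
[L_1 M, L_2, \ldots, L_n] &= [L_1, L_2, \ldots, L_n] + [M, L_2, \ldots, L_n], \\
[\ol{L_1} M, L_2, \ldots, L_n] &= [\ol{L_1}, L_2, \ldots, L_n] + [M, L_2, \ldots, L_n].
\end{align*}
Since the left-hand sides coincide, cancelling the common summand $[M, L_2, \ldots, L_n]$ gives Part 2. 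The remainder of the plan addresses Part 1.

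For Part 1, the strategy is to test the intersection index on the left-hand side using the element $f_1 := f_1' \cdot f_1'' \in L_1' L_1'' = L_1$, for a generic choice of $(f_1', f_1'', f_2, \ldots, f_n) \in L_1' \times L_1'' \times L_2 \times \cdots \times L_n$. Here ``generic'' means: (a) each of the $n$-tuples $(f_1', f_2, \ldots, f_n)$ and $(f_1'', f_2, \ldots, f_n)$ lies in the Zariski-open subset on which the indices $[L_1', L_2, \ldots, L_n]$ and $[L_1'', L_2, \ldots, L_n]$ are computed as counts of non-degenerate solutions in the appropriate $U \setminus Z$; and (b) the $(n+1)$ functions $f_1', f_1'', f_2, \ldots, f_n$ have no common zero in the non-singular locus of $X$. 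Condition (b) is a codimension-$(n+1)$ condition on the $n$-dimensional variety $X$ and therefore holds generically by a Bertini-type dimension count. The zero locus of $f_1 = f_1' f_1''$ is the union of the zero loci of $f_1'$ and of $f_1''$, and by (b) the resulting contributions to the solution set of $f_2 = \cdots = f_n = 0$ are disjoint. Each contribution is non-degenerate because the Jacobian factor from $f_1'f_1''$ at a simple zero of $f_1'$ (respectively $f_1''$) equals $f_1''$ (respectively $f_1'$) times the Jacobian from $f_1'$ alone (respectively $f_1''$), and $f_1''$ (respectively $f_1'$) is non-zero there by (b). Hence the number of non-degenerate solutions to $(f_1, f_2, \ldots, f_n)$ equals $[L_1', L_2, \ldots, L_n] + [L_1'', L_2, \ldots, L_n]$.

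The principal obstacle is then to verify that this count also equals $[L_1, L_2, \ldots, L_n]$; that is, that the specific element $f_1' f_1'' \in L_1$ is generic enough to compute the intersection index of the product subspace $L_1 = L_1' L_1''$. The image of the multiplication map $L_1' \times L_1'' \to L_1$ is a constructible subset whose Zariski closure need not be open in $L_1$, so the defining property of $[L_1, \ldots, L_n]$ does not apply verbatim. The cleanest resolution is to pass, via Chow's lemma (Lemma \ref{lem-Chow}) and resolution of singularities, to a smooth projective variety $\widetilde{X}$ birational to $X$; on $\widetilde{X}$ each $L_i$ determines a sub-linear-system of Cartier divisors, and after blowing up base loci one identifies $[L_1, \ldots, L_n]$ with the classical intersection number of generic divisors from these systems. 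Multi-linearity then becomes the additivity of zero divisors, $(f_1' f_1'')_0 = (f_1')_0 + (f_1'')_0$, which is standard for intersection of Cartier divisors on a smooth projective variety. An alternative, more direct route is to invoke lower semicontinuity of the number of non-degenerate solutions on a dense Zariski-open subset of $L_1 \times L_2 \times \cdots \times L_n$, combined with the observation that the image of multiplication $L_1' \times L_1'' \to L_1$ linearly spans $L_1$ by definition and therefore cannot be confined to any proper degeneracy subvariety of $L_1$.
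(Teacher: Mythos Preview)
The paper does not give a proof of this theorem; it simply cites \cite[Section~5]{Askold-Kiumars-arXiv-2}, and the introduction remarks that ``all the properties of this generalized intersection index can be deduced from the classical intersection theory of divisors.'' So there is no in-paper argument to compare against directly.

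Your reduction of Part~2 to Part~1 via the equivalence $L_1\sim\ol{L_1}$ and cancellation of $[M,L_2,\ldots,L_n]$ is correct and is the natural way to organize the two statements.

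For Part~1, the first resolution you propose---pass by Chow's lemma and resolution to a smooth projective model, identify $[L_1,\ldots,L_n]$ with a Cartier-divisor intersection number after clearing base loci, and then use the additivity $(f_1'f_1'')_0=(f_1')_0+(f_1'')_0$---is correct and is exactly the route taken in the cited reference. What remains is the (nontrivial but standard) verification that the birational intersection index agrees with the classical one on such a model; that is the content of \cite{Askold-Kiumars-arXiv-2}.

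Your alternative route, however, has a real gap. The assertion that the image of the multiplication map $L_1'\times L_1''\to L_1$ ``cannot be confined to any proper degeneracy subvariety of $L_1$'' because it linearly spans $L_1$ is false: linear spanning says nothing about Zariski density. The image is the linear projection of the affine cone over a Segre variety and has dimension at most $\dim L_1'+\dim L_1''-1$, so whenever $\dim(L_1'L_1'')>\dim L_1'+\dim L_1''-1$ it is a proper closed subvariety of $L_1$. For instance, with $L_1'=L_1''=\langle 1,x,y\rangle$ one has $\dim L_1=6>5$, and indeed $1+x^2+y^2\in L_1$ is not a product of two elements of $L_1'$. Nothing then prevents this image from lying inside the exceptional set ${\bf R}$ of the definition, and the accompanying ``lower semicontinuity'' claim is also not available in the stated generality: on a non-complete $X$ solutions can both merge and escape to the pole/base locus, so the non-degenerate count is not semicontinuous in either direction. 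Drop this alternative and rely on the projective-model argument.
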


Because of the multi-additivity, the intersection index can be extended to the
Grothendieck group ${\bf G}_{rat}(X)$ of the semigroup $\K$.
The Grothendieck group of ${\bf K}_{rat}(X)$ can be considered as
an analogue (for a typically non-complete variety $X$) of the group of Cartier
divisors on a complete variety, and the intersection index on
the Grothendieck group ${\bf G}_{rat}(X)$ as an analogue of the intersection
index of Cartier divisors.

The next proposition relates the self-intersection index of a subspace with the degree of
the image of the Kodaira map. It easily follows from the definition of the intersection index.
\begin{Prop}[Self-intersection index and degree] \label{prop-self-int-deg}
Let $L \in \K$ be a subspace and $\Phi_L: X \ratmap Y_L \subset \p(L^*)$ its
Kodaira map. 1) If $\dim X = \dim Y_L$ then $\Phi_L$ has finite mapping degree $d$ and
$[L, \ldots, L]$ is equal to the degree of the subvariety $Y_L$ (in $\p(L^*)$) multiplied with $d$.
2) If $\dim X > \dim Y_L$ then $[L, \ldots, L] = 0$.
\end{Prop}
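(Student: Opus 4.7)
The plan is to interpret zeros of generic $f_i \in L$ on $X$ as preimages under the Kodaira map of hyperplane sections of $Y_L \subset \p(L^*)$. Recall that each non-zero $f \in L$ defines a hyperplane $H_f = \{\xi \in \p(L^*) \mid \xi(f)=0\}$, and for any $x \in U_{\{L\}} \setminus Z_{\{L\}}$ (where $\Phi_L$ is defined and not all elements of $L$ vanish simultaneously) we have $f(x)=0$ if and only if $\Phi_L(x) \in H_f$. Consequently, the set of common zeros of $(f_1,\ldots,f_n) \in L^n$ lying in $U_{\{L\}}\setminus Z_{\{L\}}$ is exactly $\Phi_L^{-1}(H_{f_1} \cap \cdots \cap H_{f_n})$. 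So the proof amounts to counting the cardinality of this preimage for generic choices.

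For Part 1, assume $\dim Y_L = n$. First I would restrict attention to a Zariski open $V \subset X$ on which $\Phi_L$ is a regular morphism onto its image with finite fibres of constant cardinality $d$ (the mapping degree of $\Phi_L:X \ratmap Y_L$). Then I would invoke the classical Bertini-type statement that for an $n$-dimensional irreducible subvariety $Y_L \subset \p(L^*)$, a generic linear subspace $\Lambda \subset \p(L^*)$ of codimension $n$ intersects $Y_L$ transversally at exactly $\deg(Y_L)$ reduced points, all lying in the smooth locus of $Y_L$ and inside $\Phi_L(V)$. Choosing $(f_1,\ldots,f_n)$ so that $H_{f_1}\cap\cdots\cap H_{f_n} = \Lambda$ is such a subspace, each of the $\deg(Y_L)$ intersection points has exactly $d$ preimages in $V$, giving $d\cdot \deg(Y_L)$ common zeros of $f_1,\ldots,f_n$ in $U_{\{L\}}\setminus Z_{\{L\}}$. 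The transversality of $\Lambda$ with $Y_L$ together with the \'etaleness of $\Phi_L$ on $V$ gives non-vanishing of $df_1\wedge\cdots\wedge df_n$ at each such preimage, so these solutions are non-degenerate. By the definition of the intersection index this yields $[L,\ldots,L] = d \cdot \deg(Y_L)$.

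For Part 2, assume $\dim Y_L < n$. Then a generic linear subspace of codimension $n$ in $\p(L^*)$ is disjoint from $Y_L$ by dimension count, so for generic $(f_1,\ldots,f_n)$ the intersection $H_{f_1}\cap\cdots\cap H_{f_n}\cap Y_L$ is empty, and hence so is $\Phi_L^{-1}(H_{f_1}\cap\cdots\cap H_{f_n})\cap (U_{\{L\}}\setminus Z_{\{L\}})$. Thus $[L,\ldots,L] = 0$.

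The main obstacle is the genericity/transversality step in Part 1: one has to verify that for a generic $n$-tuple $(f_1,\ldots,f_n) \in L^n$ the hyperplanes $H_{f_i}$ cut $Y_L$ in $\deg(Y_L)$ transverse points inside the open set where $\Phi_L$ is \'etale of degree $d$, so that the count matches the intersection index defined via non-degenerate solutions in $U_{\{L\}}\setminus Z_{\{L\}}$. This requires combining Bertini's theorem on linear systems with a standard argument that the locus in $L^n$ where the solutions are non-degenerate and lie outside the exceptional set $Z_{\{L\}}\cup \mathrm{Sing}(X)\cup (\text{indeterminacy locus of }\Phi_L)$ contains the complement of a proper algebraic subvariety ${\bf R}\subset L^n$; once this is done the equality with $d\cdot \deg(Y_L)$ is immediate.
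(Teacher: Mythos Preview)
Your proposal is correct and is exactly the natural expansion of what the paper leaves implicit: the paper offers no detailed proof, stating only that the proposition ``easily follows from the definition of the intersection index,'' and your argument via hyperplane sections of $Y_L$ under the Kodaira map together with Bertini-type genericity is precisely the intended route. Your identification of the genericity/transversality step as the only point requiring care is accurate, and your sketch of how to handle it is sound.
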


\subsection{Newton-Okounkov body and intersection index} \label{sec-main}
We now discuss the relation between the self-intersection index of a subspace of rational functions
and the volume of the Newton-Okounkov body.

Let $X$ be an irreducible $n$-dimensional variety (over $\c$) and
$L \in \K$ a non-zero finite dimensional subspace of rational functions.
We can naturally associate two algebras of integral type to $L$: the
algebra $A_L$ and its integral closure $\ol{A_L}$. (Note that by Theorem \ref{th-A-bar-finite-type},
$\ol{A_L}$ is an algebra of integral type.)

Let $F = \c(X)$. As in Section \ref{subsec-application-valuation},
let $v:F \setminus \{0\} \to \z^n$ be a faithful valuation with one-dimensional leaves,
and $v_t$ its extension to the
polynomial ring $F[t]$. Then $v_t$ associates two convex bodies to the space $L$,
namely $\Delta(A_L)$ and $\Delta(\ol{A_L})$.

Since $A_L \subset \ol{A_L}$ then $\Delta(A_L) \subset \Delta(\ol{A_L})$.
In general $\Delta(\ol{A_L})$ can be strictly bigger than $\Delta(A_L)$ (see Example \ref{ex-A_L-A_L-bar}).

The following two theorems can be considered as far generalizations of the Kushnirenko theorem
in Newton polytope theory and toric geometry. Below $\Vol_n$ denotes the standard Euclidean
measure in $\r^n$.
\begin{Th} \label{th-main-Delta-A_L}
Let $L \in \K$ with the Kodaira map $\Phi_L$.
1) If $\Phi_L$ has finite mapping degree then
$$[L, \ldots, L] = \frac{n! \deg \Phi_L}{\ind(A_L)} \Vol_n(\Delta(A_L)).$$
Otherwise, both $[L, \ldots, L]$ and $\Vol_n(\Delta(A_L))$ are equal to $0$.
2) In particular, if $\Phi_L$ is a birational isomorphism between $X$ and $Y_L$ then
$\deg \Phi_L = \ind(A_L) = 1$ and we obtain
$$[L, \ldots, L] = n! \Vol_n(\Delta(A_L)).$$
3) The correspondence $L \mapsto \Delta(A_L)$ is superadditive, i.e.
if $L_1, L_2$ are finite dimensional subspaces of rational functions. Then
$$\Delta(A_{L_1}) \oplus_t \Delta(A_{L_2}) \subset \Delta(A_{L_1L_2}).$$
(In other words, $\Delta_0(A_{L_1}) + \Delta_0(A_{L_2}) \subset \Delta_0(A_{L_1L_2}),$
where $\Delta_0$ is the Newton-Okounkov body projected to the level $0$ and $+$ is the
Minkowski sum.)
\end{Th}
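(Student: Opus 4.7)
The plan is to assemble the three parts from results already proved earlier in the paper. For part 1), the key identity is a composition of (a) Proposition \ref{prop-self-int-deg}, which relates the self-intersection index $[L,\ldots,L]$ to $(\deg \Phi_L)(\deg Y_L)$ when $\Phi_L$ has finite mapping degree, and (b) Corollary \ref{cor-deg-Y_L-vol-Delta} (itself a consequence of Hilbert's theorem combined with Theorem \ref{th-growth-alg-Newton-Okounkov-body}), which expresses $\deg Y_L$ as $q!\Vol_q(\Delta(A_L))/\ind(A_L)$ with $q = \dim Y_L$. Concretely, when $\Phi_L$ has finite mapping degree, one has $\dim Y_L = n = \dim X$, so $q = n$, and multiplying gives
\[
[L,\ldots,L] \;=\; (\deg \Phi_L)(\deg Y_L) \;=\; \frac{n!\,\deg\Phi_L}{\ind(A_L)}\Vol_n(\Delta(A_L)).
\]
When $\Phi_L$ does not have finite mapping degree, $\dim Y_L < n$, so Proposition \ref{prop-self-int-deg}(2) forces $[L,\ldots,L]=0$, while Theorem \ref{th-growth-alg-Newton-Okounkov-body} (applied with $q = \dim Y_L < n$) gives $\dim \Delta(A_L) = q < n$, hence $\Vol_n(\Delta(A_L)) = 0$.

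For part 2), substituting $\deg \Phi_L = 1$ into the formula of part 1) reduces the claim to showing $\ind(A_L) = 1$. The argument is: if $\Phi_L$ is birational, then the subfield $P(L) \subseteq F$ consisting of ratios $f/g$ with $f,g \in L^k$ for some $k$ coincides with $F$, because the function field of $Y_L \subset \p(L^*)$ is generated by ratios of elements of $L^k$ and equals $F$ via the birational isomorphism. Now $G_0(A_L)$ is, after identifying $\z^n \times \{0\}$ with $\z^n$, the subgroup generated by the differences $v(f) - v(g) = v(f/g)$ for $f,g \in L^k$, $f,g \neq 0$, and all $k \geq 1$. Since $v$ is faithful on $F$ and $P(L) = F$, these differences exhaust $\z^n$, so $G_0(A_L) = \z^n$ and $\ind(A_L) = 1$.

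Part 3) is essentially immediate: Proposition \ref{prop-5.2}(2) identifies $A_{L_1L_2}$ with the componentwise product $A_{L_1}A_{L_2}$, and then the superadditivity Proposition \ref{prop-superadd-alg} gives $\Delta(A_{L_1}) \oplus_t \Delta(A_{L_2}) \subset \Delta(A_{L_1L_2})$. The hypothesis $m(A_{L_i}) = 1$ required by that proposition is automatic, since $L_i^k \neq 0$ for every $k \geq 0$, so every level of $S(A_{L_i})$ is non-empty.

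The one non-routine point is the claim $P(L) = F$ used in part 2); it requires the standard but not entirely trivial fact that the rational function field of the projective variety $Y_L \subset \p(L^*)$ is generated by ratios of homogeneous coordinate functions of a common degree, and that these lift back to elements of $L^k$ along the surjection $L^k \twoheadrightarrow \c[Y_L]_k$ coming from the Kodaira embedding. Everything else is a direct bookkeeping exercise combining previously established theorems.
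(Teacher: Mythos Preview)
Your proof is correct and follows essentially the same approach as the paper's own proof: part 1) is deduced from Proposition \ref{prop-self-int-deg} together with Corollary \ref{cor-deg-Y_L-vol-Delta}; part 2) is obtained by observing that birationality of $\Phi_L$ forces $P(L)=F$ and then using faithfulness of $v$ to conclude $G_0(A_L)=\z^n$; and part 3) combines $A_{L_1L_2}=A_{L_1}A_{L_2}$ with Proposition \ref{prop-superadd-alg} and the observation $m(A_{L_i})=1$. Your write-up simply makes explicit a few details (the identification of $G_0(A_L)$ with the group generated by $v(f/g)$, and the function-field-of-$Y_L$ justification for $P(L)=F$) that the paper leaves implicit.
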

\begin{proof}
1) Follows from Proposition \ref{prop-self-int-deg} and Corollary \ref{cor-deg-Y_L-vol-Delta}.
2) If $\Phi_L$ is a birational isomorphism then it has degree $1$.
On the other hand, from the birational isomorphism of $\Phi_L$ it follows
that the subfield $P(L)$ associated to $L$ coincides with the whole field $\c(X)$.
Since the valuation $v$ is faithful we then conclude that the subgroup $G_0(A_L)$ coincides
with the whole $\z^n$ and hence $\ind(A_L) = 1$. Part 2) then follows from 1).
3) We know that $A_{L_1L_2} = A_{L_1}A_{L_2}$ and $m(A_{L_1}) = m(A_{L_2}) = m(A_{L_1L_2}) = 1$.
Proposition \ref{prop-superadd-alg} now gives the required result.
\end{proof}

%The next theorem shows that by using the Newton-Okounkov body of the integral closure
%$\ol{A_L}$ we can ...
\begin{Th} \label{th-main-Delta-bar-A_L}
1) We have:
$$[L, \ldots, L] = n!\Vol_n(\Delta(\ol{A_L)}).$$
2) The correspondence $L \mapsto \Delta(\ol{A_L})$ is superadditive, i.e.
if $L_1, L_2$ are finite dimensional subspaces of rational functions. Then
$$\Delta(\ol{A_{L_1}}) \oplus_t \Delta(\ol{A_{L_2}}) \subset \Delta(\ol{A_{L_1L_2}}).$$
(In other words, $\Delta_0(\ol{A_{L_1}}) + \Delta_0(\ol{A_{L_2}}) \subset \Delta_0(\ol{A_{L_1L_2}})$.)
\end{Th}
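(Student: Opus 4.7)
For Part 2, the plan combines Proposition \ref{prop-superadd-alg} with the inclusion of algebras $\ol{A_{L_1}}\cdot \ol{A_{L_2}}\subseteq \ol{A_{L_1L_2}}$ (componentwise product). Both $\ol{A_{L_1}}$ and $\ol{A_{L_2}}$ are of integral type with exponent $1$ by Theorem \ref{th-A-bar-finite-type} and Corollary \ref{cor-m(A)=1}, so Proposition \ref{prop-superadd-alg} yields $\Delta(\ol{A_{L_1}})\oplus_t \Delta(\ol{A_{L_2}})\subseteq \Delta(\ol{A_{L_1}}\cdot \ol{A_{L_2}})$. Monotonicity of $\Delta$ under algebra inclusion then reduces the claim to the graded statement $\ol{L_1^k}\cdot \ol{L_2^k}\subseteq \ol{L_1^kL_2^k}$, which is the subspace analogue of $\bar I\bar J\subseteq \overline{IJ}$ for integral closures of ideals; it is proved by the standard finite-module argument applied to the $\k$-module spanned by $f^ig^j$ (with $f\in \ol{L_1^k}$, $g\in \ol{L_2^k}$ and $i,j$ in appropriate ranges), noting that a product of two elements integral over subalgebras $A_{L_i}$ of a common ambient ring $A_{L_1L_2}$ is integral over $A_{L_1L_2}$.

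For Part 1, the plan is to apply Theorem \ref{th-main-Delta-A_L}(1) to the subspaces $\ol{L^p}$ for $p$ large, and transport the resulting identity to one about $\ol{A_L}$ via Theorem \ref{th-5.16}, which applies since $\ol{A_L}$ is of integral type (Theorem \ref{th-A-bar-finite-type}). By multilinearity and invariance under completion of the intersection index (Theorem \ref{th-multi-lin-int-index}), $p^n[L,\ldots,L]=[\ol{L^p},\ldots,\ol{L^p}]$ for every $p\geq 1$. The degenerate case $\dim \Delta(\ol{A_L})<n$ gives zero on both sides (via Hilbert's theorem and Proposition \ref{prop-self-int-deg}(2)), so assume $\dim \Delta(\ol{A_L})=n$. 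Under the identification $\widehat{(\ol{A_L})}_p=A_{\ol{L^p}}$, Theorem \ref{th-5.16} gives, for $p$ large,
$$\frac{\Vol_n(\Delta(A_{\ol{L^p}}))}{\ind(A_{\ol{L^p}})}=p^n\cdot\frac{\Vol_n(\Delta(\ol{A_L}))}{\ind(\ol{A_L})}.$$
Substituting this into Theorem \ref{th-main-Delta-A_L}(1) applied to $\ol{L^p}$ and dividing by $p^n$ reduces Part 1 to the identity $\deg(\Phi_{\ol{L^p}})=\ind(\ol{A_L})$ for $p$ sufficiently large.

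This identity is the main obstacle. The plan is as follows. Since $\ol{A_L}$ is a finitely generated $A_L$-module, Proposition \ref{Prop-finite-module} implies $(\ol{L^p})^k=\ol{L^{pk}}$ for $p$ large and all $k\geq 1$, so the subfield $P(\ol{L^p})\subseteq F$ stabilizes to $K:=P(\ol{A_L})$ and $\deg(\Phi_{\ol{L^p}})=[F:K]$. Next, a denominator-clearing argument shows $K$ equals the algebraic closure of $P(L)$ in $F$: one inclusion is immediate, and conversely if $f\in F$ satisfies $c_mf^m+\cdots+c_0=0$ with $c_i\in L^N$ and $c_m\neq 0$, the substitution $h:=c_mf$ produces a monic equation for $h$ whose $i$-th coefficient $c_m^{i-1}c_{m-i}$ lies in $L^{Ni}$, so $h\in\ol{L^N}$ and $f=h/c_m\in P(\ol{L^N})\subseteq K$. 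Finally, since $v$ has one-dimensional leaves on $F$, Proposition \ref{prop-preval-restriction} guarantees $v|_K$ has one-dimensional leaves too, so its residue field equals $\k$; the classical fundamental identity for valuations, together with the uniqueness of the extension of $v|_K$ to $F$ (which holds because $v$ has rank equal to the transcendence degree of $F$), gives $[F:K]=[\z^n:v(K^*)]=\ind(\ol{A_L})$, completing the proof.
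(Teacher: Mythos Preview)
Your Part 2 is exactly the paper's proof: the inclusion $\ol{A_{L_1}}\,\ol{A_{L_2}}\subset \ol{A_{L_1L_2}}$ (equivalently $\ol{L}\,\ol{M}\subset\ol{LM}$) together with Proposition \ref{prop-superadd-alg}.

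For Part 1, your overall architecture matches the paper's: dispose of the degenerate case, then pass to $\ol{L^p}$ and combine Theorem \ref{th-multi-lin-int-index} with the asymptotic information from Theorem \ref{th-5.16}. Your denominator-clearing argument is also precisely the paper's Lemma \ref{lem-Lp-bar}. The divergence comes after that. You prove that $K=P(\ol{L^p})$ is the algebraic closure of $P(L)$ in $F$, and then invoke the fundamental identity for valuations to deduce $[F:K]=[\z^n:v(K^*)]=\ind(\ol{A_L})$. But in the non-degenerate case you have already assumed $\dim\Delta(\ol{A_L})=n$, which (as you use in the degenerate case) is equivalent to $\dim Y_L=n$, i.e.\ $F/P(L)$ is a finite extension. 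Hence the algebraic closure of $P(L)$ in $F$ is $F$ itself, so $K=F$. This immediately gives $\deg\Phi_{\ol{L^p}}=[F:K]=1$ and $\ind(\ol{A_L})=[\z^n:v(F^*)]=1$, with no valuation-theoretic machinery needed. That is exactly how the paper concludes.

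Your detour through the fundamental identity is therefore unnecessary, and as stated it is also not fully justified: the equality $[F:K]=ef$ (rather than merely $\sum e_if_i\le [F:K]$) requires the extension to be defectless, and your appeal to ``rank equals transcendence degree'' for uniqueness of the extension is a nontrivial fact about Abhyankar valuations that you do not prove. None of this matters once you notice $K=F$, but it would be a genuine gap if $K$ could be a proper subfield.
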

We need the following lemma.
\begin{Lem} \label{lem-Lp-bar}
Let $L$ be a subspace of rational functions. Suppose $\dim Y_L = n$ i.e. the Kodaira map $\Phi_L$
has finite mapping degree. Then there exists $N>0$ such that the following holds:
for any $p>N$ the subfield associated to the completion $\ol{L^p}$ coincides with the whole $\c(X)$.
\end{Lem}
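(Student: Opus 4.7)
The plan is to reduce the statement to the primitive element theorem and then clear denominators. Since $\dim Y_L = n = \dim X$, the Kodaira map $\Phi_L\colon X \ratmap Y_L$ is generically finite, so the function field $\c(X)$ is a finite algebraic extension of the subfield $P(L) = \c(Y_L)$. Because we are in characteristic zero this extension is separable, so I will pick a primitive element $\alpha \in \c(X)$ with $\c(X) = P(L)(\alpha)$, and let $\alpha^d + b_1 \alpha^{d-1} + \cdots + b_d = 0$ be its minimal polynomial over $P(L)$, with $b_i \in P(L)$.

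The first task will be to construct a single integer $K$ such that $\alpha \in P(\ol{L^K})$. Each coefficient $b_i$ is a ratio of elements of some power $L^{k_i}$; by multiplying through by the denominators of the remaining $b_j$, I can arrange $b_i = c_i/e$ with $c_1,\ldots,c_d,e \in L^K$ for a single integer $K \geq 1$ and $e \neq 0$. Substituting $u = e\alpha$ into the minimal polynomial and multiplying by $e^{d-1}$ then yields a monic equation for $u$ whose coefficient at $u^{d-i}$ is $c_i e^{i-1} \in L^K \cdot L^{K(i-1)} = (L^K)^i$. By Definition \ref{def-int-over-L}, this exhibits $u$ as integral over $L^K$, hence $u \in \ol{L^K}$. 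Since also $e \in L^K \subset \ol{L^K}$, it follows that $\alpha = u/e \in P(\ol{L^K})$; combined with the obvious inclusion $P(L) = P(L^K) \subset P(\ol{L^K})$, this yields $\c(X) = P(L)(\alpha) \subset P(\ol{L^K})$, while the reverse inclusion is trivial.

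The final step will be to bootstrap from $p = K$ to every $p \geq K$, so that taking $N = K-1$ does the job. Fix any such $p$ and pick a nonzero $h \in L^{p-K}$. Writing the integrality equation of $u$ over $L^K$ as $u^d + a_1 u^{d-1} + \cdots + a_d = 0$ with $a_i \in (L^K)^i$, I will multiply through by $h^d$ and regroup to obtain
$$(hu)^d + (h a_1)(hu)^{d-1} + (h^2 a_2)(hu)^{d-2} + \cdots + (h^d a_d) = 0,$$
whose $i$-th coefficient $h^i a_i$ lies in $L^{(p-K)i} \cdot L^{Ki} = (L^p)^i$. Therefore $hu \in \ol{L^p}$ and $he \in L^p \subset \ol{L^p}$, so $\alpha = u/e = (hu)/(he)$ belongs to $P(\ol{L^p})$, giving $P(\ol{L^p}) \supset P(L)(\alpha) = \c(X)$. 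I do not anticipate any serious obstacle: the denominator-clearing step and the two integrality manipulations are all routine once the primitive element and its minimal polynomial are in hand.
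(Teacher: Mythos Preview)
Your proof is correct and follows essentially the same strategy as the paper: both arguments exploit that $\c(X)/P(L)$ is a finite extension, clear denominators to land the coefficients of an algebraic relation inside a single power $L^K$, apply the standard ``multiply by the leading coefficient'' trick to obtain a monic integrality equation, and then bootstrap from $K$ to all $p\ge K$. The only cosmetic differences are that you invoke the primitive element theorem to work with a single generator $\alpha$ (the paper instead treats each element of a basis for $\c(X)$ over $P(L)$), and your bootstrap re-runs the integrality computation with an auxiliary factor $h\in L^{p-K}$, whereas the paper phrases the same step as the inclusion $\ol{L^N}L^{p-N}\subset\ol{L^p}$ and hence $P(\ol{L^N})\subset P(\ol{L^p})$.
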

\begin{proof}
Let $E =  P(L) \cong \c(Y_L)$ and $F = \c(X)$.
The extension $F/E$ is a finite extension because $\dim Y_L=n$.
Clearly for any $p>0$, $P(\ol{L^p}) \subset F$. We will show that
there is $N>0$ such that for $p>N$ we have $F \subset P(\ol{L^p})$.
Let $f_1, \ldots, f_r$ be a basis for $F/E$. Let $f \in \{f_1, \ldots
f_r\}$. Then $f$ satisfies an equation
\begin{equation} \label{equ-lem-Lp-bar}
a_0f^m + \cdots + a_m = 0,
\end{equation}
where $a_i = P_i/Q_i$
with $P_i, Q_i \in L^{d_i}$ for some $d_i > 0$. Let $N_f = \sum_{i=0}^m d_i$
and put
$Q = Q_0 \cdots Q_m$. Then $Q \in L^{N_f}$. Multiplying
(\ref{equ-lem-Lp-bar}) with $Q$
we have $b_0 f^m + \cdots + b_m = 0,$ where $b_i = P_iQ/Q_i \in L^{N_f}$.
Then multiplying with $b_0^{m-1}$
gives $$(b_0f)^m + b_{1}(b_0f)^{m-1}+ \cdots + (b_0^{m-1})b_m = 0,$$ which
shows that
$b_0f$ is integral over $L^{N_f}$. Now
$f \in P(\ol{L^{N_f}})$ because $f = b_0f/b_0$ and $b_0 \in L^{N_f}$.
Let $N$ be the maximum of the $N_f$ for $f \in \{f_1, \ldots, f_r\}$.
It follows that
$F \subset P(\ol{L^N})$. It is easy to see that for $p>N$ we have
$\ol{L^N}L^{p-N} \subset \ol{L^p}$ and hence $P(\ol{L^N}) \subset P(\ol{L^p})$. Thus
$F = P(\ol{L^p})$ as required.
\end{proof}

\begin{proof}[Proof of Theorem \ref{th-main-Delta-bar-A_L}]
1) Suppose $\dim Y_L < n$. Then from the definition of the self-intersection index it follows that
$[L, \ldots, L] = 0$. But we know that the $n$-dimensional volume of $\Delta(\ol{A_L})$ is $0$ because $\dim \Delta(\ol{A_L})$ equals the dimension of $Y_L$ and hence is less than $n$. This proves the theorem
in this case. Now suppose $\dim Y_L = n$. Then the Kodaira map $\Phi_L$ has finite mapping degree.
By Lemma \ref{lem-Lp-bar}, the Kodaira map $\Phi_{\ol{L^p}}$ is a
birational isomorphism onto its image. Thus the self-intersection index of
$\ol{L^p}$ is equal to the degree of the variety $Y_{\ol{L^p}}$.
By the main properties of intersection index (Theorem \ref{th-multi-lin-int-index}) we have:
$$[\ol{L^p}, \ldots, \ol{L^p}] = [L^p, \ldots, L^p] = p^n[L, \ldots, L].$$
On the other hand, by Theorem \ref{th-5.16},
$$\deg Y_{\ol{L^p}} = \frac{p^n}{\ind(\ol{A_L})} \Vol_n(\Delta(\ol{A_L})).$$
But since the field $P(\ol{L^p})$ coincides with $\c(X)$ we have:
$\ind(\ol{A_L}) = 1$ which finishes the proof of 1).
To prove 2) first note that we have the inclusion:
$$\ol{A_{L_1}}~\ol{A_{L_2}} \subset \ol{A_{L_1L_2}},$$
(this follows from the fact that for any two subspaces $L, M$ we have
$\ol{L}~\ol{M} \subset \ol{LM}$).
Secondly, since $m(\ol{A_{L_1}}) = m(\ol{A_{L_2}}) = 1$ by Proposition \ref{prop-superadd-alg} we know:
$$\Delta(\ol{A_{L_1}}) \oplus_t \Delta(\ol{A_{L_2}}) \subset \Delta(\ol{A_{L_1}}~\ol{A_{L_2}}) \subset \Delta(\ol{A_{L_1L_2}}).$$ The theorem is proved.
\end{proof}

\begin{Ex} \label{ex-A_L-A_L-bar}
{Let $X$ be the affine line $\c$ and  let $z$ denote the coordinate function on it}. Let $L =
\span\{1, z^2\}$. Clearly $[L] = 2$.
Now let $v: \c(X) \to \z$ be the order of vanishing at the point $1 \in X$.
Then $S(A_L) = \z_{\geq 0}$ and hence $\ind(A_L)=1$. This shows that $\deg(\Phi_L) \gneqq \ind(A_L)$.
Also it is easy to see that $\Delta(A_L)$ is the line segment $[0,1]$.
On the other hand one sees that $\Delta(\ol{A_L})$ is the line segment $[0,2]$.
\end{Ex}

%\begin{Ex}[{\bf Strict superadditivity}]
%\end{Ex}

Next, let us see that the well-known Bernstein-Kushnirenko theorem follows from Theorem \ref{th-main-Delta-A_L}.
For this, we take the variety $X$ to be $(\c^*)^n$ and the
subspace $L$ a subspace spanned by Laurent monomials.

We identify the lattice $\z^n$ with the {\it Laurent monomials} in
$(\c^*)^n$: to each integral point $a=(a_1, \ldots, a_n) \in \z^n$,
we associate the monomial $x^a=x_1^{a_1}\dots
x_n^{a_n}$ where $x=(x_1, \ldots, x_n)$.
A {\it Laurent polynomial} $P(x) =\sum_a c_a x^a$ is a finite
linear combination of Laurent monomials with complex coefficients.
The {\it support} $\supp(P)$ of a Laurent polynomial $P$, is the set
of exponents $a$ for which $c_a \neq 0$. We denote the convex hull
of a finite set $I \subset \z^n$ by $\Delta_I \subset \r^n$.
The {\it Newton polytope} $\Delta (P)$ of a Laurent polynomial $P$
is the convex hull $\Delta_{\supp(P)}$ of its support. With each
finite set $I \subset \z^n$ one associates the linear space
$L(I)$ of Laurent polynomials $P$ with $\supp(P) \subset I$.

%By definition the self-intersection index $[L(I)\dots, L(I)]$ of the space $L(I)$ is the number of
%solutions in $(\c^*)^n $ of a {general} system of equations
%$P_1=\dots=P_n=0$, with $P_1, \ldots, P_n \in L(I)$.

\begin{Th}[Kushnirenko] \label{th-Kush}
The number of solutions in $(\c^*)^n $ of a {general} system of Laurent polynomial
equations $P_1=\dots=P_n=0$, with $P_1, \ldots, P_n \in L(I)$ is equal to
$n! \Vol(\Delta_I)$, i.e. $$[L(I), \dots, L(I)]=n! \Vol(\Delta_I).$$
\end{Th}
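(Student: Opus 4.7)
The plan is to deduce Kushnirenko's theorem directly from Theorem \ref{th-main-Delta-A_L}(1) applied to the subspace $L = L(I)$ on $X = (\c^*)^n$. The idea is to equip $\c(x_1,\dots,x_n)$ with a faithful $\z^n$-valued valuation whose resulting Newton-Okounkov body coincides with the classical Newton polytope $\Delta_I$, and then to observe that the two correction factors $\deg(\Phi_L)$ and $\ind(A_L)$ appearing in that theorem are equal and hence cancel out.

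For the valuation, I would fix any total order on $\z^n$ restricting to a well-ordering on $\z^n_{\geq 0}$ (for instance, lexicographic), and take $v$ to be the associated Gr\"obner valuation extended to $\c(x_1,\dots,x_n)$; it is faithful with one-dimensional leaves and satisfies $v(x^a) = a$ for every $a \in \z^n$. Since $L^k$ is the $\c$-linear span of the monomials $\{x^c : c \in kI\}$, where $kI$ denotes the $k$-fold Minkowski sum $I+\cdots+I$, one sees at once that $v(L^k\setminus\{0\}) = kI$ (the inclusion $\subset$ because the support of any nonzero element lies in $kI$, the reverse because $v(x^c)=c$). Assembling these level sets, the cone $\Con(A_L)$ is the cone over $\Delta_I$ placed at height one, $m(A_L)=1$, and the Newton-Okounkov body $\Delta(A_L)$ is exactly $\Delta_I$.

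Next, I would match $\deg(\Phi_L)$ with $\ind(A_L)$. A direct computation from the definitions identifies $G_0(A_L)$ with the sublattice $\Lambda_I := \langle a-a' : a, a' \in I\rangle_\z \subset \z^n$, giving $\ind(A_L) = [\z^n : \Lambda_I] =: d$. On the Kodaira side, after normalizing the homogeneous coordinates on $\p(L^*)$ by a monomial $x^{a_0}$ with $a_0 \in I$, the image $Y_L$ has function field $\c(x^b : b \in \Lambda_I)$, which sits inside $\c(x_1,\dots,x_n)$ as a subfield of index exactly $d$; hence $\deg(\Phi_L) = d$. Plugging these into Theorem \ref{th-main-Delta-A_L}(1), and using that the integral volume attached to the standard lattice $\z^n$ agrees with the ordinary Euclidean volume, yields
$$[L(I),\dots,L(I)] = \frac{n!\,\deg(\Phi_L)}{\ind(A_L)}\,\Vol_n(\Delta(A_L)) = n!\,\Vol(\Delta_I),$$
as required.

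The main obstacle I anticipate is the identity $\deg(\Phi_L) = [\z^n : \Lambda_I]$ for monomial maps of tori; it is standard but must be either cited or verified via the function-field computation sketched above. An alternative route is to translate $I$ so that $0 \in I$, choose a basis of $\Lambda_I$, and pass to a finite monomial cover $(\c^*)^n \to (\c^*)^n$ of degree $d$ that trivializes the index, reducing at once to the birational case of Theorem \ref{th-main-Delta-A_L}(2); but the direct approach above feels cleaner and completely elementary.
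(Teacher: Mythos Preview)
Your proposal is correct and follows essentially the same route as the paper's own proof: both use the Gr\"obner valuation to identify $\Delta(A_{L(I)})$ with $\Delta_I$, compute $G_0(A_{L(I)})$ as the difference lattice $\Lambda_I$, verify that the mapping degree of the monomial Kodaira map equals $[\z^n:\Lambda_I]=\ind(A_{L(I)})$, and then invoke Theorem~\ref{th-main-Delta-A_L}(1). Your write-up is in fact slightly more careful than the paper's in justifying the equality $\deg(\Phi_L)=[\z^n:\Lambda_I]$, which the paper simply asserts.
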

\begin{proof}
If $I$, $J$ are finite subsets in $\z^n$ then $L_IL_J = L_{I+J}$. Consider the graded algebra
$A_{L(I)} \subset F[t]$ where $F$ is the field of rational functions on $(\c^*)^n$.
Take any valuation $v$ on $F$ coming from the Gr\"{o}bner valuation on the field of fractions of the
algebra of formal power series $\c[[x_1, \ldots, x_n]]$ (see the paragraph before Example \ref{ex-valuation-rational-function-alg-var}). From the definition it is easy to see that
$v(L(I) \setminus \{0\}) = I$ and more generally $v(L(I)^k \setminus \{0\}) = k*I$, where
$k*I$ is the sum of $k$ copies of the set $I$. Let $S = S(A_{L(I)})$ be the semigroup
associated to the algebra $A_{L(I)}$. Then the cone $\Con(S)$ is the cone in $\r^{n+1}$ over
$\Delta_I \times \{1\}$ and the group $G_0(S)$ is generated by the differences $a-b$,
$a,b \in I$. Let $I = \{\alpha_1, \ldots, \alpha_r\}$. Then $\{x^{\alpha_1}, \ldots, x^{\alpha_r}\}$ is a basis
for $L(I)$ and one verifies that, in the dual basis for $L(I)^*$, the Kodaira map is given by
$\Phi_{L(I)}(x) = (x^{\alpha_1}: \cdots : x^{\alpha_r})$. From this it follows that the mapping
degree of $\Phi_{L(I)}$ is equal to the index of the subgroup $G_0(S)$, i.e. $\ind(A_{L(I)})$.
By Theorem \ref{th-main-Delta-A_L} we then have:
$$[L(I), \ldots, L(I)] = n! \frac{\deg \Phi_{L(I)}}{\ind(A_{L(I)})}
\Vol(\Delta_I) = n! \Vol(\Delta_I),$$ which proves the theorem.
\end{proof}

The Bernstein theorem computes the intersection index of
an $n$-tuple of subspaces of Laurent polynomials in terms of the mixed volume of
their Newton polytopes.
\begin{Th}[Bernstein] \label{th-Bernstein}
Let $I_1, \ldots, I_n \subset \z^n$ be finite subsets.
The number of solutions in $(\c^*)^n $ of a {general} system of Laurent polynomial
equations $P_1=\dots=P_n=0$, where $P_i \in L(I_i)$, is equal to
$n!V(\Delta_{I_1}, \ldots, \Delta_{I_n})$, i.e.
$$[L(I_1), \dots, L(I_n)]=n! V(\Delta_{I_1}, \ldots, \Delta_{I_n}),$$
(where, as before, $V$ denotes the mixed volume of convex bodies in $\r^n$).
\end{Th}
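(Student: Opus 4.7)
The plan is to deduce Bernstein from Kushnirenko (Theorem \ref{th-Kush}) together with the multi-linearity of the intersection index (Theorem \ref{th-multi-lin-int-index}), by a polarization argument. The idea is that both sides of the desired identity are the polarizations of the same homogeneous polynomial of degree $n$, so if they agree on the diagonal (which is exactly Kushnirenko), they must agree everywhere.

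More precisely, I would first record the two elementary compatibilities between the Minkowski sum of sets and the algebraic operations involved. For finite subsets $I, J \subset \z^n$, the subspace $L(I) L(J)$ is spanned by products $x^a x^b = x^{a+b}$ with $a \in I$, $b \in J$, hence $L(I)L(J) = L(I+J)$, where $I+J$ is the Minkowski sum of sets. Also, the convex hull of the Minkowski sum of finite sets equals the Minkowski sum of the convex hulls, so $\Delta_{I+J} = \Delta_I + \Delta_J$. Then for positive integers $\lambda_1, \ldots, \lambda_n$, putting $I(\lambda) = \lambda_1 * I_1 + \cdots + \lambda_n * I_n$ gives $L(I(\lambda)) = L(I_1)^{\lambda_1} \cdots L(I_n)^{\lambda_n}$ and $\Delta_{I(\lambda)} = \lambda_1 \Delta_{I_1} + \cdots + \lambda_n \Delta_{I_n}$.

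Next I would apply Kushnirenko (Theorem \ref{th-Kush}) to $I(\lambda)$, obtaining
$$[L(I_1)^{\lambda_1} \cdots L(I_n)^{\lambda_n}, \ldots, L(I_1)^{\lambda_1} \cdots L(I_n)^{\lambda_n}] = n! \, \Vol(\lambda_1 \Delta_{I_1} + \cdots + \lambda_n \Delta_{I_n}).$$
Expanding the left-hand side using the multi-linearity and symmetry of the intersection index (Theorem \ref{th-multi-lin-int-index}) gives
$$\sum_{k_1+\cdots+k_n=n} \binom{n}{k_1,\ldots,k_n} \lambda_1^{k_1} \cdots \lambda_n^{k_n} \, [k_1 * L(I_1), \ldots, k_n * L(I_n)],$$
while expanding the right-hand side by the polynomiality of the volume yields
$$n! \sum_{k_1+\cdots+k_n=n} \binom{n}{k_1,\ldots,k_n} \lambda_1^{k_1} \cdots \lambda_n^{k_n} \, V(k_1 * \Delta_{I_1}, \ldots, k_n * \Delta_{I_n}).$$
Both expressions are polynomials in $\lambda_1, \ldots, \lambda_n$ which agree on all $n$-tuples of positive integers, hence agree as polynomials. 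Comparing the coefficient of the monomial $\lambda_1 \lambda_2 \cdots \lambda_n$ on both sides gives $[L(I_1), \ldots, L(I_n)] = n! \, V(\Delta_{I_1}, \ldots, \Delta_{I_n})$, which is the Bernstein theorem.

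There is no real obstacle here once Kushnirenko and the multi-linearity are in hand; the only point requiring any care is checking that the formal expansions on the two sides are indexed by the same multinomial combinatorics, which is immediate from the symmetry of both the intersection index and the mixed volume and from the characterization of mixed volume as the polarization of the volume polynomial.
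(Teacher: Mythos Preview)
Your proposal is correct and follows essentially the same approach as the paper: the paper's proof simply states that Bernstein follows from Ku\v{s}nirenko, multi-linearity of the intersection index, and the compatibilities $L(I)L(J)=L(I+J)$ and $\Delta_I+\Delta_J=\Delta_{I+J}$, which is exactly the polarization argument you spell out in detail.
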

\begin{proof}
The Bernstein theorem readily follows from the multi-additivity of the intersection index, Theorem \ref{th-Kush}
(the Kushnirenko theorem) and the observation that for any two finite subsets
$I, J \subset \z^n$ we have $L(I+J) = L(I)L(J)$ and $\Delta_I + \Delta_J = \Delta_{I+J}$.
\end{proof}
The above proofs of the Bernstein and Kushnirenko theorems are in fact very close to
the ones in \cite{Askold-Hilbert-poly}.

\subsection{Proof of the Alexandrov-Fenchel inequality and its algebraic analogue}
\label{sec-Alexandrov-Fenchel}
Finally in this section, using the notion of Newton-Okounkov body,
we prove an algebraic analogue of the Alexandrov-Fenchel
inequality. From this we deduce the classical Alexandrov-Fenchel
inequality in convex geometry.

As be before $X$ is an $n$-dimensional irreducible complex algebraic variety.
The self-intersection index enjoys the following analogue of the Brunn-Minkowski inequality.
\begin{Cor} \label{cor-Brunn-Mink-int-index}
Let $L_1, L_2$ be finite dimensional subspaces in $\c(X)$ and $L_3 = L_1L_2$.
Then
$$[L_1, \ldots L_1]^{1/n} + [L_2, \ldots, L_2]^{1/n} \leq [L_3, \ldots, L_3]^{1/n}.$$
\end{Cor}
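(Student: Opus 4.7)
The plan is to derive this corollary by combining Theorem \ref{th-main-Delta-bar-A_L} with the classical Brunn-Minkowski inequality (Theorem \ref{th-Brunn-Mink}) applied to the Newton-Okounkov bodies of the integral closures $\ol{A_{L_i}}$.

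First, fix a faithful $\z^n$-valued valuation $v$ on $\c(X)$ (with one-dimensional leaves) and its extension $v_t$ to $\c(X)[t]$, so that the bodies $\Delta(\ol{A_{L_i}})$ are defined for $i=1,2,3$. By Theorem \ref{th-main-Delta-bar-A_L}(1), for each $i$ we have
$$[L_i, \ldots, L_i] = n!\,\Vol_n(\Delta(\ol{A_{L_i}})) = n!\,\Vol_n(\Delta_0(\ol{A_{L_i}})),$$
the last equality because $\Delta_0$ is merely the horizontal shift of $\Delta$ from level $1$ to level $0$, which preserves $n$-dimensional volume. Raising to the $1/n$-th power converts the desired inequality into an inequality between $n$-th roots of Euclidean volumes of convex bodies in $\r^n$.

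Next, I would invoke the superadditivity statement in Theorem \ref{th-main-Delta-bar-A_L}(2), which gives the Minkowski-sum inclusion
$$\Delta_0(\ol{A_{L_1}}) + \Delta_0(\ol{A_{L_2}}) \subset \Delta_0(\ol{A_{L_1L_2}}) = \Delta_0(\ol{A_{L_3}}).$$
By monotonicity of $n$-dimensional volume under inclusion, this implies
$$\Vol_n\bigl(\Delta_0(\ol{A_{L_1}}) + \Delta_0(\ol{A_{L_2}})\bigr) \leq \Vol_n(\Delta_0(\ol{A_{L_3}})).$$

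Finally, the classical Brunn-Minkowski inequality (Theorem \ref{th-Brunn-Mink}) applied to $\Delta_0(\ol{A_{L_1}})$ and $\Delta_0(\ol{A_{L_2}})$ yields
$$\Vol_n^{1/n}(\Delta_0(\ol{A_{L_1}})) + \Vol_n^{1/n}(\Delta_0(\ol{A_{L_2}})) \leq \Vol_n^{1/n}\bigl(\Delta_0(\ol{A_{L_1}}) + \Delta_0(\ol{A_{L_2}})\bigr).$$
Chaining these two volume inequalities, multiplying through by $(n!)^{1/n}$, and re-inserting the identity $[L_i, \ldots, L_i] = n!\,\Vol_n(\Delta_0(\ol{A_{L_i}}))$ gives precisely the desired estimate. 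There is no real obstacle here: the entire content is already packed into Theorem \ref{th-main-Delta-bar-A_L} (the geometric interpretation of the self-intersection index and the superadditivity of the Newton-Okounkov body of the integral closure), so the corollary reduces to a two-line combination of that theorem with classical Brunn-Minkowski. The only point requiring mild care is the distinction between $\Delta$ and its level-$0$ projection $\Delta_0$, but since they are related by a translation in $\r^n \times \r$ that does not affect the $n$-dimensional integral volume, this is harmless.
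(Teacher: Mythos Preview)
Your proof is correct and follows essentially the same route as the paper: project the Newton--Okounkov bodies $\Delta(\ol{A_{L_i}})$ to level~$0$, use Theorem~\ref{th-main-Delta-bar-A_L}(1) to identify $[L_i,\ldots,L_i]$ with $n!\Vol_n(\Delta_0(\ol{A_{L_i}}))$, use Theorem~\ref{th-main-Delta-bar-A_L}(2) for the Minkowski-sum inclusion, and finish with classical Brunn--Minkowski. The paper's own proof is exactly this, written in three lines.
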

\begin{proof}
By definition the Newton-Okounkov body $\Delta(A)$ of an algebra $A$ lives at level $1$.
For $i=1,2,3$, let $\Delta_i$ be the Newton-Okounkov body of the algebra $\ol{A_{L_i}}$
projected to the level $0$. Then by Theorem \ref{th-main-Delta-bar-A_L}
we have $[L_i, \ldots, L_i] = n! \Vol_n(\Delta_i)$,
and $\Delta_1 + \Delta_2 \subset \Delta_3$. Now by the classical Brunn-Minkowski
inequality $\Vol^{1/n}_n(\Delta_1) + \Vol^{1/n}_n(\Delta_2) \leq \Vol^{1/n}_n(\Delta_3)$ which
proves the corollary (see also Theorem \ref{th-Brunn-Mink-algebra}).
\end{proof}

Surprisingly the most important case of the above inequality is the $n=2$ case, i.e. when
$X$ is an algebraic surface. As we show next, the general case of the above inequality and many other
inequalities for the intersection index follow from this $n=2$ case and the basic
properties of the intersection index.

\begin{Cor}[A version of the Hodge inequality] \label{cor-Hodge}
Let $X$ be an irreducible algebraic surface and let $L,M$ be non-zero finite dimensional subspaces of
$\c(X)$. Then
$$[L,L][M,M]\leq [L,M]^2.$$
\end{Cor}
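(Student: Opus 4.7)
The plan is to derive this as a direct consequence of the Brunn--Minkowski type inequality for the self-intersection index (Corollary \ref{cor-Brunn-Mink-int-index}) specialized to $n=2$, combined with the multi-linearity of the intersection index (Theorem \ref{th-multi-lin-int-index}). In other words, once one has the algebraic Brunn--Minkowski inequality on a surface, the Hodge-type inequality is a purely formal consequence obtained by squaring.

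In detail, I would apply Corollary \ref{cor-Brunn-Mink-int-index} with $L_1 = L$, $L_2 = M$, and $L_3 = LM$. Since $\dim X = 2$, this gives
\[
[L,L]^{1/2} + [M,M]^{1/2} \leq [LM, LM]^{1/2}.
\]
Squaring both sides yields
\[
[L,L] + 2\sqrt{[L,L]\,[M,M]} + [M,M] \leq [LM, LM].
\]
Next I would expand the right-hand side using multi-linearity of the intersection index (Theorem \ref{th-multi-lin-int-index}(1)):
\[
[LM, LM] = [L,L] + 2[L,M] + [M,M].
\]
Substituting this into the previous inequality and cancelling $[L,L] + [M,M]$ from both sides gives
\[
2\sqrt{[L,L]\,[M,M]} \leq 2[L,M],
\]
and squaring this (both sides are non-negative since the intersection index is non-negative) yields the desired inequality $[L,L][M,M] \leq [L,M]^2$.

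There is no real obstacle here: the proof is an elementary manipulation once Corollary \ref{cor-Brunn-Mink-int-index} is established. The only thing to be mindful of is that $[L,L]$ and $[M,M]$ are non-negative so we may take square roots and square back without reversing inequalities, and that the multi-linearity of the intersection index is formulated in the Grothendieck group $\mathbf{G}_{rat}(X)$ so applying it to $[LM, LM]$ is legitimate. Thus the entire content of the statement lies in the previously proved Brunn--Minkowski inequality for self-intersection indices, which itself rests on Theorem \ref{th-main-Delta-bar-A_L} and the classical Brunn--Minkowski inequality in $\mathbb{R}^n$.
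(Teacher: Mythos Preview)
Your proof is correct and follows essentially the same approach as the paper: apply the Brunn--Minkowski inequality for the self-intersection index (Corollary \ref{cor-Brunn-Mink-int-index}) in the case $n=2$, expand $[LM,LM]$ by multi-linearity, and cancel to obtain the Hodge-type inequality. The paper presents the same computation in a slightly different order (starting from the expanded $[LM,LM]$ and chaining inequalities), but the content is identical.
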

\begin{proof}
From Corollary \ref{cor-Brunn-Mink-int-index}, for $n=2$, we have:
\begin{eqnarray*}
[L,L] +2[L,M] +[M,M] &=& [LM,LM] \cr &\geq& ([L,L]^{1/2}
+[M,M]^{1/2})^2  \cr &\geq& [L,L]+2[L,L]^{1/2}[M,M]^{1/2}+[M,M], \cr
\end{eqnarray*}
which readily implies the claim.
\end{proof}

In other words, Theorem \ref{th-main-Delta-bar-A_L} allowed us to easily reduce the Hodge
inequality above to the generalized isoperimetric inequality. We can now give an easy
proof of the Alexandrov-Fenchel inequality
for the intersection index.

Let us call a subspace $L \in \K$ a {\it very big subspace} if the Kodaira rational map
of $L$ is a birational isomorphism between $X$ and its image. Also we call a subspace
{\it big} if for some $m>0$, the subspace $L^m$ is very big. It is not hard to show that
the product of two big subspaces is again a big subspace and thus the big subspaces form a subsemigroup
of $\K$.

%{\bf This version of Lefschtez theorem is for globally-generated divisors only(?)}
\begin{Th}[A version of the Bertini-Lefschetz theorem] \label{th-Lefschetz}
Let $X$ be a smooth irreducible $n$-dimensional variety
and let  $L_1,\dots,L_k\in \K$, $k<n$, be very big subspaces.
Then there is a Zariski open set ${\bf U}$ in ${\bf
L}=L_1\times\dots\times L_k$ such that for each point  ${\bf
f}= (f_1,\dots,f_k)\in {\bf U}$ the variety $X_{\bf f}$ defined in $X$ by
the system of equations $f_1=\dots=f_k=0$ is smooth and
irreducible.
\end{Th}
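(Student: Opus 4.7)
The plan is to iterate the classical Bertini theorems (on smoothness and on irreducibility of a generic hyperplane section) $k$ times, using the very-bigness of each $L_i$ to translate between the linear system $L_i$ on $X$ and the family of hyperplane sections on the Kodaira image $Y_{L_i}$.

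First, I would set things up as follows. Since each $L_i$ is very big, the Kodaira map $\Phi_{L_i}: X\ratmap Y_{L_i}\subset \p(L_i^*)$ is a birational isomorphism onto its image. Choose a Zariski open $U\subset X$ on which every $\Phi_{L_i}$ is regular and induces a biregular isomorphism onto an open subset $V_i\subset Y_{L_i}$. For $f\in L_i$, the zero set of $f$ inside $U$ is identified, via $\Phi_{L_i}|_U$, with $H_f\cap V_i$, where $H_f\subset \p(L_i^*)$ is the hyperplane associated to $f$. To control the behavior outside $U$, fix a smooth projective compactification $\tilde X$ of $X$ (available in characteristic zero by Hironaka) and perform Bertini on $\tilde X$; irreducibility of $\tilde X_{\mathbf f}$ then forces irreducibility of its nonempty Zariski open subset $X_{\mathbf f}$, and smoothness on $\tilde X$ restricts to smoothness on $X$.

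Next, I would argue by induction on $i=1,\ldots ,k$, producing for a generic choice of $f_1,\ldots ,f_i$ a smooth irreducible subvariety $X_{f_1,\ldots ,f_i}\subset X$ of dimension $n-i$. For the inductive step, observe that $X_{f_1,\ldots ,f_i}$ has codimension $i<n$ in $X$, hence is not contained in the indeterminacy locus of $\Phi_{L_{i+1}}$; the restriction of $\Phi_{L_{i+1}}$ to $X_{f_1,\ldots ,f_i}$ is therefore a birational isomorphism onto a subvariety of $Y_{L_{i+1}}$ of dimension $n-i$. Apply Bertini's smoothness theorem to conclude that a generic $f_{i+1}\in L_{i+1}$ cuts $X_{f_1,\ldots ,f_i}$ in a smooth subscheme, and Bertini's irreducibility (connectedness) theorem to conclude that this subscheme is irreducible. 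Openness of the set of good $(f_1,\ldots ,f_k)$ in $\mathbf L$ follows at each step because the bad locus is a proper closed subvariety.

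The hypothesis $k<n$ is used exactly in the irreducibility part of Bertini, which requires the variety being cut to have dimension at least $2$. After $i$ cuts the dimension is $n-i$, so the $(i+1)$-st cut requires $n-i\geq 2$, i.e.\ $i\leq n-2$, valid throughout because $k\leq n-1$. I expect the main technical obstacle to be the careful bookkeeping of the indeterminacy loci of the rational Kodaira maps in relation to the inductively constructed subvarieties, together with the verification that irreducibility and smoothness transfer correctly between $\tilde X$ and $X$; both issues dissolve once one observes that the relevant bad loci are proper closed subvarieties and that any nonempty Zariski open subset of an irreducible variety is irreducible.
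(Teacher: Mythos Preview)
The paper does not give its own proof of this theorem; it simply states ``A proof of the Bertini-Lefschetz theorem can be found in \cite[Theorem 8.18]{Hartshorne}.'' Your inductive Bertini argument is the standard route to such a statement and is essentially correct, so in that sense you have supplied more than the paper does.

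A couple of points where your sketch could be tightened. First, the compactification step is a little delicate: the $f_i$ are rational on $X$ and may have poles on $\tilde X\setminus X$, so you must say what $\tilde X_{\mathbf f}$ means (e.g.\ take the closure of $X_{\mathbf f}$, or pass to the divisor of zeros of the associated section of the line bundle on a resolution of the Kodaira map). It is arguably cleaner to bypass $\tilde X$ altogether and apply Bertini directly on the projective image $Y_{L_{i+1}}$, transporting smoothness and irreducibility back along the birational isomorphism on the open set where $\Phi_{L_{i+1}}$ is an isomorphism. Second, your induction produces, for each fixed $(f_1,\ldots,f_i)$ in a good locus, a dense open set of good $f_{i+1}$; to conclude that the overall good locus in $L_1\times\cdots\times L_k$ contains a Zariski open set, you need the observation that the bad locus is constructible in the product (e.g.\ via the incidence variety of all $(\mathbf f,x)$ with $x$ a singular or reducible point of $X_{\mathbf f}$), not merely ``closed at each step.'' These are routine fixes and do not affect the strategy.
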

A proof of the Bertini-Lefschetz theorem can be found in \cite[Chapter II, Theorem 8.18]{Hartshorne}

One can slightly extend Theorem \ref{th-Lefschetz}. Assume that we are given $k$ very big
spaces $L_1, \ldots, L_k \in \K$ and $(n-k)$ arbitrary subspaces $L_{k+1}, \ldots, L_n$.
We denote by $[L_{k+1}, \ldots, L_n]_{X_{\bf f}}$, the intersection index of the restriction of the
subspaces $L_{k+1}, \ldots, L_n$ to the subvariety $X_{\bf f}$.
It is easy to verify the following reduction theorem.
\begin{Th} \label{th-reduction}
There is a Zariski open subset
${\bf U}$ in $L_1 \times \cdots \times L_k$ such that for ${\bf f}
=(f_1,\dots,f_k)\in {\bf U}$, the system $f_1=\dots=f_k=0$ defines a smooth irreducible
subvariety $X_{{\bf f}}$ in $X$ and the identity
$$[L_1,\dots L_n]_X =  [L_{k+1},\dots L_n]_{X_{\bf f}},$$
holds.
\end{Th}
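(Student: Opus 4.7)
The plan is to combine the Bertini--Lefschetz theorem (Theorem \ref{th-Lefschetz}) with the observation that the intersection index on $X$ can be realised by a \emph{single} sufficiently generic $n$-tuple $(f_1,\ldots,f_n)\in L_1\times\cdots\times L_n$, and to produce such a tuple in two stages: first fix $(f_1,\ldots,f_k)$ cutting out a smooth irreducible slice $X_{\bf f}\subset X$, then pick $(f_{k+1},\ldots,f_n)$ generically on $X_{\bf f}$.

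First I would apply Theorem \ref{th-Lefschetz} to the very big subspaces $L_1,\ldots,L_k$ to obtain a Zariski open ${\bf U}_0\subset L_1\times\cdots\times L_k$ on which $X_{\bf f}$ is smooth and irreducible of dimension $n-k$. I would then shrink ${\bf U}_0$ to a Zariski open ${\bf U}$ on which, in addition, every restriction $L_i|_{X_{\bf f}}\subset \c(X_{\bf f})$ ($i>k$) is non-zero: picking any $0\neq g_i\in L_i$, the condition $g_i|_{X_{\bf f}}\not\equiv 0$ is Zariski open in ${\bf f}$, and it is non-empty because the very-bigness of $L_1,\ldots,L_k$ forces $X_{\bf f}$ to move freely enough that it is not contained in the divisor $\{g_i=0\}$ for generic ${\bf f}$.

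Next, fix ${\bf f}\in {\bf U}$. By the defining property of the intersection index on the smooth irreducible variety $X_{\bf f}$, there is a Zariski open ${\bf V}_{\bf f}\subset L_{k+1}\times\cdots\times L_n$ such that for $(f_{k+1},\ldots,f_n)\in {\bf V}_{\bf f}$ the restricted system $f_{k+1}|_{X_{\bf f}}=\cdots=f_n|_{X_{\bf f}}=0$ has exactly $[L_{k+1},\ldots,L_n]_{X_{\bf f}}$ non-degenerate zeros on the good locus of $X_{\bf f}$. Similarly there is a Zariski open ${\bf W}\subset {\bf L}=L_1\times\cdots\times L_n$ on which the original system $f_1=\cdots=f_n=0$ has $[L_1,\ldots,L_n]_X$ non-degenerate zeros on the good locus of $X$. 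The crucial identification is that for ${\bf f}\in {\bf U}$ and any common zero $a\in X_{\bf f}$ the tangent space satisfies $T_aX_{\bf f}=\bigcap_{i=1}^k\ker df_i(a)$, so that $df_1\wedge\cdots\wedge df_n(a)\neq 0$ on $T_aX$ if and only if $df_{k+1}|_{X_{\bf f}}\wedge\cdots\wedge df_n|_{X_{\bf f}}(a)\neq 0$ on $T_aX_{\bf f}$; consequently the two systems have the same solution set with the same non-degeneracy at each point.

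To finish, choose $(f_1,\ldots,f_n)$ in the intersection of ${\bf W}$ with the fibred set $\bigl\{({\bf f},(f_{k+1},\ldots,f_n)) \colon {\bf f}\in {\bf U},\ (f_{k+1},\ldots,f_n)\in {\bf V}_{\bf f}\bigr\}$; by counting zeros at this point one reads off the equality $[L_1,\ldots,L_n]_X=[L_{k+1},\ldots,L_n]_{X_{\bf f}}$. The main obstacle is the uniform-genericity step, namely showing that the family of open sets ${\bf V}_{\bf f}$ assembles into a constructible subset of ${\bf U}\times L_{k+1}\times\cdots\times L_n$ containing a Zariski open piece whose projection to ${\bf U}$ is dense, so that the above intersection is non-empty. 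This is a standard constructibility argument applied to the incidence variety $\{((f_1,\ldots,f_n),x)\in {\bf L}\times X\mid f_i(x)=0,\ x\in U_{\bf L}\setminus Z_{\bf L}\}$, using the fact that its generic fibre over ${\bf L}$ is finite. Finally, since the right-hand side $[L_{k+1},\ldots,L_n]_{X_{\bf f}}$ is an integer-valued function of ${\bf f}\in {\bf U}$ equal to the fixed integer $[L_1,\ldots,L_n]_X$ on a Zariski dense subset, after one last shrinking of ${\bf U}$ to its locus of constancy we obtain the desired identity on all of ${\bf U}$.
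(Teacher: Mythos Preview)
The paper does not actually prove Theorem \ref{th-reduction}: it merely says ``It is easy to verify the following reduction theorem'' and moves on. Your plan is a correct fleshing-out of what the paper leaves implicit, and it uses the same ingredients the authors clearly have in mind (Theorem \ref{th-Lefschetz} to get $X_{\bf f}$ smooth and irreducible, then the identification of non-degenerate zeros of the full system on $X$ with non-degenerate zeros of the restricted system on $X_{\bf f}$ via $T_aX_{\bf f}=\bigcap_{i\le k}\ker df_i(a)$).

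Two small remarks. First, your constructibility discussion for the family $\{{\bf V}_{\bf f}\}$ is more careful than anything the paper hints at; the authors presumably regard this as routine, and you handle it correctly by passing to the incidence variety. Second, when you restrict $L_i$ to $X_{\bf f}$ for $i>k$, note that the restriction map $L_i\to\c(X_{\bf f})$ need not be injective, but this is harmless: a Zariski-generic element of $L_i$ maps to a Zariski-generic element of the image $L_i|_{X_{\bf f}}$, so genericity in $L_{k+1}\times\cdots\times L_n$ on $X$ pushes forward to genericity in the restricted spaces on $X_{\bf f}$, which is what the intersection index there requires.
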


\begin{Th}[Algebraic analogue of the Alexandrov-Fenchel
inequality] \label{th-Alex-Fenchel-alg} Let $X$ be an irreducible
$n$-dimensional variety and let $L_1,\dots,L_n\in \K$. Also assume that
$L_3, \ldots, L_n$ are big subspaces.
Then the following inequality holds:
$$[L_1,L_1,L_3,\dots,L_n][L_2,L_2,L_3,\dots,L_n] \leq [L_1,L_2, L_3,\dots,L_n]^2.$$
\end{Th}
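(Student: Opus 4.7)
The strategy is to reduce the $n$-dimensional Alexandrov--Fenchel inequality to the two-dimensional Hodge-type inequality (Corollary \ref{cor-Hodge}) via the reduction theorem for very big subspaces. So I would first reduce from \emph{big} to \emph{very big} subspaces: since each $L_i$ ($i\geq 3$) is big, there exists $m_i>0$ such that $L_i^{m_i}$ is very big. By multilinearity of the intersection index (Theorem \ref{th-multi-lin-int-index}), writing $M=m_3\cdots m_n$, every term appearing in the desired inequality is multiplied by the same factor $M^2$ when one replaces each $L_i$ by $L_i^{m_i}$. Hence it suffices to prove the inequality assuming $L_3,\dots,L_n$ are very big.

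Next, I would apply Theorem \ref{th-reduction} with the $k=n-2$ very big subspaces $L_3,\dots,L_n$. This produces a Zariski open subset $\mathbf{U}\subset L_3\times\cdots\times L_n$ on which the reduction formula holds. Choosing $\mathbf{f}=(f_3,\dots,f_n)$ in the intersection of the three Zariski open sets arising from applying the theorem separately to the triples $(L_1,L_1,L_3,\dots,L_n)$, $(L_2,L_2,L_3,\dots,L_n)$, and $(L_1,L_2,L_3,\dots,L_n)$ — this intersection is still Zariski open and dense — one obtains a smooth irreducible surface $X_{\mathbf{f}}\subset X$ together with the three identities
\begin{align*}
[L_1,L_1,L_3,\dots,L_n]_X &= [L_1,L_1]_{X_{\mathbf{f}}},\\
[L_2,L_2,L_3,\dots,L_n]_X &= [L_2,L_2]_{X_{\mathbf{f}}},\\
[L_1,L_2,L_3,\dots,L_n]_X &= [L_1,L_2]_{X_{\mathbf{f}}}.
\end{align*}
Applying Corollary \ref{cor-Hodge} on the irreducible surface $X_{\mathbf{f}}$ (with the restrictions of $L_1,L_2$) immediately gives the required inequality.

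One small wrinkle is that Theorem \ref{th-Lefschetz}, which underlies the reduction theorem, is stated for \emph{smooth} $X$. If $X$ is not smooth I would first replace it by a smooth birational model: the intersection index in $\mathbf{G}_{rat}(X)$ is birationally invariant (as discussed in the introduction), and the subspaces $L_1,\dots,L_n\subset\c(X)$ transfer unchanged under birational isomorphism, so nothing is lost. Apart from this, the argument is essentially formal given all the machinery already in place.

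The main obstacle is really conceptual rather than technical: one must notice that the reduction theorem turns the $n$-dimensional Alexandrov--Fenchel inequality into the two-dimensional Hodge inequality on a surface cut out by generic sections, and that the Hodge inequality on a surface was already obtained from the elementary $n=2$ Brunn--Minkowski inequality via the Newton--Okounkov body. This chain of reductions — $\text{Brunn--Minkowski on }\r^2 \Rightarrow \text{Hodge on surfaces} \Rightarrow \text{Alexandrov--Fenchel for intersection index}$ — is what makes the whole argument elementary, and verifying the bookkeeping of multilinearity and of generic choice of $\mathbf{f}$ is the only thing that requires care.
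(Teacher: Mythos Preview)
Your proof is correct and follows essentially the same route as the paper: reduce from big to very big via multilinearity, apply the reduction theorem (Theorem~\ref{th-reduction}) to cut down to a surface, and then invoke the Hodge-type inequality (Corollary~\ref{cor-Hodge}). Your treatment is in fact slightly more careful than the paper's own proof in two places: you explicitly intersect the three Zariski open sets so that a single generic $\mathbf{f}$ works simultaneously for all three identities, and you address the smoothness hypothesis of Theorem~\ref{th-Lefschetz} by passing to a smooth birational model---a point the paper does not comment on.
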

\begin{proof}
Because of the multi-additivity of the intersection index, if the inequality holds for the spaces $L_i$ replaced
with ${L_i}^N$, for some $N$, then it holds
for the original spaces $L_i$. So without loss of generality we can assume that $L_3, \ldots, L_n$ are
very big. By Theorem \ref{th-reduction}, for almost all the $(f_3, \ldots, f_n) \in
L_3 \times \cdots \times L_n$ and the variety $Y$ defined by the system
$f_3 = \cdots = f_n = 0$ we have:
$$[L_1,L_2, L_3,\dots,L_n] = [L_1, L_2]_Y,$$
$$[L_1,L_1, L_3,\dots,L_n] = [L_1, L_1]_Y,$$
$$[L_2,L_2, L_3,\dots,L_n] = [L_2, L_2]_Y.$$
Now applying Corollary \ref{cor-Hodge} (the Hodge inequality) for the surface $Y$ we have:
$$[L_1, L_1]_Y[L_2, L_2]_Y \leq [L_1, L_2]^2_Y,$$ which proves the theorem.
\end{proof}

%Similar to Section \ref{subsec-mixed-vol},
Let us introduce a notation for repetition
of subspaces in the intersection index.
Let $2\leq m\leq n$ be an integer and $k_1+\dots+k_r=m$ a partition of $m$ with $k_i \in \n$.
Consider the subspaces $L_1, \ldots, L_n \in \K$.
Denote by $[k_1*L_1,\dots, k_r*L_r,L_{m+1},\dots,L_n]$
the intersection index of $L_1, \ldots, L_n$ where
$L_1$ is repeated $k_1$ times, $L_2$ is repeated $k_2$ times, etc.
and $L_{m+1},\dots, L_n$ appear once.

\begin{Cor}[Corollaries of the algebraic analogue of
the Alexandrov-Fenchel inequality] \label{cor-Alex-Fenchel-alg} Let $X$
be an $n$-dimensional irreducible variety.
1) Let $2\leq m\leq n$ and $k_1+\dots+k_r=m$ with $k_i \in
\n$. Take big subspaces of rational functions $L_1, \ldots, L_n
\in \K$. Then $$\prod_{1 \leq j \leq r}[m*L_j, L_{m+1}, \ldots,
L_n]^{k_j} \leq [k_1*L_1, \ldots, k_r*L_r, L_{m+1},
\ldots, L_n]^m.$$
2)(Generalized Brunn-Minkowski inequality) For any fixed
big subspaces $L_{m+1}, \ldots, L_n \in \K$, the
function $$F: L \mapsto [m*L, L_{m+1}, \ldots, L_n]^{1/m},$$ is a
concave function on the semigroup $\K$.
\end{Cor}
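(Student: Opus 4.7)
The plan is to deduce both statements from the algebraic analogue of the Alexandrov-Fenchel inequality (Theorem \ref{th-Alex-Fenchel-alg}) together with the multi-linearity of the intersection index (Theorem \ref{th-multi-lin-int-index}), by mimicking the standard derivation of these corollaries in convex geometry (compare Corollary \ref{cor-Alex-Fenchel}).

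For part 1), I would fix the big subspaces $L_{m+1},\dots,L_n$ and, for non-negative integers $k_1,\dots,k_r$ with $\sum k_j=m$, set $a(k_1,\dots,k_r)=[k_1*L_1,\dots,k_r*L_r,L_{m+1},\dots,L_n]$. Since the big subspaces form a subsemigroup of $\K$, all the tuples appearing in the argument remain admissible for Theorem \ref{th-Alex-Fenchel-alg}. Applying that theorem in any pair of coordinates $i\neq j$ with $k_i,k_j\geq 1$ gives the discrete log-concavity
$$a(\dots,k_i,\dots,k_j,\dots)^2 \geq a(\dots,k_i+1,\dots,k_j-1,\dots)\cdot a(\dots,k_i-1,\dots,k_j+1,\dots).$$
I would then argue by induction on $r$. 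The base case $r=1$ is trivial. For $r=2$, iterating the one-variable log-concavity of the sequence $k\mapsto a(k,m-k)$ over $0\leq k\leq m$ yields $a(k_1,k_2)^m\geq a(m,0)^{k_1}a(0,m)^{k_2}$, which is exactly the required inequality. For the inductive step, I would apply the $r=2$ case to the first two coordinates with the remaining $k_3,\dots,k_r$ held fixed, obtaining
$$a(k_1,k_2,k_3,\dots,k_r)^{k_1+k_2} \geq a(k_1{+}k_2,0,k_3,\dots,k_r)^{k_1}\cdot a(0,k_1{+}k_2,k_3,\dots,k_r)^{k_2}.$$
Each factor on the right involves one fewer non-zero entry, so the induction hypothesis on $r$ bounds it below by the appropriate product of $[m*L_j,L_{m+1},\dots,L_n]$. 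Raising the displayed inequality to the $m$-th power and dividing by $k_1+k_2$ in the exponent yields the claim.

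For part 2), I would combine part 1) (with $r=2$) with the multi-linearity of the intersection index. By Theorem \ref{th-multi-lin-int-index} one has
$$F(L_1L_2)^m = [m*L_1L_2,L_{m+1},\dots,L_n] = \sum_{k=0}^m \binom{m}{k}[k*L_1,(m-k)*L_2,L_{m+1},\dots,L_n].$$
Part 1) gives $[k*L_1,(m-k)*L_2,L_{m+1},\dots,L_n]\geq F(L_1)^k F(L_2)^{m-k}$ for each $k$, and summing via the binomial theorem produces $F(L_1L_2)^m\geq (F(L_1)+F(L_2))^m$, which is the concavity of $F$ on $\K$.

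The main obstacle is not conceptual but combinatorial: the bookkeeping in the inductive step of part 1), where one must carefully track how the exponents of $(k_1+k_2)$ interact with the exponents coming from the inductive hypothesis so that the final inequality has exactly the claimed form. Everything else is a direct formal consequence of Theorem \ref{th-Alex-Fenchel-alg} and multi-linearity.
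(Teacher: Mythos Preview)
Your argument for part 1) is correct and is exactly what the paper intends: it simply remarks that part 1) ``follows formally from the algebraic analogue of the Alexandrov--Fenchel, the same way that the corresponding inequalities follow from the classical Alexandrov--Fenchel in convex geometry.''

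For part 2) your route differs from the paper's and, as written, leaves a gap. Your deduction via part 1) with $r=2$ needs $L_1,L_2$ themselves to be big: in the log-concavity step for the sequence $k\mapsto a(k,m-k)$ you apply Theorem \ref{th-Alex-Fenchel-alg} with $k-1$ copies of $L_1$ and $m-k-1$ copies of $L_2$ sitting among the \emph{fixed} entries, and that theorem requires those fixed entries to be big. Hence for $m>2$ your argument only yields concavity of $F$ on the subsemigroup of big subspaces, whereas part 2) asserts concavity on all of $\K$. The paper's proof avoids this by using Corollary \ref{cor-Brunn-Mink-int-index} together with Theorem \ref{th-reduction}: after replacing the big $L_{m+1},\dots,L_n$ by powers to make them very big, one picks a generic ${\bf f}\in L_{m+1}\times\cdots\times L_n$ and passes to the smooth irreducible $m$-dimensional subvariety $Y=X_{\bf f}$, on which $[m*L,L_{m+1},\dots,L_n]_X=[L,\dots,L]_Y$ for every $L$; the concavity of $F$ then becomes exactly Corollary \ref{cor-Brunn-Mink-int-index} on $Y$, and that corollary holds for arbitrary $L_1,L_2\in\K$. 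Your approach is appealingly formal once Alexandrov--Fenchel is available, but the paper's geometric reduction is what delivers part 2) in the stated generality.
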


1) follows formally from the algebraic analogue
of the Alexandrov-Fenchel, the same way that the corresponding
inequalities follow from the classical Alexandrov-Fenchel in convex geometry.
2) can be easily deduced from Corollary \ref{cor-Brunn-Mink-int-index} and
Theorem \ref{th-reduction}.

We now prove the classical Alexandrov-Fenchel inequality in convex geometry
(Theorem \ref{thm-alexandrov-fenchel}).
%Its corollary (Corollary \ref{cor-Alex-Fenchel}) then follow automatically.

\begin{proof}[Proof of Theorem \ref{thm-alexandrov-fenchel}]
As we saw above, the Bernstein-Kushnirenko theorem follows from
Theorem \ref{th-main-Delta-A_L}. Applying the algebraic analogue of the Alexandrov-Fenchel
inequality to the situation considered in the Bernstein-Kushnirenko theorem
one proves the Alexandrov-Fenchel inequality for convex
polytopes of full dimension and with integral vertices. The homogeneity then implies
the Alexandrov-Fenchel inequality for convex polytopes of full dimension and with rational
vertices. But since any convex body can be approximated by convex polytopes of full dimension and with
rational vertices, by continuity we obtain
the Alexandrov-Fenchel inequality in complete generality.
\end{proof}

%At the end of $1970$s and the beginning of $1980$s,
%Teissier \cite{Teissier} and the second author (see \cite{Askold-BZ} for a survey)
%proved the algebraic analogue of the Alexandrov-Fenchel inequality (using the Hodge inequality)
%and out of it deduced the original Alexandrov-Fenchel in convex geometry. This was done in a similar manner
%as discussed above. The new contribution of the present paper is that we do not use the classical Hodge inequality
%as in \cite{Teissier} and \cite{Askold-BZ}, but rather we get a version of this inequality along the way
%in our chain of arguments. Our arguments rely on Hilbert's theorem on dimension and degree of a
%projective variety, and the elementary $n=2$ case of the Brunn-Minkowski inequality (which can be proved
%with high-school mathematics background). \footnote{In fact, using these we prove the Alexandrov-Fenchel
%inequality which implies the Brunn-Minkowski inequality for any $n$.}
%We use a birational version of intersection theory
%constructed in our previous paper \cite{Askold-Kiumars-arXiv-2}. Then
%via a valuation, we reduce the required results to general statements on semigroups of integral points.
%These theorems on the semigroups of integral points are among the main results of the present paper.
%We should note that while in the classical Hodge inequality
%the surface needs to be smooth and projective (or compact Kaehler),
%our version of the Hodge inequality (Corollary \ref{cor-Hodge}) holds for any irreducible surface
%(not necessarily smooth and projective or compact Kaehler) and hence is much easier to apply.

\vspace{.2cm}
{\Small
A. G. Khovanskii: Department of Mathematics, University of Toronto, Toronto,
Canada; Moscow Independent University; Institute for Systems Analysis, Russian Academy of Sciences.
email: {\sf askold@math.toronto.edu}

Kiumars Kaveh: Department of Mathematics, University of Pittsburgh,
Pittsburgh, PA, USA. email:\\ {\sf kaveh@pitt.edu}
}

%\noindent Askold G. Khovanskii\\Department of
%Mathematics\\University of Toronto \\Toronto, ON M5S 2E4\\Canada\\
%{\it Email:} {\sf askold@math.utoronto.ca}\\

%\noindent Kiumars Kaveh\\Department of Mathematics\\University of
%Toronto\\Toronto, ON M5S 2E4\\Canada\\
%{\it Email:} {\sf kaveh@math.utoronto.ca}\\
\end{document}